\newtheorem{theorem}{Theorem}[section]
\newtheorem{proposition}[theorem]{Proposition}
\newtheorem{lemma}[theorem]{Lemma}
\newtheorem{definition}[theorem]{Definition}
\newtheorem{corollary}[theorem]{Corollary}
\newtheorem{remark}{Remark}[section]
\numberwithin{equation}{section}
\newcommand{\ZZ}{\mathbb{Z}}
\newcommand{\NN}{\mathbb{N}}
\newcommand{\bbA}{\mathbb{A}}
\newcommand{\bbB}{\mathbb{B}}
\newcommand{\bbI}{\mathbb{I}}
\newcommand{\bbJ}{\mathbb{J}}
\newcommand{\bbK}{\mathbb{K}}
\newcommand{\cali}{\mathcal{I}}
\newcommand{\calj}{\mathcal{J}}
\newcommand{\calk}{\mathcal{K}}
\newcommand{\calm}{\mathcal{M}}
\newcommand{\Div}{{\mathrm{Div}}}
\newcommand{\Span}{{\mathrm{Span}}}
\newcommand{\Bispan}{{\mathrm{Bispan}}}
\subjclass[2020]{Primary 05B45, 11B75, 20K01. Secondary 11C08, 43A47, 51D20, 52C22.}
\title[Even Coven-Meyerowitz]{The Coven-Meyerowitz tiling conditions for 3 prime factors: the even case} 
\author{Izabella {\L}aba and Itay Londner}
\date{\today}
\begin{document}

\begin{abstract} We consider finite sets $A\subset\ZZ$ that the integers by translations. By periodicity, any such tiling is equivalent to a factorization $A\oplus B=\ZZ_M$ of a finite cyclic group. Building on the work in \cite{LaLo1, LaLo2, LaLo3}, we prove that a tentative characterization of finite tiles proposed by Coven and Meyerowitz \cite{CM}
holds for all integer tilings of period $M=(p_ip_jp_k)^2$, where $p_i,p_j,p_k$ are distinct primes.  This extends the main result of \cite{LaLo2}, where we assumed that $M$ is odd. We also improve parts of the argument from \cite{LaLo2}.

\end{abstract}

\maketitle

\setcounter{tocdepth}{1}
\tableofcontents


\section{Introduction}


A finite set $A\subset\ZZ$ {\em tiles the integers by translations} 
if there is a (necessarily infinite) set $T\subset\ZZ$ such that every $n\in\ZZ$ can be represented uniquely
as $n=a+t$ with $a\in A$ and $t\in T$. We will call such sets {\em finite integer tiles}. Newman
\cite{New} proved
that any tiling of $\ZZ$ by a finite set $A$ must be periodic: there exists a $M\in\NN$
such that $T=B\oplus M\ZZ$ for some finite set $B\subset \ZZ$. We then have $|A|\,|B|=M$, and
$A\oplus B$ modulo $M$ is a factorization of the cyclic group $\ZZ_M$. We will write this as $A\oplus B=\ZZ_M$.

The main open problem concerning integer tilings is to determine whether the Coven-Meyerowitz tiling conditions hold for all finite tiles. 
To state these conditions, we need some notation. By translational invariance, we may assume that $A,B\subset\{0,1,\dots\}$ and that $0\in A\cap B$. The {\it  mask polynomials} of $A$ and $B$ are
$$
A(X)=\sum_{a\in A}X^a,\ B(x)=\sum_{b\in B}X^b .
$$
The tiling condition $A\oplus B=\ZZ_M$ is then equivalent to
\begin{equation}\label{poly-e1}
 A(X)B(X)=1+X+\dots+X^{M-1}\ \mod (X^M-1).
\end{equation}
Let $\Phi_s(X)$ be the $s$-th cyclotomic polynomial, i.e., the unique monic, irreducible polynomial whose roots are the primitive $s$-th roots of unity. 
Factorizing the polynomial on the right hand side of (\ref{poly-e1}) to cyclotomic factors, tiling may be equivalently restated as
\begin{equation}\label{poly-e2}
  |A||B|=M\hbox{ and }\Phi_s(X)\ |\ A(X)B(X)\hbox{ for all }s|M,\ s\neq 1.
\end{equation}
Since $\Phi_s$ are irreducible, each $\Phi_s(X)$ with $s|M$ must divide at least one of $A(X)$ and $B(X)$.

Let $S_A$ be the set of prime powers
$p^\alpha$ such that $\Phi_{p^\alpha}(X)$ divides $A(X)$.  Then the Coven-Meyerowitz tiling conditions 
are as follows:

\smallskip
{\it (T1) $A(1)=\prod_{s\in S_A}\Phi_s(1)$,}

\smallskip
{\it (T2) if $s_1,\dots,s_k\in S_A$ are powers of different
primes, then $\Phi_{s_1\dots s_k}(X)$ divides $A(X)$.}
\medskip

It is proved in \cite{CM} that:

\begin{itemize}

\item if $A$ satisfies (T1), (T2), then $A$ tiles $\ZZ$;

\item  if $A$ tiles $\ZZ$ then (T1) holds;

\item if $A$ tiles $\ZZ$ and $|A|$ has at most two distinct prime factors,
then (T2) holds.
\end{itemize}

The conjecture that (T1) and (T2) hold for all finite integer tiles was popularized in the literature, in recent years, as the Coven-Meyerowitz conjecture.  

The statement (T1) is a counting condition, ensuring that the equation $|A||B|=M$ is compatible with the second part of (\ref{poly-e2}).
The second condition (T2) is a much deeper structural property. For finite sets $A$ satisfying (T1) and (T2), Coven and Meyerowitz constructed an explicit tiling $A\oplus B^\flat=\ZZ_M$, where $M=\hbox{lcm}(S_A)$ and $B^\flat$ is an explicit ``standard" tiling complement (described here in Section \ref{standards}). We proved in \cite{LaLo1} (although this argument was already implicit in \cite{CM}) that having a tiling complement of this type is in fact equivalent to (T2).  This places (T2) in close relation to questions on factor replacement in factorizations of abelian groups \cite{Szabo-book}.

The Coven-Meyerowitz proof can be extended to a limited range of tilings where $M$ has more than two prime factors. In \cite[Corollary 6.2]{LaLo1}, we use the methods of \cite{CM} to prove that if $A\oplus B=\ZZ_M$, and if $|A|$ and $|B|$ have at most two \emph{shared} distinct prime factors, then both $A$ and $B$ satisfy (T2). Similar results have also appeared elsewhere in the literature, see e.g., \cite{Tao-blog}, \cite[Proposition 4.1]{shi}, \cite[Theorem 1.5]{M}.

Unfortunately, the methods of \cite{CM} do not extend to the case when $|A|$ and $|B|$ share three or more distinct prime factors. This was already known to Coven and Meyerowitz, who cited the examples due to Szab\'o \cite{Sz} (see also \cite{LS}). While (T2) still holds for these particular examples, a key element of  the Coven-Meyerowitz proof is Sands's factor replacement
theorem \cite{Sands}, which in general does not hold for tilings with three or more prime factors..
The only cases that had been resolved prior to our work in \cite{LaLo1}, \cite{LaLo2}  are either covered by the methods of \cite{CM} (see \cite{Tao-blog}, \cite{shi}, \cite{M}) or else concern tilings with explicitly given structure (\cite{KL}, \cite{dutkay-kraus}).

In \cite{LaLo2}, we proved the following theorem.

\begin{theorem}\label{T2-3primes-thm} \cite{LaLo2}
Let $M=p_i^2p_j^2p_k^2$, where $p_i,p_j,p_k$ are distinct odd primes. 
Assume that $A\oplus B=\ZZ_M$, with $|A|=|B|=p_ip_jp_k$. Then both $A$ and $B$ satisfy (T2).
\end{theorem}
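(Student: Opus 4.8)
The plan is to reduce everything to a \emph{rigid} situation in which neither tile can be split along a prime direction, and then to attack that situation with the structural results of \cite{LaLo1}. Throughout write the three primes as $p,q,r$, so $M=p^2q^2r^2$. Since $A$ and $B$ both tile $\ZZ$, condition (T1) holds for each; and because $\Phi_{\ell^{\alpha}}(1)=\ell$ for every prime power $\ell^{\alpha}$, the equalities $A(1)=B(1)=pqr$ together with (T1) force $S_A=\{s_p,s_q,s_r\}$ and $S_B=\{t_p,t_q,t_r\}$, where for each $\ell\in\{p,q,r\}$ the pair $\{s_\ell,t_\ell\}$ equals $\{\ell,\ell^2\}$ (one also uses the elementary counting fact that each of $\Phi_\ell,\Phi_{\ell^2}$ divides exactly one of $A(X),B(X)$). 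Thus (T2) for $A$ and $B$ is exactly the assertion that $\Phi_{s_\ell s_m}\mid A$, $\Phi_{s_ps_qs_r}\mid A$, $\Phi_{t_\ell t_m}\mid B$ and $\Phi_{t_pt_qt_r}\mid B$, and by permuting the primes and interchanging $A\leftrightarrow B$ only a bounded number of cases for $(s_p,s_q,s_r)$ remain. If one of the tiles, say $A$, is \emph{fibered} in some direction, say $r$ (i.e.\ $A$ is a union of cosets of the order-$r$ subgroup of $\ZZ_M$), then $\Phi_d\mid A$ automatically for every $d\mid M$ with $r^2\mid d$, and the tiling descends to a factorization $\bar A\oplus\bar B=\ZZ_{M/r}$ with $|\bar A|=pq$, $|\bar B|=pqr$; since $|\bar A|$ and $|\bar B|$ then share only the primes $p$ and $q$, \cite[Corollary 6.2]{LaLo1} gives (T2) for $\bar A$ and $\bar B$, which lifts back to (T2) for $A$ and $B$. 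Hence we may assume henceforth that neither $A$ nor $B$ is fibered in any direction.

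In this rigid case the structure results of \cite{LaLo1} pin down, for each $\ell\in\{p,q,r\}$, the distribution of $A$ (and of $B$) modulo $\ell^2$: once one knows which of $\Phi_\ell,\Phi_{\ell^2}$ divides $A$, the vector $\bigl(|A\cap(a+\ell^2\ZZ)|\bigr)_{a\bmod \ell^2}$ is confined to a short explicit list of shapes --- for instance $\Phi_{\ell^2}\mid A$ forces it to depend only on $a\bmod \ell$. Dually, each divisibility $\Phi_d\mid A$ is equivalent, upon expanding $A$ at a primitive $d$-th root of unity in the basis of roots of unity, to the vanishing of all ``$d$-cuboid'' evaluations of the indicator of $A$: for the two-prime divisors these are two-dimensional cuboid (grid-rectangle) conditions on the arrays $\bigl(|A\cap(a\bmod\cdot,\,b\bmod\cdot)|\bigr)$, and for $\Phi_{s_ps_qs_r}$ it is a genuinely three-dimensional cuboid condition. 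So the problem becomes: deduce these cuboid identities from $A\oplus B=\ZZ_M$ together with the one-variable shapes.

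To verify a candidate cuboid identity for $A$ one argues by contradiction. Since $\Phi_d\mid A(X)B(X)$ whenever $d\mid M$, failure of the identity for $A$ means the corresponding identity holds for $B$ instead; combined with the one-variable shapes of $A$ and $B$, with the already-established lower-dimensional divisibilities, and with the tiling relation $A\oplus B=\ZZ_M$ evaluated on the relevant cuboids (where the lower-dimensional vanishings produce substantial cancellation), this pins the pair $(A,B)$ down to a configuration modulo $p^2,q^2,r^2$ that one then rules out. The two-prime divisibilities $\Phi_{s_ps_q}\mid A$ and $\Phi_{t_pt_q}\mid B$ come out of this analysis relatively early, since they involve only two primes and can be fed through the two-prime theory; the substantive output is $\Phi_{s_ps_qs_r}\mid A$ and $\Phi_{t_pt_qt_r}\mid B$, after which the mixed divisors $\Phi_{\ell^2 m n}$ are determined as well.

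The heart of the matter --- and where essentially all of the difficulty lies --- is the case analysis for the three-prime divisor. With three primes shared between $|A|$ and $|B|$, the Szab\'o examples \cite{Sz} rule out any Sands-type factor-replacement shortcut; the three-dimensional cuboid sums do not cancel term by term; and the configurations of $(A,B)$ that survive the one- and two-variable constraints genuinely have to be classified and then checked one by one. Carrying this out --- in particular showing that every admissible rigid configuration either secretly contains a fiber (contradicting the reduction in the first step) or already satisfies the three-dimensional cuboid identity --- is exactly where the finer apparatus of \cite{LaLo1} (fibered subsets, saturating sets, and the analysis of ``cores'') becomes indispensable, and I expect it to absorb the bulk of the work.
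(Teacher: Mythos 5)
Your preliminary steps are fine: (T1) together with $|A|=|B|=p_ip_jp_k$ does force $S_A$ and $S_B$ to consist of one power of each prime, complementary to one another; and when one tile is $M$-fibered in some direction, the descent to $\ZZ_{M/p_\nu}$ plus \cite[Corollary 6.2]{LaLo1} and the lifting mechanism of Theorem \ref{subtile}/Corollary \ref{slab-reduction} does give (T2). The gap is everything after that. The case in which neither tile is globally fibered is not a residual case to be finished off by inspecting profiles modulo $p^2,q^2,r^2$: it \emph{is} the theorem. Your plan for it --- assume a cuboid identity fails for $A$, conclude it holds for $B$, and claim this ``pins the pair $(A,B)$ down to a configuration \dots that one then rules out'' --- is not an argument, and you defer it yourself (``I expect it to absorb the bulk of the work''). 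Divisibility of $B$ by a single composite cyclotomic, combined with one-dimensional shapes mod $\ell^2$, constrains $(A,B)$ only very weakly; this is exactly the three-shared-primes regime in which Sands-type factor replacement fails (Szab\'o's examples \cite{Sz}), so no short rigidity argument of the kind you sketch is available.

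The endgame you describe is also aimed at the wrong target. First, the operative dichotomy is not ``globally fibered vs.\ not'': after normalizing $\Phi_M|A$, the proof splits according to whether $A\cap\Lambda$ is $M$-fibered in some direction on \emph{every} $D(M)$-grid $\Lambda$ (Theorems \ref{unfibered-mainthm} and \ref{fibered-thm}). Even when it is, $A$ need not be globally fibered (the direction may change from grid to grid), and reducing that case to the slab reduction (Corollary \ref{slab-reduction}) or subgroup reduction (Theorem \ref{subgroup-reduction}) is itself a long argument --- Theorem \ref{fibered-mainthm}, including ruling out three simultaneous fibering directions --- not a consequence of your first step. Second, in the genuinely unfibered case the surviving structures (extended corners, diagonal boxes) are neither ``ruled out'' nor shown to ``secretly contain a fiber'' or ``already satisfy'' the three-prime cuboid identity: they occur in actual tilings, and (T2) is obtained by proving the tiling is \emph{T2-equivalent}, via fiber shifts (Lemma \ref{fibershift}) justified by cofibered structures located through saturating-set/box-product arguments (Theorem \ref{GLW-thm}, Lemma \ref{1dim_sat-cor}), to the standard tiling $A^\flat\oplus B$ (Corollary \ref{get-standard}). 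Your proposal contains no mechanism --- no fiber shifting, no notion of T2-equivalence, no saturating sets --- capable of reaching a conclusion of that form, so even granting an unlimited case analysis it is not set up to close the proof.
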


The goal of this article is to extend Theorem \ref{T2-3primes-thm} to the even case, as follows.

\begin{theorem}\label{T2-3primes-even}
Let $M=p_i^2p_j^2p_k^2$, where $p_i,p_j,p_k$ are distinct primes and $2\in\{p_i,p_j,p_k\}$. 
Assume that $A\oplus B=\ZZ_M$, with $|A|=|B|=p_ip_jp_k$. Then both $A$ and $B$ satisfy (T2).
\end{theorem}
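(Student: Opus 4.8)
The plan is to follow the overall architecture of \cite{LaLo2}, which reduces the proof of (T2) to a finite analysis of the divisibility structure of the mask polynomials $A(X)$ and $B(X)$ at the various prime-power levels, and then to isolate and handle the specific new phenomena that arise when $2\in\{p_i,p_j,p_k\}$. Throughout, write $M=p_i^2p_j^2p_k^2$ and recall that since $|A|=|B|=p_ip_jp_k$, condition (T1) already forces a rigid bookkeeping of which cyclotomic factors $\Phi_{p_\nu}$, $\Phi_{p_\nu^2}$ go into $A$ versus $B$. The statement to be proved, after using the symmetry between $A$ and $B$ and the classification of ``easy'' cases (those where $A$ or $B$ is a subset of a proper sublattice, or factors as a direct sum along one of the primes), is that the top cyclotomic factor $\Phi_M(X)$ divides $A(X)B(X)$ in a way consistent with (T2) — i.e. if $\Phi_{p_i^{a}}, \Phi_{p_j^{b}}, \Phi_{p_k^{c}} \mid A(X)$ then $\Phi_{p_i^ap_j^bp_k^c}(X)\mid A(X)$, and similarly for all ``mixed'' triples and pairs. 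The pair cases ($\Phi_{p_i^ap_j^b}$) are already covered by the two-prime result in Theorem \ref{CM-thm} and \cite[Corollary 6.2]{LaLo1}, so the real content is the triple case.

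The key steps, in order, would be: (1) Set up the cuboid/array notation of \cite{LaLo1, LaLo2} — view $A$ as a subset of $\ZZ_M\cong\ZZ_{p_i^2}\times\ZZ_{p_j^2}\times\ZZ_{p_k^2}$ and encode the cyclotomic divisibility conditions as vanishing of certain weighted sums over ``cuboids'' (corners of boxes aligned with the three coordinate directions). (2) Reduce to the case where neither $A$ nor $B$ is ``fibered'' in any direction (the fibered case follows from induction on the number of prime factors via the standard tiling reduction). (3) Carry out the main dichotomy of \cite{LaLo2}: at each scale, either the relevant divisibility is ``saturated'' (all the $\Phi_{p_\nu^2}$ divide one of the two polynomials) — the ``slab'' scenario — or it is split between $A$ and $B$; in the latter case deploy the structural lemmas (the ``splitting'' technology advertised in the abstract) to pin down the possible configurations. (4) In each surviving configuration, verify $\Phi_M \mid A(X)B(X)$ and track it to the correct factor, using the box/cuboid vanishing identities together with a counting argument based on (T1).

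The main obstacle — and the reason the even case was not already subsumed by \cite{LaLo2} — is that when one of the primes is $2$, the cyclotomic polynomial $\Phi_{2}(X)=1+X$ and the prime-power cyclotomics $\Phi_{2^\alpha}(X)=1+X^{2^{\alpha-1}}$ are short (two terms), so the ``many disjoint fibers'' and ``diversity'' arguments that force structure in the odd case degenerate: a fiber in the $2$-direction has length only $2$, and several of the combinatorial lemmas in \cite{LaLo2} that count distinct residues or invoke $p_\nu\geq 3$ fail outright. I expect the heart of the paper to be a new set of lemmas handling these degenerate $2$-adic configurations — this is presumably exactly where the \emph{splitting} method enters, replacing the failed diversity arguments with a decomposition of $A$ (or $B$) into pieces that individually satisfy tractable divisibility relations and can be recombined. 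A secondary technical point is that parity makes certain ``balanced'' mask-polynomial identities behave differently, so the case analysis in step (3) will have more branches; but these should be dispatched by adapting, case by case, the corresponding arguments of \cite{LaLo2}, with the genuinely new input confined to the $2$-direction degeneracies.
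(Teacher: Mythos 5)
Your outline captures the right general frame (cuboid notation, a dichotomy according to fibering on $D(M)$-grids, slab/subgroup reductions, and new arguments for the $2$-adic degeneracies), but as a proof it has two load-bearing steps that are wrong, and the genuinely new content is missing. First, your step (2) asserts that the fibered case ``follows from induction on the number of prime factors via the standard tiling reduction.'' It does not: when $A$ is fibered on $D(M)$-grids the fibering direction may vary from grid to grid, and the slab reduction (Theorem \ref{subtile}, Corollary \ref{slab-reduction}) only applies once a single direction is shown to work uniformly, or else one must fall back on the subgroup reduction. Establishing this is the bulk of the actual proof (the analysis of $\cali,\calj,\calk$, the cases (F1), (F2), (F3) in Theorem \ref{fibered-mainthm}, fibering on lower scales, and the splitting-parity reformulation of the slab conditions); it was the most technical part already in the odd case and cannot be dispatched by a one-line induction. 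Second, your claim that the ``pair cases'' of (T2) are already covered by the two-prime results is false in this setting: Theorem \ref{CM-thm} requires $|A|$ to have at most two prime factors, and \cite[Corollary 6.2]{LaLo1} requires $|A|$ and $|B|$ to share at most two primes, whereas here $|A|=|B|=p_ip_jp_k$ share three. The paper never splits (T2) into pair and triple conditions; it proves (T2) wholesale, either by showing the tiling is T2-equivalent via fiber shifts to $A^\flat\oplus B$ (Corollary \ref{get-standard}) or by verifying the hypotheses of a reduction theorem.

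Third, the heart of the even case is not ``a decomposition of $A$ into pieces that individually satisfy tractable divisibility relations.'' Splitting here means that for every $M$-fiber $z*F_\nu$ the elements of $A$ and $B$ tiling it must follow a rigid parity pattern (Lemma \ref{no-upgrades}), and this is combined with saturating sets to resolve precisely the configurations your sketch leaves untouched: when $p_i=2$, unfibered grids can carry diagonal-box structures with $M/p_i\notin\Div(A\cap\Lambda)$, including the two-incomplete-fiber case (DB2) where saturating-set geometry alone is too weak, and one must also rule out (rather than assume away) the possibility that $B$ is $M$-fibered in the $2$-direction (Lemma \ref{nopifibers-alt}); the conclusion in each case is reached by fiber shifts producing a T2-equivalent standard structure (Theorem \ref{db-theorem}), not by directly ``tracking $\Phi_M$ to the correct factor.'' Since none of these degenerate configurations — which you yourself identify as the reason the odd-case proof does not extend — are actually handled, the proposal remains a plausible table of contents rather than a proof.
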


Theorems \ref{T2-3primes-thm} and \ref{T2-3primes-even} cover all tilings of period $M=p_i^2p_j^2p_k^2$, where $p_i,p_j,p_k$ are distinct primes. 
Additionally, the results of this article together with those of \cite{LaLo2} provide a classification of all tilings $A\oplus B=\ZZ_M$, where $M=p_i^2p_j^2p_k^2$. The detailed statements are given in Theorems \ref{unfibered-mainthm} and \ref{fibered-thm}.

We will rely on the methods and partial results of \cite{LaLo1}, \cite{LaLo2}.
While the broad outline of our proof is similar to that in \cite{LaLo2} (see Sections \ref{classified} and \ref{fibered-sec} for more details), the even case is different enough to make the overlap between the actual arguments rather limited. We take advantage of {\it splitting}, a method introduced more recently in \cite{LaLo3}.  Splitting saves the day in several situations where the saturating set arguments of \cite{LaLo2} do not work well enough in the even case. We also reorganize and simplify the proof in the ``fibered grids'' case, the most technical part of \cite{LaLo2}. The argument is still divided into cases, but we define these cases differently and use the splitting and structure results from \cite{LaLo3} to resolve one of them entirely. Additionally, the splitting formulation of the slab reduction \cite{LaLo3} leads to significant simplifications in some of the technical arguments. We include both odd and even cases in this part of the article, since the improvements apply in both cases and the additional length needed to cover the odd case is minimal.

The method of splitting turns out to be powerful enough to provide an almost immediate proof of the Coven-Meyerowitz conjecture for an additional class of tilings \cite{LaLo3}. This includes the following cases:

\begin{itemize}
	\item  $M=p_1^{n_1}p_2^{n_2}p_3^{n_3}$ with $p_1>p_2^{n_2-1}p_3^{n_3-1}$ 
	\item $M=p_1^{n_1}p_2^2p_3^2p_4^2$ with $p_1>p_2p_3p_4$
\end{itemize}
where $p_\nu,\nu\in\{1,2,3,4\},$ are distinct primes and $n_\nu\in \NN, \nu\in\{1,2,3\}$. The latter case is built on the main result of this paper.

While progress on the Coven-Meyerowitz conjecture beyond \cite{CM} and our papers \cite{LaLo1, LaLo2, LaLo3} has been limited, there is a considerable interest in tiling questions, for instance the periodic tiling conjecture \cite{Bh}, \cite{GK}, \cite{GT}, \cite{GT2}. A closely related conjecture of Fuglede \cite{Fug} has also drawn significant interest. We refer the reader to \cite{LaLo3} for further discussion of this connection and the implications of our work in this context.


\section{Notation and preliminaries}\label{sec-prelim}


This section summarizes the relevant definitions and results of \cite{LaLo1}, specialized to the 3-prime case. All material due to other authors is indicated explicitly as such.
\subsection{Multisets and mask polynomials}

We will assume that $M=p_i^{n_i}p_j^{n_j}p_k^{n_k}$, where $p_i,p_j,p_k$ are distinct primes and $n_i,n_j,n_k\in\NN$. While the indices $\{i,j,k\}$ are a permutation of $\{1,2,3\}$, we will use $i,j,k$ for this purpose, freeing up numerical subscripts for other uses. Though the results of Sections \ref{product-dilations}, \ref{splitting-section}, and \ref{tiling-reductions} are not restricted to the 3-prime setting, we will only state the results in the setup it is used.

The full proof of Theorem \ref{T2-3primes-thm} requires that $n_i=n_j=n_k=2$ and $2\in\{p_i,p_j,p_k\}$. However, many of our intermediate results are valid under weaker assumptions as indicated.

We will always work in either $\ZZ_M$ or in $\ZZ_N$ for some $N|M$. 
We use $A(X)$, $B(X)$, etc. to denote polynomials modulo $X^M-1$ with integer coefficients. 
Each such polynomial $A(X)=\sum_{a\in\ZZ_M} w_A(a) X^a$ is associated with a weighted multiset in $\ZZ_M$, which we will also denote by $A$, with weights $w_A(x)$ assigned to each $x\in\ZZ_M$. (If the coefficient of $X^x$ in $A(X)$ is 0, we set $w_A(x)=0$.) In particular, if $A$ has $\{0,1\}$ coefficients, then
$w_A$ is the characteristic function of a set $A\subset \ZZ_M$. We will use $\calm(\ZZ_M)$ to denote the 
family of all weighted multisets in $\ZZ_M$, and reserve the notation $A\subset \ZZ_M$ for sets.

If $N|M$, then any $A\in \calm(\ZZ_M)$ induces a weighted multiset $A$ mod $N$ in $\ZZ_N$, with the corresponding mask polynomial $A(X)$ mod $(X^N-1)$.
We will continue to write $A$ and $A(X)$ for $A$ mod $N$ and $A(X)$ mod $X^N-1$, respectively, while working in $\ZZ_N$.

 If $A,B\in\calm(\ZZ_M)$, 
we use the convolution notation $A*B$ to denote the weighted sumset of $A$ and $B$, so that $(A*B)(X)=A(X)B(X)$.
 If one of the sets is a singleton, say $A=\{x\}$, we will simplify the notation and write $x*B=\{x\}*B$. 
 The direct sum notation $A\oplus B$ is reserved for tilings, i.e., $A\oplus B=\ZZ_M$ means that $A,B\subset\ZZ_M$ are both sets and $A(X)B(X)=\frac{X^M-1}{X-1}$ mod $X^M-1$. 
Notation such as $A'$, $A''$, etc., will be used to denote auxiliary multisets and polynomials rather than derivatives.



\subsection{Array coordinates and geometric representation}
\label{sec-array}


By the Chinese Remainder Theorem, the cyclic group $\ZZ_M$ may be identified with $\ZZ_{p_i^{n_i}}\oplus \ZZ_{p_j^{n_j}}\oplus\ZZ_{p_k^{n_k}}$.
We set up an explicit isomorphism as follows. For $\nu\in\{i,j,k\}$, define 
$M_\nu: = M/p_\nu^{n_\nu}= \prod_{\kappa\neq \nu} p_\kappa^{n_\kappa}$. 
Then each $x\in \ZZ_M$ can be written uniquely as
$$
x=\sum_{\nu\in\{i,j,k\}} \pi_\nu(x) M_\nu,\ \ \pi_\nu(x)\in \ZZ_{p_\nu^{n_\nu}}.
$$
Geometrically, this maps each
$x\in\ZZ_M$ to an element of a $3$-dimensional lattice with coordinates $(\pi_i(x),\pi_j(x),\pi_k(x))$. The tiling $A\oplus B=\ZZ_M$ corresponds to a tiling of that lattice. 

Let $D|M$ and $\nu\in\{i,j,k\}$. A {\em $D$-grid} in $\ZZ_M$ is a set of the form
$$
\Lambda(x,D):= x*D\ZZ_M=\{x'\in\ZZ_M:\ D|(x-x')\}
$$
for some $x\in\ZZ_M$. We note the following important special cases.

\begin{itemize}
\item A {\em line} through $x\in\ZZ_M$ in the $p_\nu$ direction is the set
$\ell_\nu(x):= \Lambda(x,M_\nu)$.
\item A {\em plane} through $x\in\ZZ_M$ perpendicular to the $p_\nu$ direction, on the scale $M_\nu p_\nu^{\alpha_\nu}$, is the set
$\Pi(x,p_\nu^{\alpha_\nu}):=\Lambda(x,p_\nu^{\alpha_\nu}).$

\item An {\em $M$-fiber in the $p_\nu$ direction} is a set of the form $x*F_\nu$, where $x\in\ZZ_M$ and
$$
F_\nu=\{0,M/p_\nu,2M/p_\nu,\dots,(p_\nu-1)M/p_\nu\}.
$$
Thus $x*F_\nu=\Lambda(x,M/p_\nu)$.  

\end{itemize}

We also need more general fibers, defined as follows. Let $N|M$, $c\in\NN$, and $\nu\in\{i,j,k\}$ such that $p_\nu|N$.
An {\em $N$-fiber in the $p_\nu$ direction} with multiplicity $c$ is a set $F\subset \ZZ_M$ such that
$F$ mod $N$ has the mask polynomial
$$
F(X)\equiv cX^a(1+X^{N/p_\nu}+ X^{2N/p_\nu}+\dots +X^{(p_\nu-1)N/p_\nu})\mod (X^{N}-1)
$$
for some $a\in \ZZ_M$. We will say sometimes that $F$ {\em passes through} $a$. 

A set $A\subset \ZZ_M$ is {\em $N$-fibered in the $p_\nu$ direction} if it can be written as a union of disjoint 
$N$-fibers in the $p_\nu$ direction, all with the same multiplicity. In particular, $A$ is {\em $M$-fibered in the $p_\nu$ direction} if there is a subset $A'\subset A$ such that $A=A'*F_\nu$.

If $N=p_i^{\alpha_i}p_j^{\alpha_j} p_k^{\alpha_k}$ is a divisor of $M$, with  $0\leq \alpha_\nu\leq n_\nu$, we let 
$$
D(N):= p_i^{\gamma_i} p_j^{\gamma_j}p_k^{\gamma_k},\ \hbox{ where }
\gamma_\nu=\max(0,\alpha_\nu-1)\hbox{ for }\nu\in\{i,j,k\}.
$$ 
We will also write $N_\nu=M/p_\nu$ for $\nu\in\{i,j,k\}$.


\subsection{Divisor sets and dilations}


For $N|M$ and $A\subset\ZZ_M$, we define
$$
\Div_N(A):=\{(a-a',N):\ a,a'\in A\}
$$
When $N=M$, we will omit the subscript and write $\Div(A)=\Div_M(A)$. Informally, we will refer to the elements of $\Div(A)$ as the {\em divisors of $A$} or {\em differences in $A$}. 
In cases when we need to indicate where a particular divisor of $A$ must occur, we will 
use the following notation for localized divisor sets. If $A_1,A_2\subset \ZZ_M$, we will write
$$
\Div_N(A_1,A_2):=\{(a_1-a_2,N):\ a_1\in A_1,a_2\in A_2\}.
$$
If one of the sets is a singleton, say $A_1=\{a_1\}$, we will simplify the notation and write $\Div_N(a_1,A_2)=\Div_N(\{a_1\},A_2)$. We note a basic theorem due to Sands.

\begin{theorem}\label{thm-sands} {\bf (Divisor exclusion; Sands \cite{Sands})}
Let $A,B\subset \ZZ_M$. Then $A\oplus B=\ZZ_M$
if and only if $|A|\,|B|=M$ and 
$$
\Div(A) \cap \Div(B)=\{M\}.
$$
\end{theorem}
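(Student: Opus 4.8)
The plan is to prove both directions by expressing the tiling condition and the divisor condition in terms of counting representations $n = a + b$ with $a \in A$, $b \in B$ across the cyclic group $\ZZ_M$, organized according to which $D$-grids the elements land in. First I would record the elementary fact that $A \oplus B = \ZZ_M$ holds if and only if $|A|\,|B| = M$ and the multiset $A + B$ (in the sense of the convolution $w_A * w_B$) takes the value $1$ at every point of $\ZZ_M$; once the cardinality condition is granted, $A \oplus B = \ZZ_M$ is equivalent to $A + B$ covering $\ZZ_M$ with no repetitions, i.e. to the statement that the only solutions of $a + b = a' + b'$ with $a,a' \in A$, $b,b' \in B$ are the trivial ones $a = a'$, $b = b'$. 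Rearranging, this says: there is no nontrivial coincidence $a - a' = b' - b$ with $a - a' \neq 0$. In the language of $\Div$, an element $d$ with $(d, M) = m$ lies in $\Div(A)$ iff some difference $a - a'$ is divisible by exactly the cofactor $M/m$ pattern — but here I must be careful: $\Div(A)$ as defined in \eqref{divisors} records the \emph{gcd} $(a-a', M)$, not the difference itself. So the correct translation is subtler, and I will use the standard reformulation via the structure of $\ZZ_M$ as $\bigoplus_\nu \ZZ_{p_\nu^{n_\nu}}$.

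For the forward direction, suppose $A \oplus B = \ZZ_M$. Then certainly $|A|\,|B| = M$. Suppose for contradiction that some $g \neq M$ lies in $\Div(A) \cap \Div(B)$, say $g = (a - a', M) = (b - b', M)$ with $a \neq a'$ and $b \neq b'$ (these are genuine inequalities since $g \neq M$ forces the differences to be nonzero). I want to produce a nontrivial coincidence in $A + B$. The natural move is to find translates lining up: since $(a-a',M) = (b'-b,M) = g$, both $a - a'$ and $b' - b$ generate the same cyclic subgroup $\langle M/g \rangle \cong \ZZ_g$ of order $g$ inside $\ZZ_M$. Hence there is a unit $u \in (\ZZ_g)^\times$ (lifted to an integer coprime to $g$) with $u(a - a') \equiv (b' - b) \pmod{M}$ once we... actually this is exactly the delicate point and I expand on it below. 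Granting that one can arrange $a - a' = b' - b$ in $\ZZ_M$ — possibly after replacing $A$ or $B$ by a dilate $rA$ or $rB$, which preserves the tiling by the Sands–Tijdeman dilation results cited as Theorem \ref{thm-sands}'s companions (Tijdeman \cite{Tij}) — we get $a + b = a' + b'$ with $(a,b) \neq (a',b')$, contradicting uniqueness of representations.

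For the converse, suppose $|A|\,|B| = M$ and $\Div(A) \cap \Div(B) = \{M\}$. I must show every $n \in \ZZ_M$ has exactly one representation $n = a + b$. By the cardinality hypothesis it suffices to show there are no two distinct representations; but two distinct representations $a + b = a' + b'$ with $(a,b)\neq(a',b')$ give $a - a' = b' - b$, and since $(a,b) \neq (a',b')$ and the equation forces both sides equal, neither side is $0$, so $g := (a - a', M) = (b' - b, M) \neq M$ and $g \in \Div(A) \cap \Div(B)$, contradiction. (Note this direction does \emph{not} need any dilation argument — it is purely the pigeonhole principle once no collision can occur.) Thus the only real content, and the step I expect to be the main obstacle, is the forward direction: converting "$A$ and $B$ share a common gcd-value $g \neq M$" into an actual collision in the cyclic group. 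The issue is that sharing the gcd $(a-a',M) = (b-b',M) = g$ does not immediately give $a - a' = \pm(b - b')$ in $\ZZ_M$; one needs to exploit that $\ZZ_M$ decomposes primewise and that within each $\ZZ_{p_\nu^{n_\nu}}$ all elements of a fixed order are related by units, together with the dilation-invariance of tilings (Tijdeman's theorem: if $A \oplus B = \ZZ_M$ and $\gcd(r, M) = 1$ then $rA \oplus B = \ZZ_M$) to absorb the unit. I would carry this out prime by prime, choosing $r$ by the Chinese Remainder Theorem so that $r(a - a')$ and $b' - b$ agree in each coordinate $\pi_\nu$, which is possible precisely because they have the same gcd with $M$ and hence the same "order pattern" across the primes. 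Once the collision is in hand the contradiction with $rA \oplus B = \ZZ_M$ is immediate.
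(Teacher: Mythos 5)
Your proof is correct, and it follows essentially the route the paper itself indicates (it does not reprove Sands's theorem, but in Section \ref{product-dilations} it describes exactly this argument: dilation invariance for $r\in R$ as in Sands's Theorem 2 / Theorem \ref{thm-tijdeman}, plus the fact, proved there as Lemma \ref{basic-r}(ii), that two elements with the same gcd with $M$ are related by an invertible dilation). Your prime-by-prime CRT construction of $r$ is precisely that lemma, and the easy converse direction is the same pigeonhole observation, so there is nothing genuinely different here.
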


We note an important consequence, due also to Sands \cite{Sands} and generalized by Tijdeman \cite{Tij}. Let
$R=\{r\in\ZZ_M:(r,M)=1\}$. Then for any $r\in R$,
$$
A\oplus B=\ZZ_M \ \ \Leftrightarrow \ \ rA\oplus B=\ZZ_M,
$$
where $rA=\{ra: \ a\in A\}$. This follows from Theorem \ref{thm-sands} since $\Div(A)=\Div(rA)$ for any $r\in R$.


\subsection{Standard tiling complements}\label{standards}

Suppose that $A\oplus B=\ZZ_M$. The {\em standard tiling complement} $A^\flat\subset\ZZ_M$, defined in \cite[Definition 3.1]{LaLo1} and based on a construction in \cite{CM}, is an explicit set that has the same prime power cyclotomic divisors as $A(X)$ and satisfies both (T1) and (T2). 
For the general definition of $A^\flat$, see \cite[Definition 3.1]{LaLo1}. If $M=p_i^2p_j^2p_k^2$ and $|A|=p_ip_jp_k$, the
only special case we need here is
$$
\hbox{ if }\Phi_{p_i^2}\Phi_{p_j^2}\Phi_{p_k^2}\mid A,
\hbox{ then }A^\flat = \Lambda(0,D(M)).
$$

\begin{proposition}\label{replacement}\cite[Proposition 3.4]{LaLo1}, \cite{CM}
Let $A\oplus B=\ZZ_M$. Then $A^\flat \oplus B= \ZZ_M$
if and only if $B$ satisfies (T2).
\end{proposition}

We say that the tilings $A\oplus B=\ZZ_M$ and $A'\oplus B=\ZZ_M$ are {\em T2-equivalent} if 
$$
A\hbox{ satisfies (T2) }\Leftrightarrow A'\hbox{ satisfies (T2).}
$$

Since $A$ and $A'$ tile the same group $\ZZ_M$ with the same tiling complement $B$, they must have the same cardinality and the same prime power cyclotomic divisors. We will sometimes say simply that $A$ is T2-equivalent to $A'$ if both $M$ and $B$ are clear from context.
Usually, $A'$ will be derived from $A$ using certain permitted manipulations such as fiber shifts (Lemma \ref{fibershift}).
In particular, if we can prove that either $A$ or $B$ in a given tiling is T2-equivalent to a standard tiling complement, this resolves the problem completely in that case.

\begin{corollary}[\cite{LaLo1} Corollary 3.6]\label{get-standard}
Suppose that the tiling $A\oplus B=\ZZ_M$ is T2-equivalent to the tiling $A^\flat \oplus B=\ZZ_M$. Then 
$A$ and $B$ satisfy (T2).
\end{corollary}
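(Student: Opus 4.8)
The plan is to read this off directly from Proposition~\ref{replacement} together with the two recorded facts about the standard complement: that $A^\flat$ has the same prime-power cyclotomic divisors as $A$, and that $A^\flat$ satisfies (T2) unconditionally. No new tiling-theoretic input is needed; the argument is a short manipulation of the definitions.

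First I would dispatch the complement $B$. The hypothesis asserts in particular that $A^\flat\oplus B=\ZZ_M$ is itself a valid factorization. Applying Proposition~\ref{replacement} to the given tiling $A\oplus B=\ZZ_M$, the relation $A^\flat\oplus B=\ZZ_M$ holds if and only if $B$ satisfies (T2); hence $B$ satisfies (T2). Then I would turn to $A$. By construction $A^\flat$ satisfies (T2), and — since $A$ and $A^\flat$ have the same prime-power cyclotomic divisors, so that $S_A=S_{A^\flat}$ — the assumption that the tilings $A\oplus B=\ZZ_M$ and $A^\flat\oplus B=\ZZ_M$ are T2-equivalent is precisely the statement that $A$ satisfies (T2) $\Leftrightarrow$ $A^\flat$ satisfies (T2). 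Combining these two observations gives that $A$ satisfies (T2), which together with the previous paragraph completes the proof.

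There is no real obstacle here: the only point requiring a moment's care is that the phrase ``T2-equivalent to the tiling $A^\flat\oplus B=\ZZ_M$'' already presupposes that $A^\flat\oplus B=\ZZ_M$ is a genuine tiling, which is exactly the hypothesis of Proposition~\ref{replacement} that is invoked to conclude (T2) for $B$. Everything else is bookkeeping with the definitions of $A^\flat$ and of T2-equivalence.
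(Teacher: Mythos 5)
Your argument is correct and is exactly the intended one: the paper cites this corollary from \cite{LaLo1} without reproducing the proof, and the standard argument is precisely your two steps — Proposition~\ref{replacement} applied to the (assumed) tiling $A^\flat\oplus B=\ZZ_M$ gives (T2) for $B$, and the definition of T2-equivalence combined with the fact that $A^\flat$ satisfies (T2) by construction gives (T2) for $A$. Nothing is missing.
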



\subsection{Box notation and cuboids}\label{cuboid-section}


We use the $N$-box notation of \cite{LaLo1}, \cite{LaLo2}.
For $x\in\ZZ_M$, define
\begin{align*}
\bbA^N_m[x] & = \# \{a\in A:\ (x-a,N)=m \}.
\end{align*}
If $N=M$, we will usually omit the superscript and write
$\bbA^M_m[x]=\bbA_m[x]$. For $X\subset \ZZ_M$ and $x\in\ZZ_M$, we define
$\bbA_m^N [X] :=\sum_{x'\in X}\bbA^N_m[x']$ and
$$
\bbA^N_m[x|X] = \# \{a\in A\cap X: \ (x-a,N)=m\}.
$$

Cuboids are an important tool in the literature on cyclotomic divisibility and Fuglede's conjecture, see e.g., 
\cite{KMSV}, \cite{KMSV2}, \cite{LaLo1}, \cite{LaLo2}, \cite{Steinberger}. We follow the presentation in \cite[Section 5]{LaLo1}. 

\begin{definition}
Let $M=p_i^{n_i}p_j^{n_j}p_k^{n_k}$and $N|M$. An $N$-cuboid is a weighted multiset corresponding to a mask polynomial of the form
$$
	\Delta(X)= X^c\prod_{p_\nu|N} (1-X^{d_\nu}),
$$ 
with $(d_\nu,N)=N/p_\nu$ for all $\nu $ such that $p_\nu|N$.
\end{definition}

Cuboids provide useful criteria to determine cyclotomic divisibility properties of mask polynomials. 
For $A\in\calm(\ZZ_N)$, we have
$\Phi_N(X)|A(X)$ if and only if $\bbA^N_N[\Delta]=0$ for every $N$-cuboid $\Delta$. This 
has been known and used previously in the literature,
see e.g.  \cite[Section 3]{Steinberger}, or \cite[Section 3]{KMSV}.
In particular, for any $N|M$, $\Phi_N$ divides $A$ if and only if it divides the mask polynomial of $A\cap\Lambda(x,D(N))$ for every $x\in\ZZ_M$.


\section{Saturating sets and fiber shifting}\label{product-dilations}



\subsection{Box product and saturating sets}\label{box product revisited}

Following \cite{LaLo1}, we define the {\em $M$-box product} as follows.
If $A,B\subset\ZZ_M$, let
$$
\langle \bbA[x], \bbB[y] \rangle = \sum_{m|M} \frac{1}{\phi(M/m)} \bbA_m[x] \bbB_m[y].
$$
Here $\phi$ is the Euler totient function: if 
$n=\prod_{\iota=1}^L q_\iota^{r_\iota}$, where $q_1,\dots,q_L$ are distinct primes and $r_\iota\in\NN$, then
$
\phi(n)= \prod_{\iota=1}^L (q_\iota-1)q_\iota^{r_\iota-1}.
$

\begin{theorem}\label{GLW-thm}
(\cite{LaLo1}; following \cite[Theorem 1]{GLW}) If
$A\oplus B=\ZZ_M$ is a tiling, then 
\begin{equation}\label{e-ortho2}
\langle \bbA[x], \bbB[y] \rangle =1\ \ \forall x,y\in\ZZ_M.
\end{equation}
\end{theorem}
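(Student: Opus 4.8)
The plan is to deduce the box-product identity \eqref{e-ortho2} from Tijdeman's dilation theorem (Theorem \ref{thm-tijdeman}), or equivalently from Sands's divisor-exclusion criterion (Theorem \ref{thm-sands}), rather than by harmonic analysis. The starting observation is that the left-hand side of \eqref{e-ortho2} can be rewritten as an average of indicator quantities over dilations. Concretely, fix $x,y\in\ZZ_M$. For each $r\in R$, the dilate $rA$ still tiles $\ZZ_M$ with complement $B$ by Tijdeman's theorem (or by Lemma \ref{isometry-lemma}, since $\psi_r$ is a divisor isometry), so every element of $\ZZ_M$ — in particular $x+y$ — is covered exactly once by $rA\oplus B$. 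That is, for each $r\in R$,
\begin{equation}\label{propplan-e1}
\#\{(a,b)\in A\times B:\ ra+b=x+y\}=1.
\end{equation}
I would then sum \eqref{propplan-e1} over a suitable finite family of dilations and count, for a fixed pair $(a,b)$, how many $r$ in that family satisfy $ra=x-b+y$; by Lemma \ref{basic-r}(ii) this count depends only on $(a-\text{(something)},M)$, and after normalizing by $1/\phi(M/m)$ the combinatorial weights assemble exactly into $\langle\bbA[x],\bbB[y]\rangle$.

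More precisely, here are the steps in order. First, I will unfold the definition: $\langle\bbA[x],\bbB[y]\rangle=\sum_{m\mid M}\frac{1}{\phi(M/m)}\,\#\{(a,b):\ (x-a,M)=m=(y-b,M)\}$. Second, I will interpret the inner count via dilations: for a pair $(a,b)$ with $(x-a,M)=(y-b,M)=m$, the set $R_{x-a,\,y-b}=\{r\in R:\ r(x-a)=y-b\}$ has size $\phi(M)/\phi(M/m)$ by Lemma \ref{basic-r}(ii). Hence
\begin{equation}\label{propplan-e2}
\phi(M)\,\langle\bbA[x],\bbB[y]\rangle=\sum_{r\in R}\ \#\{(a,b)\in A\times B:\ r(x-a)=y-b\},
\end{equation}
where I have swapped the order of summation and used that the condition $r(x-a)=y-b$ forces $(x-a,M)=(y-b,M)$ automatically (since $r\in R$). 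Third, I rewrite the condition $r(x-a)=y-b$ as $rx-ra = y-b$, i.e. $ra + (-b) = rx-y$; equivalently, after replacing $B$ by $y - B$ and absorbing $rx$, this says $(a,b)$ witnesses that the fixed point $rx-y$ is covered by the tiling $rA\oplus(-B)$ — and $-B$ is a translate-complement of $rA$ exactly when $B$ is. Fourth, by Theorem \ref{thm-tijdeman} (applied with the dilation $r$, valid because $(r,|A|)=1$ holds for all $r\in R$), each such tiling covers each element of $\ZZ_M$ exactly once, so the inner count in \eqref{propplan-e2} equals $1$ for every $r\in R$. Summing over $R$ gives $\phi(M)\cdot\langle\bbA[x],\bbB[y]\rangle=\phi(M)$, hence \eqref{e-ortho2}.

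I expect the main obstacle to be bookkeeping in the reduction rather than any deep step: one must be careful that the map $(a,b)\mapsto r(x-a)=(y-b)$ really does partition correctly, that $-B$ (or the appropriate translate) remains a valid tiling complement of $rA$ under reflection and translation — which follows since reflection $x\mapsto -x$ and translations are divisor isometries (Lemma \ref{lemma-divisor-isometries}), so Lemma \ref{isometry-lemma} applies — and that the normalization constants $1/\phi(M/m)$ match the orbit sizes $\phi(M)/\phi(M/m)$ from Lemma \ref{basic-r}(ii) exactly, with no double counting across different values of $m$ (this is automatic because the pairs are sorted by the value $m=(x-a,M)$, which is determined by $(a,b)$). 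A secondary point to handle cleanly is that \eqref{propplan-e1} uses Tijdeman's theorem in the form ``$rA\oplus B=\ZZ_M$,'' so I should state once that $(r,|A|)=1$ for $r\in R$ and that this suffices. Once these are pinned down, the identity drops out of a single change of order of summation.
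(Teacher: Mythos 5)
Your proposal is correct and is essentially the paper's own argument: the paper also proves \eqref{e-ortho2} by double counting triples $(a,b,r)\in A\times B\times R$ with $ra+b$ equal to a fixed point, using Tijdeman's theorem (each $r$ contributes exactly one pair) in one direction and Lemma \ref{basic-r}(ii) (orbit size $\phi(M)/\phi(M/m)$) in the other, so the normalizations $1/\phi(M/m)$ assemble into the box product exactly as you describe. The only cosmetic difference is that the paper reduces to $x=y=0$ by translation, whereas you keep general $x,y$ and pass to the tiling $rA\oplus(-B)$ via the divisor isometry $\psi_{-1}$; both are valid and interchangeable.
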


Let $A\oplus B= \ZZ_M$, and $x,y\in\ZZ_M$. We define the sets $A_{x,y}$ and $B_{y,x}$ to be the sets that saturate the box product on the left side of (\ref{e-ortho2}):
$$
A_{x,y}:=\{a\in A:\ (x-a,M)=(y-b,M) \hbox{ for some }b\in B\},
$$
and similarly for $B_{y,x}$ with $A$ and $B$ interchanged.
The {\em saturating set} for $x$ is 
$$
A_{x}:=\{a\in A: (x-a,M)\in\Div(B)\} =\bigcup_{b\in B} A_{x,b},
$$
with $B_{y}$ defined similarly. We encourage the reader to consult \cite[Section 3]{LaLo3} for a combinatorial proof of Theorem \ref{GLW-thm} and an alternative description of saturating sets in terms of dilations by the elements of $R$.

By Theorem \ref{thm-sands}, $A_a=\{a\}$ for all $a\in A$. For $x\in\ZZ_M\setminus A$, $A_x$ must be nonempty by (\ref{e-ortho2}), and obeys the following geometric constraints.
For $x,x'\in\ZZ_M$ such that $(x-x',M)=p_i^{\alpha_i} p_j^{\alpha_j} p_k^{\alpha_k}$, where $0\leq \alpha_\nu\leq n_\nu$, define
$$
\Span(x,x')=\bigcup_{\nu: \alpha_\nu<n_\nu} \Pi(x,p_\nu^{\alpha_\nu+1}),
$$
$$
\Bispan(x,x')= \Span(x,x')\cup \Span(x',x).
$$
Then for any $x,x',y\in\ZZ_M$, we have
\begin{equation}\label{setplusspan}
 A_{x',y}\subset A_{x,y}\cup\Bispan(x,x'),
\end{equation}
and in particular,
\begin{equation}\label{bispan}
A_x \subset \bigcap_{a\in A} \Bispan (x,a).
\end{equation}
In an important special case, if $x\in \ZZ_M\setminus A$ satisfies $(x-a,M)=M/p_i$ for some $a\in A$, then
$$
A_x\subset \Bispan(x,a)=\Pi(x,p_i^{n_i}) \cup\Pi(a,p_i^{n_i}).
$$
Our evaluations of saturating sets will always begin with (\ref{bispan}).


\subsection{Cofibered structures}


The following is a simplified version of the definitions and results of  \cite[Section 8]{LaLo1}, restricted to $M=p_i^2p_j^2p_k^2$. 
If $F\subset \ZZ_M$ is an $M$-fiber in the $p_\nu$ direction, we say that an element $x\in \ZZ_M$ is at {\em distance} $m$ from $F$ if $m|M$ is the maximal divisor
such that $(z-x,M)=m$ for some $z\in F$. It is easy to see that such $m$ exists.

Let $A\oplus B=\ZZ_M$ be a tiling. We will often be interested 
in finding ``complementary" fibers and fibered structures in $A$ and $B$, in the following sense.

\begin{definition}[\bf Cofibers and cofibered structures]\label{cofibers} 
Let $A, B\subset \ZZ_M$ and $\nu\in\{i,j,k\}$.

\smallskip
 
(i) We say that $F\subset A,G\subset B$ are {\em $(1,2)$-cofibers} in the $p_\nu$ direction
if $F$ is an $M$-fiber and $G$ is an $M/p_\nu$-fiber, both in the $p_\nu$ direction.

\smallskip
(ii) We say that the pair $(A,B)$ has a {\em (1,2)-cofibered structure} in the $p_\nu$ direction if

\begin{itemize}
\item $B$ is  $M/p_\nu$-fibered in the $p_\nu$ direction,

\item $A$ contains at least one ``complementary" $M$-fiber $F\subset A$
 in the $p_\nu$ direction, which we will call a  {\em cofiber} for this structure. 
 \end{itemize}

\end{definition}

The advantage of cofibered structure is that it permits fiber shifts as described below. In many cases, we will be able to use this to reduce the given tiling to a simpler one.

\begin{lemma}[\bf Fiber-Shifting Lemma]\label{fibershift} 
Let $A\oplus B=\ZZ_M$. Assume that the pair $(A,B)$ has a $(1,2)$-cofibered structure,
with a cofiber $F\subset A$. Let $A'$ be the set obtained from $A$ by shifting $F$ to a point $x\in\ZZ_M$ at 
a distance $M/p_i^{2}$ from it. 
Then $A'\oplus B=\ZZ_M$, and $A$ is T2-equivalent to $A'$.
\end{lemma}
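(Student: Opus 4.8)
The plan is to argue directly from Sands's divisor-exclusion criterion (Theorem \ref{thm-sands}) together with the standard-complement characterization of (T2) (Proposition \ref{replacement}). Write the cofiber as $F = a * F_\nu$ for some $a\in A$, with $F_\nu$ as in \eqref{def-Fi}, and let $x\in\ZZ_M$ be a point at distance $M/p_i^2$ from $F$; then $A' = (A\setminus F)\cup (x*F_\nu)$, so on the level of mask polynomials $A'(X) = A(X) - X^a F_\nu(X) + X^x F_\nu(X) = A(X) + (X^x - X^a)F_\nu(X)$, where I am abusing notation and writing $F_\nu(X) = 1 + X^{M/p_\nu} + \dots + X^{(p_\nu-1)M/p_\nu}$. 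Note $\Phi_{p_\nu^{n_\nu}} \mid F_\nu$, so $A'$ and $A$ have the same cardinality and (since the difference is a multiple of $F_\nu$) one checks easily that they have the same prime-power cyclotomic divisors; in particular $A'$ and $A$ have the same standard complement, so once we show $A'\oplus B = \ZZ_M$ it will make sense to ask about T2-equivalence.

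First I would prove $A'\oplus B = \ZZ_M$. Since $|A'|\,|B| = |A|\,|B| = M$, by Theorem \ref{thm-sands} it suffices to show $\Div(A')\cap\Div(B) = \{M\}$. The only new differences in $A'$ are those involving the shifted points $x + jM/p_\nu$ ($0\le j\le p_\nu-1$). The key point is that because $(A,B)$ has a $(1,2)$-cofibered structure, $B$ is $M/p_\nu$-fibered in the $p_\nu$ direction, which forces every divisor of $B$ to be divisible by $M/p_\nu$ — more precisely, if $d\in\Div(B)$ and $(d,M) = m$ then $p_\nu^{n_\nu}\nmid m$ is impossible in the relevant sense; I would record the precise statement that $\Div(B)$ contains no element at distance exactly $M/p_\nu^{n_\nu}$ from any line in the $p_\nu$ direction, or rather that fibering makes $B$ "transparent" in the $p_\nu$ direction. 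Concretely: a difference $x + jM/p_\nu - a''$ with $a''\in A\setminus F$, reduced modulo $M$, has the same value of $(\cdot, M/p_\nu)$ for all $j$ (since $M/p_\nu$ divides $jM/p_\nu$); and because $x$ is at distance $M/p_i^2$ from $F$, these differences are "close to" the old differences $a + jM/p_\nu - a''\in\Div(A)$ in a controlled way. The cleanest route is probably: the $M/p_\nu$-fibering of $B$ implies that for checking $\Div(A)\cap\Div(B)=\{M\}$ one may replace $A$ by $A$ mod $M/p_\nu$ lifted arbitrarily within $p_\nu$-lines, i.e. the tiling property depends only on the "$p_\nu$-projection" of $A$; and $A'$ has the same $p_\nu$-projection modifications only up to the $p_i$-plane containing $x$, where by the distance hypothesis $x$ lands in a plane already occupied consistently. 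I expect this is exactly the content of the cofibered-structure machinery in \cite[Section 8]{LaLo1}, so I would either cite it or reprove the one-line consequence that shifting a cofiber within the allowed distance set preserves divisor exclusion.

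Second, for T2-equivalence I would invoke Proposition \ref{replacement}: $A'$ satisfies (T2) with complement $B$ iff $A'^\flat\oplus B = \ZZ_M$, and similarly for $A$; since $A^\flat = A'^\flat$ (same prime-power cyclotomic divisors), the two conditions "$A^\flat\oplus B=\ZZ_M$" and "$A'^\flat\oplus B=\ZZ_M$" are literally the same statement, so $A$ satisfies (T2) $\iff$ $A'$ satisfies (T2) — but wait, Proposition \ref{replacement} is stated as "$A^\flat\oplus B=\ZZ_M$ iff $B$ satisfies (T2)", so actually the cleaner statement is that $A$ and $A'$ are each T2-equivalent to the same standard pair, hence to each other; more directly, one shows $A$ satisfies (T2) $\iff$ $B$ satisfies (T2) $\iff$ $A'$ satisfies (T2) using that (T1)+(T2) for a tile is equivalent to its complement satisfying (T2) in these $M=p_i^2p_j^2p_k^2$ factorizations where $|A|=|B|$, or simply that the "$B$ satisfies (T2)" side is untouched by the shift while the relation between $A$ and $B$ is symmetric. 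The honest statement I will use: by Proposition \ref{replacement} applied to both tilings, $A$ satisfies (T2) iff $A^\flat\oplus B=\ZZ_M$ iff $A'^\flat\oplus B=\ZZ_M$ iff $A'$ satisfies (T2), where the middle equivalence holds because $A^\flat=A'^\flat$; here I am implicitly using the symmetric form of Proposition \ref{replacement} (with the roles of $A$ and $B$ swapped), which holds because the definition of standard complement and (T2) are symmetric in the two factors. The main obstacle is the first step — verifying that the specific shift by distance $M/p_i^2$ keeps $\Div(A')\cap\Div(B) = \{M\}$; this is where the hypothesis that $F$ is a genuine cofiber (so $B$ is $M/p_\nu$-fibered) is essential, and I anticipate it requires unwinding the precise combinatorial meaning of "distance" and of the $p_i$-plane structure, most efficiently by reducing mod $M/p_\nu$ where the shift becomes invisible and then observing the residual $p_\nu$-line freedom is exactly what the distance bound allows.
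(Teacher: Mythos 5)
Your overall skeleton (first show $A'\oplus B=\ZZ_M$, then transfer (T2)) is the right one, and the identity $A'(X)=A(X)+(X^{x}-X^{a})F_\nu(X)$ is the correct starting point, but both halves of your argument have genuine gaps. The paper itself imports this lemma from \cite[Section 8]{LaLo1}, so "cite the cofibered-structure machinery of \cite[Section 8]{LaLo1}" is circular here: that machinery \emph{is} this lemma. For the tiling step, your bridging claims are not correct as stated: it is false that the fibering of $B$ "forces every divisor of $B$ to be divisible by $M/p_\nu$" (the fibering only guarantees, e.g., that $\bbB_{M/p_\nu^2}[b]\geq p_\nu-1$ for every $b$), and it is also false that "the tiling property depends only on the $p_\nu$-projection of $A$" — moving a single non-fiber element of $A$ within its $p_\nu$-line does not in general preserve the tiling; what matters is that a \emph{full} $M$-fiber is being moved. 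The missing one-line computation is: writing $B$ as a union of $M/p_\nu$-fibers $G$ in the $p_\nu$ direction, one has $F_\nu(X)G(X)\equiv X^{g}\sum_{s=0}^{p_\nu^{2}-1}X^{sM/p_\nu^{2}} \pmod{X^M-1}$, i.e.\ $F_\nu*B$ is a union of full lines $\ell_\nu$, hence invariant under multiplication by $X^{x-a}$ since $M/p_\nu^{2}\mid x-a$; therefore $(X^{x}-X^{a})F_\nu(X)B(X)\equiv 0$ and $A'(X)B(X)\equiv A(X)B(X)$, which (together with nonnegativity of the coefficients of $A'$) gives $A'\oplus B=\ZZ_M$. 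Your Sands-style route can also be pushed through, but it requires an explicit case analysis on $p_\nu$-adic valuations of the new differences, played against the differences generated inside the $B$-fibers, none of which is in your sketch.

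The (T2)-equivalence step as written is wrong in two places. First, the chain "$A$ satisfies (T2) iff $A^\flat\oplus B=\ZZ_M$" misstates Proposition \ref{replacement}: that proposition ties $A^\flat\oplus B=\ZZ_M$ to \emph{$B$} satisfying (T2), and the genuinely symmetric form would involve $B^\flat\oplus A$ versus $B^\flat\oplus A'$ — at which point you need the tiling-preservation argument \emph{again}, now for the pair $(A,B^\flat)$, i.e.\ you must check that $\Phi_{p_\nu}\mid B$ (which does follow from the $M/p_\nu$-fibering, since $p_\nu\mid M/p_\nu$) so that $B^\flat$ contains the factor $1+X^{M_\nu}+\dots+X^{(p_\nu-1)M_\nu}$ and is itself $M/p_\nu$-fibered in the $p_\nu$ direction; this extra step is absent. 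Second, your justification that $A$ and $A'$ have the same prime-power cyclotomic divisors "since the difference is a multiple of $F_\nu$" fails for $s=p_\nu$: one checks $\Phi_{p_\nu}\nmid F_\nu$ (because $p_\nu\mid M/p_\nu$) and $\Phi_{p_\nu}\nmid X^{x-a}-1$ (because $(x-a,M)=M/p_\nu^{2}$ is coprime to $p_\nu$), so $\Phi_{p_\nu}$ does \emph{not} divide $A'-A$; the equality $S_A=S_{A'}$ instead follows a posteriori from both sets tiling with the same $B$. The cleanest repair avoids Proposition \ref{replacement} altogether: since $\Phi_{p_\nu}\mid B$, we have $p_\nu\notin S_A=S_{A'}$, so every product $s_1\cdots s_m$ relevant to (T2) has $p_\nu$-exponent $0$ or $2$; for all such $s$, $\Phi_s$ divides $(X^{x}-X^{a})F_\nu(X)$ (exponent $0$: $\Phi_s\mid X^{x-a}-1$; exponent $2$: $\Phi_s\mid F_\nu$), hence $\Phi_s\mid A\Leftrightarrow\Phi_s\mid A'$, giving the T2-equivalence. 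As it stands, your proposal identifies the right objects but does not contain a proof of either assertion of the lemma.
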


In order to find cofibered structures in $(A,B)$, we will typically use 
saturating sets, via the following lemma.

\begin{lemma}\label{1dim_sat-cor}
Assume that $A\oplus B =\ZZ_M$ is a tiling, with $M=p_i^2p_j^2p_k^2$. 
Suppose that $x\in\ZZ_M\setminus A$, $b\in B$, $M/p_\nu\in\Div(A)$, and $A_{x,b}\subset \ell_\nu(x)$ for some $\nu\in\{i,j,k\}$ and $b\in B$. Then
$$
\mathbb{A}^M_{M/p_\nu^2}[x]\mathbb{B}^M_{M/p_\nu^2}[b]=\phi(p_\nu^2).
$$
with the product saturated by a $(1,2)$-cofiber pair $(F,G)$ such that $F\subset A$ is at distance $M/p_i^2$ from $x$ and $G\subset B$ is rooted at $b$.
In particular, if $A_x\subset \ell_\nu(x)$,
then the pair $(A,B)$ has a $(1,2)$-cofibered structure.
\end{lemma}


\section{Splitting and tiling reductions}\label{splitting-section}


\subsection{Splitting}
The definitions and notation below are from \cite[Section 4]{LaLo3}.

\begin{definition}
Let $M=p_1^{n_1}\dots p_K^{n_K}$, and assume that $A\oplus B=\ZZ_M$ is a tiling.
For a set $Z\subset \ZZ_M$, define
\begin{align*}
\Sigma_{A}(Z)&=\{a\in A:\ z=a+b\hbox{ for some }z\in Z,\ b\in B\},
\\
\Sigma_{B}(Z)&=\{b\in B:\ z=a+b\hbox{ for some }z\in Z,\ a\in A\}.
\end{align*}
\end{definition}

Note that $\Sigma_{A}(Z)$ depends on both $A$ and $B$. When more than one tiling complement of $A$ is being considered, we will identify the relevant tiling explicitly.

\begin{definition}\label{def-splitting}
	Let $Z=x*F_i\subset \ZZ_M$ be an $M$-fiber in the $p_i$ direction. We will say that $Z$ {\em splits with parity $(A,B)$} if:
	\begin{itemize}
		\item[(i)] $p_i^{n_i}|a-a'$ for any $a,a'\in\Sigma_A(Z)$,
		\item[(ii)] $p_i^{n_i-1}\parallel b-b'$ for any two distinct $b,b'\in \Sigma_B(Z)$. 
	\end{itemize}
\end{definition}

\begin{lemma}\label{no-upgrades}{\bf (Splitting for fibers)} \cite[Lemma 4.3]{LaLo3}
	Every $M$-fiber $Z$ splits with parity either $(A,B)$ or $(B,A)$. In particular, if $Z$ is an $M$-fiber in the $p_i$ direction, then for any $a\in \Sigma_A(Z)$ and $b\in \Sigma_B(Z)$, we have
	$\Sigma_A(Z)\subset\Pi(a,p_i^{n_i-1})$ and $\Sigma_B(Z)\subset\Pi(b,p_i^{n_i-1})$.
\end{lemma}

We do not know, in general, whether all $M$-fibers in a given direction split with the same parity. However, if a tiling does have uniform splitting in the sense of Definition \ref{uniform-splitting} below, we can use this to our advantage as described in the next section.

\begin{definition}\label{uniform-splitting}
Let $A\oplus B=\ZZ_M$. We say that the tiling $A\oplus B=\ZZ_M$ has {\em uniform $(A,B)$ splitting parity in the $p_i$ direction} if all $M$-fibers in the $p_i$ direction split with parity $(A,B)$. Uniform $(B,A)$ splitting parity is defined analogously. 
\end{definition}



\subsection{Tiling reductions}\label{tiling-reductions}


The tiling reductions below allow us, under certain assumptions, to decompose a tiling $A\oplus B=\ZZ_M$ into a family of tilings of $\ZZ_{M/p_\nu}$ for some $\nu$, with the additional property that if (T2) holds for both sets in each of the smaller tilings, then it also holds for $A$ and $B$. Both reductions are valid for tilings of $\ZZ_M$ with no assumptions on the prime factorization of $M$. In order to deduce (T2) for $A$ and $B$, we must know that (T2) holds for both sets in the smaller tilings. If $M=p_i^2p_j^2p_k^2$, this is provided by \cite[Corollary 6.2]{LaLo1} (based on the methods of \cite{CM}). Theorem \ref{subgroup-reduction} and Corollary \ref{slab-reduction} combine the two steps in a form ready to apply here.

The subgroup reduction is due to Coven and Meyerowitz \cite{CM}; the formulation we use is from \cite[Theorem 6.1]{LaLo1}.

\begin{theorem}\label{subgroup-reduction} {\bf (Subgroup reduction)} \cite[Lemma 2.5]{CM}
Let $M=p_i^{n_i}p_j^{n_j}p_k^{n_k}$.
Assume that $ A\oplus B=\ZZ_M $, and that $A\subset p_\nu \ZZ_M$ for some 
$\nu\in\{i,j,k\}$ such that $p_\nu\parallel |B|$.
Then $A$ and $B$ satisfy (T2). 
\end{theorem}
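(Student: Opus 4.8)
The statement to prove is Theorem~\ref{subgroup-reduction} (Subgroup reduction): if $M=p_i^{n_i}p_j^{n_j}p_k^{n_k}$, $A\oplus B=\ZZ_M$, $A\subset p_\nu\ZZ_M$, and $p_\nu\parallel|B|$, then $A$ and $B$ satisfy (T2). Since this is attributed to \cite[Lemma~2.5]{CM} and reformulated in \cite[Theorem~6.1]{LaLo1}, the plan is to follow the Coven--Meyerowitz descent argument: exploit the hypothesis $A\subset p_\nu\ZZ_M$ to pass to a tiling of a proper subgroup, apply the known (T2) result for tilings with at most two relevant prime factors, and then pull the conclusion back up to $\ZZ_M$.

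\textbf{Step 1: Reduce to a subgroup tiling.} Write $N=M/p_\nu$ and consider the subgroup $p_\nu\ZZ_M\cong\ZZ_N$ (via division by $p_\nu$). Since $A\subset p_\nu\ZZ_M$, the set $\tfrac1{p_\nu}A$ makes sense as a subset of $\ZZ_N$. The condition $p_\nu\parallel|B|$ means that modulo $p_\nu$ the complement $B$ ``uses up'' exactly one factor of $p_\nu$: concretely, $B$ mod $N$ should still be a tiling complement of $\tfrac1{p_\nu}A$ in $\ZZ_N$ after one collapses the $p_\nu$-direction appropriately. The precise mechanism (this is the content of \cite[Lemma~2.5]{CM}) is that $A(X)B(X)\equiv\frac{X^M-1}{X-1}\bmod(X^M-1)$ together with $A\subset p_\nu\ZZ_M$ forces $B$ to contain a full $p_\nu$-fiber structure in the $p_\nu$ direction, so that $B=B'\oplus (\text{coset rep.\ of }\ZZ_{p_\nu})$ in that direction, and $\tfrac1{p_\nu}A\oplus B'=\ZZ_N$ is a genuine tiling of $\ZZ_N$. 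I would verify this at the level of mask polynomials: factor out the cyclotomic contributions $\Phi_{p_\nu}(X^{M_\nu})=\Phi_{p_\nu^{n_\nu}}$, etc., and track which prime-power cyclotomics divide $A$ versus $B$.

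\textbf{Step 2: Apply the known two-prime (T2) result, then ascend.} The subgroup $\ZZ_N=\ZZ_{p_i^{n_i}p_j^{n_j}p_k^{n_k}/p_\nu}$ has only two ``active'' primes dividing $|\tfrac1{p_\nu}A|$ and $|B'|$ in the sense needed, or more precisely we are in a setting where \cite[Corollary~6.2]{LaLo1} (equivalently, the Coven--Meyerowitz theorem for $\le 2$ shared primes) applies, giving that $\tfrac1{p_\nu}A$ and $B'$ both satisfy (T2) as tiles in $\ZZ_N$. It then remains to check that (T2) is preserved under the two operations used: multiplication by $p_\nu$ (dilation, which permutes prime-power cyclotomic divisors in a controlled way — $\Phi_{p^\alpha}(X)\mid A(X) \iff \Phi_{p^\alpha}(X)\mid (\tfrac1{p_\nu}A)(X)$ for $p\ne p_\nu$, and the $p_\nu$-power divisors shift by one) and the fibering in the $p_\nu$ direction that reassembles $B$ from $B'$. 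Both are routine: (T2) is a statement about which $\Phi_{s}$ with $s$ a product of prime powers from $S_A$ divide $A$, and each of these manipulations corresponds to a transparent bijection on the relevant index sets. Writing $S_A$, $S_B$ for the prime-power cyclotomic index sets before the reduction and $S_{A'},S_{B'}$ after, I would show $S_A$ is obtained from $S_{A'}$ by adding $p_\nu^{n_\nu}$ (say) and leaving the rest fixed, and similarly for $B$, and that (T2) for the starred sets then yields (T2) for $A,B$ term by term.

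\textbf{Main obstacle.} The only real subtlety is Step~1: carefully extracting, from $A\subset p_\nu\ZZ_M$ and $p_\nu\parallel|B|$, the fact that $B$ genuinely splits as a $p_\nu$-fiber product in the $p_\nu$ direction, so that the quotient tiling of $\ZZ_N$ is well-defined and its two factors are exactly $\tfrac1{p_\nu}A$ and $B'$. This requires a clean argument that $\Phi_{p_\nu}(X)\mid B(X)$ but $\Phi_{p_\nu^{\alpha}}(X)\mid A(X)$ for $\alpha\ge 2$ (using $A\subset p_\nu\ZZ_M$, which forces $(X^{M/p_\nu}-1)\mid A(X)$ in a suitable sense, hence all $\Phi_{p_\nu^{\alpha}}$ with $\alpha\le n_\nu$ except possibly one divide $A$), combined with the counting condition $p_\nu\parallel|B|$ to pin down exactly one $p_\nu$-power cyclotomic going to $B$. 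Since this is precisely \cite[Lemma~2.5]{CM}, in the write-up I would either cite it directly or reproduce the short mask-polynomial computation; everything after it is bookkeeping on cyclotomic index sets.
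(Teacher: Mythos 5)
Your overall architecture is the same as the paper's: pass to the subgroup $p_\nu\ZZ_M$, observe that $p_\nu\parallel|B|$ forces the induced tilings of $\ZZ_{M/p_\nu}$ to have factors whose cardinalities share at most two primes, apply \cite[Corollary 6.2]{LaLo1}, and lift (T2) back up. However, your Step 1 rests on a false structural claim. The hypotheses $A\subset p_\nu\ZZ_M$ and $p_\nu\parallel|B|$ do \emph{not} force $B$ to split as $B=B'\oplus R$ with $R$ a transversal of $p_\nu\ZZ_M$; equivalently, the residue classes $B_t=B\cap(t+p_\nu\ZZ_M)$ need not be translates of a single set. For example, take $M=12$, $p_\nu=2$, $A=\{0,6\}\subset 2\ZZ_{12}$, $B=\{0,1,2,4,5,9\}$: this is a tiling with $2\parallel|B|=6$, but $B\cap 2\ZZ_{12}=\{0,2,4\}$ and $B\cap(1+2\ZZ_{12})=\{1,5,9\}$ have different difference sets, so $B$ has no such fibered (direct sum) structure. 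What the subgroup reduction of \cite[Lemma 2.5]{CM} (in the form \cite[Theorem 6.1]{LaLo1}) actually produces is a \emph{family} of tilings $A'\oplus B'_t=\ZZ_{M/p_\nu}$, $t=0,\dots,p_\nu-1$, with the same $A'=\tfrac1{p_\nu}A$ but possibly different complements $B'_t=\tfrac1{p_\nu}(B_t-t)$.

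This matters because your Step 2 "ascent" for $B$ is premised on reassembling $B$ from one $B'$ by fibering; with $p_\nu$ genuinely different complements, proving that (T2) for every $B'_t$ implies (T2) for $B$ (whose mask polynomial is $\sum_t X^t B'_t(X^{p_\nu})$ mod $X^M-1$) is precisely the nontrivial content of the cited lemma, not routine bookkeeping on index sets. The auxiliary claims in your "main obstacle" paragraph are also off: $A\subset p_\nu\ZZ_M$ gives $A(X)=A'(X^{p_\nu})$, not $(X^{M/p_\nu}-1)\mid A(X)$, and it does not force all but one of the $\Phi_{p_\nu^{\alpha}}$ to divide $A$ (e.g.\ $A=\{0,2\}\subset\ZZ_8$, $B=\{0,1,4,5\}$: only $\Phi_4$ divides $A$). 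If you instead cite \cite[Lemma 2.5]{CM} or \cite[Theorem 6.1]{LaLo1} in their correct form (reduction to a family of tilings together with the (T2) lift for both factors), then your remaining observation — that $p_\nu\parallel|B|$ makes $|A'|$ and $|B'_t|$ share at most two primes, so \cite[Corollary 6.2]{LaLo1} applies to each smaller tiling — completes the argument exactly as the paper does; as written, though, the splitting claim and the trivialized lifting constitute a genuine gap.
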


The slab reduction was introduced in \cite{LaLo2}. The statement below follows from \cite[Theorem 6.5]{LaLo2}. and \cite[Lemma 5.4]{LaLo3}.

\begin{theorem}\label{subtile} \cite[Theorem 6.5]{LaLo1} and \cite[Lemma 5.4]{LaLo3}
Let $M=p_i^{n_i}p_j^{n_j} p_k^{n_k}$.
Assume that $A\oplus B=\ZZ_M$, and let $\nu\in\{i,j,k\}$.
Define
$$
A_{p_\nu}=\{a\in A:\ 0\leq\pi_\nu(a)\leq p_\nu^{n_\nu-1}-1\}.
$$
Then the following are equivalent:

\begin{enumerate}
\item [(i)]  For any translate $A'$ of $A$, we have $A'_{p_\nu}\oplus B=\ZZ_{M/p_\nu}$.

\item[(ii)] The tiling $A\oplus rB=\ZZ_M$
has uniform $(rB,A)$ splitting parity in the $p_\nu$ direction, for all $r\in R$.
\item [(iii)] For every $a\in A$ and $b\in B$, we have $A_{x,b}\subset \Pi(x,p_i^{n_i})$ for all $x\in a*F_i$.

\end{enumerate}
\end{theorem}

We have not included parts (ii) and (iii) of \cite[Theorem 6.5]{LaLo1} since we will not use them here; the conditons stated above, taken from \cite[Lemma 5.4]{LaLo3}, are much easier to use. We have also made a minor modification in (i), as follows: in \cite{LaLo1}, we use $\Phi_{p_\nu^{n_\nu}}|A$ as an assumption of the theorem. However, if (i) holds, then $B$ tiles $\ZZ_{M/p_\nu}$. Hence it satisfies (T1), and in particular $\Phi_{p_\nu^{n_\nu}}\nmid B$. Therefore (i) together with the tiling assumption $A\oplus B=\ZZ_M$ implies that $\Phi_{p_\nu^{n_\nu}}|A$.

\begin{corollary}\label{slab-reduction} {\bf (Slab reduction)} \cite[Corollary 6.7]{LaLo2}
Let $M=p_i^{n_i}p_j^{n_j}p_k^{n_k}$.
Assume that $A\oplus B=\ZZ_M$, and that 
there exists a $\nu\in\{i,j,k\}$ such that $p_\nu\parallel |A|$, and $A,B$ obey any of the conditions of Theorem \ref{subtile} for that $\nu$. 
Then $A$ and $B$ satisfy (T2). 
\end{corollary}

\begin{corollary}\label{slabcor} \cite[Corollary 5.5]{LaLo3}
Let $A\oplus B=\ZZ_M$. 
Assume that at least one of the following holds for some $\nu\in\{i,j,k\}$:
\begin{itemize}
\item[(i)] $\bbA_{M/p_\nu}[a]>0$ for every $a\in A$ (in particular, this holds if $A$ is $M$-fibered in the $p_\nu$ direction),
\item[(ii)] for every $b\in B$, we have 
\begin{equation}\label{maxplanebound}
|B\cap\Pi(b,p_\nu^{n_\nu})|=|B|/(|B|,p_\nu^{n_\nu}).
\end{equation}
\end{itemize}
Then $A$ satisfies the conditions of Theorem \ref{subtile}.
\end{corollary}


\section{Classification results}\label{classified}


\subsection{Classification results}\label{classified1}

We now state our results on the classification of tilings with three prime factors and the (T2) property for such tilings. We restrict our attention to tilings 
$A\oplus B=\ZZ_M$, where $M=p_i^{n_i}p_j^{n_j}p_k^{n_k}$ has three distinct prime factors. Our main results require the additional assumption that 
\begin{equation}\label{allnare2}
n_i=n_j=n_k=2 \hbox{ and }  |A|=|B|=p_ip_jp_k,
\end{equation}
but some of our intermediate results are also valid without (\ref{allnare2}).

As in \cite{LaLo2}, we start with the assumption that $\Phi_M|A$. (Since $\Phi_M$ divides at least one of $A(X)$ and $B(X)$ by (\ref{poly-e2}), we may always assume this after interchanging $A$ and $B$ if necessary.) This implies structure results for $A$ on grids $\Lambda(x,D(M))$ for every $x\in\ZZ_M$. 

Let $\Lambda:=\Lambda(a,D(M))$ for some $a\in A$, so that $A\cap\Lambda$ is nonempty.  
By the classic results on vanishing sums of roots of unity \cite{deB}, \cite{Re1}, \cite{Re2}, \cite{schoen}, \cite{Mann}, \cite{LL}, $\Phi_M$ divides $A\cap\Lambda$ if and only if $A\cap\Lambda$ is a linear combination of $M$-fibers with integer coefficients. In other words, 
$$
(A\cap\Lambda)(X)=\sum_{\nu\in\{i,j,k\}} Q_\nu(X) F_\nu(X),
$$
where $Q_i,Q_j,Q_k$ are polynomials with integer coefficients depending on both $A$ and $\Lambda$.

In \cite{LaLo2}, we used this to develop a classification of sets $A\cap\Lambda$, where $A$ is a finite tile, $\Phi_M|A$, and $\Lambda$ is a $D(M)$-grid. One possibility is that $A$ is $M$-fibered on each such grid $\Lambda$, so that $(A\cap\Lambda)(X)=
Q_{\Lambda}(X)F_{\nu(\Lambda)}(X)$ for some $\nu(\Lambda)\in \{i,j,k\}$, possibly depending on $\Lambda$. If $A$ is fibered on all $D(M)$-grids in the same direction, so that $\nu(\Lambda)$ can be chosen independent of $\Lambda$, the conditions of Theorem \ref{subtile} are satisfied and we may use Corollary \ref{slab-reduction} to conclude that (T2) holds for both $A$ and $B$.
However, it is also possible for $A\cap\Lambda$ to be fibered in different directions on different grids $\Lambda$. Additionally, there may exist grids $\Lambda$ such that $A\cap\Lambda$ is not fibered. This can happen if $A\cap\Lambda$ contains nonintersecting $M$-fibers in two or three different directions, or if some of the polynomials $Q_i,Q_j,Q_k$ have negative coefficients, resulting in cancellations between fibers in different directions.

Our classification and (T2) results are summarized in 
Theorems \ref{unfibered-mainthm} and \ref{fibered-thm} below.
Both theorems were proved in \cite{LaLo2} with the additional assumption that $M$ is odd. In this paper, we prove that the same conclusions hold when $M$ is even. 

\begin{theorem}\label{unfibered-mainthm}
Let $A\oplus B=\ZZ_M$, where $M=p_i^{2}p_j^{2}p_k^{2}$. Assume that
$|A|=|B|=p_ip_jp_k$, $\Phi_M|A$, and that there exists a $D(M)$-grid $\Lambda$ such that $A\cap\Lambda$ is nonempty and is not $M$-fibered in any direction. Assume further, without loss of generality, that $0\in\Lambda$.
Then $A^\flat=\Lambda$, and the tiling $A\oplus B=\ZZ_M$ is T2-equivalent to $\Lambda \oplus B=\ZZ_M$ via fiber shifts. By Corollary \ref{get-standard}, both $A$ and $B$ satisfy (T2).
\end{theorem}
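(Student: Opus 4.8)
The plan is to pin down the prime-power cyclotomic divisors of $A$, recognize $\Lambda$ as the standard complement $A^\flat$, transport $A$ onto $\Lambda$ by a finite sequence of fiber shifts, and then invoke Corollary~\ref{get-standard}. First I would record the cyclotomic inventory. Since $A$ tiles, condition (T1) of Theorem~\ref{CM-thm} holds; the only prime powers dividing $M=p_i^2p_j^2p_k^2$ are the $p_\nu$ and $p_\nu^2$, each with cyclotomic value $p_\nu$ at $1$, and $|A|=p_ip_jp_k$, so $\mathfrak{A}_\nu(A)=\{\alpha_\nu\}$ is a singleton with $\alpha_\nu\in\{1,2\}$ for every $\nu\in\{i,j,k\}$. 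If $\alpha_i=\alpha_j=\alpha_k=2$, then \eqref{replacement-factors} gives $A^\flat(X)=F_i(X)F_j(X)F_k(X)$, so $A^\flat=F_i*F_j*F_k$ is the subgroup of $\ZZ_M$ of order $p_ip_jp_k$, i.e. $A^\flat=\Lambda(0,D(M))$; since by hypothesis $0\in\Lambda$ and $\Lambda$ is a $D(M)$-grid, this is precisely $\Lambda$. One also checks directly that $\Lambda^\flat=\Lambda$ (the prime-power divisors of $F_i(X)F_j(X)F_k(X)$ are exactly $\Phi_{p_i^2},\Phi_{p_j^2},\Phi_{p_k^2}$), so any $A$ which is T2-equivalent to $\Lambda$ automatically satisfies $A^\flat=\Lambda^\flat=\Lambda$. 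Hence it suffices to show (a) $\alpha_i=\alpha_j=\alpha_k=2$ and (b) $A$ can be carried to $\Lambda$ by fiber shifts: (a) gives $A^\flat=\Lambda$, (b) together with Lemma~\ref{fibershift} shows that $A\oplus B=\ZZ_M$ is T2-equivalent to $A^\flat\oplus B=\ZZ_M$ (so in particular $\Lambda\oplus B=\ZZ_M$ is a tiling), and Corollary~\ref{get-standard} then finishes. I would also normalize $0\in A\cap\Lambda$, which is legitimate since translating $A$ by an element of the subgroup $\Lambda$ fixes $\Lambda$ and does not change $A^\flat$.

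Next I would extract the rigidity implicit in the non-fibered hypothesis. Because $\Phi_M\mid A$, on every $D(M)$-grid $\Lambda'$ the set $A\cap\Lambda'$ is $\Phi_M$-null and hence, by the vanishing-sums-of-roots-of-unity results \cite{deB, Re1, Re2, schoen, Mann, LL}, an integer combination $(A\cap\Lambda')(X)=\sum_{\nu}Q_\nu(X)F_\nu(X)$. On $\Lambda$ this combination represents a set that is not an $M$-fiber bundle in any single direction. Parsing this — using the classification of such vanishing-sum configurations on a $D(M)$-grid from \cite{LaLo2} — I would show that $A\cap\Lambda$ either contains pairwise non-intersecting $M$-fibers in at least two of the three directions, or, when some $Q_\nu$ has negative coefficients, falls into one of a short list of ``cancellation'' configurations to be treated separately. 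In the main case this both forces $|A\cap\Lambda|$ to be comparatively large and supplies $M$-fibers of $A$ in more than one direction, and it produces many forced differences in $\Div(A)$, hence, by divisor exclusion (Theorem~\ref{thm-sands}), many differences forbidden in $\Div(B)$.

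The heart of the proof is to rule out $\alpha_\nu=1$. Suppose, say, $\alpha_i=1$, so $\Phi_{p_i}\mid A$ while $\Phi_{p_i^2}\mid B$. Then $B$ is severely constrained in the $p_i$ direction at scale $p_i^2$: combining the slab/splitting equivalence of Lemma~\ref{splittingslab} and Corollary~\ref{slabcor} with the splitting lemmas (Lemma~\ref{no-upgrades}, Lemma~\ref{intersections}) and the saturating-set geometry \eqref{setplusspan}, \eqref{bispan}, I would deduce that $A$ must then be $M$-fibered in some direction on \emph{every} $D(M)$-grid, contradicting the behaviour of $\Lambda$. Hence $\alpha_i=\alpha_j=\alpha_k=2$, which gives (a) and identifies $A^\flat$ with $\Lambda$. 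With $\Phi_{p_\nu^2}\mid A$ now available for all $\nu$, together with the multi-direction $M$-fibers of $A$ found above, I would manufacture $(1,2)$-cofibered structures of $(A,B)$ in each of the three directions by exhibiting points $x$ whose saturating set $A_x$ is confined to a single line $\ell_\nu(x)$ and applying Lemma~\ref{1dim_sat-cor}.

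Finally I would iterate Lemma~\ref{fibershift}: each fiber shift moves an $M$-fiber of $A$ by $M/p_\nu^2$ while keeping the complement $B$ fixed and preserving T2-equivalence, and shifting fibers direction by direction collapses $A$ onto its standard complement, ending with $A'=\Lambda$. Then $\Lambda\oplus B=\ZZ_M$ is a genuine tiling T2-equivalent to $A\oplus B=\ZZ_M$, so $A\oplus B=\ZZ_M$ is T2-equivalent to $A^\flat\oplus B=\ZZ_M$ and Corollary~\ref{get-standard} gives (T2) for both $A$ and $B$. The main obstacle is the third paragraph — both excluding $\alpha_\nu=1$ and assembling the cofibered structures — together with the combinatorial bookkeeping of the fiber-shift sequence. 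The even case $2\in\{p_i,p_j,p_k\}$ is where this is genuinely harder: when $\phi(2)=1$ the saturating-set and box-product arguments lose their usual leverage, so one must substitute the splitting machinery of Sections~\ref{splitting-section} and \ref{slab-to-splitting}, and several of the structural dichotomies that are clean for odd $M$ acquire new subcases.
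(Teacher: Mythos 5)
Your overall skeleton (classify the unfibered structure of $A\cap\Lambda$, build $(1,2)$-cofibered structures from saturating sets, shift fibers via Lemma \ref{fibershift} until $A$ becomes $\Lambda$, then quote Corollary \ref{get-standard}) is the same as the paper's, but the step you call the heart of the proof has a genuine gap. You propose to first rule out $\alpha_\nu=1$ by claiming that $\Phi_{p_i}\mid A$ together with $\Phi_{p_i^2}\mid B$ would force $A$ to be $M$-fibered in some direction on \emph{every} $D(M)$-grid, citing Lemma \ref{splittingslab}, Corollary \ref{slabcor}, Lemma \ref{no-upgrades}, Lemma \ref{intersections} and the saturating-set inclusions. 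None of these results yields such an implication: Corollary \ref{slabcor} requires $\Phi_{p_i^{n_i}}\mid A$, so it is not applicable to $A$ when $\alpha_i=1$, and applying the slab/splitting machinery to $B$ only constrains splitting parities and divisor sets, not the fibering of $A$ on $D(M)$-grids (which is governed by $\Phi_M\mid A$ and the de Bruijn--R\'edei structure, not by which power of $p_i$ divides $A$). In the paper the identity $A^\flat=\Lambda$, i.e. $S_A=\{p_i^2,p_j^2,p_k^2\}$, is not an input proved up front; it is a \emph{consequence} of the T2-equivalence to $\Lambda$ obtained at the end of the fiber-shift argument (T2-equivalent sets tiling with the same $B$ have the same prime-power cyclotomic divisors). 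So your plan inverts the logical order and supports the inverted step with a claim that is nowhere established and is essentially as hard as the theorem itself.

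The second, related gap is that the content carrying all the difficulty is asserted rather than argued. The dichotomy you sketch in your second paragraph is exactly Proposition \ref{prop-ecorner} versus Proposition \ref{db-prop}: either $A\cap\Lambda$ is a disjoint union of $M$-fibers (extended corner, already settled in Theorem \ref{cornerthm}) or it contains diagonal boxes; the ``cancellation configurations'' you defer are not a side case but the main new case of this paper. Resolving them is Theorem \ref{db-theorem}, whose proof occupies Section \ref{res-boxes}: the case analysis (DB1)--(DB3), the line-by-line determination of where $A_{x,b}$ can live (Lemmas \ref{edb1-satline}--\ref{db1-saturating}, \ref{edb2-lines}--\ref{edb2-unif1}), the splitting arguments needed precisely because $p_i=2$ kills the usual box-product leverage (Lemma \ref{edb2-nodiag}, Proposition \ref{nodiagdiv}), and the exclusion of $B$ being $M$-fibered in the $p_i$ direction (Lemma \ref{nopifibers-alt}, needed since $M/p_i\notin\Div(A\cap\Lambda)$ in the even diagonal-boxes structure). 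Your proposal correctly identifies the tools and the final reduction, but without these arguments, and with the unsupported claim above replacing them, the proof does not go through as written.
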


\begin{theorem}\label{fibered-thm}
Let $A\oplus B=\ZZ_M$, where $M=p_i^{2}p_j^{2}p_k^{2}$. Assume that
$|A|=|B|=p_ip_jp_k$, $\Phi_M|A$, and that for every $a\in A$, the set $A\cap\Lambda(a,D(M))$ is $M$-fibered in at least one direction (possibly depending on $a$). 

\medskip\noindent
{\rm (I)} Suppose that there exists an element $a_0\in A$ such that 
\begin{equation}\label{intro-inter}
a_0*F_\nu\subset A\ \ \forall \nu\in\{i,j,k\}.
\end{equation}
Then the tiling $A\oplus B=\ZZ_M$ is T2-equivalent to $\Lambda\oplus B=\ZZ_M$ via fiber shifts, where $\Lambda:=\Lambda(a_0,D(M))$.
By Corollary \ref{get-standard}, both $A$ and $B$ satisfy (T2).

\medskip\noindent
{\rm (II)} Assume that no $a_0\in A$ satisfies (\ref{intro-inter}). Then at least one of the following holds.

\begin{itemize}
\item We have $A\subset \Pi(a,p_\nu)$ for some $a\in A$ and $\nu\in\{i,j,k\}$. By Theorem \ref{subgroup-reduction}, both $A$ and $B$ satisfy (T2).

\item There exists a $\nu\in\{i,j,k\}$ such that (possibly after interchanging $A$ and $B$) the conditions of Theorem \ref{subtile} are satisfied in the $p_\nu$ direction. By Corollary \ref{slab-reduction}, both $A$ and $B$ satisfy (T2).

\end{itemize}

\end{theorem}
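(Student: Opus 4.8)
\textbf{Plan of proof for Theorem \ref{fibered-thm}.}

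The plan is to carry out a case analysis driven by the fibered structure of $A$ on the grids $\Lambda(a,D(M))$, using saturating sets and the splitting machinery of Section \ref{splitting-section} as the principal tools. For part (I), the idea is that once some $a_0\in A$ satisfies \eqref{intro-inter}, the translated grid $\Lambda:=\Lambda(a_0,D(M))$ is a candidate for the standard complement $A^\flat$: since each of $a_0*F_i$, $a_0*F_j$, $a_0*F_k$ lies in $A$, the polynomial $A(X)$ is divisible by $\Phi_{p_\nu^2}$ for every $\nu$, so the factorization $A^\flat(X)$ of \eqref{replacement-factors} must contain all three factors $1+X^{M_\nu p_\nu}+\dots$, forcing $A^\flat=\Lambda$ (after translation). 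The remaining work is to show that $A$ is T2-equivalent to $\Lambda$, which I would do by repeatedly applying the Fiber-Shifting Lemma (Lemma \ref{fibershift}): the cofibered structures needed to invoke it come from Lemma \ref{1dim_sat-cor}, once one checks that saturating sets $A_x$ for $x\notin A$ are contained in a single line through $x$ — here the hypothesis that $A\cap\Lambda(a,D(M))$ is $M$-fibered in at least one direction for \emph{every} $a$ is exactly what pins down the direction of the missing fiber and collapses the geometric constraint \eqref{bispan} to a line. One shifts the ``stray'' fibers of $A$ onto $\Lambda$ one at a time, preserving the tiling and the (T2) status at each step, until $A$ becomes $\Lambda$ itself, at which point Corollary \ref{get-standard} applies.

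For part (II) I would argue by contradiction/dichotomy: assume no $a_0$ satisfies \eqref{intro-inter} and that neither conclusion of (II) holds, i.e.\ $A$ is not contained in any plane $\Pi(a,p_\nu)$ and for no $\nu$ do $(A,B)$ (or $(B,A)$) satisfy the slab reduction conditions of Theorem \ref{subtile} in the $p_\nu$ direction. By the splitting–slab equivalence of Lemma \ref{splittingslab}, the failure of the slab conditions in direction $p_\nu$ means that there is an $M$-fiber in the $p_\nu$ direction that splits with the ``wrong'' parity in some dilated tiling $A\oplus rB$; and by Lemma \ref{no-upgrades} every $M$-fiber splits with \emph{one} of the two parities. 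The heart of the argument is to show that if all three directions fail the slab conditions, then one can assemble, from the splitting data in the three directions, an element $a_0\in A$ with $a_0*F_\nu\subset A$ for all $\nu$ — contradicting the standing assumption. Concretely, I expect to use Lemma \ref{intersections} (plane consistency) on grids $\Lambda(z,M/p_\nu^{\alpha_\nu}p_\mu^{\alpha_\mu})$ to propagate fiber directions between grids, together with the hypothesis that $A\cap\Lambda(a,D(M))$ is always fibered, to force the simultaneous presence of fibers in all three directions through a common point of $A$. The exclusion of the planar case $A\subset\Pi(a,p_\nu)$ is needed precisely to rule out the degenerate configuration in which $A$ fits inside a single slab and the fiber directions cannot be reconciled.

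The main obstacle I anticipate is part (II\,a) — showing that the failure of the slab reduction in \emph{all} three directions, combined with the global fibered hypothesis and the non-planarity of $A$, forces the existence of the triple-fiber point $a_0$. This is the delicate combinatorial core: one has splitting information fiber-by-fiber and dilation-by-dilation, but converting it into a single structural statement about $A$ requires carefully tracking which direction each grid is fibered in and showing these choices are globally consistent. I would expect to need the box-product identity \eqref{e-ortho2} and the geometric constraints \eqref{bispan}, \eqref{setplusspan} on saturating sets to control the interaction between directions, and quite possibly an inductive or extremal argument over $D(M)$-grids. This corresponds to the portion the introduction flags as ``the most technical part of our proof'' and which is reorganized in Sections \ref{fibered-sec}--\ref{fibered-F3}; accordingly I would structure the proof so that the (I)-type reductions and the planar/slab dichotomy are dispatched quickly, leaving the bulk of the effort for the consistency argument that produces $a_0$.
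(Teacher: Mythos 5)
Your plan for part (II) rests on a reduction that does not hold and is not how the theorem is actually proved. You propose to assume that no triple-fiber point exists, that $A$ is not planar, and that the slab conditions fail in every direction (for both orders of $A$ and $B$), and then to ``assemble'' from the splitting data an element $a_0$ with $a_0*F_\nu\subset A$ for all $\nu$. But the presence of $M$-fibers of $A$ in all three directions somewhere in $\ZZ_M$ (i.e.\ all three sets $\cali,\calj,\calk$ nonempty) does not force such fibers through a common point, and there is no mechanism in Lemma \ref{intersections} or the fibered-grid hypothesis that produces one; plane consistency only constrains which elements of $A$ and $B$ can tile a fixed grid, it does not merge fibers in different directions into a single point of $A$. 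The actual intermediate result is the opposite kind of statement: under $\cali\cap\calj\cap\calk=\emptyset$, at least one of $\cali,\calj,\calk$ must be \emph{empty} (Theorem \ref{fibered-mainthm}\,(II\,a)), and proving this is the bulk of the work. It is carried out not by negating the slab conditions but by a dichotomy on cyclotomic divisibility --- case (F1), where $\Phi_{M_\nu}|A$ for some $\nu$, and case (F2), where $\Phi_{M_\nu}|B$ for all $\nu$ --- combined with the lower-scale fibering results (Corollary \ref{subset}, Lemmas \ref{Kunfibered}, \ref{evenunfiberedN_igrid}, \ref{M_kfibering}) and saturating-set/splitting arguments that eventually force incompatible fibering properties of $B$. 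Your proposal contains no substitute for this step.

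A second gap is in how the two-direction case is closed. Even after one knows $\cali=\emptyset$ and $A\subset\calj\cup\calk$ with both $\calj\setminus\calk$ and $\calk\setminus\calj$ nonempty, the slab conditions for $(A,B)$ can genuinely fail in the $p_j$ and $p_k$ directions; the conclusion there is reached by showing that $B$ is $N_j$- and $N_k$-fibered in the $p_i$ direction (via $\Phi_{N_\nu}\Phi_{M_\nu}|B$ and Corollary \ref{doublediv}), which yields the slab conditions for $(B,A)$ in the $p_i$ direction (checked via Lemma \ref{splittingslab} or Corollary \ref{slabcor}), or else $\Phi_{p_i^2}|A$ and the subgroup reduction applies. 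So the contradiction never takes the form of a reconstructed triple-fiber point; it runs through the structure of $B$. Finally, a small but real flaw in your part (I) sketch: from $a_0*F_\nu\subset A$ one cannot conclude $\Phi_{p_\nu^2}|A(X)$ --- divisibility of a mask polynomial is not inherited from a subset --- so the identification $A^\flat=\Lambda$ is a consequence of the full fiber-shifting argument (done in \cite{LaLo2}), not a starting observation.
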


We will provide a more detailed breakdown of the case (II) of  Theorem \ref{fibered-thm}  in
Theorem \ref{fibered-mainthm}, after the appropriate terminology has been introduced.


\subsection{Outline of the proof}\label{classified2}

The general scheme of the proof of Theorems \ref{unfibered-mainthm} and \ref{fibered-thm} is similar to that in \cite{LaLo2} for odd $M$. We will take advantage of the results already proved in \cite{LaLo2} where possible, and use the methods and technical tools developed there. In the outline below, we describe the new contributions of this paper and explain how they fit into the framework of \cite{LaLo2}.

We assume that $A\oplus B=\ZZ_M$, where $M=p_i^{2}p_j^{2}p_k^{2}$, 
$|A|=|B|=p_ip_jp_k$, and $\Phi_M|A$. 
Our proof splits into two parts according to the fibering properties of $A$.

Assume first that there exists a $D(M)$-grid $\Lambda$ such that $A\cap\Lambda$ is not $M$-fibered in any direction. In Propositions 5.2 and 5.5 in \cite{LaLo2}, we proved that $A\cap\Lambda$ must then contain at least one of two special structures, either {\em diagonal boxes} or an {\em extended corner}. The latter case was resolved in \cite[Theorem 8.1]{LaLo2}, for both odd and even $M$.

We are left with the case when $M$ is even and $A\cap\Lambda$ contains diagonal boxes for some $D(M)$-grid $\Lambda$. As a preliminary reduction, we adapt the proof of Propositions 5.2 and 6.1 in \cite{LaLo2} to prove that $A\cap\Lambda$ must in fact be the union of the diagonal boxes and possibly additional $M$-fibers in one or more directions (Proposition \ref{db-prop}; see also the remarks after the proposition).  We will also need the classification of unfibered structures with
$\{m:\ D(M)|m|M\}\not\subset\Div(A)$, developed in \cite[Section 6]{LaLo2}. The relevant results are 
summarized in Section \ref{special-structures}. They will be 
needed both in the unfibered case currently under consideration, and in the fibered case where they will be applied on lower scales.

We resolve the diagonal boxes case in Section \ref{res-boxes}. Our main result in this regard is Theorem \ref{db-theorem}, stating that if $A\cap\Lambda$ contains diagonal boxes, then $A$ is T2-equivalent via fiber shifts to $\Lambda$. In particular, (T2) holds for both $A$ and $B$. 

The fiber shifting method was already used in \cite{LaLo2}, and some of our techniques are similar. However, the diagonal boxes structures in the even case are significantly more difficult to resolve than their odd case counterparts. For one thing, they can contain very few points. (We draw the reader's attention to the case labelled here as (DB2), when $A\cap\Lambda$ consists of just two incomplete fibers and $\Div(A\cap\Lambda)$ is a three-element set.) In such cases, saturating set arguments are less efficient than they were in \cite{LaLo2}, basically because there are fewer geometric restrictions coming from (\ref{bispan}). Additionally, unlike in the odd case, diagonal boxes with $p_i=2$ do not exclude {\it a priori} the possibility that $B$ might be $M$-fibered in the $p_i$ direction; this does turn out to be impossible, but only after a longer argument.

 In the particularly difficult case (DB2), we deal with this by combining saturating set techniques with splitting arguments. Intuitively, splitting arguments require less information than saturating sets. They also provide less information; however, in the even case this can be sufficient. We refer the reader to Lemma \ref{edb2-nodiag} and Proposition \ref{nodiagdiv} for examples of this.

In Section \ref{fibered-sec}, we consider the case when $A\cap\Lambda$ is fibered in some direction (possibly depending on $\Lambda$) for each $D(M)$-grid $\Lambda$. 
Part (I) of Theorem \ref{fibered-thm} was proved in \cite{LaLo2} for both odd and even $M$. 
Our proof of (T2) in the remaining case (II) is a significant departure from that in \cite{LaLo2}, even though some of the intermediate results and technical ingredients are the same. We still prove that only two fibering directions are allowed in this case, and the detailed breakdown of the conclusions is the same as in \cite{LaLo2}. However, the proof is organized differently, and the use of the splitting method from \cite{LaLo3} leads to many simplifications. We defer a longer discussion to Section \ref{fibered-sec}, after the relevant concepts and notation have been introduced.

As in \cite{LaLo2}, our final result is restricted to the case when $M=p_i^{2}p_j^{2}p_k^{2}$, but some of our methods and intermediate results are valid under weaker assumptions. Ultimately, however, we do need to assume that $n_i=n_j=n_k=2$ in order to complete the proof of (T2).
In that case, a single application of either the subgroup reduction or the slab reduction brings us to a case when (T2) can be proved using simpler methods. It is likely that new multiscale methods will be needed to go beyond this constraint.


\section{Toolbox}\label{sec-toolbox}


\subsection{Divisors} The first part of Lemma \ref{triangles} below is Lemma 8.9 of \cite{LaLo1}. The second part is specific to the case when $p_i=2$.

\begin{lemma}[\bf Enhanced divisor exclusion] \label{triangles} 
Let $A\oplus B=\ZZ_M$, with $M=p_i^{n_i}p_j^{n_j}p_k^{n_k}$. Let $m=p_i^{\alpha_i}p_j^{\alpha_j}p_k^{\alpha_k}$ and $m'=p_i^{\alpha'_i}p_j^{\alpha'_j}p_k^{\alpha'_k}$, with $0\leq \alpha_\iota,\alpha'_\iota\leq n_\iota,\,\iota\in\{i,j,k\}$. 
	
\smallskip
(i) Assume that at least one of $m,m'$ is different from $M$, and that for every $\iota\in\{i,j,k\}$ we have 
\begin{equation}\label{triangles-e1}
\hbox{ either }\alpha_\iota\neq \alpha'_\iota \hbox{ or }
\alpha_\iota =\alpha'_\iota=n_\iota.
\end{equation}
Then for all $x,y\in\ZZ_M$ we have
$$
\bbA_m[x] \,\bbA_{m'}[x] \, \bbB_m[y] \, \bbB_{m'}[y] =0.
$$

\smallskip
(ii) If $p_\nu=2$ for some $\nu\in\{i,j,k\}$, then for that $\nu$, the assumption (\ref{triangles-e1}) may be replaced by
\begin{equation}\label{triangles-e2}
\hbox{ either }\alpha_\nu\neq \alpha'_\nu \hbox{ or }
\alpha_\nu =\alpha'_\nu\in\{n_\nu,n_\nu-1\},
\end{equation}
and the same conclusion holds.
\end{lemma}

\begin{proof}
Assume towards contradiction that there exist $a,a'\in A$, $b,b'\in B$, $x,y\in\ZZ_M$ such that
$$
(a-x,M)=(b-y,M)=m,\ \ (a'-x,M)=(b'-y,M)=m'.
$$
This together with (\ref{triangles-e1}) for all $\iota$ implies that
$$
(a-a',M)=(b-b',M)=\prod_{\iota\in\{i,j,k\}} p_\iota^{\min(\alpha_\iota,\alpha'_\iota)},
$$
with the right side different from $M$. But that is prohibited.
	
If $p_\nu=2$ and we assume (\ref{triangles-e2}) instead of (\ref{triangles-e1}) for $\iota=\nu$, then the same conclusion holds, since in this case $\alpha_\nu =\alpha'_\nu=n_\nu-1$ still implies that $p_\nu^{n_\nu}|a-a'$ and $p_\nu^{n_\nu}|b-b'$.
\end{proof}


\subsection{Cyclotomic divisibility and fibering}

The results here are borrowed from \cite[Section 4.2]{LaLo2}.

\begin{lemma}[\bf Cyclotomic divisibility on grids]\label{cyclo-grid}
Let $A\in\calm(\ZZ_M), M=p_i^{n_i}p_j^{n_j}p_k^{n_k}$, and let $m,s|M$ with $s\neq 1$. Suppose that for every $a\in A$, $\Phi_s$ divides $A\cap\Lambda(a,m)$. Then $\Phi_s|A$.
\end{lemma}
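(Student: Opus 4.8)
The plan is to prove Lemma~\ref{cyclo-grid} by reducing divisibility by $\Phi_s$ to a cuboid condition and then averaging (or summing) the grid-wise hypotheses. Recall from Section~\ref{cuboid-section} that for $A\in\calm(\ZZ_M)$ one has $\Phi_N(X)\mid A(X)$ if and only if $\bbA^N_N[\Delta]=0$ for every $N$-cuboid $\Delta$, and that more generally, divisibility by $\Phi_s$ can be tested on the grids $\Lambda(x,D(s))$. So first I would reduce to the case where $s=N$ is itself the modulus we work in: pass to $A$ mod $N'$ where $N'=\mathrm{lcm}(s,m)$ or, more simply, observe that it suffices to show $\bbA^s_s[\Delta]=0$ for every $s$-cuboid $\Delta$ in $\ZZ_s$ (viewing $A$ via its induced weights mod $s$), since that is equivalent to $\Phi_s\mid A$.

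The key step is then to relate an arbitrary $s$-cuboid in $\ZZ_M$ (or its reduction) to cuboids supported on a single grid $\Lambda(a,m)$. Given an $s$-cuboid $\Delta(X)=X^c\prod_{p_\nu\mid s}(1-X^{d_\nu})$ with $(d_\nu,s)=s/p_\nu$, its vertices $x_{\vec\epsilon}=c+\sum_{\nu}\epsilon_\nu d_\nu$ all lie in a common coset of the subgroup generated by the $d_\nu$; the point is to choose, for each vertex, a representative lying in the grid $\Lambda(a,m)$ for a fixed $a$, or else to decompose $\bbA^s_s[\Delta]$ as a sum of such cuboid evaluations over the various grids. Concretely, $\bbA^s_s[x_{\vec\epsilon}]=\sum_{a'} \#\{a'\in A: (x_{\vec\epsilon}-a',s)=s\}$, and by partitioning $A=\bigsqcup_a (A\cap\Lambda(a,m))$ over a set of representatives $a$ of the $m$-cosets, one gets $\bbA^s_s[\Delta]=\sum_{a}(A\cap\Lambda(a,m))^{s\text{-cuboid}}[\Delta]$. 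For each fixed $a$, the term is $\bbA^s_s[\Delta]$ computed for the set $A\cap\Lambda(a,m)$, which vanishes precisely because $\Phi_s\mid (A\cap\Lambda(a,m))$ by hypothesis. Summing the zeros gives $\bbA^s_s[\Delta]=0$, hence $\Phi_s\mid A$.

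The main obstacle I anticipate is bookkeeping with the two moduli $m$ and $s$ and making sure the cuboid on $\ZZ_M$ (which is what the hypothesis ``$\Phi_s$ divides $A\cap\Lambda(a,m)$'' really controls) genuinely splits as a sum over the $m$-grids without interference: one must check that the grid decomposition of $A$ is compatible with evaluating an $s$-cuboid, i.e., that $(x-a',s)=s$ depends on $a'$ only through data that respects the partition. This is automatic because the decomposition $A=\bigsqcup_a(A\cap\Lambda(a,m))$ is a genuine disjoint union of multisets and $\bbA^s_s[\cdot]$ is additive over disjoint unions of the set $A$; so the difficulty is really just notational rather than substantive. The only genuine subtlety is the reduction in the first paragraph: if $\gcd(m,s)\neq 1$ one should be slightly careful, but since divisibility by $\Phi_s$ only sees $A$ through its reduction mod any multiple of $s$, and $\Lambda(a,m)$ only refines the partition, no information is lost. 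I would therefore structure the writeup as: (1) recall the cuboid criterion; (2) fix an $s$-cuboid $\Delta$; (3) decompose $A$ over $m$-cosets and use additivity; (4) apply the hypothesis grid by grid; (5) conclude via the cuboid criterion.
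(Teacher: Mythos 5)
Your proof is correct, and its core is the same as the one the paper relies on (cited from \cite{LaLo2}): partition $A$ into its intersections with the distinct $m$-grids $\Lambda(a,m)$, note that this is a genuine disjoint decomposition of the (multi)set, and sum the grid-wise hypotheses. The only difference is that you route the conclusion through the cuboid criterion, which is an unnecessary detour here: since $A(X)=\sum_{\Lambda}(A\cap\Lambda)(X)$, where $\Lambda$ runs over the distinct $m$-grids meeting $A$, and each summand is divisible by the irreducible polynomial $\Phi_s$ by hypothesis (equivalently, each vanishes at every primitive $s$-th root of unity), the divisibility $\Phi_s\mid A$ follows immediately by linearity of evaluation, with no need to fix an $s$-cuboid, reduce mod $s$, or verify compatibility of the two moduli $m$ and $s$. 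Your cuboid version does work, precisely because $\bbA^s_s[\Delta]$ is additive over the disjoint decomposition, but the bookkeeping you worry about in the last paragraph disappears entirely if you argue at the level of mask polynomials.
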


\begin{lemma}[\bf Plane bound]\label{planebound}
Let $A\oplus B=\ZZ_M$, where $M=p_i^{n_i}p_j^{n_j}p_k^{n_k}$ and $|A|=p_i^{\beta_i}p_j^{\beta_j}p_k^{\beta_k}$.
Then for every $x\in\ZZ_M$ and $0\leq\alpha_i\leq n_i$ we have 
$$
|A\cap\Pi(x,p_i^{n_i-\alpha_i})|\leq p_i^{\alpha_i}p_j^{\beta_j} p_k^{\beta_k}.
$$
\end{lemma}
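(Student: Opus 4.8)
\textbf{Proof plan for the Plane bound (Lemma \ref{planebound}).}

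The plan is to reduce the statement to a counting argument inside a single plane using the fact that any tiling of $\ZZ_M$ restricts, when intersected with a subgroup or a coset of a subgroup, to an exact tiling of the smaller group. First I would fix $x\in\ZZ_M$ and, after translating, assume $x=0$, so that the plane in question is the subgroup $H:=\Pi(0,p_i^{n_i-\alpha_i})=p_i^{n_i-\alpha_i}\ZZ_M$, which under the Chinese Remainder identification $\ZZ_M\cong\ZZ_{p_i^{n_i}}\oplus\ZZ_{p_j^{n_j}}\oplus\ZZ_{p_k^{n_k}}$ is exactly $(p_i^{n_i-\alpha_i}\ZZ_{p_i^{n_i}})\oplus\ZZ_{p_j^{n_j}}\oplus\ZZ_{p_k^{n_k}}$, a subgroup of index $p_i^{n_i-\alpha_i}$ and hence of order $p_i^{\alpha_i}p_j^{n_j}p_k^{n_k}$.

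The key step is the following standard observation: if $A\oplus B=\ZZ_M$ and $H\leq\ZZ_M$ is any subgroup, then for each coset $y+H$ the sets $(A\cap(y-b+H))$ as $b$ ranges over $B$, reduced appropriately, tile $H$; more precisely, projecting the tiling identity modulo $H$ (i.e. passing to $\ZZ_M/H$) shows that $B$ meets every coset of $H$ in exactly $|B|/|\ZZ_M/H| = |B|\,|H|/M$ points only if $H\supseteq$ (something) — so instead I would argue directly. The cleanest route: since $A\oplus B=\ZZ_M$, for a fixed coset $y+H$ we have $|A\cap(y+H)|\cdot$(number of $b\in B$ with $y+H = a+b+H$ for the relevant $a$)$\,=\,|H|$ once we know each element of $y+H$ is hit exactly once; summing the tiling over $y+H$ gives $\sum_{b\in B}|A\cap(y-b+H)| = |H|$. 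Now $A$ is a genuine set, so $|A\cap(z+H)|\le |A\cap H|$ is false in general, but $\sum_{z}|A\cap(z+H)|=|A|$ over coset representatives $z$ of $H$; the right bound comes instead from noting $|A\cap(y-b+H)|\le |H|$ trivially, which is too weak. So I would instead use the partition of $A$: $|A| = \sum_{z\in \ZZ_M/H}|A\cap(z+H)|$, and show each term is $\le p_i^{\alpha_i}p_j^{\beta_j}p_k^{\beta_k}$ would already give the result were there only one nonempty term — which is not the case.

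The actual argument I expect to use: apply the known structural fact (implicit in Theorem \ref{thm-sands} / Coven--Meyerowitz, and recorded in \cite[Section 4.2]{LaLo2}) that $\Phi_s \mid A$ for every prime-power $s=p_\nu^{\gamma}$ with $\gamma>\beta_\nu$, because $|A|=p_i^{\beta_i}p_j^{\beta_j}p_k^{\beta_k}$ forces all such higher prime-power cyclotomics into $B(X)$; consequently $A(X)$ is divisible by $\Phi_{p_i^{\alpha_i+1}}(X)\Phi_{p_i^{\alpha_i+2}}(X)\cdots\Phi_{p_i^{n_i}}(X)$ whenever $\alpha_i\ge\beta_i$, equivalently $A(X)$ is divisible by $\frac{X^{p_i^{n_i}}-1}{X^{p_i^{\alpha_i}}-1}$ evaluated in the $p_i$-variable. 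Restricting $A$ to a plane $\Pi(x,p_i^{n_i-\alpha_i})$ corresponds, in the $\ZZ_{p_i^{n_i}}$ coordinate, to fixing the residue modulo $p_i^{n_i-\alpha_i}$, i.e. intersecting with a coset of $p_i^{n_i-\alpha_i}\ZZ_{p_i^{n_i}}$; the divisibility by $\frac{X^{p_i^{n_i}}-1}{X^{p_i^{\alpha_i}}-1}$ forces the number of points of $A$ in each such coset-plane to be at most $|A|/p_i^{n_i-\alpha_i}=p_i^{\alpha_i}p_j^{\beta_j}p_k^{\beta_k}$, since otherwise averaging over the $p_i^{n_i-\alpha_i}$ planes in that family would violate equidistribution forced by the cyclotomic divisibility. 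When $\alpha_i<\beta_i$ the bound $p_i^{\alpha_i}p_j^{\beta_j}p_k^{\beta_k}$ is weaker than $|A\cap\Pi|\le |A|$ only mildly and again follows from the trivial pigeonhole $|A\cap\Pi(x,p_i^{n_i-\alpha_i})|\le$ (size of that plane)$\,=\,p_i^{\alpha_i}p_j^{n_j}p_k^{n_k}$ combined with — no, that is too big; here one uses instead that $B$ meets each plane $\Pi(x,p_i^{n_i})$-translate enough times.

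The main obstacle is getting the case $\alpha_i<\beta_i$ cleanly: there the relevant higher cyclotomics $\Phi_{p_i^{\alpha_i+1}},\dots$ need not divide $A$, so the equidistribution argument does not directly apply, and one must instead exploit that $B$ is then divisible by those cyclotomics and run the symmetric argument on $B$, deducing the bound on $A$ from $|A|\cdot|B\cap\Pi| $-type counting within the tiling. I would therefore organize the proof as: (1) translate to $x=0$; (2) split into $\alpha_i\ge\beta_i$ and $\alpha_i<\beta_i$; (3) in the first case use $\Phi_{p_i^{\gamma}}\mid A$ for $\gamma>\beta_i$ to force $\sum_{y\equiv \cdot}w_A$ to be constant along the $p_i$-direction at the relevant scale, yielding the equality-with-bound; (4) in the second case apply step (3) to $B$ and transfer via $A\oplus B=\ZZ_M$. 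I expect (4) to be where the bookkeeping is most delicate, essentially because it is the place where the roles of $A$ and $B$ must be interchanged and one has to track which cyclotomic polynomials land in which factor.
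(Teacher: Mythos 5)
Your proposal has a genuine gap at its central step. The claimed fact that $|A|=p_i^{\beta_i}p_j^{\beta_j}p_k^{\beta_k}$ forces $\Phi_{p_i^{\gamma}}\mid A$ for every $\gamma>\beta_i$ is false (and the sentence asserting it is internally inconsistent: you first say these cyclotomics are forced into $B(X)$ and then use their divisibility of $A(X)$). Condition (T1) of Theorem \ref{CM-thm} only fixes the \emph{number} of exponents $\gamma$ with $\Phi_{p_i^{\gamma}}\mid A$, namely $\beta_i$; it does not determine which ones. For example, with $n_i=2$ the standard factor $A=\{0,\,M/p_i^2,\,2M/p_i^2,\dots,(p_i-1)M/p_i^2\}$ tiles $\ZZ_M$, has $\beta_i=1$, $\beta_j=\beta_k=0$, and satisfies $\Phi_{p_i}\mid A$ but $\Phi_{p_i^2}\nmid A$, contradicting your claim. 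Consequently the equidistribution you invoke in step (3) is not available: divisibility by $\beta_i$ prime-power cyclotomics at \emph{unknown} exponents does not make the $p_i$-projection of $A$ constant on cosets of $p_i^{\alpha_i}\ZZ_{p_i^{n_i}}$.

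Moreover, your case split is inverted. When $\alpha_i\ge\beta_i$ the asserted bound satisfies $p_i^{\alpha_i}p_j^{\beta_j}p_k^{\beta_k}\ge|A|$ and the statement is trivial; the entire content of the lemma is the case $\alpha_i<\beta_i$, which you describe as only mildly different from trivial and then defer to ``running the symmetric argument on $B$ and transferring.'' As it stands that transfer is not an argument: the symmetric statement for $B$ yields an \emph{upper} bound on $|B\cap\Pi(y,p_i^{n_i-\alpha_i})|$, whereas the transfer (say via the injectivity consequence of the tiling, $|A\cap\Pi(x,p_i^{n_i-\alpha_i})|\cdot|B\cap\Pi(y,p_i^{n_i-\alpha_i})|\le|\Pi(x+y,p_i^{n_i-\alpha_i})|=p_i^{\alpha_i}p_j^{n_j}p_k^{n_k}$) requires a \emph{lower} bound $|B\cap\Pi(y,\cdot)|\ge p_j^{n_j-\beta_j}p_k^{n_k-\beta_k}$ for some $y$, and pigeonhole only gives $|B|/p_i^{n_i-\alpha_i}$, which is too small exactly when $\alpha_i<\beta_i$. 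Note also that this paper does not reprove the lemma but cites \cite{LaLo2}; a workable argument in the spirit of what you attempt is to project $A$ onto its $p_i$-coordinate, use (T1) only through the count $\beta_i$ of exponents $\gamma$ with $\Phi_{p_i^\gamma}\mid A$, and prove by induction on $n_i$ (splitting according to whether $\gamma=n_i$ is among these exponents) that a nonnegative multiset on $\ZZ_{p_i^{n_i}}$ with $\beta_i$ such cyclotomic divisors places at most $p_i^{\alpha_i-\beta_i}$ times its total mass in any coset of $p_i^{n_i-\alpha_i}\ZZ_{p_i^{n_i}}$; this treats all $\alpha_i$ uniformly and never requires knowing which exponents occur.
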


\begin{corollary}\label{planegrid}
Let $A\oplus B=\ZZ_M$, where $M=p_i^{n_i}p_j^{n_j}p_k^{n_k}$ and $|A|=p_i^{\beta_i}p_j^{\beta_j}p_k^{\beta_k}$ 
with $\beta_i>0$.
Suppose that for some $x\in \ZZ_M$ and $1\leq\alpha_0\leq n_i$
$$
|A\cap\Pi(x,p_i^{n_i-\alpha_0})|>p_i^{\beta_i-1}p_j^{\beta_j}p_k^{\beta_k},
$$
then $\Phi_{p_i^{n_i-\alpha}}|A$ for at least one $\alpha\in \{0,\ldots,\alpha_0-1\}$. 
\end{corollary}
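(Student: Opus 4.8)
\textbf{Proof plan for Corollary \ref{planegrid}.}

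The plan is to argue by contrapositive, combining the plane bound (Lemma \ref{planebound}) with the cyclotomic structure of a tiling. Suppose that $\Phi_{p_i^{n_i-\alpha}}\nmid A$ for every $\alpha\in\{0,1,\dots,\alpha_0-1\}$; I want to deduce the bound $|A\cap\Pi(x,p_i^{n_i-\alpha_0})|\leq p_i^{\beta_i-1}p_j^{\beta_j}p_k^{\beta_k}$. Since $A\oplus B=\ZZ_M$ forces $\Phi_{p_i^{n_i-\alpha}}$ to divide exactly one of $A$ and $B$ for each prime power $p_i^{n_i-\alpha}$ dividing $M$ (this is the remark after \eqref{poly-e2}, combined with the fact quoted after \eqref{replacement-factors} that for prime powers $s\mid M$ one has $\Phi_s\mid A\Leftrightarrow\Phi_s\nmid B$), the assumption says exactly that $\Phi_{p_i^{n_i-\alpha}}\mid B$ for all $\alpha\in\{0,\dots,\alpha_0-1\}$, i.e.\ $B(X)$ is divisible by the product $\prod_{\alpha=0}^{\alpha_0-1}\Phi_{p_i^{n_i-\alpha}}(X)$.

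\medskip\noindent
The key step is then to recognize that $A\cap\Pi(x,p_i^{n_i-\alpha_0})$, viewed as a subset of $\ZZ_M$, tiles the sub-grid $\Pi(x,p_i^{n_i-\alpha_0})$ with a complement derived from $B$. More precisely, the restriction of the tiling to the coset $\Pi(x,p_i^{n_i-\alpha_0})=x+p_i^{n_i-\alpha_0}\ZZ_M$ (which is a subgroup coset isomorphic to $\ZZ_{M/p_i^{n_i-\alpha_0}}\cong\ZZ_{p_i^{\alpha_0}p_j^{n_j}p_k^{n_k}}$) shows that $A\cap\Pi(x,p_i^{n_i-\alpha_0})$ has a tiling complement of size $p_i^{\alpha_0}p_j^{n_j}p_k^{n_k}/|A\cap\Pi(x,p_i^{n_i-\alpha_0})|$ inside that smaller group. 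The divisibility $\prod_{\alpha=0}^{\alpha_0-1}\Phi_{p_i^{n_i-\alpha}}\mid B$ passes to this smaller group and translates into an upper bound on $|A\cap\Pi(x,p_i^{n_i-\alpha_0})|$: because $\Phi_{p_i^{n_i}},\dots,\Phi_{p_i^{n_i-\alpha_0+1}}$ all divide the complement, those cyclotomic factors are ``used up'' by $B$ and cannot divide the $A$-part, which caps the cardinality at $p_i^{\beta_i-1}p_j^{\beta_j}p_k^{\beta_k}$ — one full power of $p_i$ below the plane bound $p_i^{\beta_i}p_j^{\beta_j}p_k^{\beta_k}$ from Lemma \ref{planebound}. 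Concretely, I would run this by projecting $A$ modulo $p_i^{n_i-\alpha_0}$ together with the coordinates in the $p_j,p_k$ directions, and count using $A(1)=|A|$ together with the cyclotomic constraints; alternatively, one can invoke Lemma \ref{planebound} applied after passing to $B$: the $\Phi_{p_i^{n_i-\alpha}}$-divisibility of $B$ for $\alpha=0,\dots,\alpha_0-1$ is equivalent to $p_i\mid |B\bmod p_i^{\alpha_0+j}\text{-type quotients}|$ in a way that, by the counting identity $|A\cap\Pi(x,p_i^{n_i-\alpha_0})|\cdot(\text{local complement size})=p_i^{\alpha_0}p_j^{n_j}p_k^{n_k}$ and the factorization of the local complement size into the $\Phi_{p_i^\gamma}(1)=p_i$ contributions, forces the stated inequality.

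\medskip\noindent
The main obstacle I anticipate is making the passage to the sub-grid $\Pi(x,p_i^{n_i-\alpha_0})$ fully rigorous: one needs that the intersection of a tiling with a subgroup coset is again a tiling of that coset (this is standard — it is essentially the content of Theorem \ref{subgroup-reduction}'s setup, or can be seen directly since $A\oplus B=\ZZ_M$ and the coset is $B$-invariant after grouping $B$ appropriately), and then one must track exactly which cyclotomic polynomials survive in the smaller group and how they constrain cardinalities. Since this is quoted from \cite[Corollary 4.4]{LaLo2}, the cleanest route is to cite that proof; in a self-contained write-up I would use the counting argument via $A(1)$ and the elementary fact that for a set $S\subset\ZZ_{p^a q}$ with $\Phi_{p^a}\mid S$ one has $p\mid |S|$, iterated across the relevant scales, combined directly with Lemma \ref{planebound} to get the factor-of-$p_i$ improvement.
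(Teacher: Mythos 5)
Your contrapositive setup, and the appeal to the fact that a prime power cyclotomic $\Phi_{p_i^\gamma}$ with $p_i^\gamma\mid M$ divides exactly one of $A$ and $B$, are fine; but the step you yourself label as the key one is not correct, and the rest of the argument leans on it. It is false in general that $A\cap\Pi(x,p_i^{n_i-\alpha_0})$ tiles the coset $\Pi(x,p_i^{n_i-\alpha_0})$ with a complement ``derived from $B$'': the points of that coset are tiled by pairs $(a,b)$ with $a$ ranging over all of $A$, not only over $A\cap\Pi(x,p_i^{n_i-\alpha_0})$. Already in $\ZZ_{12}$, the tiling $\{0,4,8\}\oplus\{0,1,2,3\}=\ZZ_{12}$ has the coset $1+2\ZZ_{12}$ containing no element of $A$ at all, yet that coset is tiled (by even elements of $A$ paired with odd elements of $B$); restriction of a tiling to a coset of a subgroup $H$ is a tiling by $A\cap(\hbox{coset})$ only under an extra hypothesis such as $B\subset H$ (this is exactly the situation exploited in Theorem \ref{subgroup-reduction}, where $A\subset p_\nu\ZZ_M$ plays that role), which is not available here. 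Consequently your ``counting identity'' $|A\cap\Pi|\cdot(\hbox{local complement size})=p_i^{\alpha_0}p_j^{n_j}p_k^{n_k}$ is unjustified, and the assertion that the top cyclotomic factors being ``used up'' by $B$ ``caps'' $|A\cap\Pi|$ is not an argument: divisibility of $B$ by $\Phi_{p_i^{n_i}},\dots,\Phi_{p_i^{n_i-\alpha_0+1}}$ only says that $B$ is equidistributed among the planes $\Pi(\cdot,p_i^{n_i})$ inside each plane $\Pi(\cdot,p_i^{n_i-\alpha_0})$; it gives no lower bound on $|B\cap\Pi(y,p_i^{n_i-\alpha_0})|$ and hence, by itself, no nontrivial upper bound on $|A\cap\Pi(x,p_i^{n_i-\alpha_0})|$.

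What the proof actually needs is $A$-side divisibility at a \emph{low} level, together with an equidistribution (not a cardinality-divisibility) statement. By (T1), which holds for every tile, $|A|=\prod_{s\in S_A}\Phi_s(1)$ and $\Phi_{p^a}(1)=p$, so exactly $\beta_i\geq 1$ of the prime powers $p_i^\gamma$, $1\leq\gamma\leq n_i$, satisfy $\Phi_{p_i^\gamma}\mid A$. Under the contrapositive assumption none of these has $\gamma\geq n_i-\alpha_0+1$, so some $\gamma\leq n_i-\alpha_0$ has $\Phi_{p_i^\gamma}\mid A$. Reducing $A$ modulo $p_i^\gamma$, this divisibility means that for each $y$ the counts $|A\cap\Pi(y+tp_i^{\gamma-1},p_i^\gamma)|$, $t=0,\dots,p_i-1$, are all equal, hence $|A\cap\Pi(y,p_i^\gamma)|\leq |A|/p_i$ for every $y$; since $\Pi(x,p_i^{n_i-\alpha_0})\subset\Pi(x,p_i^\gamma)$, this gives $|A\cap\Pi(x,p_i^{n_i-\alpha_0})|\leq p_i^{\beta_i-1}p_j^{\beta_j}p_k^{\beta_k}$, which is the desired contrapositive. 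Your sketch gestures at ``projecting mod $p_i^{n_i-\alpha_0}$ and counting with $A(1)$'', but it never isolates this equidistribution mechanism and never pivots from the $B$-side divisors to the needed low-level divisor of $A$; the fact you propose to iterate ($\Phi_{p^a}\mid S$ implies $p\mid|S|$) controls cardinalities, not the distribution of $A$ among planes, so combining it with Lemma \ref{planebound} does not produce the factor-of-$p_i$ gain. (Also note that the plane bound at scale $p_i^{n_i-\alpha_0}$ is $p_i^{\alpha_0}p_j^{\beta_j}p_k^{\beta_k}$, not $p_i^{\beta_i}p_j^{\beta_j}p_k^{\beta_k}$.)
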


Lemma \ref{2d-cyclo} below is a simple version of the de Bruijn-R\'edei-Schoenberg theorem for cyclic groups $\ZZ_N$ when $N$ has at most two distinct prime factors. This has been known in the literature, see \cite{deB}, \cite[Theorem 3.3]{LL}. The version here is from \cite[Lemma 4.7]{LaLo2}. 

\begin{lemma}\label{2d-cyclo} {\bf (Cyclotomic divisibility for 2 prime factors)}
Let $M=p_i^{n_i}p_j^{n_j}p_k^{n_k}$, and let $ A\in \mathcal{M}(\ZZ_N)$,  for some $N\mid M$ such that $N=p_j^{\alpha_j}p_k^{\alpha_k}$ has only two distinct prime factors. Then:

\smallskip
(i) $\Phi_N|A$ if and only if $A$ is a linear combination of $N$-fibers in the $p_j$ and $p_k$ direction with non-negative integer coefficients.

\smallskip
(ii) Let $\Lambda$ be a $D(N)$-grid. Assume that $\Phi_N|A$, and that there
exists $c_0\in\NN$ such that $\bbA^N_N[x]\in\{0,c_0\}$ for all $x\in\Lambda$. 
Then $A\cap\Lambda$ is $N$-fibered in either the $p_j$ or the $p_k$ direction.
\end{lemma}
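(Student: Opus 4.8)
For part (i), the plan is to recognize the statement as the classical de Bruijn--R\'edei--Schoenberg theorem and treat the nontrivial half accordingly. The ``if'' direction is a one-line cyclotomic computation: an $N$-fiber in the $p_\nu$ direction ($\nu\in\{j,k\}$) with multiplicity $c$ has mask polynomial $c X^a \frac{X^N-1}{X^{N/p_\nu}-1}$ modulo $X^N-1$, and $\Phi_N$ divides $\frac{X^N-1}{X^{N/p_\nu}-1}$ since $\Phi_N\mid X^N-1$ while $\Phi_N\nmid X^{N/p_\nu}-1$ (because $N\nmid N/p_\nu$); hence any integer combination of such fibers is divisible by $\Phi_N$. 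For the ``only if'' direction I would argue by induction on $\alpha_j+\alpha_k$. The base case is a prime power $N=p^{\,a}$: if $\Phi_{p^a}\mid A$ with $\deg A<p^a$, write $A(X)=G(X)\Phi_{p^a}(X)$ with $\deg G<p^{a-1}$; since $\Phi_{p^a}(X)=1+X^{p^{a-1}}+\dots+X^{(p-1)p^{a-1}}$ the $p$ translated copies of $G$ occupy disjoint coefficient blocks, so each coefficient of $G$ equals a coefficient of the non-negative multiset $A$, and $A$ is thereby displayed as a non-negative integer combination of cosets of the order-$p$ subgroup (i.e.\ of $N$-fibers). The two-prime inductive step is the delicate point, and there I would invoke the vanishing-sums argument of de Bruijn / Lam--Leung (\cite{deB}, \cite[Theorem 3.3]{LL}) as a black box rather than reprove it.

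For part (ii), I would deduce everything from (i) by a short combinatorial argument on the single grid $\Lambda$. Since $\Phi_N\mid A$ we have $\Phi_N\mid(A\cap\Lambda)$, so by part (i) the multiset $A\cap\Lambda$ is a non-negative integer combination of $N$-fibers in the $p_j$ and $p_k$ directions. Any such fiber occurring with positive coefficient meets $\Lambda$, and since $D(N)=p_j^{\alpha_j-1}p_k^{\alpha_k-1}$ divides both $N/p_j$ and $N/p_k$, every $N$-fiber in the $p_j$ or $p_k$ direction that meets $\Lambda$ is entirely contained in $\Lambda$. Hence
\[
A\cap\Lambda=\sum_{l} d_l\, C_l+\sum_{r} c_r\, R_r,\qquad c_r,d_l\in\ZZ_{\ge 0},
\]
where $C_1,\dots,C_{p_k}$ are the $p_j$-direction fibers inside $\Lambda$ and $R_1,\dots,R_{p_j}$ the $p_k$-direction fibers inside $\Lambda$. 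Each of these two families partitions $\Lambda$, and each $C_l$ meets each $R_r$ in exactly one point (the steps $N/p_j$ and $N/p_k$ have least common multiple $N$), so $\Lambda$ is parametrized by pairs $(r,l)$ and the weight of $A$ at the point indexed by $(r,l)$ equals $c_r+d_l$; by hypothesis this lies in $\{0,c_0\}$. If all $c_r$ coincide, the weight depends only on $l$, so $A\cap\Lambda$ is a union of the fibers $C_l$, all with multiplicity $c_0$, i.e.\ $N$-fibered in the $p_j$ direction. Otherwise pick $r_0$ with $c_{r_0}=\max_r c_r\ge 1$; then $c_{r_0}+d_l\ge c_{r_0}>0$ for every $l$, forcing $c_{r_0}+d_l=c_0$, so $d_l=c_0-c_{r_0}$ is independent of $l$. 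Then $\sum_l d_l\,C_l=(c_0-c_{r_0})\sum_r R_r$, hence $A\cap\Lambda=\sum_r\bigl(c_r+c_0-c_{r_0}\bigr) R_r$ is a union of $p_k$-direction fibers with non-negative multiplicities, whose nonzero multiplicities are again $c_0$ by the $\{0,c_0\}$ hypothesis, so $A\cap\Lambda$ is $N$-fibered in the $p_k$ direction. The degenerate cases $A\cap\Lambda=\emptyset$ and $A\cap\Lambda=c_0\Lambda$ are trivially fibered.

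The main obstacle is the ``only if'' half of part (i), the de Bruijn theorem, which I would use as an established fact; granting it, part (ii) is elementary, the points needing care being that $N$-fibers meeting a $D(N)$-grid lie inside it, that the two fiber families cross transversally so the weight factors as $c_r+d_l$, and that in each case the resulting decomposition has the uniform multiplicity $c_0$ required by the definition of ``$N$-fibered''.
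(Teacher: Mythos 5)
Your proposal is correct. Note that the paper itself gives no proof of this lemma: it is imported verbatim from \cite[Lemma 4.7]{LaLo2} and attributed to the de Bruijn--R\'edei--Schoenberg circle of results (\cite{deB}, \cite[Theorem 3.3]{LL}), so the only meaningful comparison is with that background. For part (i) you do essentially what the paper does, namely delegate the hard ``only if'' direction to the classical two-prime vanishing-sums theorem (your prime-power base case via the disjoint coefficient blocks of $G(X)\Phi_{p^a}(X)$ is fine, and the ``if'' direction is the standard observation that $\Phi_N$ divides $(X^N-1)/(X^{N/p_\nu}-1)$). For part (ii) you supply a self-contained elementary argument that the paper does not spell out: localizing to the grid uses the standard fact, stated in Section \ref{cuboid-section}, that $\Phi_N|A$ forces $\Phi_N\,|\,(A\cap\Lambda)$ for every $D(N)$-grid $\Lambda$; then the key points you handle correctly are that fibers with positive coefficient lie inside $\Lambda$ (no cancellation since coefficients are non-negative and $D(N)$ divides both $N/p_j$ and $N/p_k$), that $\langle N/p_j\rangle\oplus\langle N/p_k\rangle=\langle D(N)\rangle$ so the two fiber families cross transversally and the weight at the point indexed by $(r,l)$ is exactly $c_r+d_l$, and that the $\{0,c_0\}$ hypothesis then forces either all $c_r$ equal (fibered in the $p_j$ direction) or all $d_l$ equal (fibered in the $p_k$ direction) with uniform multiplicity $c_0$. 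The only implicit hypothesis worth flagging is that $A$ must be a genuine multiset with non-negative weights for the non-negativity conclusion in (i) (otherwise $-F_j$ is a counterexample); this is how the lemma is used throughout the paper, e.g.\ via the condition $\bbA^N_N[x]\in\{0,c_0\}$ in (ii), so it is a presupposition of the statement rather than a gap in your argument.
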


The following special case will be used several times.

\begin{corollary}\label{doublediv}
Let $A\subset\ZZ_M$, where $M=p_i^{2}p_j^{2}p_k^{2}$. Assume that $\Phi_{N_i}\Phi_{M_i}|A$, and that there
exists $c_0\in\NN$ such that 
\begin{equation}\label{binary-cond-2}
\bbA^{N_i}_{N_i}[x]\in\{0,c_0\}\hbox{ for all }x\in\ZZ_M.
\end{equation}
(Note that (\ref{binary-cond-2}) is satisfied with $c_0=1$ if $M/p_i\not\in\Div(A)$, and with $c_0=p_i$ if $A$ is $M$-fibered in the $p_i$ direction.)
Then $A$ is a union of pairwise disjoint $N_i$-fibers in the $p_j$ and $p_k$ directions, each of multiplicity $c_0$.
\end{corollary}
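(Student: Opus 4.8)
\textbf{Proof proposal for Corollary \ref{doublediv}.}

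The plan is to reduce the statement to a grid-by-grid application of Lemma \ref{2d-cyclo}, exploiting the fact that the relevant divisors involve only the two primes $p_j,p_k$ once we restrict to an appropriate family of subgroups. First I would pass to the quotient setting: since $\Phi_{N_i}\Phi_{M_i}\mid A$, the mask polynomial $A(X)$ is divisible by both $\Phi_{M/p_i}$ and $\Phi_{M/p_i^2}$, i.e.\ by $\Phi_{p_i^2 p_j p_k}$ and $\Phi_{p_i p_j p_k}$ in the notation $M=p_i^2p_j^2p_k^2$, $N_i=M/p_i=p_ip_j^2p_k^2$. The natural move is to fix a $D(N_i)$-grid $\Lambda=\Lambda(x_0,D(N_i))$ inside $\ZZ_M$ and study $A\cap\Lambda$ after reduction mod $N_i$; on such a grid the only cyclotomic polynomial of relevance is $\Phi_{N_i}$, and $N_i/D(N_i)=p_ip_jp_k$ has three primes, which is one too many for Lemma \ref{2d-cyclo}. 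So the key point is to kill the $p_i$-direction first.

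Here is the step I expect to do the real work. The hypothesis (\ref{binary-cond-2}) says $\bbA^{N_i}_{N_i}[x]\in\{0,c_0\}$ for all $x$; combined with $\Phi_{N_i}\mid A$ this is exactly the situation of an $N_i$-cuboid being null with binary box values. I would first show that $A$, viewed mod $N_i$, is in fact $N_i$-fibered (as a weighted multiset, with all fibers of multiplicity $c_0$) in the $p_i$ direction \emph{after pairing up the points correctly}; more precisely, that on each line $\ell_i$ in the $p_i$ direction the weights $\bbA^{N_i}_{N_i}[\cdot]$ are constant equal to $0$ or $c_0$ along the whole line. This follows because $\Phi_{p_i}\mid \Phi_{N_i}$-divisibility forces the $p_i$-direction line sums to be balanced, and the binary constraint (\ref{binary-cond-2}) then forces them to be either identically $0$ or identically $c_0$: an $N_i$-cuboid in the $p_i$ direction alone has vertices $\{c,c+d_i\}$ with weights $+1,-1$, nullity gives $\bbA^{N_i}_{N_i}[c]=\bbA^{N_i}_{N_i}[c+d_i]$, and iterating across the $p_i-1$ translates of $c$ by $N_i/p_i$ gives constancy along $\ell_i$. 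Thus $A\bmod N_i = c_0\,(A'' * F_i')$ where $F_i'$ is the $N_i$-fiber in the $p_i$ direction and $A''\subset \ZZ_{N_i/p_i}=\ZZ_{p_j^2p_k^2}$ is a genuine set. Then $\Phi_{M_i}\mid A$, i.e.\ $\Phi_{p_j^2p_k^2}\mid A$ (note $M_i = p_j^2 p_k^2$ and $M/p_i^2$ have the same prime-power structure in the $j,k$ coordinates), descends to $\Phi_{p_j^2p_k^2}\mid A''$ as a polynomial in $\ZZ_{p_j^2p_k^2}$; since this modulus has only two prime factors, Lemma \ref{2d-cyclo}(i) applies and writes $A''$ as a non-negative integer combination of $p_j^2p_k^2$-fibers in the $p_j$ and $p_k$ directions. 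Because $A''$ is a genuine set (coefficients in $\{0,1\}$), these fibers must be pairwise disjoint. Tensoring back with $c_0 F_i'$ turns each $p_j^2p_k^2$-fiber into an $N_i$-fiber in the same direction of multiplicity $c_0$, giving the desired decomposition of $A$.

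I would close by checking the two parenthetical remarks, which are essentially sanity checks: if $M/p_i\notin\Div(A)$ then no two points of $A$ differ by exactly $M/p_i = N_i$-worth in the $p_i$ direction while matching elsewhere, forcing $\bbA^{N_i}_{N_i}[x]\le 1$, so $c_0=1$ works; and if $A=A'*F_i$ then $\bbA^{N_i}_{N_i}[x]\in\{0,p_i\}$ directly. The main obstacle is the middle step—converting the binary-box hypothesis plus $\Phi_{N_i}$-divisibility into honest $p_i$-directional fibering of the weighted multiset $A\bmod N_i$, and in particular verifying that the set $A''$ obtained after dividing out by $c_0 F_i'$ really has $\{0,1\}$ coefficients so that the non-negative combination from Lemma \ref{2d-cyclo}(i) is forced to be a disjoint union rather than merely an overlapping one. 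One has to be slightly careful that the two hypotheses $\Phi_{N_i}\mid A$ and (\ref{binary-cond-2}) are used in the right order: divisibility first to get balance along $p_i$-lines, the binary constraint second to upgrade balance to constancy.
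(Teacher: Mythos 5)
Your overall plan (reduce mod $N_i$ and invoke Lemma \ref{2d-cyclo}) is the right general direction, but the step you flag as doing "the real work" is false. You claim that $\Phi_{N_i}\mid A$ together with (\ref{binary-cond-2}) forces $\bbA^{N_i}_{N_i}$ to be constant along each line in the $p_i$ direction, arguing via a one-dimensional "cuboid" $\{c,c+d_i\}$ with $(d_i,N_i)=N_i/p_i$. But $\Phi_{N_i}$-divisibility is equivalent to nullity only for full $N_i$-cuboids, which here are three-dimensional, $\Delta(X)=X^c(1-X^{d_i})(1-X^{d_j})(1-X^{d_k})$; it gives no information about single differences in the $p_i$ direction. (Nullity for the one-dimensional type $(N_i,(1,0,0),1)$ would require $\Phi_m\mid A$ for \emph{every} $m\mid N_i$ with $p_i\mid m$, far more than $\Phi_{N_i}\Phi_{M_i}$.) A concrete counterexample to your intermediate claim: let $A\subset\ZZ_M$ be a single $N_i$-fiber in the $p_j$ direction. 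Then $\Phi_{N_i}\Phi_{M_i}\mid A$ and (\ref{binary-cond-2}) holds with $c_0=1$, and $A$ satisfies the conclusion of the corollary, yet $\bbA^{N_i}_{N_i}$ along a $p_i$-line through a fiber point takes the values $1,0,\dots,0$, so your proposed factorization $A\bmod N_i=c_0\,(A''*F_i')$ fails. Indeed that factorization is strictly stronger than the corollary's conclusion, so no correct argument can establish it.

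The way the paper actually bridges the gap is different in kind: from $\Phi_{N_i}\Phi_{M_i}\mid A$ one gets nullity for the \emph{two-dimensional} cuboid type $\calt=(N_i,(0,1,1),1)$ (every $m\mid N_i$ not dividing $N_i/p_j$ or $N_i/p_k$ is $N_i$ or $M_i$, and those cyclotomics divide $A$), which localizes the divisibility: $\Phi_{M_i}$ divides $(A\cap\Lambda)(X)$ for every $N_i/p_jp_k$-grid $\Lambda$ in $\ZZ_{N_i}$. On each such two-prime grid, Lemma \ref{2d-cyclo}(ii) together with (\ref{binary-cond-2}) yields an $N_i$-fiber in the $p_j$ or $p_k$ direction of multiplicity $c_0$, and taking the disjoint union over grids gives the corollary. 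So the two hypotheses are used jointly to localize to $p_j$--$p_k$ grids, not sequentially to fiber in the $p_i$ direction first; replacing your middle step by this localization would repair the argument.
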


\begin{proof}
Consider $A$ modulo $N_i$. Without loss of generality, we may assume that $c_0=1$. 
The assumption that $\Phi_{N_i}\Phi_{M_i}|A$ implies that 
for any $N_i/p_jp_k$-grid $\Lambda$ in $\ZZ_{N_i}$, $\Phi_{M_i}$ divides $(A\cap\Lambda)(X)$. By  Lemma \ref{2d-cyclo}, $A\cap\Lambda$ mod $N_i$ is $N_i$-fibered in one of the $p_j$ and $p_k$ directions. Writing $\ZZ_{N_i}$ as a union of pairwise disjoint $N_i/p_jp_k$-grids, we get the conclusion of the corollary.
\end{proof}


\section{Structure on unfibered grids}\label{unfibered-structure-section}


Throughout this section, we will use the following notation. Let $M=p_i^{n_i}p_j^{n_j}p_k^{n_k}$, and let
$\Lambda$ be a fixed $D(M)$-grid such that $A\cap\Lambda\neq\emptyset$. 
We identify $\Lambda$ with $\ZZ_{p_i}\oplus \ZZ_{p_j} \oplus \ZZ_{p_k}$, and represent
each point $x\in \Lambda$ as $(\lambda_ix,\lambda_j x, \lambda_k x)$ in the implied coordinate system. Assume that $A\cap\Lambda$ is not $M$-fibered in any direction (we will call such grids {\em unfibered}). 
In Sections 5 and 6 of \cite{LaLo2}, we classified the types of structure that $A$ may have on such grids. Below, we outline those results and identify the cases that were already resolved in \cite{LaLo2} for all $M$ as above, including the even case. The remaining cases with $M$ even will be resolved here in Section \ref{res-boxes}.


\subsection{Basic structure results}\label{basic structure}
We start with the case when $A\cap\Lambda$ is a union of pairwise disjoint fibers. Since we are assuming that $A\cap\Lambda$ is unfibered, it must contain fibers in at least two different directions. 
In that case, 
$A\cap\Lambda$ contains the following structure.

\begin{definition}\label{corner}\cite[Definition 5.4]{LaLo2}
Suppose that $ A\subset \ZZ_M$, and let $\Lambda$ be a $D(M)$-grid.
	
\smallskip
	
(i) We say that $A\cap\Lambda$ contains a {\em $p_i$ corner} if there exist $a,a_i\in A\cap\Lambda$
with $(a-a_i,M)=M/p_i$ satisfying  
$$
A\cap(a*F_j*F_k)= a*F_j,\ \ A\cap(a_i*F_j*F_k)= a_i*F_k.
$$

\smallskip
(ii) We say that $A\cap\Lambda$ contains a
{\em $p_i$ extended corner} if there exist $a,a_i\in A\cap\Lambda$
such that $(a-a_i,M)=M/p_i$ and 
\begin{itemize}
\item $A\cap(a*F_j*F_k)$ is $M$-fibered in the $p_j$ direction but not in the $p_k$ direction,
\item $A\cap(a_i*F_j*F_k)$ is $M$-fibered in the $p_k$ direction but not in the $p_j$ direction.
\end{itemize}
	
\end{definition}

\begin{proposition}\label{prop-ecorner}\cite[Proposition 5.5]{LaLo2}
	Let $D=D(M)$, and let $\Lambda$ be a $D$-grid. Assume that $A\cap\Lambda$ is a union of disjoint $M$-fibers, but is not fibered in any direction. Then $A\cap\Lambda$ contains a $p_\nu$ extended corner for some $\nu\in\{i,j,k\}$.
\end{proposition}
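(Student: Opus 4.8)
The plan is to argue entirely inside the grid $\Lambda$, identified via its coordinates $(\pi_i,\pi_j,\pi_k)$ with $\ZZ_{p_i}\oplus\ZZ_{p_j}\oplus\ZZ_{p_k}$. In this picture an $M$-fiber in the $p_\nu$ direction is a full affine line parallel to the $\nu$-th axis; for $\nu\in\{i,j,k\}$ and $c\in\ZZ_{p_\nu}$ the set $\Pi^\nu_c:=\{x\in\Lambda:\pi_\nu(x)=c\}$ is a two-dimensional grid in the two remaining coordinates, and $a*F_j*F_k=\Pi^i_{\pi_i(a)}$. Let $A\cap\Lambda$ be the given pairwise disjoint union of $M$-fibers. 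Since $A\cap\Lambda$ is not fibered in any single direction, these fibers are not all parallel, so fibers in at least two of the three directions occur among them; after relabelling $\{i,j,k\}$ we may assume that both a $p_j$-fiber and a $p_k$-fiber occur.

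The proof rests on one elementary observation using only the disjointness, which I will call $(\star)$: for every $\nu$ and $c$, all of the chosen fibers contained in $\Pi^\nu_c$ point in the same direction, because two full lines in distinct directions of a two-dimensional grid always intersect. Consider first the case where no $p_i$-fiber occurs. Then every chosen fiber lies in some $p_i$-plane, so by $(\star)$ each $A\cap\Pi^i_c$ is a (possibly empty) union of $p_j$-fibers or a union of $p_k$-fibers. As $A\cap\Lambda$ is not $M$-fibered in the $p_j$ direction, some $A\cap\Pi^i_{c_1}$ fails to be a union of $p_j$-fibers, hence is a nonempty union of $p_k$-fibers that is not all of $\Pi^i_{c_1}$; symmetrically some $c_2\ne c_1$ gives a nonempty union of $p_j$-fibers that is not all of $\Pi^i_{c_2}$. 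A proper nonempty union of lines in one direction of a two-dimensional grid is not a union of lines in the perpendicular direction, so $A\cap\Pi^i_{c_2}$ is $M$-fibered in the $p_j$ direction but not the $p_k$ direction, and $A\cap\Pi^i_{c_1}$ the other way around. Finally, reading a $p_j$-fiber $F\subset A\cap\Pi^i_{c_2}$ and a $p_k$-fiber $G\subset A\cap\Pi^i_{c_1}$ off in the shared $(\pi_j,\pi_k)$-plane, they meet in a pair $(s,t)$; the points $a\in\Pi^i_{c_2}$ and $a_i\in\Pi^i_{c_1}$ with $(\pi_j,\pi_k)$-coordinates $(s,t)$ both lie in $A$, and since they agree in the $p_j$- and $p_k$-coordinates but differ in the $p_i$-coordinate we get $(a-a_i,M)=M/p_i$. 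Thus $(a,a_i)$ is a $p_i$ extended corner in the sense of Definition \ref{corner}.

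For the remaining case, in which fibers in all three directions occur, the plan is again to classify the $p_i$-planes by $(\star)$ and to use that a chosen $p_j$-fiber and a chosen $p_k$-fiber, being disjoint, must lie in distinct $p_i$-planes $\Pi^i_{c_2}$ and $\Pi^i_{c_1}$, so that the alignment step above still produces a pair $(a,a_i)$ with $(a-a_i,M)=M/p_i$. I expect the main obstacle to be precisely here: the planes $\Pi^i_{c_1}$ and $\Pi^i_{c_2}$ now also meet the $p_i$-fibers and so acquire extra points, so verifying the fibering conditions of Definition \ref{corner}(ii) for these planes is no longer automatic. Handling this requires either invoking the additional structure in force in this section (which excludes the more degenerate "diagonal" configurations) or a finer combinatorial analysis of how the $p_i$-fibers can cross the planes carrying the $p_j$- and $p_k$-fibers, possibly after re-choosing which of the three directions plays the role of $p_i$.
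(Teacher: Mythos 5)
Your reduction to the grid picture and your two-direction case are correct: when the decomposition contains no $p_i$-fibers, disjointness forces all fibers inside a fixed plane $\Pi^i_c$ to be parallel, unfiberedness of $A\cap\Lambda$ in each of the $p_j$ and $p_k$ directions produces one plane that is a proper nonempty union of $p_j$-fibers and another that is a proper nonempty union of $p_k$-fibers, and the alignment step yields $a,a_i$ with $(a-a_i,M)=M/p_i$, i.e.\ a $p_i$ extended corner. The problem is the case you leave open, and it is not a technicality: it cannot be closed by ``a finer combinatorial analysis'' of the disjoint fiber decomposition alone, because with only the hypotheses you actually use the statement is false. Take one fiber in each direction in generic position, say $L^{(i)}=\{(s,y_0,z_0)\}$, $L^{(j)}=\{(x_1,t,z_1)\}$, $L^{(k)}=\{(x_2,y_2,u)\}$ with $z_1\neq z_0$, $y_2\neq y_0$, $x_2\neq x_1$. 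These three fibers are pairwise disjoint, their union is not fibered in any direction, and every plane perpendicular to any of the three axes meets the union either in a single point or in one full fiber together with exactly one extra point whose transversal fiber inside that plane is not contained in the set; hence no plane is $M$-fibered in a single direction, and no extended corner exists in any direction. Your own alignment step even produces the pair $(a,a_i)$ here, but the fibering conditions of Definition \ref{corner}(ii) fail for both planes.

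So the three-direction case is exactly where the hypotheses you never invoke must enter. Proposition \ref{prop-ecorner} is quoted from \cite{LaLo2}, where it is proved under the standing assumptions of that section — in particular that $A$ is one factor of a tiling $A\oplus B=\ZZ_M$ with $\Phi_M|A$ and the prescribed cardinalities — and it is precisely this structure (plane bounds, divisor exclusion against $B$, saturating-set information, and so on) that rules out configurations such as the one above or forces the required single-direction fibered planes. Your proposal treats the proposition as a purely set-theoretic fact about disjoint unions of lines in $\ZZ_{p_i}\oplus\ZZ_{p_j}\oplus\ZZ_{p_k}$; the counterexample shows there is no such proof, so the ``additional structure in force in this section'' that you gesture at is not an optional shortcut but the essential missing ingredient, and identifying how it excludes or resolves the mixed three-direction configurations is the real content of the remaining case.
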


This case was resolved entirely in \cite{LaLo2}, including the case when $M$ is even.

\begin{theorem}\label{cornerthm}\cite[Theorem 8.1]{LaLo2}
Assume that $A\oplus B=\ZZ_M$, where $M=p_i^{n_i}p_j^{2}p_k^{2}$, $|A|=p_ip_jp_k$, 
and $\Phi_M|A$. Moreover, assume that $A$ contains a $p_i$ extended corner in the sense of Definition \ref{corner} (ii) on a $D(M)$-grid $\Lambda$.
Then the tiling $A\oplus B=\ZZ_M$ is T2-equivalent to $\Lambda\oplus B=\ZZ_M$ via fiber shifts.
By Corollary \ref{get-standard}, both $A$ and $B$ satisfy (T2).
\end{theorem}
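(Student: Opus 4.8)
\textbf{Proof proposal for Theorem \ref{cornerthm}.}

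The plan is to argue by a sequence of reductions that peel off $M$-fibers from $A$ until what remains is precisely a standard tiling complement equal to $\Lambda$, all moves being fiber shifts (hence T2-equivalences by Lemma \ref{fibershift}). First I would fix the $D(M)$-grid $\Lambda$ carrying the $p_i$ extended corner, with witnesses $a,a_i\in A\cap\Lambda$ at distance $M/p_i$, so that $A\cap(a*F_j*F_k)$ is $M$-fibered in the $p_j$ direction only and $A\cap(a_i*F_j*F_k)$ is $M$-fibered in the $p_k$ direction only. Since $|A|=p_ip_jp_k$ and each plane $a*F_j*F_k$, $a_i*F_j*F_k$ already contains a full $M$-fiber of $p_jp_k$ points, a counting argument (via Lemma \ref{planebound} applied in the $p_i$ direction, together with $\Phi_M|A$ forcing $A\cap\Lambda$ to be a sum of $M$-fibers on the grid by the de Bruijn--R\'edei--Schoenberg classification) should show that $A$ meets each of the $p_i$ parallel planes $\Pi(\cdot,p_i^{n_i-1})\cap\Lambda$ in exactly $p_jp_k$ points, i.e. $A\cap\Lambda$ is a disjoint union of exactly $p_i$ $M$-fibers, one in each plane, and the two exhibited fibers are among them.

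The heart of the argument is to show that every one of these $M$-fibers in $A\cap\Lambda$ can be shifted, one at a time, so that they all pass through a common point, thereby rebuilding $\Lambda$ itself. For this I would use saturating sets and the cofiber machinery. The idea: pick the $p_j$-direction fiber $F\subset a*F_j*F_k\subset A$ and the $p_k$-direction fiber $F_i\subset a_i*F_j*F_k\subset A$; because they point in different directions inside adjacent $p_i$-planes, for a suitable $x\in\Lambda\setminus A$ lying in the ``missing corner'' one finds a flat-corner configuration as in Lemma \ref{flatcorner} (after first checking the hypothesis $\{D(M)\mid m\mid M\}\cap\Div(B)=\emptyset$, which follows from $\Phi_M|A$ plus the cuboid criterion and divisor exclusion since $A\cap\Lambda$ already realizes all the top divisors $M/p_\nu$). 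Lemma \ref{flatcorner} then yields $A_x\subset\ell_i(x)$ and, via Lemma \ref{1dim_sat-cor}, a $(1,2)$-cofibered structure in the $p_i$ direction with an $M$-cofiber in $A$ at distance $M/p_i^2$ from $x$. Applying the Fiber-Shifting Lemma \ref{fibershift} repeatedly, each fiber of $A\cap\Lambda$ (and then, by the same reasoning applied on the other $D(M)$-grids using Lemma \ref{cyclo-grid} to propagate divisibility, every fiber of $A$) is moved into alignment, and after finitely many such shifts $A$ is transformed into a set $A'$ with $A'\cap\Lambda=\Lambda$ and $A'$ a disjoint union of $D(M)$-grids' worth of fibers — which one recognizes as $\Lambda$ written as $A^\flat$, using that $|\Lambda|=D(M)\cdot\text{(something)}$... more precisely one checks $A'=\Lambda$ by cardinality ($|\Lambda|=p_ip_jp_k=|A|$) once all fibers coincide on $\Lambda$.

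The main obstacle I expect is the bookkeeping that guarantees the fiber shifts can be performed \emph{consistently} and \emph{terminate}: each shift is legal only if the relevant cofibered structure persists, and shifting one fiber could in principle destroy the configuration needed to shift the next. Handling this requires choosing a good order (e.g. shifting the $p_k$-fiber in $a_i$'s plane first to convert the extended corner into an honest corner, then sweeping the remaining $p_i-2$ planes), and verifying at each stage that $B$ is still $M/p_i$-fibered in the $p_i$ direction so that the cofibered structure is available — this is where the hypothesis $p_j^{n_j}=p_k^{n_k}=p^2$... i.e. $n_j=n_k=2$, and the constraint $M/p_i\in\Div(A\cap\Lambda)$, get used. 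A secondary technical point is ruling out the degenerate possibility that the extended corner's two ``incomplete'' planes force $B$ to be $M$-fibered in the $p_i$ direction, which would block the reduction; this should be excluded by a divisor-exclusion / box-product count (Theorem \ref{GLW-thm} together with Lemma \ref{triangles}). Since the statement explicitly says this case was fully resolved in \cite[Theorem 8.1]{LaLo2}, I would lean on those arguments for the most delicate combinatorial steps and only reproduce the structure of the reduction here.
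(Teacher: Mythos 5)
This theorem is not proved in the present paper at all: it is imported verbatim from \cite[Theorem 8.1]{LaLo2}, where the proof occupies an entire section of saturating-set and fiber-shift analysis. Your proposal is therefore competing with that external argument, and as written it is a plan rather than a proof --- you yourself defer the ``most delicate combinatorial steps'' to \cite{LaLo2}, which is exactly what the paper does by citing it.

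Beyond that, two concrete steps in your sketch fail. First, the counting in your opening paragraph is wrong: an $M$-fiber in the $p_j$ (resp.\ $p_k$) direction has $p_j$ (resp.\ $p_k$) points, not $p_jp_k$, so the corner planes do not ``already contain a full $M$-fiber of $p_jp_k$ points''; indeed, if $A\cap(a*F_j*F_k)$ had $p_jp_k$ points it would equal $a*F_j*F_k$ by Lemma \ref{planebound} and would then be $M$-fibered in both the $p_j$ and $p_k$ directions, contradicting Definition \ref{corner} (ii). Moreover $n_i$ need not equal $2$ here, there are $p_i^{n_i}$ parallel planes rather than $p_i$, and $A$ need not be contained in $\Lambda$, so the conclusion that $A\cap\Lambda$ is a disjoint union of exactly $p_i$ $M$-fibers, one per plane, does not follow (and is not even consistent with your own point count). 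Second, your route through Lemma \ref{flatcorner} takes for granted both its hypothesis $\{D(M)|m|M\}\cap\Div(B)=\emptyset$ and the existence of the required three-point configuration. Neither is automatic: the extended corner alone need not produce the difference $M/p_jp_k$ in $\Div(A)$ (each corner plane may consist of a single fiber), so divisor exclusion does not immediately yield (\ref{notopdivB}); and inside a plane that is fibered only in the $p_j$ direction there is no ready-made triple $a,a_j,a_k$ with the prescribed $M/p_j$ and $M/p_k$ differences. Establishing precisely this divisor and saturating-set information, and checking that the successive fiber shifts preserve the cofibered structures they rely on, is the actual content of the proof in \cite{LaLo2}; your sketch names the right tools (saturating sets, Lemmas \ref{1dim_sat-cor} and \ref{fibershift}, reduction to the grid $\Lambda$) but does not supply these arguments.
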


%
%
%

It remains to consider the case when $A\cap\Lambda$ is not a union of disjoint $M$-fibers. This is the case that was not resolved in \cite{LaLo2} for even $M$, hence we need to do additional work here. However, a version of the preliminary structure results in \cite[Proposition 5.2 and 6.1]{LaLo2} still applies.

\begin{definition}\label{def-db}\cite[Definition 5.1]{LaLo2}
Let $A\subset\ZZ_M$. We say that $A\cap\Lambda$ {\em contains diagonal boxes} if
there are nonempty sets 
$I\subset \ZZ_{p_i}$, $J\subset \ZZ_{p_j}$, $K\subset \ZZ_{p_k}$, such that  
$$
I^c:=\ZZ_{p_i}\setminus I,\  J^c:=\ZZ_{p_j}\setminus J,\  K^c:=\ZZ_{p_k}\setminus K
$$
are also nonempty, and 
$$
(I\times J\times K) \cup (I^c\times J^c\times K^c)\subset A\cap\Lambda.
$$
\end{definition}

\begin{proposition}\label{db-prop}
Let $A\subset\ZZ_M$ (we do not require $A$ to be a tile). Assume that $\Phi_M\mid A$, and 
that $p_\nu=2$ for some $\nu\in\{i,j,k\}$. Let $\Lambda$ be a $D(M)$-grid such that
$A\cap\Lambda$ is not a union of disjoint $M$-fibers. Then $A\cap\Lambda$ is a union of 
one set of diagonal boxes 
$$
A_1=(I_1\times J_1\times K_1) \cup (I_1^c\times J_1^c\times K_1^c),
$$
where $I_1\subset \ZZ_{p_i}$, $J_1\subset \ZZ_{p_j}$, $K_1\subset \ZZ_{p_k}$ are non-empty sets such that  
$
I_1^c:=\ZZ_{p_i}\setminus I_1,\  J_1^c:=\ZZ_{p_j}\setminus J_1,\  K_1^c:=\ZZ_{p_k}\setminus K_1
$
are also non-empty, and possibly additional $M$-fibers in one or more directions, disjoint from $A_1$ and from each other. 
\end{proposition}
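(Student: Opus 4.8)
The plan is to work on a single $D(M)$-grid $\Lambda$ and track which $F_j*F_k$-columns of $A\cap\Lambda$ are full, empty, or partial, using the hypothesis $\Phi_M\mid A\cap\Lambda$ together with the de Bruijn--R\'edei--Schoenberg fibering structure. Write $A\cap\Lambda = \sum_{\nu} Q_\nu(X)F_\nu(X)$ with integer coefficients $Q_i,Q_j,Q_k$, as recalled in Section~\ref{classified1}. The key leverage of $p_\nu=2$ for some $\nu$, say $p_k=2$, is that $F_k$ has only two points, so on each $F_k$-line the multiset $A\cap\Lambda$ takes at most two distinct values; this is precisely the kind of ``binary'' hypothesis that forces fibering via Lemma~\ref{2d-cyclo}(ii) and Corollary~\ref{doublediv}.

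First I would reduce to the case where $A$ has $\{0,1\}$ coefficients on $\Lambda$ (it is a genuine set on $\Lambda$, not a weighted multiset with multiplicities $\geq 2$) — this is where I'd first exploit $p_k=2$: on any $F_k$-line the two entries sum to a quantity that, combined with $\Phi_M$-divisibility projected appropriately, cannot both exceed $1$ unless the line is a fiber; peeling off all complete $F_k$-fibers, one is left with a set $A'$ all of whose $F_k$-lines are ``half-full'' (exactly one of the two points present). Then $A'$ mod $M/p_k$ is a genuine set in $\ZZ_{M/p_k}$ with $\Phi_{M/p_k}\mid A'$, living in a group with only two prime factors $p_i,p_j$. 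By Lemma~\ref{2d-cyclo}(i), $A'$ mod $M/p_k$ is a non-negative integer combination of $M/p_k$-fibers in the $p_i$ and $p_j$ directions. Lifting back, each such planar fiber corresponds in $A'$ to a ``staircase'' choice of which of the two $F_k$-levels is occupied along an $F_i$- or $F_j$-direction.

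The heart of the argument is then combinatorial: I want to show that if $A\cap\Lambda$ is \emph{not} itself a union of disjoint $M$-fibers, the leftover set $A'$ (after removing complete $F_i$-, $F_j$-, $F_k$-fibers) is exactly one diagonal-box configuration $A_1 = (I_1\times J_1\times K_1)\cup(I_1^c\times J_1^c\times K_1^c)$ with $p_k=2$, i.e. $K_1=\{0\}$, $K_1^c=\{1\}$. Here I would adapt the bookkeeping of \cite[Proposition~5.2 and~6.1]{LaLo2}: record for each $(\lambda_i,\lambda_j)\in\ZZ_{p_i}\times\ZZ_{p_j}$ the ``level function'' $k(\lambda_i,\lambda_j)\in\{0,1,\text{both},\text{neither}\}$ giving the occupied $F_k$-level(s); ``both'' means a full $F_k$-fiber, ``neither'' means an empty column. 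Non-negative integrality of the $Q_i,Q_j$ on each $F_k$-level forces each of the two level-sets $\{k=0\}$ and $\{k=1\}$ (as subsets of $\ZZ_{p_i}\times\ZZ_{p_j}$, together with the full columns counted on both levels) to be a union of $M/p_k$-fibers in the $p_i$ and $p_j$ directions with non-negative coefficients; after removing the genuine $M$-fibers (columns that are full or that combine into an $F_i$- or $F_j$-fiber at a single level), what remains on level $0$ is a product set $I_1\times J_1$ and on level $1$ its complement $I_1^c\times J_1^c$ — any deviation from the product/complementary structure produces either a mismatch with the non-negativity of some $Q_\nu$ or an extractable complete $M$-fiber, contradiction with having removed all of them. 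Finally I would check $I_1,J_1,I_1^c,J_1^c$ are all nonempty (nonemptiness of $I_1^c$ comes from $A\cap\Lambda$ not being $M$-fibered in the $p_i$ direction after fiber removal, and symmetrically for $J_1^c$; if say $I_1=\ZZ_{p_i}$ the level-$0$ block is a union of $F_i$-fibers and gets absorbed into the fiber part).

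The main obstacle I expect is the last combinatorial step: showing that after stripping all complete $M$-fibers the residual level-sets \emph{must} be of the product-complementary form $I_1\times J_1$ and $I_1^c\times J_1^c$, rather than some more complicated pair of ``fibered'' subsets of $\ZZ_{p_i}\times\ZZ_{p_j}$. The point is that a non-negative integer combination of $M/p_k$-fibers in two directions, with $\{0,1\}$ values, is not automatically a single rectangle; one has to use that the two level-sets are \emph{complementary up to full columns} (since every $F_k$-line of $A'$ is half-full) and that full-column removal has already been done, which together with Lemma~\ref{2d-cyclo}(i) applied to \emph{both} levels simultaneously pins down the rectangle. I would handle this by a direct case analysis on the supports of the coefficient polynomials $Q_i,Q_j$, exactly paralleling \cite[Proposition~5.2]{LaLo2} but now in the simpler setting where one coordinate ranges over a two-element set; the remark after the proposition (that there may be additional $M$-fibers left over) is then just the statement that the fiber-removal process need not terminate with an empty set.
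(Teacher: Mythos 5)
Your proposal has a genuine gap, and it sits exactly where you predicted the difficulty would be, but the mechanism you propose to get through it is not just incomplete — it is false. After peeling the complete $F_k$-fibers ($p_k=2$), you claim that the reduction of the leftover set $A'$ modulo $M/p_k$ satisfies $\Phi_{M/p_k}\mid A'$ and hence, via Lemma \ref{2d-cyclo}(i), is a non-negative combination of fibers in the $p_i$ and $p_j$ directions; later you also assert that each of the two $F_k$-level sets is a union of such fibers, on the grounds of ``non-negative integrality of the $Q_i,Q_j$''. Neither claim is available: the hypothesis is only $\Phi_M\mid A$, which says nothing about $\Phi_{M/p_k}$ dividing the reduction, and the coefficients $Q_\nu$ in the de Bruijn--R\'edei--Schoenberg decomposition may be negative — that is precisely why unfibered grids exist at all. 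Worse, both claims fail for the very configuration the proposition asserts: if $A\cap\Lambda=(I_1\times J_1\times\{0\})\cup(I_1^c\times J_1^c\times\{1\})$ is a bare diagonal-box pair, its reduction mod $M/p_k$ is the indicator of $(I_1\times J_1)\cup(I_1^c\times J_1^c)$, whose two-dimensional cuboid evaluations equal $2$ (so $\Phi_{M/p_k}$ does not divide it), and the level sets $I_1\times J_1$, $I_1^c\times J_1^c$ are not unions of lines. (Also, for $n_k\geq 2$ the group $\ZZ_{M/p_k}$ still has three prime factors, so Lemma \ref{2d-cyclo} does not even apply to it as stated, and after peeling, $F_k$-lines of $A'$ can be empty, not ``half-full''.) So the engine of your combinatorial step would, if it worked, rule out exactly the structure you are trying to exhibit.

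What $p_k=2$ actually buys is different. Let $f_0,f_1:\ZZ_{p_i}\times\ZZ_{p_j}\to\{0,1\}$ be the indicators of the two $F_k$-levels of $A'$ (so $f_0f_1\equiv 0$ after peeling). Since $\Phi_M\mid F_k(X)$, we still have $\Phi_M\mid A'$, and the vanishing of all $M$-cuboid sums on the grid says precisely that the \emph{difference} $g=f_0-f_1$ has all $2\times 2$ second differences in $(\lambda_i,\lambda_j)$ equal to zero, i.e.\ $g(\lambda_i,\lambda_j)=u(\lambda_i)+v(\lambda_j)$ with $g$ valued in $\{-1,0,1\}$. A short case analysis finishes: if $u$ or $v$ is constant, $A'$ (hence $A\cap\Lambda$) is a disjoint union of $M$-fibers, which is excluded; otherwise $u$ and $v$ each take exactly two values differing by $1$, forcing $g=1$ exactly on a product set $I_1\times J_1$ and $g=-1$ exactly on $I_1^c\times J_1^c$, all four factors nonempty, which is the diagonal-box pair (the peeled $F_k$-fibers are the ``additional fibers''). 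The paper itself does not rerun this computation: it observes that Claims 1--3 in the proof of \cite[Proposition 5.2]{LaLo2} do not use the tiling hypothesis, that the branching condition (5.4) there cannot hold when $M$ is even, and then repeats the fiber-decomposition argument of \cite[Proposition 6.1]{LaLo2} verbatim. Either route is fine, but the middle of your argument needs to be replaced by something like the difference-function computation above rather than level-by-level fibering.
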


Note the difference between the statement of Proposition \ref{db-prop} here and Propositions 5.2 and 6.1 in \cite{LaLo2}. In the even case considered here, the complement of $A_1$ in $A\cap\Lambda$ (if nonempty) must be a disjoint union of $M$-fibers. No such claim is made in the odd case in \cite[Proposition 5.2]{LaLo2}. The conclusions of Proposition \ref{db-prop} are identical to those of \cite[Proposition 6.1]{LaLo2} for odd $M$. However, Proposition 6.1 in \cite{LaLo2} requires stronger assumptions: we must assume, in addition, that 
$\{D(M)|m|M\}\not\subset \Div(A\cap\Lambda)$.

\begin{proof}
Let $\Lambda$ be the $D(M)$-grid satisfying the assumptions of the lemma. We first claim that $A\cap\Lambda$ must contain diagonal boxes.
To see this, we follow the proof of \cite[Proposition 5.2]{LaLo2} up to Claim 3, then note that the condition (5.2) of \cite{LaLo2} cannot be satisfied when $M$ is even, hence the proof is complete at that point. 

Let $A_1$ be the set of diagonal boxes thus obtained. By the same argument as in the proof of Proposition 6.1 in \cite{LaLo2}, we see that $(A\cap\Lambda)\setminus A_1$ is a union of disjoint $M$-fibers in one or more directions.

Since our assumptions here differ from those of \cite[Propositions 5.2 and 6.1]{LaLo2}, a few remarks on this are in order. Claims 1-3 in the proof of \cite[Proposition 5.2]{LaLo2} do not require $A$ to be a tile. In \cite[Proposition 6.1]{LaLo2}, we assume in (6.1) that $\{m:D(M)|m|M\}\not\subset\Div(A)$. The purpose of that assumption is to ensure that, in the case under consideration, the proof of 
\cite[Proposition 5.2]{LaLo2} can be halted after Claim 3 (see \cite[(5.3)]{LaLo2}). In our case, the same purpose is served by the
assumption that $M$ is even, so that the divisor assumption is not needed. Other than that, the proof of  \cite[Proposition 6.1]{LaLo2} can be repeated verbatim here.
\end{proof}

\begin{remark}
Proposition \ref{db-prop} is only stated for sets $A\subset \ZZ_M$. However, if we assume instead that $A\in\calm(\ZZ_N)$ for some $N|M$, and that 
\begin{equation}\label{bin_cond}
\bbA^N_N[x]\in\{0,c_0\}\hbox{ for some }c_0\in\NN 
\end{equation}
for all $x\in \Lambda$
(i.e., $A\cap\Lambda$ is a multiset of
constant multiplicity $c_0$ mod $N$), the same argument applies except that the diagonal boxes and fibers in the conclusion also have multiplicity $c_0$.
\end{remark}


\subsection{Special unfibered structures}\label{special-structures}

We will need the structure results of \cite{LaLo2}  for unfibered grids with missing top differences.
We will apply then both to the set $A$ in a tiling $A\oplus B=\ZZ_M$, on the scale $M$, and to subsets of $A$ (which do not need to be tiling complements) on various scales. Therefore, in this section we do not require $A$ to be a tile. We only state Lemma \ref{even_struct} in the even case; for the odd case, see \cite[Section 6.2]{LaLo2}.

\begin{lemma}\label{even_struct}\cite[Lemmas 6.4 and 6.5]{LaLo2}
Let $M=p_i^{n_i}p_j^{n_j}p_k^{n_k}$, with $2|M$. Assume that $N|M$ satisfies $N=M/p_i^{\alpha_i}p_j^{\alpha_j}p_k^{\alpha_k}$ with 
$\alpha_\iota<n_\iota$ for all $\iota\in\{i,j,k\}$, so that any $D(N)$-grid is 3-dimensional. Let $\Lambda$ be a $D(N)$-grid such that $A\cap\Lambda\neq\emptyset$.
Assume that:
\begin{itemize}
\item  $ A\in \mathcal{M}(\ZZ_N)$ satisfies $\Phi_N|A$, 
\item there exists $c_0\in\NN$ such that (\ref{bin_cond}) holds for all $ x\in\Lambda$,
\item $A\cap\Lambda$ is not fibered in any direction,
\end{itemize}
and that
$$
\{m: D(N)|m|N\}\not\subset \Div_N(A\cap\Lambda).
$$
Then, possibly after a permutation of $\{i,j,k\}$,
one of the following must hold.

\medskip
(i) 
We have $\{D(N)|m|N\}\setminus \Div_N(A\cap\Lambda)=\{N/p_ip_j\},$
and $A\cap\Lambda$ has a $p_k$ corner structure in the sense of Definition \ref{corner} (i).

\medskip
(ii) We have $p_k=2$ and $N/p_k\not\in \Div_N(A\cap\Lambda)$. Moreover, 
there is a pair of diagonal boxes 
$$A_0=
(I\times J\times K) \cup (I^c\times J^c\times K^c)\subset \Lambda,$$
as in Definition \ref{def-db}, such that for all $z\in A\cap\Lambda$ we have 
$$
\bbA^N_N[z]=c_0 {\bf 1}_{A_0}(z).
$$
Thus, $A\cap\Lambda$ is a set of diagonal boxes with multiplicity $c_0$, with no additional fibers in the same grid.
\end{lemma}


\section{Resolving diagonal boxes}\label{res-boxes}


Our main theorem for the diagonal boxes case is as follows.

\begin{theorem}\label{db-theorem}
Let $A\oplus B=\ZZ_M$ be a tiling, where $M=p_i^2p_j^2p_k^2$ is even and $|A|=|B|=p_ip_jp_k$, and assume that $\Phi_M\mid A$. 
Let $D=D(M)$, and let $\Lambda$ be a $D$-grid such that $A\cap\Lambda\neq\emptyset$.
Assume further that $A\cap\Lambda$ is not a disjoint union of $M$-fibers.
Then the tiling $A\oplus B=\ZZ_M$ is T2-equivalent to $\Lambda \oplus B=\ZZ_M$ via fiber shifts. Thus $\Lambda$ is a translate of $A^\flat$, and by Corollary \ref{get-standard}, both $A$ and $B$ satisfy (T2).
\end{theorem}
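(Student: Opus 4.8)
The plan is to reduce to the structural dichotomy in Lemma~\ref{even_struct}. By Proposition~\ref{db-prop}, since $A\cap\Lambda$ is not a disjoint union of $M$-fibers, it consists of one set of diagonal boxes $A_1=(I_1\times J_1\times K_1)\cup(I_1^c\times J_1^c\times K_1^c)$ together with possibly additional $M$-fibers in one or more directions. First I would show that we may in fact assume there are \emph{no} additional fibers in $A\cap\Lambda$: any extra $M$-fiber $a_0*F_\nu\subset A\cap\Lambda$ in some direction $p_\nu$, combined with the diagonal box structure, either forces a $(1,2)$-cofibered structure (via Lemma~\ref{1dim_sat-cor} or the flat corner Lemma~\ref{flatcorner}) — after which we apply the Fiber-Shifting Lemma~\ref{fibershift} to remove it without changing the T2-class — or else it forces $A\cap\Lambda$ to contain more fibers until it becomes fully fibered, contradicting our hypothesis. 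So after finitely many T2-preserving fiber shifts we reduce to the case $A\cap\Lambda=A_1$, a bare set of diagonal boxes. Since $A\cap\Lambda$ contains fibers in no direction and has constant multiplicity $1$, it satisfies Assumption~(UF) on the scale $M$ provided some top difference is missing; the point $z$ with $z=(\lambda_i, \lambda_j,\lambda_k)\in I_1\times J_1^c\times K_1$ (say) is not in $A\cap\Lambda$, and one checks directly which of $M/p_i, M/p_j, M/p_k$ lie in $\Div(A\cap\Lambda)$, so Lemma~\ref{even_struct} applies and we land in case (i) (a $p_k$-corner) or case (ii) (the narrow diagonal boxes, with $p_k=2$ and $M/p_k\notin\Div(A\cap\Lambda)$).

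In case (i), $A\cap\Lambda$ has a $p_k$-corner in the sense of Definition~\ref{corner}(i). Here I would invoke the flat corner Lemma~\ref{flatcorner}: its hypothesis is a $3$-point configuration of exactly this corner type, together with $\{D(M)|m|M\}\cap\Div(B)=\emptyset$. The divisor condition on $B$ follows because the corner structure of $A\cap\Lambda$ supplies enough divisors $M/p_i, M/p_j \in \Div(A)$ that, by divisor exclusion (Theorem~\ref{thm-sands}), are forbidden in $\Div(B)$; more carefully, one argues as in \cite{LaLo2} that a top difference $m$ with $D(M)|m|M$ in $\Div(B)$ would force, via Lemma~\ref{triangles}, a forbidden configuration against the corner in $A$. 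Lemma~\ref{flatcorner} then yields $A_x\subset\ell_i(x)$ for the corner apex $x$, hence by Lemma~\ref{1dim_sat-cor} a $(1,2)$-cofibered structure with an $M$-cofiber in $A$, and Lemma~\ref{fibershift} lets us shift that cofiber. Iterating, $A$ gets progressively ``straightened'' toward $\Lambda$; the bookkeeping is that each shift strictly decreases a suitable complexity measure (e.g. the number of $D(M)$-grids on which $A$ is unfibered, or the total ``spread'' of $A$ off $\Lambda$), so the process terminates with $A$ T2-equivalent to $\Lambda$. At the terminal stage $A\cap\Lambda$ is a single $M$-fiber-union that exhausts $\Lambda$, i.e. $A=\Lambda$ up to translation, and $\Lambda=A^\flat$ because $\Lambda(X)=\prod_{\nu}(1+X^{M_\nu p_\nu}+\dots+X^{(p_\nu-1)M_\nu p_\nu})$ has exactly the prime-power cyclotomic divisors $\Phi_{p_\nu^2}$ of $A$.

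Case (ii) is the genuinely hard part and where most of the work lies — this is the case labelled (DB2)/(DB3) in the introduction, with $p_k=2$, $M/p_k\notin\Div(A\cap\Lambda)$, and $A\cap\Lambda$ possibly as small as two incomplete fibers so that $\Div(A\cap\Lambda)$ has only three elements. Here saturating-set arguments are weak because \eqref{bispan} gives few geometric restrictions, and moreover one cannot rule out a priori that $B$ is $M$-fibered in the $p_k$ direction. The plan is to combine saturating sets with the splitting machinery of Section~\ref{splitting-section}: for each $M$-fiber $Z$ in a direction we do not yet control, Lemma~\ref{no-upgrades} forces $Z$ to split with one of the two parities, and Lemma~\ref{intersections} (plane consistency) propagates the parity across whole planes; one then shows that uniform $(B,A)$ splitting parity in the $p_k$ direction holds, invokes Lemma~\ref{splittingslab} together with Corollary~\ref{slabcor}, and concludes that the slab reduction conditions of Theorem~\ref{subtile} are met in the $p_k$ direction — unless instead the splitting analysis forces enough new elements into $A\cap\Lambda$ to place us back in a fibered or corner situation already handled. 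Either way we obtain, via Corollary~\ref{slab-reduction} (or the fiber-shift reduction to $\Lambda$), that both $A$ and $B$ satisfy (T2) and that $A$ is T2-equivalent to $\Lambda=A^\flat$. The main obstacle throughout is this last case: keeping the splitting/saturating casework finite and ensuring the ``narrow'' diagonal boxes cannot coexist with an $M$-fibering of $B$ in the $p_k$ direction, which requires the delicate arguments flagged in the introduction as Lemma~\ref{edb2-nodiag} and Proposition~\ref{nodiagdiv}.
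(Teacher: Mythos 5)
There is a genuine gap: your proposal never actually proves the two hard cases, and the reduction you do propose runs in the wrong direction. After invoking Proposition \ref{db-prop}, you try to \emph{remove} the additional $M$-fibers so as to reduce to the bare diagonal boxes $A\cap\Lambda=A_1$. But the case with additional fibers is precisely the case where $\{D(M)\,|\,m\,|\,M\}\subset\Div(A\cap\Lambda)$, and it is the \emph{easy} case, already settled by \cite[Corollary 7.3]{LaLo2} (Proposition \ref{db3-cor}); the paper's whole strategy for the bare-boxes cases (DB1) and (DB2) is the opposite of yours: use saturating sets to produce $(1,2)$-cofibered structures and then shift fibers \emph{into} $\Lambda$ so as to land in (DB3) or in an extended corner configuration (Theorem \ref{cornerthm}). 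Your removal step is also unjustified on its own terms: Lemma \ref{fibershift} requires a $(1,2)$-cofibered structure, i.e.\ $B$ must be $M/p_\nu$-fibered in the relevant direction, and you give no argument that this holds for an arbitrary extra fiber; the alternative branch of your dichotomy (``it forces $A\cap\Lambda$ to become fully fibered'') cannot occur anyway, since $A_1$ is never a disjoint union of $M$-fibers. A further sign of confusion is your claim that the bare-boxes case can land in Lemma \ref{even_struct}(i): that corner structure \emph{is} a disjoint union of $M$-fibers, so it is excluded by the standing hypothesis, and your flat-corner iteration there (including the unproved claim that (\ref{notopdivB}) holds) is moot.

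For the remaining case, which you correctly identify as the heart of the matter, you offer only a plan: establish uniform $(B,A)$ splitting parity in one direction and apply Lemma \ref{splittingslab}, Corollary \ref{slabcor} and Theorem \ref{subtile}. This is not carried out, it collapses the genuinely distinct subcases (DB1) ($\min(|J^c|,|K^c|)\geq 2$) and (DB2) ($|J^c|=|K|=1$), which in the paper require separate and lengthy saturating-set analyses (Lemmas \ref{edb1-satline}--\ref{nopifibers-alt} and \ref{edb2-lines}--\ref{nodiagdiv}), and even if completed it would prove the wrong thing: the slab reduction yields (T2) for $A$ and $B$, not the asserted conclusion that the tiling is T2-equivalent to $\Lambda\oplus B$ \emph{via fiber shifts} with $\Lambda$ a translate of $A^\flat$. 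Indeed, in case (DB1) the paper shows (Lemma \ref{nopifibers-alt}) that the scenario where $B$ is $M$-fibered in the $p_i$ direction --- the only one where your slab-reduction endgame would be relevant --- cannot occur at all; the actual proof instead uses the saturating-set and splitting lemmas to force cofibered structures, shifts the cofibers into $\Lambda$, and concludes by Proposition \ref{db3-cor} or Theorem \ref{cornerthm}. As it stands, your proposal restates the difficulty rather than resolving it.
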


We begin the proof with a few reductions. Let $A$ and $\Lambda$ be as in Theorem \ref{db-theorem}, so that in particular
$A\cap\Lambda$ is not a disjoint union of $M$-fibers. By Proposition \ref{db-prop}, $A\cap\Lambda$ is a union of one pair of diagonal boxes and possibly additional $M$-fibers, disjoint from the boxes and from each other. We fix that pair of diagonal boxes, and denote it 
$$
A_1=(I\times J\times K)\cup (I^c\times J^c\times K^c).
$$
The proof splits further into cases, depending on the dimensions of boxes and on whether we have
\begin{equation}\label{db-e1000}
\{m:\ D|m|M\}\subset \Div(A\cap\Lambda).
\end{equation}
If (\ref{db-e1000}) fails, by Lemma \ref{even_struct} we must have
\begin{equation}\label{A=A1}
A\cap\Lambda =A_1.
\end{equation}
We note that the corner structure in Lemma \ref{even_struct} (i) is a union of disjoint $M$-fibers, therefore does not fall under the purview of Theorem \ref{db-theorem}. This case was already resolved in \cite[Theorem 8.1]{LaLo2}, stated here as Theorem \ref{cornerthm}.

We claim that it suffices to consider the following sets of assumptions.

\medskip\noindent
{\bf Case (DB1):} The tiling $A\oplus B$ satisfies the assumptions of Theorem \ref{db-theorem}. Additionally, we have $p_i=2$, (\ref{A=A1}) holds, and $\min( |J^c|, |K^c|)\geq 2$.

\medskip\noindent
{\bf Case (DB2):} The tiling $A\oplus B$ satisfies the assumptions of Theorem \ref{db-theorem}.
Additionally, we have $p_i=2$,  (\ref{A=A1}) holds, and $ |J^c|=|K|=1$.

\medskip\noindent
{\bf Case (DB3):} The tiling $A\oplus B$ satisfies the assumptions of Theorem \ref{db-theorem}. Moreover,
$p_i=2$, (\ref{db-e1000}) holds, and $A\cap\Lambda$ is a union of a pair of diagonal boxes $A_1$ and one or more $M$-fibers, disjoint from the boxes and from each other.

\medskip

Indeed, assume that $M$ is even, with $p_i=2$. Without loss of generality, we have $I=\{0\}, I^c=\{1\}$. 
\begin{itemize}
\item If (\ref{db-e1000}) holds, we cannot have $A\cap\Lambda =A_1$, since $M/p_i\not\in\Div(A_1)$. Hence $(A\cap\Lambda) \setminus A_1$ is a nonempty union of $M$-fibers, and we are in the case (DB3).
\item Suppose now that (\ref{db-e1000}) fails. Then (\ref{A=A1}) holds by Lemma \ref{even_struct} (ii). Moreover, 
at least one of the sets from each pair 
$J,J^c$ and $K,K^c$ must have cardinality greater than 1. If $\min( |J|, |K|)\geq 2$ or $\min( |J^c|, |K^c|)\geq 2$, we are in the case (DB1), and if either $ |J^c|=|K|=1$ or $ |J|=|K^c|=1$, we are in the case (DB2),
possibly after relabelling the sets.
\end{itemize}

The case (DB3) was resolved in \cite[Corollary 7.3]{LaLo2}.

\begin{proposition}\label{db3-cor} (Special case of \cite[Corollary 7.3]{LaLo2})
Assume that (DB3) holds. Then the tiling $A\oplus B=\ZZ_M$ is T2-equivalent to $\Lambda\oplus B=\ZZ_M$. Consequently, $A$ and $B$ both satisfy T2.
\end{proposition}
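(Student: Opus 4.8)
\textbf{Proof proposal for Proposition \ref{db3-cor}.}

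The plan is to reduce the statement to the already-established result \cite[Corollary 7.3]{LaLo2}, so the core of the argument is really a matter of checking that the hypotheses of (DB3) match the hypotheses under which \cite[Corollary 7.3]{LaLo2} was proved. First I would unwind the setup: we are in the situation $A\oplus B=\ZZ_M$ with $M=p_i^2p_j^2p_k^2$ even, $p_i=2$, $|A|=|B|=p_ip_jp_k$, $\Phi_M\mid A$, and $\Lambda$ a $D(M)$-grid on which $A\cap\Lambda$ is a disjoint union of one pair of diagonal boxes $A_1=(I\times J\times K)\cup(I^c\times J^c\times K^c)$ together with one or more additional $M$-fibers disjoint from $A_1$ and from each other; moreover (\ref{db-e1000}) holds, i.e. $\{m:\ D(M)\mid m\mid M\}\subset\Div(A\cap\Lambda)$. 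I would then recall the statement of \cite[Corollary 7.3]{LaLo2}, whose hypothesis is precisely that $A\cap\Lambda$ contains diagonal boxes \emph{together with} the full set of top differences on the $D(M)$-scale (this is the case where the boxes are ``padded'' by extra fibers, in contrast to the lean case $A\cap\Lambda=A_1$ treated separately). Since $p_i=2$ forces $M/p_i\notin\Div(A_1)$ but (\ref{db-e1000}) supplies $M/p_i\in\Div(A\cap\Lambda)$, the extra $M$-fibers in $(A\cap\Lambda)\setminus A_1$ are genuinely present, and this is exactly the configuration \cite[Corollary 7.3]{LaLo2} handles.

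The substantive content is then transported wholesale: \cite[Corollary 7.3]{LaLo2} concludes that $A$ is obtained from $\Lambda$ by a sequence of fiber shifts (each permitted by the Fiber-Shifting Lemma \ref{fibershift} applied to a $(1,2)$-cofibered structure in $(A,B)$), hence $A$ is T2-equivalent to $\Lambda$, and $\Lambda$ is a translate of $A^\flat$. By Corollary \ref{get-standard}, both $A$ and $B$ satisfy (T2). I would state explicitly that the odd/even distinction does not intrude here: the proof of \cite[Corollary 7.3]{LaLo2} uses only the combinatorial structure of the diagonal-boxes-plus-fibers configuration and the tiling hypothesis, and in particular does not rely on $M$ being odd — indeed the hypothesis (\ref{db-e1000}) is what makes the argument go through uniformly. (This is why the excerpt phrases it as a ``special case'' of the cited corollary rather than a new theorem.)

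The main — and essentially only — obstacle is a bookkeeping one: making sure that the labelling conventions of (DB3) (with $p_i=2$, $I=\{0\}$, $I^c=\{1\}$) are compatible with the generality in which \cite[Corollary 7.3]{LaLo2} is stated, and that the notion of T2-equivalence ``via fiber shifts'' used there agrees with the one used in the statement here. There is no new mathematics to do; one simply verifies that (DB3) is subsumed by the hypotheses of \cite[Corollary 7.3]{LaLo2} and quotes the conclusion. I would therefore write the proof as a short paragraph: cite \cite[Corollary 7.3]{LaLo2}, note that (DB3) is exactly the configuration it treats (diagonal boxes together with additional disjoint $M$-fibers, with all top $D(M)$-differences present), conclude T2-equivalence of $A\oplus B$ and $\Lambda\oplus B$ via fiber shifts, and invoke Corollary \ref{get-standard} to obtain (T2) for both $A$ and $B$.
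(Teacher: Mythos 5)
Your proposal is correct and matches the paper exactly: the paper gives no independent argument for Proposition \ref{db3-cor}, labelling it a special case of \cite[Corollary 7.3]{LaLo2} and simply quoting that result, which is precisely the reduction you describe. The only check needed is the hypothesis match under (DB3), which you carry out, so nothing further is required.
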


We will proceed to resolve the cases (DB1) and (DB2) in Sections \ref{subsec-db1} and \ref{subsec-db2}, respectively. We will be able to reduce these situations to either (DB3) or an extended corner structure. By Theorem \ref{cornerthm} and Proposition \ref{db3-cor}, this implies the conclusion of the theorem.


\subsection{Case (DB1)}\label{subsec-db1}

Assume that $A\oplus B=\ZZ_M$ is a tiling satisfying the assumptions of Theorem \ref{db-theorem}. Let 
$\Lambda$ be the $D$-grid provided by the assumption of the theorem.
Additionally, we assume that $p_i=2$, (\ref{A=A1}) holds, and
\begin{equation}\label{edb1-e0}
\min( |J^c|, |K^c|)\geq 2.
\end{equation}
In particular, (\ref{A=A1}) implies that
every point $x\in\ZZ_M$ such that
\begin{equation}\label{db1-e1}
x\in I^c\times J\times K
\end{equation} 
satisfies $x\not\in A$ and 
$\bbA_{M/p_j}[x] = \bbA_{M/p_k}[x]=0.$

The proof of Theorem \ref{db-theorem} is based on saturating sets considerations, leading to the T2-equivalence via fiber shifting as stated in the theorem. Compared to the corresponding case with $M$ odd, an additional difficulty is that the assumption (DB1) does not exclude {\it a priori} the possibility that $M/p_i\in\Div(B)$. It turns out, however, that $M/p_i\in\Div(B)$ implies that $B$ must in fact be $M$-fibered in the $p_i$ direction. By Lemma \ref{nopifibers-alt}, this is incompatible with the structure of $A$ provided by (DB1). The remaining cases are covered by Lemma \ref{done-if-jsatura}.

\begin{lemma}\label{edb1-satline}
Assume (DB1), and let $x$ satisfy (\ref{db1-e1}). 
Then for every $b\in B$ we have exactly one of the following:
\begin{equation}\label{db1-jsatura}
A_{x,b}\subset \ell_j(x), \hbox{ with }\bbA_{M/p_j^2}[x]\bbB_{M/p_j^2}[b]=\phi(p_j^2),
\end{equation} 
\begin{equation}\label{db1-ksatura}
A_{x,b}\subset \ell_k(x), \hbox{ with }\bbA_{M/p_k^2}[x]\bbB_{M/p_k^2}[b]=\phi(p_k^2),
\end{equation}
\begin{equation}\label{db1-isatura}
A_{x,b}\subset \ell_i(x) \hbox{ with } \bbA_{M/p_i}[x]\bbB_{M/p_i}[b]=\phi(p_i)
\end{equation}	

Furthermore:
\begin{itemize}
\item If (\ref{db1-jsatura}) holds for some $b\in B$, then the product $\langle \bbA[x],\bbB[b]\rangle$ is saturated by a $(1,2)$-cofiber pair in the $p_j$
direction, with the $A$-cofiber at distance $M/p_j^2$ from $x$ and the $B$-fiber rooted at $b$. The same is true for (\ref{db1-ksatura}), with $k$ and $j$ interchanged.
\item If (\ref{db1-isatura}) holds for some $b\in B$, then the product $\langle \bbA[x],\bbB[b]\rangle$ is saturated by a $(0,1)$-cofiber pair in the $p_i$
direction, i.e. $\bbA_{M/p_i}[x]= \bbB_{M/p_i}[b]=1$.
\end{itemize}
\end{lemma}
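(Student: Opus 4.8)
The strategy is to apply the box product identity \eqref{e-ortho2} of Theorem \ref{GLW-thm} at the point $x$ (satisfying \eqref{db1-e1}) against an arbitrary $b\in B$, and then use the structural information coming from (DB1) to see that the only divisors $m$ that can contribute to $\langle\bbA[x],\bbB[b]\rangle$ are those supported on a single line through $x$. Concretely, first I would record what \eqref{A=A1} and \eqref{db1-e1} give: since $x\in I^c\times J\times K$ and $A\cap\Lambda=A_1=(I\times J\times K)\cup(I^c\times J^c\times K^c)$, there is no $a\in A$ with $(x-a,M)=M/p_j$ or $(x-a,M)=M/p_k$, i.e. $\bbA_{M/p_j}[x]=\bbA_{M/p_k}[x]=0$, and also $x\notin A$. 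Moreover $x$ lies at distance $M/p_i$ from the point of $A_1$ directly ``across'' in the $p_i$ direction (the point in $I\times J\times K$ sharing the $p_j,p_k$ coordinates of $x$), so $\bbA_{M/p_i}[x]>0$; and the only $a\in A$ with $(x-a,M)\in\{M/p_i, M/p_jp_k, M/p_ip_j, M/p_ip_k, M/p_ip_jp_k,\dots\}$ come from $A_1$, whose geometry I would analyze case by case. Combined with \eqref{bispan} applied to $A_x$, this forces $A_{x,b}$ to be contained in one of the three lines $\ell_i(x),\ell_j(x),\ell_k(x)$ — this is the content of Lemma \ref{1dim_sat-cor}-type reasoning, and indeed once $A_{x,b}\subset\ell_\nu(x)$ with $M/p_\nu\in\Div(A)$ we may invoke Lemma \ref{1dim_sat-cor} directly to get the saturation identity and the cofiber structure in \eqref{db1-jsatura} or \eqref{db1-ksatura}.

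The trichotomy itself I would derive as follows. By \eqref{e-ortho2}, $\langle\bbA[x],\bbB[b]\rangle=1$, so there is at least one $m|M$, $m\neq M$ (since $x\notin A$), with $\bbA_m[x]\bbB_m[b]>0$. Using enhanced divisor exclusion (Lemma \ref{triangles}, part (ii) since $p_i=2$), no two ``incompatible'' divisors can simultaneously be realized on both the $A$ side at $x$ and the $B$ side at $b$; I would run through the $p_i^{\alpha_i}p_j^{\alpha_j}p_k^{\alpha_k}$ possibilities and show that the surviving ones all lie along a common line through $x$. The key input from (DB1) is precisely that $\bbA_{M/p_j}[x]=\bbA_{M/p_k}[x]=0$: this kills the ``planar'' divisors in the $p_j$ and $p_k$ directions, so whatever divisor survives is either $M/p_i$ (or a sub-divisor of it supported on $\ell_i(x)$), or it is one of $M/p_j^2, M/p_jp_k^{?},\dots$ forcing $A_{x,b}\subset\ell_j(x)$, or symmetrically $\ell_k(x)$. (Here I should be careful: a priori $A_{x,b}$ could meet, say, both $\ell_j(x)$ and $\ell_i(x)$; enhanced divisor exclusion together with the vanishing of $\bbA_{M/p_j}[x],\bbA_{M/p_k}[x]$ and the fact that $x\notin A$ rules this out, and the condition \eqref{edb1-e0} that $\min(|J^c|,|K^c|)\ge 2$ is what guarantees the relevant cross-direction differences genuinely occur in $A$ so that the exclusion lemma has teeth.)

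Once $A_{x,b}\subset\ell_\nu(x)$ is established, the ``furthermore'' clauses follow from Lemma \ref{1dim_sat-cor} in the $\nu\in\{j,k\}$ cases: that lemma gives $\bbA_{M/p_\nu^2}[x]\bbB_{M/p_\nu^2}[b]=\phi(p_\nu^2)$ together with the $(1,2)$-cofiber pair at distance $M/p_\nu^2$ from $x$ rooted at $b$, which is exactly \eqref{db1-jsatura}/\eqref{db1-ksatura}. For $\nu=i$ with $p_i=2$ we have $n_i=2$ but the cofiber picture is different: here $M/p_i\in\Div(A)$ and $A_{x,b}\subset\ell_i(x)$, and since $p_i=2$ the line $\ell_i(x)$ has only $p_i^2=4$ points, but the relevant saturating contribution is at scale $M/p_i$; the box product identity restricted to $\ell_i(x)$ forces $\bbA_{M/p_i}[x]=\bbB_{M/p_i}[b]=1$ (a $(0,1)$-cofiber pair), giving \eqref{db1-isatura}. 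I would make this last point precise by noting $\langle\bbA[x],\bbB[b]\rangle=\tfrac{1}{\phi(p_i)}\bbA_{M/p_i}[x]\bbB_{M/p_i}[b]+(\text{lower-scale terms on }\ell_i(x))$, and that the lower-scale terms vanish because the only $a\in A$ on $\ell_i(x)\cap A_1$ within reach of $x$ is the single ``partner'' point — again using \eqref{A=A1}.

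**Main obstacle.** The delicate part is the exhaustive case analysis in the second paragraph: showing that $A_{x,b}$ cannot straddle two different lines through $x$, and in particular ruling out mixed configurations that are compatible with enhanced divisor exclusion on the face of it but are excluded once one feeds in the precise structure \eqref{A=A1} of $A_1$ together with \eqref{edb1-e0}. This is where the hypothesis $\min(|J^c|,|K^c|)\ge 2$ (as opposed to the degenerate (DB2) situation) is essential, and getting the bookkeeping right — which differences of the form $M/(p_j^{\alpha}p_k^{\beta})$ actually occur in $A$ and at which points — is the crux. I expect everything after the trichotomy is established to be a routine appeal to Lemma \ref{1dim_sat-cor} and the box product identity.
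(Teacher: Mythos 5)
Your geometric localization step is where the argument breaks down. From (\ref{bispan}) applied at $x$, the constraints are $A_x\subset\Pi(x,p_i^2)\cup\Pi(a,p_i^2)$ (where $a$ is the unique element of $A$ with $(x-a,M)=M/p_i$) and $A_x\subset\Pi(x,p_j^2)\cup\Pi(x,p_k^2)$; intersecting these gives
$$A_x\subset \ell_j(x)\cup\ell_k(x)\cup\ell_j(a)\cup\ell_k(a),$$
not containment in the three lines through $x$. The lines $\ell_j(a)$ and $\ell_k(a)$ through the partner point $a$ cannot be discarded by enhanced divisor exclusion together with $\bbA_{M/p_j}[x]=\bbA_{M/p_k}[x]=0$: a configuration with $A_{x,b}\subset\ell_j(a)$, saturated by differences of the form $M/p_ip_j$ or $M/p_ip_j^2$, is perfectly consistent with Lemma \ref{triangles}. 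Ruling it out is in fact the bulk of the paper's proof: one passes to the point $y$ with $(b-y,M)=M/p_i$, observes that saturation of $\langle\bbA[x],\bbB[b]\rangle$ along $\ell_j(a)$ forces either $\bbA_{M/p_j}[a]\bbB_{M/p_j}[y]=\phi(p_j)$ or $\bbA_{M/p_j^2}[a]\bbB_{M/p_j^2}[y]=\phi(p_j^2)$, and then kills the first option using (\ref{A=A1}) (which gives $\bbA_{M/p_j}[a]=|J|-1<\phi(p_j)$) and the second using the plane bound of Lemma \ref{planebound}. None of this appears in your proposal, and your ``main obstacle'' paragraph misidentifies the difficulty as $A_{x,b}$ straddling two lines through $x$ (which really is handled by Lemma \ref{triangles}), rather than $A_{x,b}$ lying entirely in a line through $a$.

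The remaining ingredients you describe are essentially right and match the paper: the immediate dispatch of case (\ref{db1-isatura}) when $\bbB_{M/p_i}[b]>0$ (since $p_i=2$ gives $\phi(p_i)=1$ and $\bbA_{M/p_i}[x]=1$, this term alone saturates the box product, so one may assume $\bbB_{M/p_i}[b]=0$ afterwards), and the appeal to Lemma \ref{1dim_sat-cor} once $A_{x,b}\subset\ell_\nu(x)$ for $\nu\in\{j,k\}$ is known, using $M/p_j,M/p_k\in\Div(A)$ and $\bbA_{M/p_\nu}[x]=0$ to land on the $M/p_\nu^2$ term. But without the exclusion of $\ell_j(a)$ and $\ell_k(a)$ the trichotomy is not established, so as written the proof has a genuine gap.
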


\begin{proof}
By (DB1) and (\ref{db1-e1}), we have $A_x\subset \Pi(x,p_i^2)\cup \Pi(a,p_i^2)$, where $a\in A$ satisfies $(a-x,M)=M/p_i$. Moreover, by (\ref{bispan}) we have
$$
A_x\subset \bigcap_{a':(x-a',M)=M/p_jp_k} \Bispan(x,a')= \Pi(x,p_j^2)\cup\Pi(x,p_k^2),
$$
thus 
\begin{equation}\label{fp-2}
A_x\subset (\Pi(x,p_i^2)\cup \Pi(a,p_i^2))\cap(\Pi(x,p_j^2)\cup\Pi(x,p_k^2))=\ell_j(x)\cup\ell_k(x)\cup\ell_j(a)\cup\ell_k(a).
\end{equation}
By (\ref{edb1-e0}), we also have 
\begin{equation}\label{fp-1}
\{M/p_jp_k \mid m \mid M\}\subset\Div (A).
\end{equation}
If $\bbB_{M/p_i}[b]>0$, then $\bbA_{M/p_i}[x]\bbB_{M/p_i}[b]=\phi(p_i)=1$ and we are in the case (\ref{db1-isatura}). Assume therefore, for the rest of the proof, that
\begin{equation}\label{fp-0}
\bbB_{M/p_i}[b]=0.
\end{equation}

We start by claiming that 
\begin{equation}\label{fp-5}
\hbox{ at most one of }A_{x,b}\cap\ell_j(x) \hbox{ and } A_{x,b}\cap\ell_j(a)\hbox{ is non-empty}.
\end{equation}
Indeed, suppose that $A_{x,b}\cap\ell_j(x)\neq\emptyset$. By (\ref{fp-1}), this is only possible if
\begin{equation}\label{fp-3}
\bbA_{M/p_j^2}[x] \bbB_{M/p_j^2}[b]>0.
\end{equation} 
Suppose that we also have $A_{x,b}\cap\ell_j(a)\neq \emptyset$, then
$$
\bbA_{M/p_i}[x] \bbB_{M/p_i}[b] + \bbA_{M/p_ip_j}[x] \bbB_{M/p_ip_j}[b]
+ \bbA_{M/p_ip_j^2}[x] \bbB_{M/p_ip_j^2}[b] >0.
$$
The first term is 0 by (\ref{fp-0}). The second term cannot be non-zero concurrently with (\ref{fp-3}), by Lemma \ref{triangles}. For the third term to be non-zero, we would need $\bbA_{M/p_ip_j^2}[x] = \bbA_{M/p_j^2}[a]>0$,
but that again cannot hold concurrently with (\ref{fp-3}). This proves (\ref{fp-5}).

By Lemma \ref{triangles}, if either of the sets in (\ref{fp-5}) is non-empty,  we have $A_{x,b}\cap(\ell_k(x)\cup\ell_k(a))=\emptyset$. 
Repeating the same argument with $j$ and $k$ interchanged, we get that $A_{x,b}$ is in fact contained in just one of the lines in (\ref{fp-2}). 

It remains to prove that we cannot have $A_{x,b}\subset\ell_j(a)$ or $A_{x,b}\subset\ell_k(a)$.
Suppose that  $A_{x,b}\subset\ell_j(a)$. Then, using (\ref{fp-0}) and that $p_i=2$, we get
\begin{align*}
	1&=\frac{1}{\phi(p_ip_j)}\bbA_{M/p_ip_j}[x|\ell_j(a)]\bbB_{M/p_ip_j}[b]+\frac{1}{\phi(p_ip_j^2)}\bbA_{M/p_ip_j^2}[x|\ell_j(a)]\bbB_{M/p_ip_j^2}[b]\\
	&=\frac{1}{\phi(p_j)}\bbA_{M/p_j}[a]\bbB_{M/p_j}[y]+\frac{1}{\phi(p_j^2)}\bbA_{M/p_j^2}[a]\bbB_{M/p_j^2}[y],
\end{align*}
where $y\in \ZZ_M\setminus B$ is the unique element such that $(b-y,M)=M/p_i$. 
If both expressions in the last sum were nonzero, we would have $\bbA_{M/p_j^2}[a]\bbB_{M/p_j}[y]\bbB_{M/p_j^2}[y]>0$, hence $M/p_j^2\in \Div(A)\cap \Div(B)$, which is a contradiction. This means that either
\begin{equation}\label{isatur1}
\phi(p_j)=\bbA_{M/p_j}[a]\bbB_{M/p_j}[y]
\end{equation}
or
\begin{equation}\label{isatur2}
\phi(p_j^2)=\bbA_{M/p_j^2}[a]\bbB_{M/p_j^2}[y].
\end{equation}

Suppose that (\ref{isatur2}) holds. Since $M/p_j$, $M/p_j^2 \in \Div(A)$, we must have $\bbB_{M/p_j^2}[y]=1$, so that $\bbA_{M/p_j^2}[a]=\phi(p_j^2)$. But this implies $|A\cap\Pi(a,p_k^2)|\geq\bbA_{M/p_j^2}[a]+\bbA_M[a] \geq \phi(p_j^2)+1>p_ip_j$, which contradicts Lemma \ref{planebound}.

Finally, we prove (\ref{isatur1}) cannot hold. Assume the contrary. Then, since $M/p_j\notin \Div(B)$, 
we must have $\bbB_{M/p_j}[y]=1$ and therefore
$$
\bbA_{M/p_j}[a]=\phi(p_j).
$$
However, this contradicts the assumption (\ref{A=A1}).

The proof that we cannot have $A_{x,b}\subset\ell_k(a)$ is identical. The lemma follows.
\end{proof}

\begin{corollary}\label{edb1-unif1}
Assume (DB1), and fix $b\in B$. Suppose that $A_{x,b}\subset\ell_{\nu}(x)$, where $\nu\in\{i,j,k\}$ and $x$ satisfies (\ref{db1-e1}). Then $A_{x',b}\subset\ell_{\nu}(x')$ for all $x'$ satisfying (\ref{db1-e1}).

\end{corollary}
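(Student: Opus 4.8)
The plan is to fix $b\in B$ and prove that the case in the trichotomy of Lemma \ref{edb1-satline} is the same for every $x$ satisfying (\ref{db1-e1}); call this case the \emph{type} of $x$ (relative to $b$), which is $j$, $k$, or $i$ according as (\ref{db1-jsatura}), (\ref{db1-ksatura}), or (\ref{db1-isatura}) holds. Throughout we use that in (DB1) we have $p_i=2$, that $I=\{0\}$, $I^c=\{1\}$, and that $A\cap\Lambda=A_1$ by (\ref{A=A1}).

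First, a preliminary observation. Adding $M/p_i$ to a point $x\in\Lambda$ keeps it in $\Lambda$ (because $D(M)\mid M/p_i$) and moves it along the line $\ell_i(x)$; since $p_i=2$, this flips the $p_i$-coordinate and fixes the $p_j$- and $p_k$-coordinates. Hence, if $x$ satisfies (\ref{db1-e1}), i.e. $x\in I^c\times J\times K$, then $x+M/p_i\in I\times J\times K\subset A_1=A\cap\Lambda$, so
$$
\bbA_{M/p_i}[x]\ge 1\qquad\hbox{for every }x\hbox{ satisfying (\ref{db1-e1}).}
$$

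The key mechanical point is that when $\langle\bbA[x'],\bbB[b]\rangle=1$ is saturated by a cofiber pair as in Lemma \ref{edb1-satline}, the distinguished term of the nonnegative sum $\langle\bbA[x'],\bbB[b]\rangle=\sum_{m\mid M}\phi(M/m)^{-1}\bbA_m[x']\bbB_m[b]$ already equals $1$, so every other term vanishes. If $x'$ has type $j$ or $k$, then in particular the $m=M/p_i$ term vanishes, so $\bbA_{M/p_i}[x']\,\bbB_{M/p_i}[b]=0$, whence $\bbB_{M/p_i}[b]=0$ by the preliminary observation. But if some $x$ has type $i$ then $\bbB_{M/p_i}[b]=1$ by the $(0,1)$-cofiber conclusion of Lemma \ref{edb1-satline}. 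These are incompatible, so type $i$ occurs either for all $x$ satisfying (\ref{db1-e1}) or for none; in the former case we are done, so we may assume every such $x$ has type $j$ or $k$.

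It remains to rule out that some $x$ has type $j$ and some $x'$ has type $k$. The set of $x$ satisfying (\ref{db1-e1}) is $I^c\times J\times K$, which is connected under single-coordinate moves, so if the types $j$ and $k$ both occur we may choose such $x,x'$ to be adjacent: $(x-x',M)=M/p_\mu$ with $\mu\in\{j,k\}$, so that $\Bispan(x,x')=\Pi(x,p_\mu^2)\cup\Pi(x',p_\mu^2)$. From the saturation above, $A_{x,b}$ equals $\{a\in A:(x-a,M)=M/p_j^2\}$, and comparing its cardinality with that of the $(1,2)$-cofiber shows $A_{x,b}$ equals an $M$-fiber $F\subset A$ in the $p_j$ direction at distance $M/p_j^2$ from $x$; likewise $A_{x',b}$ equals an $M$-fiber $F'\subset A$ in the $p_k$ direction at distance $M/p_k^2$ from $x'$. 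A short $p$-adic computation then shows the fiber among $F,F'$ whose direction equals $p_\mu$ misses $\Bispan(x,x')$: for instance if $\mu=j$, each $z\in F$ has $(z-x,M)$ of $p_j$-valuation $0$, and since $x-x'$ has $p_j$-valuation $1$ the same holds for $z-x'$, so $z\notin\Pi(x,p_j^2)\cup\Pi(x',p_j^2)$; the case $\mu=k$ is symmetric with $F'$ and $x'$ in place of $F$ and $x$. Plugging this into the appropriate instance of (\ref{setplusspan}) forces $F\subset F'$ when $\mu=j$, and $F'\subset F$ when $\mu=k$. This is impossible: both fibers have at least two elements (as $p_j,p_k\ge 2$), but two distinct points of an $M$-fiber in the $p_j$ direction differ by an element of $(\cdot,M)$-value $M/p_j$, whereas the analogous value inside an $M$-fiber in the $p_k$ direction is $M/p_k\ne M/p_j$. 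This contradiction completes the proof.

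The step I expect to be the real work is the last paragraph: choosing the correct direction of (\ref{setplusspan}) according to which coordinate $x$ and $x'$ differ in, and verifying the valuation bookkeeping that places the relevant $M$-fiber outside $\Bispan(x,x')$; once the containment $F\subset F'$ (or $F'\subset F$) is available, the transversality contradiction is immediate. A minor technical point, handled in the third paragraph, is that the saturation argument must be run with the unrestricted functions $\bbA_m[\cdot]$, $\bbB_m[\cdot]$ rather than their restrictions to the saturating sets.
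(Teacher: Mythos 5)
Your proposal is correct. It shares the paper's skeleton — run the trichotomy of Lemma \ref{edb1-satline} at every $x$ satisfying (\ref{db1-e1}) and show the case cannot change — but the two halves are executed somewhat differently. For the $p_i$-case dichotomy, your argument is in fact more direct than the paper's: you extract $\bbB_{M/p_i}[b]=0$ straight from the vanishing of the $m=M/p_i$ term in the saturated box product together with $\bbA_{M/p_i}[x']\geq 1$, whereas the paper reaches the same conclusion via the divisor $M/p_ip_j^2\in\Div(\ell_j(x),a)\subset\Div(A)$ combined with $\bbB_{M/p_j^2}[b]>0$. For ruling out mixed $j$/$k$ types, the paper propagates the type along fiber directions (type $j$ at $x$ forces type $j$ on all of $x*F_j$, and a type-$k$ point at distance $M/p_k$ would propagate back along its $F_k$-fiber to contradict this), with a separate remark for $|K|=1$; you instead pick two adjacent points of different types and use (\ref{setplusspan}), after checking that the relevant saturating fiber avoids $\Bispan(x,x')$, to force one $M$-fiber inside another of a different direction — an immediate transversality contradiction. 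Both routes rest on the same exact identification $A_{x,b}=F$ coming from the $(1,2)$-cofiber conclusion of Lemma \ref{edb1-satline} (your cardinality comparison is the right justification, and the needed facts $\bbA_{M/p_j}[x]=\bbA_{M/p_k}[x]=0$ are recorded right after (\ref{db1-e1})); yours avoids the divisor computation and the $|K|=1$ case split at the cost of the valuation bookkeeping with $\Bispan$, so it is a legitimate, slightly cleaner alternative finish.
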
	

\begin{proof}
Assume first that $\bbB_{M/p_i}[b]=1$. Since $\bbA_{M/p_i}[x]=1$ for all $x$ satisfying (\ref{db1-e1}), it follows that (\ref{db1-isatura}) holds for all such $x$, as required. 

We now prove the corollary with $\nu=j$. Assume that $A_{x,b}\subset\ell_{j}(x)$, so that (\ref{db1-isatura}) holds. Clearly, 
(\ref{db1-isatura}) also holds for all $x'$ such that $(x'-x,M)=M/p_j$, so that 
\begin{equation}\label{edb1-align}
A_{x_j,b}\subset\ell_{j}(x_j)\ \ \forall x_j\in x*F_j.
\end{equation}
We also note that
\begin{equation}\label{edb1-Adiv}
M/p_ip_j^2\in \Div(\ell_j(x),a)\subset \Div(A)
\end{equation}
where $a$ satisfies $(x-a,M)=M/p_i$.

If $|K|=1$, the above argument already proves the lemma. 
Assume now that $|K|>1$, and let $x''$ satisfy (\ref{db1-e1}) and $(x''-x_j,M)=M/p_k$ for some $x_j\in x*F_j$.
By (\ref{edb1-align}), we have $\bbB_{M/p_j^2}[b]>0$, and by (\ref{edb1-Adiv}) we have $M/p_ip_j^2\notin \Div(B)$. It follows that  $\bbB_{M/p_i}[b]=0$, so that $A_{x'',b}\cap\ell_i(x'')=\emptyset$. Suppose now that $A_{x'',b}\subset \ell_k(x'')$. By the same argument as above with $j$ and $k$ interchanged, this would imply that $A_{x_j,b}\subset\ell_{k}(x_j)$, which would contradict (\ref{edb1-align}). Hence $A_{x'',b}\subset \ell_j(x'')$, and as above, the same is true for all $x''_j$ with $(x''_j-x'',M)=M/p_j$. This proves the lemma for $\nu=j$. 

The proof for $\nu=k$ is identical, interchanging $k$ and $j$.
\end{proof}

\begin{lemma}\label{fp-unif2}
Assume that (DB1) holds.
Let $x$ satisfy (\ref{db1-e1}). If there exists $b\in B$ for which (\ref{db1-jsatura}) holds, then (\ref{db1-ksatura}) does not hold for any $b'\in B$. The same holds with (\ref{db1-jsatura}) and (\ref{db1-ksatura}) interchanged. 
\end{lemma}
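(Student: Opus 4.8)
The plan is to argue by contradiction: suppose there exist $b,b'\in B$ such that (\ref{db1-jsatura}) holds for $x$ with $b$, and (\ref{db1-ksatura}) holds for $x$ with $b'$. First I would extract the divisibility consequences. By the ``Furthermore'' clause of Lemma \ref{edb1-satline}, (\ref{db1-jsatura}) with $b$ means the product $\langle\bbA[x],\bbB[b]\rangle$ is saturated by a $(1,2)$-cofiber pair in the $p_j$ direction, with the $A$-cofiber $F\subset A$ at distance $M/p_j^2$ from $x$ and a $B$-fiber $G\subset B$ rooted at $b$; in particular $\bbA_{M/p_j^2}[x]=\phi(p_j)$ or, more to the point, $F$ is an $M$-fiber in the $p_j$ direction inside $A$ lying in a line $\ell_j$ through a point at distance $M/p_j^2$ from $x$, and $G$ is an $M/p_j$-fiber in $B$ rooted at $b$, so $\bbB_{M/p_j^2}[b]=\phi(p_j)/\bbA_{M/p_j^2}[x]\cdot(\dots)$ — more usefully, $M/p_j^2\in\Div(B)$ is \emph{excluded}: since $F\subset A$ is an $M$-fiber in the $p_j$ direction we have $M/p_j, M/p_j^2\in\Div(A)$, but $G\subset B$ an $M/p_j$-fiber gives $M/p_j\in\Div(B)$, and divisor exclusion forbids $M/p_j\in\Div(A)\cap\Div(B)$ unless... wait — here I need to be careful: an $M/p_j$-fiber in $B$ in the $p_j$ direction has differences with $(d,M)=M/p_j$, i.e.\ $M/p_j\in\Div(B)$, which combined with $M/p_j\in\Div(A)$ would already contradict Sands. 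So in fact the correct reading is that the saturation forces $\bbB_{M/p_j^2}[b]>0$, i.e.\ $M/p_j^2\in\Div(B)$, and the cofiber $F\subset A$ at distance $M/p_j^2$ from $x$ is an $M$-fiber, giving $M/p_j\in\Div(A)$ \emph{and} the key relation $M/p_ip_j^2\in\Div(\ell_j(F),a)\subset\Div(A)$ as in (\ref{edb1-Adiv}), where $a$ is the unique element of $A$ with $(x-a,M)=M/p_i$. Symmetrically, (\ref{db1-ksatura}) with $b'$ yields $M/p_k^2\in\Div(B)$, an $M$-fiber in $A$ in the $p_k$ direction at distance $M/p_k^2$ from $x$, and $M/p_ip_k^2\in\Div(A)$.

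Next I would look at the element $a\in A$ with $(x-a,M)=M/p_i$ (recall $p_i=2$, so $a$ is unique and $\Pi(x,p_i^2)\cup\Pi(a,p_i^2)=\ZZ_M$ covers everything in the $p_i$-direction). The two cofibers $F_j\subset A$ (in the $p_j$-direction, at distance $M/p_j^2$ from $x$) and $F_k\subset A$ (in the $p_k$-direction, at distance $M/p_k^2$ from $x$) must both reside in $A$; I would pin down their positions relative to $a$ and relative to the diagonal-box set $A_1$. Using (\ref{A=A1}), every element of $A$ on the grid $\Lambda$ lies in $A_1=(I\times J\times K)\cup(I^c\times J^c\times K^c)$ with $I=\{0\},I^c=\{1\}$; but $F_j, F_k$ need not lie on $\Lambda$. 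The point is to count: I expect the configuration to force a point $x''$ of the form (\ref{db1-e1}) (i.e.\ in $I^c\times J\times K$) for which \emph{both} (\ref{db1-jsatura}) and (\ref{db1-ksatura}) are simultaneously forced by Corollary \ref{edb1-unif1} applied with $\nu=j$ (using $b$) and with $\nu=k$ (using $b'$) — Corollary \ref{edb1-unif1} says that if $A_{x,b}\subset\ell_j(x)$ then $A_{x',b}\subset\ell_j(x')$ for \emph{all} $x'$ satisfying (\ref{db1-e1}), and likewise $A_{x,b'}\subset\ell_k(x')$ for all such $x'$. Then for any single $x'$ satisfying (\ref{db1-e1}) we would have $A_{x',b}\subset\ell_j(x')$ and $A_{x',b'}\subset\ell_k(x')$ simultaneously, which is permitted since $b\neq b'$; so the contradiction must come from divisor exclusion at the level of $\Div(A)\cap\Div(B)$, not from the line-containment alone.

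So the decisive step is to combine the four divisibility facts $M/p_j^2\in\Div(B)$, $M/p_k^2\in\Div(B)$, $M/p_ip_j^2\in\Div(A)$, $M/p_ip_k^2\in\Div(A)$ together with the structure of the $B$-cofibers $G_j$ (an $M/p_j$-fiber rooted at $b$) and $G_k$ (an $M/p_k$-fiber rooted at $b'$), and derive a forbidden common divisor. Concretely, I would consider the pair of cofibers in $B$: $G_j$ sits in a line $\ell_j(b)$ and has $M/p_j^2\in\Div(G_j)$; now bring in $b'$ and the $p_k$-cofiber $G_k$. Either $b$ and $b'$ match in the $p_j$ direction or not, and similarly in the $p_k$ direction, and in each case I would produce an element of $\Div(B)$ of the shape $M/p_j^2p_k^2\cdot(\text{something})$ or a mismatch contradicting the saturation counts; alternatively, use Lemma \ref{triangles}(ii) (the $p_i=2$ enhanced version) on a quadruple $(a_1,a_2,b_1,b_2)$ drawn from $F_j\cup F_k\cup\{a\}$ and $G_j\cup G_k$. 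The main obstacle I anticipate is the bookkeeping of which of $M/p_ip_j^2$ vs.\ $M/p_j^2$ etc.\ actually lands in $\Div(B)$ versus $\Div(A)$ and ruling out the ``escape'' where $b,b'$ are positioned so that all the would-be common divisors are avoided — I expect this to be closed off exactly by Lemma \ref{planebound}/Corollary \ref{planegrid} (a plane-count contradiction, as in the last paragraph of the proof of Lemma \ref{edb1-satline}), since forcing both a full $p_j$-cofiber and a full $p_k$-cofiber through nearby points of $A$, on top of the diagonal-box contribution, overloads some plane $\Pi(\cdot,p_\nu^2)$ beyond the bound $p_ip_jp_k$. I would finish by invoking that contradiction, and the symmetric statement (interchanging $j$ and $k$) is proved identically.
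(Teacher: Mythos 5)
Your proposal assembles the right ingredients (Corollary \ref{edb1-unif1} for uniformity, Lemma \ref{planebound} for a plane count), but the decisive step is misassembled, and the route you actually commit to is not the one that works. You assert that ``the contradiction must come from divisor exclusion at the level of $\Div(A)\cap\Div(B)$, not from the line-containment alone,'' and then spend the main part of the argument trying to manufacture a forbidden common divisor from $M/p_j^2,M/p_k^2\in\Div(B)$ and $M/p_ip_j^2,M/p_ip_k^2\in\Div(A)$, possibly via Lemma \ref{triangles}. That list of divisors is mutually consistent, the relative position of $b$ and $b'$ is unconstrained, and you yourself concede you cannot rule out the ``escape'' configurations; this branch is left genuinely open. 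In the paper the contradiction does come from the line containments: by Corollary \ref{edb1-unif1}, (\ref{db1-jsatura}) with $b$ and (\ref{db1-ksatura}) with $b'$ hold simultaneously for \emph{every} $x'\in I^c\times J\times K$, and each instance forces (via the cofiber clause of Lemma \ref{edb1-satline}) a full $M$-fiber of $A$ in the line $\ell_j(x')$, respectively $\ell_k(x')$, off the grid $\Lambda$. Aggregating over all $x'$ puts $|K|$ disjoint $M$-fibers in the $p_j$ direction and $|J|$ disjoint $M$-fibers in the $p_k$ direction inside the single plane $\Pi(x,p_i^2)$, on top of the $(p_j-|J|)(p_k-|K|)$ points of $I^c\times J^c\times K^c$, for a total of $p_jp_k+|J||K|>p_jp_k$, contradicting Lemma \ref{planebound}.

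Your fallback sentence gestures at this plane count but does not carry it: ``one full $p_j$-cofiber and one full $p_k$-cofiber through nearby points, on top of the diagonal-box contribution'' does not exceed the bound in general (e.g.\ $p_j=3$, $p_k=5$, $|J|=1$, $|K|=3$ gives $(p_j-|J|)(p_k-|K|)+p_j+p_k=12<15$), so the count only closes once you use the uniformity to produce a cofiber in \emph{every} line $\ell_j(x')$ and $\ell_k(x')$, which is exactly the step you set aside as ``permitted since $b\neq b'$.'' You also quote the wrong bound: the relevant estimate is $|A\cap\Pi(x,p_i^2)|\le p_jp_k$ from Lemma \ref{planebound}, not $p_ip_jp_k$ (which is $|A|$ itself and can never be exceeded). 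So as written the proof would not go through; the fix is to drop the divisor-exclusion detour and make the aggregated cofiber count in $\Pi(x,p_i^2)$ precise.
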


\begin{proof}
Assume, by contradiction, that there exist $b_j,b_k\in B$ such that $b_j$ satisfies (\ref{db1-jsatura}) and $b_k$ satisfies (\ref{db1-ksatura}). It follows by Corollary \ref{edb1-unif1} that (\ref{db1-jsatura}) with $b=b_j$ and (\ref{db1-ksatura}) with $b=b_k$ hold for all 
$x'\in I^c\times J\times K$. Thus $A\cap\Pi(x,p_i^2)$ contains $ I^c\times J^c\times K^c$, as well as $|K|$ $M$-fibers in the $p_j$ direction and $|J|$ $M$-fibers in the $p_k$ direction, all disjoint from each other. We get
\begin{align*}
|A\cap \Pi(x,p_i^2)|&\geq (p_j-|J|)(p_k-|K|)+p_k|J|+p_j|K|\\
&=p_jp_k + |J| |K| >p_jp_k.
\end{align*}	
This contradicts Lemma \ref{planebound}, and completes the proof of the lemma.
\end{proof}

\begin{lemma}\label{db1-saturating}
Assume (DB1) holds, and that (\ref{db1-jsatura}) holds for some $x$ satisfying (\ref{db1-e1}) and $b\in B$. Then, for every $x'\in I^c\times J\times K^c$,
\begin{equation}\label{db1-isatura2}
A_{x',b}\subset\ell_i(x'), \hbox{ with } \bbA_{M/p_i^2}[x']=p_i, \ \bbB_{M/p_i^2}[b]=\phi(p_i).
\end{equation}
Moreover, (\ref{db1-isatura2}) holds for all $b\in B$, and $B$ is $N_i$-fibered in the $p_i$ direction.

If (\ref{db1-ksatura}) holds for some $x$ satisfying (\ref{db1-e1}) and $b\in B$, then the same conclusion is true with $j$ and $k$ interchanged.
\end{lemma}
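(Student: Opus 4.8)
The plan is to start from the consequences of (\ref{db1-jsatura}) established so far. By Lemma \ref{edb1-satline}, the assumption (\ref{db1-jsatura}) means that $\langle\bbA[x],\bbB[b]\rangle$ is saturated by a $(1,2)$-cofiber pair in the $p_j$ direction, so there is an $M$-fiber $F\subset A$ at distance $M/p_j^2$ from $x$, an $M/p_j$-fiber $G\subset B$ rooted at $b$, and $\bbA_{M/p_j^2}[x]\bbB_{M/p_j^2}[b]=\phi(p_j^2)$. By Corollary \ref{edb1-unif1}, the case (\ref{db1-jsatura}) occurs for \emph{this same} $b$ at every point $x'$ satisfying (\ref{db1-e1}). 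Now fix $x'\in I^c\times J\times K^c$ (so $|K^c|\geq 1$ by (\ref{edb1-e0}), making this set nonempty). The key structural input is that $x'$ sees, via the diagonal boxes $A_1$, an element $a\in A$ with $(a-x',M)=M/p_i$ (the point of $I\times J\times K^c$ in the same $p_i$-line; note $x'\in I^c\times J\times K^c$ forces its $p_i$-partner into $I\times J\times K^c\subset A_1$ only if $K^c$... — more precisely the partner lies in $I\times J\times K^c$, and this is in $A$ iff that coordinate triple is in the box, which holds since $I\times J\times K$ is a box and we are on $K^c$; I will instead use the box $I\times J\times K$: the partner of $x'$ lies in $I\times J\times K^c$, which is \emph{not} necessarily in $A$, so the honest statement is that the relevant neighbour of $x'$ inside $A_1$ comes from whichever box applies). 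Rather than belabour this, the real content is: $A_{x'}\subset\Pi(x',p_i^2)\cup\Pi(a,p_i^2)$ for the $p_i$-neighbour $a$ (if $a\in A$), together with the line restriction $A_{x'}\subset\ell_j(x')\cup\ell_k(x')\cup\ell_j(a)\cup\ell_k(a)$ exactly as in the proof of Lemma \ref{edb1-satline}, and then ruling out every line except $\ell_i(x')$.

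The heart of the argument is the following dichotomy, run as in Lemma \ref{edb1-satline}: for the fixed $b$, $A_{x',b}$ is contained in a single one of the four lines, and the $\ell_j$ and $\ell_k$ alternatives are excluded. The alternative $A_{x',b}\subset\ell_j(x')$ is impossible because it would force $M/p_j^2\in\Div(A)\cap\Div(B)$ conflated with the divisor information $M/p_i p_j^2\in\Div(A)$ already in play, or more directly because combined with (\ref{db1-jsatura}) at the neighbouring points it would overfill $\Pi(x',p_i^2)$ past the bound $p_jp_k$ of Lemma \ref{planebound} — this is the same counting contradiction as in Lemma \ref{fp-unif2}, now internal to a single $p_i$-plane. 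The alternatives involving $\ell_j(a)$ or $\ell_k(a)$ are excluded exactly by the argument at the end of Lemma \ref{edb1-satline} (using $M/p_j\notin\Div(B)$, $M/p_k\notin\Div(B)$ — which themselves follow because $M/p_j^2\in\Div(B)$ from $\bbB_{M/p_j^2}[b]>0$ together with $M/p_ip_j^2\in\Div(A)$ forces $\bbB_{M/p_i}[b]=0$, and similarly on the $k$ side via (\ref{fp-unif2})). Once all competing lines are gone, $A_{x',b}\subset\ell_i(x')$, and since $\langle\bbA[x'],\bbB[b]\rangle=1$ with only the terms $m=M/p_i^2$ and $m=M$ surviving along $\ell_i(x')$ (the $m=M$ term vanishes since $x'\notin A$, as $x'\in I^c\times J\times K^c\not\subset A_1$), we get $\frac{1}{\phi(p_i^2)}\bbA_{M/p_i^2}[x']\bbB_{M/p_i^2}[b]=1$; combined with $\bbA_{M/p_i^2}[x']\leq p_i$ from Lemma \ref{planebound} and $\bbB_{M/p_i^2}[b]\leq\phi(p_i^2)$ this yields (\ref{db1-isatura2}), i.e. $\bbA_{M/p_i^2}[x']=p_i$ and $\bbB_{M/p_i^2}[b]=\phi(p_i)$ (recall $p_i=2$, so $\phi(p_i^2)=2=p_i$).

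To upgrade from ``some $b$'' to ``all $b\in B$'': the relation $\bbB_{M/p_i^2}[b]=\phi(p_i)=1$ holding for one $b$, together with the fact that the saturating cofiber pair at $x'$ must account for the entire value $1$ of the box product for \emph{every} $b'\in B$, is handled by observing that the $A$-side quantity $\bbA_{M/p_i^2}[x']=p_i$ is now known and is independent of $b'$; then for any $b'\in B$ the contribution along $\ell_i(x')$ is $\frac{1}{\phi(p_i^2)}\bbA_{M/p_i^2}[x']\bbB_{M/p_i^2}[b']=\frac12 p_i\bbB_{M/p_i^2}[b']$, which is $\leq 1$ and must leave room for no other direction — running the same exclusion of $\ell_j,\ell_k$ for $b'$ (now easier, since the planebound near $x'$ is already saturated by the $p_i$-fiber structure) forces $\bbB_{M/p_i^2}[b']=\phi(p_i)$ for all $b'$. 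Finally, $\bbB_{M/p_i^2}[b]=\phi(p_i)$ for all $b\in B$, combined with $\Phi_{N_i}|B$? — no: the correct route is that each $b$ lies in an $N_i$-fiber in the $p_i$ direction because $\bbB_{M/p_i^2}[b]=\phi(p_i)=p_i-1$ means $b$ together with its $p_i^2$-plane neighbours at distance $M/p_i^2$ forms a full $p_i$-progression of step $M/p_i^2$, i.e. an $N_i$-fiber; these fibers partition $B$, so $B$ is $N_i$-fibered in the $p_i$ direction. The symmetric statement for (\ref{db1-ksatura}) follows by interchanging $j$ and $k$ throughout.

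\textbf{Main obstacle.} The delicate point is the exclusion of $A_{x',b}\subset\ell_j(a)$ and $\ell_k(a)$: this is where one must carefully track which of $M/p_j,M/p_j^2,M/p_k,M/p_k^2$ lie in $\Div(B)$, using that $\bbB_{M/p_j^2}[b]>0$ (from (\ref{db1-jsatura})) together with the $A$-divisor $M/p_ip_j^2\in\Div(A)$ to kill $\bbB_{M/p_i}[b]$, and then invoking Lemma \ref{triangles} to prevent $\ell_k$-type saturation from coexisting with the $\ell_j$-saturation at the neighbouring point. Getting the bookkeeping of these mismatched-divisor exclusions right, exactly paralleling the final paragraphs of Lemma \ref{edb1-satline} but now anchored at $x'$ rather than $x$, is the part that needs the most care.
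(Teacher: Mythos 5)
There are two genuine gaps here, and they sit exactly where the paper has to work hardest. First, your localization of $A_{x',b}$ is built on the wrong geometry. For $x'\in I^c\times J\times K^c$ the $p_i$-neighbour lies in $I\times J\times K^c$, which is disjoint from $A_1=(I\times J\times K)\cup(I^c\times J^c\times K^c)$; by (\ref{A=A1}) it is \emph{not} in $A$, so the four-line restriction $A_{x'}\subset\ell_j(x')\cup\ell_k(x')\cup\ell_j(a)\cup\ell_k(a)$ from Lemma \ref{edb1-satline} simply does not transfer (that restriction used $\bbA_{M/p_j}[x]=\bbA_{M/p_k}[x]=0$ and an actual $p_i$-neighbour in $A$, neither of which holds at $x'$, where instead $\bbA_{M/p_j}[x']=|J^c|\geq 2$ and $\bbA_{M/p_i}[x']=0$). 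You half-notice this and then set it aside, but the whole exclusion scheme that follows is anchored on lines that are not the relevant candidates. The correct move (the paper's Claim 1) is to use $|J^c|\geq 2$ to force $A_{x',b}\subset\Pi(x',p_j^2)$ and then apply (\ref{setplusspan}) with the $p_k$-neighbour $x$ satisfying (\ref{db1-e1}), whose saturating set is already localized; this yields $A_{x',b}\subset\ell_i(x')\cup\ell_i(x)$. The real difficulty is then to exclude the competing line $\ell_i(x)$, which your proposal never identifies; in particular the case $|K|=1$, $p_k=3$ requires a separate argument playing $\Phi_{p_k^2}|A$ against $\Phi_{p_k^2}|B$ via plane-bound counts, and nothing in your plan produces it.

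Second, even granting $A_{x',b}\subset\ell_i(x')$, your derivation of (\ref{db1-isatura2}) does not close. From $\bbA_{M/p_i^2}[x']\,\bbB_{M/p_i^2}[b]=\phi(p_i^2)=2$ with $p_i=2$, the bounds you cite ($\bbA_{M/p_i^2}[x']\leq p_i$, $\bbB_{M/p_i^2}[b]\leq\phi(p_i^2)$) allow both factorizations $2\cdot 1$ and $1\cdot 2$; they do not select $\bbA_{M/p_i^2}[x']=p_i$, $\bbB_{M/p_i^2}[b]=\phi(p_i)$. The paper needs a dedicated contradiction argument (its Claim 3: assume $\bbA_{M/p_i^2}[x']=1$, $\bbB_{M/p_i^2}[b]=p_i$, take $b'$ with $(b-b',M)=M/p_i^2$, and show $\langle\bbA[x'],\bbB[b']\rangle$ cannot be saturated) to rule out the other branch. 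This is not bookkeeping: which side carries the factor $p_i$ is precisely what makes $M/p_i\in\Div(A)$ rather than $\Div(B)$, kills the alternative (\ref{db1-isatura}) for all $b$, and gives the $N_i$-fibering of $B$ claimed in the lemma. Your subsequent ``upgrade to all $b$'' inherits this gap.
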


\begin{proof}
Suppose that (\ref{db1-jsatura}) holds for some $x$ satisfying (\ref{db1-e1}), and for some $b\in B$. By Corollary \ref{edb1-unif1} and Lemma \ref{fp-unif2}, (\ref{db1-jsatura}) holds for the same $b\in B$ with $x$ replaced by any other element $x''$ satisfying (\ref{db1-e1}).

Let $x'\in I^c\times J\times K^c$. By the above argument, we may assume that
$$
(x-x',M)=M/p_k.
$$
Denote by $a$ the unique element in $I\times J\times K$ satisfying 
$$
(x-a,M)=M/p_i,(x'-a,M)=M/p_ip_k.
$$

\noindent
{\bf Claim 1.} Let $b\in B$. If $b$ satisfies either $A_{x,b}\subset \ell_i(x)$ or (\ref{db1-jsatura}), then
\begin{equation}\label{db1-satsetloc}
A_{x',b}\subset \ell_i(x')\cup \ell_i(x).
\end{equation}

\begin{proof}
Consider the saturating set $A_{x',b}$. By (\ref{edb1-e0}) we have $\bbA_{M/p_j}[x']\geq 2$. By (\ref{bispan}) 
$$
A_{x',b}\subset \bigcap_{a'\in A, (x'-a',M)=M/p_j} (\Pi(x',p_j^2)\cup \Pi(a',p_j^2))=\Pi(x',p_j^2).
$$
Additionally, applying (\ref{setplusspan}) to $x$ and $x'$, and using that $A_{x,b}\subset\ell_\mu(x)$ for some $\mu\in \{i,j\}$, we get
$$
A_{x',b}\subset \Pi(x',p_k^2)\cup \Pi(x,p_k^2).
$$
Hence (\ref{db1-satsetloc}) follows.
\end{proof}

\noindent
{\bf Claim 2.} Let $b\in B$ satisfy (\ref{db1-jsatura}). Then 
\begin{equation}\label{db1-isatset}
A_{x',b}\subset \ell_i(x').
\end{equation}

\begin{proof}
Clearly, if $|K|>1$ then $A_{x',b}\cap \ell_i(x)\neq \emptyset$ implies $A_{x'',b}\cap \ell_i(x)\neq \emptyset$ for all $x''$ satisfying (\ref{db1-e1}) with $(x-x'',M)=M/p_k$. But this would contradict the fact that $x''$ satisfies (\ref{db1-jsatura}), as follows from Corollary \ref{edb1-unif1}. We are left with the case $|K|=1$. Assume, for contradiction, that 
\begin{equation}\label{Ax'}
	A_{x',b}\cap \ell_i(x)\neq \emptyset.
\end{equation}
We first claim that (\ref{Ax'}) implies 
\begin{equation}\label{Ax'only}
	A_{x',b}\subset \ell_i(x).
\end{equation}
Indeed, (\ref{Ax'}) implies that either $\bbA_{M/p_i^2p_k}[x'|\ell_i(x)]=\bbA_{M/p_i^2}[a]>0$, or else $a\in A_{x',b}$ and $\bbB_{M/p_ip_k}[b]>0$. If the former holds, then $M/p_i^2\in \Div(A)$; since $\bbA_{M/p_i}[x']=0$ by (\ref{A=A1}), this implies that $A_{x',b}\cap \ell_i(x')=\emptyset$ as claimed. Assume now that $a\in A_{x',b}$. Then the failure of (\ref{Ax'only}) would imply that $M/p_i^2p_k\in \Div(a,\ell_i(x))\subset \Div(A)$; on the other hand, we have $M/p_i^2p_k\in \Div(\ell_i(b), b'')\subset \Div(B)$, where $(b-b'',M)=M/p_ip_k$. The claim follows.

With (\ref{Ax'only}) in place, we now have
\begin{equation}\label{ell_i(x)sat}
	1=\frac{1}{\phi(p_k)}\bbA_{M/p_ip_k}[x'|\ell_i(x)]\bbB_{M/p_ip_k}[b]+\frac{1}{\phi(p_i^2p_k)}\bbA_{M/p_i^2p_k}[x'|\ell_i(x)]\bbB_{M/p_i^2p_k}[b].
\end{equation}
By (\ref{A=A1}), we have $\bbA_{M/p_ip_k}[x'|\ell_i(x)]=1$, and since
\begin{equation}\label{AM/p_kdiv}
	M/p_k\in \Div(A),
\end{equation}
we have $\bbB_{M/p_ip_k}[b]\leq 1$. It follows that 
\begin{equation}\label{ell_i(x)sat2}
	\bbA_{M/p_i^2p_k}[x'|\ell_i(x)]\bbB_{M/p_i^2p_k}[b]>0,
\end{equation}
with 
\begin{equation}\label{ell_i(x)satbounds}
	\bbA_{M/p_i^2p_k}[x'|\ell_i(x)]\leq \phi(p_i^2) \text{ and } \bbB_{M/p_i^2p_k}[b]\leq 2,
\end{equation}
where the latter follows from the fact that $p_i=2$ and (\ref{AM/p_kdiv}). Plugging in these restrictions to (\ref{ell_i(x)sat}), we get $\phi(p_k)\leq 3$. 

It is, therefore, left to consider the case when
$$
p_k=3\hbox{ and }|K|=1.
$$
 We then have
\begin{align*}
	|A\cap \Pi(x,p_k)| & \geq |A_1|+p_j\cdot |K|\\
	&= p_jp_k-p_k|J|+2|J||K|\\
	&=3p_j-|J|\\
	&>p_ip_j
\end{align*}
and by Corollary \ref{planegrid} 
\begin{equation}\label{cycA}
	\Phi_{p_k^2}|A.
\end{equation}

If $\bbB_{M/p_ip_k}[b]=0$, it follows from (\ref{ell_i(x)satbounds}) with $p_k=3$ that $\bbA_{M/p_i^2p_k}[x'|\ell_i(x)]=\phi(p_i^2)$ as well. In particular, $M/p_i\in \Div(A)$, hence (\ref{db1-isatura}) does not hold for any $b\in B$. Therefore (\ref{db1-jsatura}) must hold for both $b_1,b_2\in B$ with $(b-b_\mu,M)=M/p_i^2p_k$, for $\mu=1,2$. Since (\ref{db1-jsatura}) also holds for $b$, and $b_1,b_2\in\Pi(b,p_k)$,  we get
$$
|B\cap \Pi(b,p_k)|\geq 3p_j>p_ip_j.
$$
Applying Corollary \ref{planegrid} to $B$, we get $\Phi_{p_k^2}|B$, contradicting (\ref{cycA}).

When $\bbB_{M/p_ip_k}[b]=1$, we denote $b'\in B$ with $(b-b',M)=M/p_ip_k$. From (\ref{AM/p_kdiv}), it is evident that $\bbB_{M/p_i}[b']=0$ and so $b'$ cannot satisfy (\ref{db1-isatura}). It follows that $b'$ satisfies (\ref{db1-jsatura}). In addition, by (\ref{ell_i(x)sat2}), there must also be $b''\in B$ with $(b-b'',M)=M/p_i^2p_k$. Combining all of the above, we have
$$
|B\cap \Pi(b,p_k)|\geq 2p_j+1>p_ip_j.
$$
By Corollary \ref{planegrid}, we get $\Phi_{p_k^2}|B$ again, contradicting (\ref{cycA}).
\end{proof}

\noindent
{\bf Claim 3.} Let $b\in B$ satisfy (\ref{db1-jsatura}). Then (\ref{db1-isatura2}) holds.

\begin{proof}
By Claim 2, (\ref{db1-isatset}) holds. We need to prove that
\begin{equation}\label{satsetsol}
	\bbA_{M/p_i^2}[x']=p_i, \ \bbB_{M/p_i^2}[b]=\phi(p_i).
\end{equation}
Assume, by contradiction, that (\ref{satsetsol}) does not hold. Then
\begin{equation}\label{db1-falseassu}
\bbA_{M/p_i^2}[x']=\phi(p_i), \ \bbB_{M/p_i^2}[b]=p_i
\end{equation}
and 
\begin{equation}\label{satsetBdiv}
M/p_i,M/p_i^2\in \Div(B).
\end{equation} 
Let $b'\in B$ with $(b-b',M)=M/p_i^2$ so that 
\begin{equation}\label{db1-satsetb'}
\bbB_{M/p_i}[b']=\bbB_{M/p_i^2}[b']=1.
\end{equation}
We show that the product $\langle\bbA[x'],\bbB[b']\rangle$ cannot be saturated.

Consider the saturating set $A_{x',b'}$. Since $\bbA_{M/p_i}[x]\bbB_{M/p_i}[b']=1=\phi(p_i)$, we have $A_{x,b'}\subset \ell_i(x)$, so that the assumptions of Claim 1 are satisfied with $b$ replaced by $b'$. It follows that 
$$
A_{x',b'}\subset \ell_i(x')\cup \ell_i(x).
$$
By (\ref{satsetBdiv}), if $A_{x',b'}\cap\ell_i(x)$ is nonempty, then $a\in A_{x',b'}$, hence  $\bbB_{M/p_ip_k}[b']>0$; but this is in contradiction with (\ref{AM/p_kdiv}) and (\ref{db1-satsetb'}). It follows that $A_{x',b'}\subset\ell_i(x')$. By (\ref{A=A1}), (\ref{db1-falseassu}) and (\ref{db1-satsetb'}) we get
$$
1=\frac{1}{\phi(p_i^2)}\bbA_{M/p_i^2}[x']\bbB_{M/p_i^2}[b']=1/2.
$$
This contradiction proves (\ref{satsetsol}). 
\end{proof}

\noindent
{\bf Claim 4.} Suppose there exists $b\in B$ satisfying (\ref{db1-jsatura}). Then (\ref{satsetsol}) holds for all $b\in B$. Consequently $B$ is $N_i$-fibered in the $p_i$ direction. 

\begin{proof}
By (\ref{satsetsol}), we have $M/p_i\in \Div(A)$ and $M/p_i^2\in \Div(B)$. We conclude that no element $b'\in B$ satisfies (\ref{db1-isatura}), so that (\ref{satsetsol}) holds for all $b\in B$.
\end{proof}

This ends the proof of Lemma \ref{db1-saturating}.
\end{proof}

\begin{lemma}\label{done-if-jsatura}
Assume that (DB1) holds, and that (\ref{db1-jsatura}) holds for some $x$ satisfying (\ref{db1-e1}) and some $b\in B$.  Then the tiling $A\oplus B=\ZZ_M$ is T2-equivalent to $\Lambda\oplus B=\ZZ_M$ via fiber shifts. Therefore, the conclusions of Theorem \ref{db-theorem} are satisfied.
\end{lemma}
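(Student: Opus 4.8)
The plan is to feed the hypothesis into Lemma~\ref{db1-saturating}, extract from it a family of $M$-fibers of $A$ in the $p_i$ direction, then use the Fiber-Shifting Lemma repeatedly to bring $A$, up to T2-equivalence, into an extended-corner configuration, and finish by Theorem~\ref{cornerthm}. Since (\ref{db1-jsatura}) holds for some $x$ satisfying (\ref{db1-e1}) and some $b\in B$, Lemma~\ref{db1-saturating} applies and gives two facts I will use: $B$ is $N_i$-fibered (equivalently $M/p_i$-fibered) in the $p_i$ direction, and $\bbA_{M/p_i^2}[x']=p_i$ for every $x'\in I^c\times J\times K^c$. Normalize $I=\{0\}$, $I^c=\{1\}$ (legitimate since $p_i=2$). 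The equality $\bbA_{M/p_i^2}[x']=p_i=2$ forces the two elements of $\ell_i(x')$ at distance $M/p_i^2$ from $x'$, namely $x'+M/p_i^2$ and $x'-M/p_i^2$, both to lie in $A$; since they differ by $M/p_i$, they form an $M$-fiber $F_{x'}:=\{x'-M/p_i^2,\ x'+M/p_i^2\}$ in the $p_i$ direction contained in $A$. Because $p_i=2$, $M/p_i^2$ is not divisible by $D(M)=p_ip_jp_k$, so each $F_{x'}$ lies outside $\Lambda$, and as $x'$ ranges over $I^c\times J\times K^c$ the fibers $F_{x'}$ are pairwise disjoint.

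Next I would run the fiber shifts. Since $B$ is $M/p_i$-fibered in the $p_i$ direction and $A$ contains the $M$-fiber $F_{x'}$ in that direction, the pair $(A,B)$ has a $(1,2)$-cofibered structure in the $p_i$ direction with cofiber $F_{x'}$, so Lemma~\ref{fibershift} lets me move $F_{x'}$ so that it passes through $x'$ (which is at distance $M/p_i^2$ from it), replacing $F_{x'}$ by the $M$-fiber $\{x',\ x'+M/p_i\}\subset\Lambda$ and producing a T2-equivalent tiling. Since $B$ is untouched and each newly created fiber is again an $M$-fiber in the $p_i$ direction, the $(1,2)$-cofibered structure persists after each shift, so I may iterate over all $x'\in I^c\times J\times K^c$. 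The result is a tiling $A'\oplus B=\ZZ_M$, T2-equivalent to $A\oplus B=\ZZ_M$ via fiber shifts, and, using $A\cap\Lambda=A_1$ from (\ref{A=A1}) together with the fact that $x'\in I^c\times J\times K^c$ implies $x'+M/p_i\in I\times J\times K^c$,
$$
A'\cap\Lambda \;=\; A_1\cup(I^c\times J\times K^c)\cup(I\times J\times K^c)\;=\;(I\times J\times\ZZ_{p_k})\cup(I^c\times\ZZ_{p_j}\times K^c).
$$

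Finally, $A'\cap\Lambda$ is a disjoint union of $M$-fibers — those in the $p_k$ direction through $I\times J$ and those in the $p_j$ direction through $I^c\times K^c$ — and it is not fibered in any single direction, because $J$ and $K^c$ are proper nonempty subsets of $\ZZ_{p_j}$ and $\ZZ_{p_k}$ (under (DB1) one even has $|J^c|,|K^c|\ge 2$, and $J,K$ are nonempty by Definition~\ref{def-db}). By Proposition~\ref{prop-ecorner}, $A'\cap\Lambda$ contains a $p_\nu$ extended corner; inspecting how $A'$ meets the two planes of $\Lambda$ perpendicular to the $p_i$ direction shows it is a $p_i$ extended corner — the $\{1\}$-plane meets $A'$ in $\{1\}\times\ZZ_{p_j}\times K^c$, $M$-fibered in the $p_j$ direction but not the $p_k$ direction, and the $\{0\}$-plane in $\{0\}\times J\times\ZZ_{p_k}$, $M$-fibered in the $p_k$ direction but not the $p_j$ direction. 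Moreover $\Phi_M\mid A'$, since fiber shifts leave $B$ fixed and $\Phi_M$ divides exactly one of $A'(X)$, $B(X)$ with $\Phi_M\nmid B$. Hence Theorem~\ref{cornerthm} applies and yields that $A'\oplus B=\ZZ_M$ is T2-equivalent to $\Lambda\oplus B=\ZZ_M$ via fiber shifts; composing with the shifts carrying $A$ to $A'$ gives the conclusion of Theorem~\ref{db-theorem}. The part I expect to require the most care is the bookkeeping in the iterated-shift step — confirming that the $(1,2)$-cofibered structure really survives every shift and that the shifted fibers assemble into exactly the displayed set — together with the routine but necessary verification that this set is an extended corner.
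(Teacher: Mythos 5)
Your proof is correct and, in its first half, follows the same route as the paper: invoke Lemma \ref{db1-saturating} to get $\bbA_{M/p_i^2}[x']=p_i$ for all $x'\in I^c\times J\times K^c$ together with the $N_i$-fibering of $B$ in the $p_i$ direction, extract the $M$-fibers $F_{x'}=\{x'\pm M/p_i^2\}\subset A$ (your observation that the two points differ by $M/p_i$, hence form a fiber since $p_i=2$, is exactly right), and shift them into $\Lambda$ via Lemma \ref{fibershift}; the disjointness and iteration bookkeeping you worried about is fine. Where you diverge is the endgame: the paper only notes that $A'\cap\Lambda$ contains the diagonal boxes $A_1$ plus at least two additional $M$-fibers in the $p_i$ direction disjoint from $A_1$, so the new tiling falls under case (DB3), and it concludes by Proposition \ref{db3-cor}; you instead compute $A'\cap\Lambda=(I\times J\times\ZZ_{p_k})\cup(I^c\times\ZZ_{p_j}\times K^c)$ exactly, verify it is an unfibered disjoint union of fibers containing a $p_i$ extended corner, and conclude by Theorem \ref{cornerthm}. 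Both terminal reductions are available in the paper, so your route is legitimate; the paper's is a bit more economical (no exact description of $A'\cap\Lambda$ is needed), while yours makes the resulting geometric configuration explicit. One justification should be repaired: you claim $\Phi_M\mid A'$ because ``$\Phi_M$ divides exactly one of $A'(X)$, $B(X)$ with $\Phi_M\nmid B$,'' but exclusivity of cyclotomic divisors between tiling complements is guaranteed only for prime-power indices, so $\Phi_M\nmid B$ is not automatic. The fact you need is nevertheless immediate: each fiber polynomial $X^c(1+X^{M/p_i})$ vanishes at every primitive $M$-th root of unity, so fiber shifts preserve divisibility by $\Phi_M$, and $\Phi_M\mid A$ gives $\Phi_M\mid A'$ directly (the paper's reduction to (DB3) tacitly uses the same observation).
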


\begin{proof}
Assume that there exists $x\in I^c\times J\times K$ satisfying (\ref{db1-e1}) and (\ref{db1-jsatura}) for some $b\in B$. 
By Lemma \ref{db1-saturating}, for every element $x'\in I^c\times J\times K^c$ the line $\ell_i(x)$ contains an $M$-fiber in the $p_i$ direction, at distance $M/p_i^2$ from $x'$, and $B$ is $N_i$-fibered in the $p_i$ direction. Let $A'$ be the set obtained from $A$ by shifting all such $M$-fibers, so that after these shifts we have $\{0,1\}\times J\times K^c\subset A'$. By Lemma \ref{fibershift}, $A'$ is T2-equivalent to $A$. 
Moreover, $A'\cap \Lambda$ contains both the original set of diagonal boxes $A_1$ and at least two (since $|K^c|\geq 2$) additional $M$-fibers in the $p_i$ direction, disjoint from $A_1$. Therefore the tiling $A'\oplus B=\ZZ_M$ satisfies the assumption (DB3). The conclusion follows from Proposition \ref{db3-cor}.
\end{proof}

Since the argument above is symmetric with respect to interchanging $j$ and $k$, the same conclusion holds if (\ref{db1-ksatura}) holds for some $x$ satisfying (\ref{db1-e1}) and some $b\in B$.
It remains to consider the case when (\ref{db1-isatura}) holds for all $x$ satisfying (\ref{db1-e1}) and all $b\in B$. This, however, means that $B$ is $M$-fibered in the $p_i$ direction. We could apply the slab reduction to $B$ and conclude that (T2) holds in this case as well. However, in order to prove the full T2-equivalence in Theorem \ref{unfibered-mainthm}, we need to prove that the hypothetical tiling obtained from the slab reduction cannot actually exist, so that this case can be eliminated altogether.

\begin{lemma}\label{nopifibers-alt}
Assume (DB1). Then $B$ cannot be $M$-fibered in the $p_i$ direction.
\end{lemma}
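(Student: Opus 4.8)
The plan is to argue by contradiction: suppose $B$ is $M$-fibered in the $p_i$ direction, say $B=B_0*F_i$ with $|B_0|=p_jp_k$. I would first record the cyclotomic consequences: $\Phi_{p_i^2}\mid B$, so $\Phi_{p_i^2}\nmid A$ and, since (T1) holds for every tile, $\Phi_{p_i}\mid A$. By Sands's divisor exclusion (Theorem \ref{thm-sands}), $M/p_i\notin\Div(A)$, so $A$ has distinct elements modulo $M/p_i$. Since $p_i\parallel|B|$, the slab reduction (Corollary \ref{slab-reduction}, applied with the roles of $A,B$ interchanged, using that $B$ being $M$-fibered in the $p_i$ direction gives condition (ii) of Theorem \ref{subtile} there) yields the tiling $B_{p_i}\oplus A=\ZZ_{M/p_i}$; in it $p_i\parallel(M/p_i)$ and $p_i\nmid|B_{p_i}|=p_jp_k$, so again $\Phi_{p_i}\mid A$. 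I would then verify that $(A,B_{p_i})$ satisfies the slab reduction hypotheses in the $p_i$ direction inside $\ZZ_{M/p_i}$ — a statement about $\Div(A)$ and $\Div(B_{p_i})$ which can be checked through the splitting criteria of Lemma \ref{splittingslab} / Corollary \ref{slabcor}, the key input being $M/p_i\notin\Div(A)$ — and apply the slab reduction a second time in the $p_i$ direction, producing a tiling $\bar A\oplus\bar B=\ZZ_{M/p_i^2}=\ZZ_{p_j^2p_k^2}$ of a cyclic group with only the two primes $p_j,p_k$, where $\bar A=A_{p_i}\bmod(M/p_i^2)$ and $\bar B=B_{p_i}\bmod(M/p_i^2)$, both of cardinality $p_jp_k$. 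Along the way one checks the images of $A$ stay genuine sets, which again follows from $M/p_i\notin\Div(A)$.

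The next step is to see that the diagonal boxes descend intact. The point is that the image $\bar\Lambda$ of $\Lambda$ in $\ZZ_{p_j^2p_k^2}$ is a $D(M/p_i^2)$-grid (its preimage in $\ZZ_M$ is $\Lambda\cup(\Lambda+M/p_i^2)$, and the second copy has $\pi_i$ odd, hence contributes nothing to $A_{p_i}$), so that $\bar A\cap\bar\Lambda$ is exactly the two-dimensional configuration $D_0:=(J\times K)\cup(J^c\times K^c)\subset\ZZ_{p_j}\times\ZZ_{p_k}$. Since $J,J^c,K,K^c$ are all nonempty, $D_0$ is not a union of $N$-fibers in either coordinate direction: each column of $D_0$ is $J$ or $J^c$, both proper nonempty in $\ZZ_{p_j}$, and each row is $K$ or $K^c$. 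This is incompatible with the two-prime tiling $\bar A\oplus\bar B=\ZZ_{N}$, $N=p_j^2p_k^2$: the polynomial $\Phi_N$ divides $\bar A$ or $\bar B$. If $\Phi_N\mid\bar A$, then by Lemma \ref{2d-cyclo}(ii) (with $c_0=1$, since $\bar A$ is a set so $\bbA^N_N[\cdot]$ is $\{0,1\}$-valued) the set $\bar A\cap\bar\Lambda$ must be $N$-fibered in one of the two directions — a direct contradiction. If instead $\Phi_N\mid\bar B$, then $\Phi_N\nmid\bar A$, and here I would use the hypothesis $\min(|J^c|,|K^c|)\geq2$: it guarantees $N/p_j,N/p_k\in\Div_N(\bar A\cap\bar\Lambda)$, so that $\bar\Lambda$ is an unfibered grid for $\bar A$ on which all top differences are realized, and this structure is ruled out when $\Phi_N\nmid\bar A$ by the classification in Lemma \ref{gen_top_div_mis} together with the two-prime-factor theory (or by a plane-counting contradiction via Lemma \ref{planebound}).

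The main obstacle is the descent itself: justifying the second application of the slab reduction — i.e.\ checking condition (ii) of Theorem \ref{subtile} in the $p_i$ direction for the intermediate tiling $B_{p_i}\oplus A=\ZZ_{M/p_i}$, which is exactly where "slightly more effort" is needed and where the splitting reformulation of the slab conditions is convenient — and then confirming that $\Lambda$ and the diagonal-box structure on it survive both reductions unchanged. The other delicate point is the sub-case $\Phi_N\mid\bar B$ of the final contradiction, which is the place the quantitative hypothesis $\min(|J^c|,|K^c|)\geq2$ is genuinely used rather than merely the qualitative fact that $D_0$ is a proper, non-fibered configuration. (One could instead try to argue directly in $\ZZ_M$ by combining the saturating-set estimates of Lemmas \ref{edb1-satline}--\ref{db1-saturating} with plane bounds, but the two-prime reduction isolates the obstruction most cleanly.)
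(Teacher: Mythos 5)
Your opening move coincides with the paper's: assuming $B=B_0*F_i$, one application of Theorem \ref{subtile} (the parenthetical case of Corollary \ref{slab-reduction}, since a fibered set automatically satisfies condition (ii)) produces the tiling $A\oplus B_{p_i}=\ZZ_{M'}$, $M'=M/p_i$, with $A$ a genuine set mod $M'$ because $M/p_i\in\Div(B)$ forces $M/p_i\notin\Div(A)$ by Theorem \ref{thm-sands}. From there the paper finishes directly inside $\ZZ_{M'}$: a short saturating-set computation at a point $x'_j$ lying over $I\times J^c\times K$ shows that every $b\in B_{p_i}$ must have $\bbB'_{M'/p_i}[b]=1$, i.e.\ $B_{p_i}$ would be $M'$-fibered in the $p_i$ direction, which is impossible since $p_i\nmid|B_{p_i}|=p_jp_k$. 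You instead propose a second slab reduction in the $p_i$ direction, descending to $\ZZ_{p_j^2p_k^2}$, where the surviving diagonal-box configuration $(J\times K)\cup(J^c\times K^c)$ is unfibered; your endgame there is essentially sound (the case $\Phi_N\mid\bar B$ is handled most cleanly not by Lemma \ref{gen_top_div_mis}, whose hypothesis is $\Phi_N\mid\bar A$, but by noting that $\Phi_N\mid\bar B$ with $|\bar B|=p_jp_k$ forces $\bar B$ to contain an $N$-fiber, so $N/p_j$ or $N/p_k\in\Div(\bar B)$, contradicting divisor exclusion since both lie in $\Div(\bar A)$ thanks to $\min(|J^c|,|K^c|)\geq2$), and your bookkeeping that $A_1$ descends intact is correct.

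The genuine gap is exactly the step you flag but do not close: the hypothesis for the second reduction, namely condition (ii) of Theorem \ref{subtile} for the pair $(A,B_{p_i})$ in $\ZZ_{M'}$ in the $p_i$ direction. This does not follow from $M/p_i\notin\Div(A)$, and neither criterion of Corollary \ref{slabcor} applies: (i) would require every $a\in A$ to lie in an $M'$-fiber in the $p_i$ direction, and (ii) would require $B_{p_i}\subset\Pi(b,p_i)$, neither of which is available. Concretely, already the instance $m=M'$ of (\ref{slabcond}) demands $M'/p_i=M/p_i^2\notin\Div_{M'}(B_{p_i})$ (this is also what makes $B_{p_i}$ a genuine set mod $M'/p_i$, so without it the second reduction does not even produce a set tiling); in $\ZZ_M$ terms this means no two $M$-fibers of $B$ in the $p_i$ direction sit at distance $M/p_i^2$ from each other, and nothing in (DB1) or in the fibering hypothesis rules that out a priori — indeed the paper's analysis of (DB1) elsewhere (e.g.\ Lemma \ref{db1-saturating}) allows $M/p_i^2\in\Div(B)$ in nearby configurations. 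Excluding such configurations, and the analogous failures for the other pairs $(m,m/p_i)$, would require a saturating-set or splitting argument in $\ZZ_{M'}$ of essentially the same nature and difficulty as the paper's direct finish, so as written the proposal is incomplete at its pivotal step.
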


\begin{proof}
Assume, for contradiction, that $B$ is $M$-fibered in the $p_i$ direction. We apply Theorem 
\ref{subtile} and Corollary \ref{slab-reduction} to get a tiling $A'\oplus B'=\ZZ_{M'}$, where $M'=M/p_i$, $A'\equiv A$ mod $M'$, and $B'$ is obtained from $B$ by selecting one element from each $M$-fiber in the $p_i$ direction and then reducing mod $M'$. In particular, $|B'|=p_jp_k$. We also use $\bbA'$ and $\bbB'$ to denote the boxes associated with $A'$ and $B'$ in $\ZZ_{M'}$.

Let $\Lambda'=\{x'\in\ZZ_{M'}:\ p_i|a-x'\}$ for any $a\in A'_1$, where $A'_1$ is the reduction of $A_1$ mod $M'$. (Note that the images of both of the diagonal boxes in $A_1$ lie in one plane $\Pi(a,p_i)$ mod $M'$ after this reduction.)
Let $x_j' \in \Lambda'\setminus A'_1$ so that $x'_j\equiv x_j$ mod $M'$ for some $x_j\in I\times J^c\times K$. By
(\ref{A=A1}), we have $x'_j\not\in A'$, and
\begin{equation}\label{modM'}
\bbA'_{M'/p_j}[x'_j]\geq 1,\ \ \bbA'_{M'/p_k}[x'_j]\geq 2.
\end{equation}
In this proof, we will use $A'_{x_j}$ and $A'_{x_j,b}$ to denote saturating sets with respect to the tiling $A'\oplus B'=\ZZ_{M'}$. We claim that
\begin{equation}\label{satmodM'}
A'_{x_j}\subset \ell_i(x_j), \hbox{ with }\bbB'_{M'/p_i}[b]=1\hbox{ for all }b\in B'.
\end{equation}
Hence $B'$ must be $M'$-fibered in the $p_i$ direction, contradicting the fact that $|B'|=p_jp_k$.

We now prove (\ref{satmodM'}). If $\bbA'_{M'/p_j}[x'_j]\geq 2$, it follows immediately from (\ref{modM'}) and (\ref{bispan}) that 
$$
A'_{x_j}\subset \bigcap_{a'\in A', (x_j-a',M')=M'/p_\nu, \nu\in\{j,k\}} (\Pi(x_j,p_\nu^2)\cup \Pi(a',p_\nu^2)),
$$
which proves (\ref{satmodM'}).

If $\bbA'_{M'/p_j}[x'_j]=1$, we have
$$
A'_x\subset \ell_i(x_j)\cup\ell_i(a_j),
$$
where $a_j$ is the unique element of $A'$ with $(a_j-x_j,M')=M'/p_j$. 
Let $b\in B'$. Since $M'/p_j\in\Div_{M'}(A')$, we have $\bbB'_{M'/p_j}[b]=0$ and $\bbB'_{M'/p_ip_j}[b]\leq 1$, so that
$$
\langle \bbA'[x'_j|\ell_i(a_j)],\bbB'[b]\rangle \leq \frac{1}{\phi(p_ip_j)}<1.
$$
Hence $A'_{x_j,b}\cap\ell_i(x_j)\neq\emptyset$. But then there must be a $b'\in B'$ such that $(b-b',M')=M'/p_i$. Since $b\in B'$ was arbitrary, the claim follows.
\end{proof}

 
 \subsection{Case (DB2)}\label{subsec-db2}

We continue to assume that the tiling $A\oplus B=\ZZ_M$ satisfies the conditions of Theorem \ref{db-theorem}. Let 
$\Lambda$ be the $D$-grid as in the statement of the theorem.	
Additionally, we assume that $p_i=2$ and that (\ref{A=A1}) holds with
\begin{equation}\label{edb2-ev0}
|J^c|=|K|=1.
\end{equation}
Let $x_j,x_k\in \ZZ_M$ such that $I\times J^c\times K=\{x_j\}$ and  $I^c\times J^c\times K=\{x_k\}$. By (\ref{A=A1}) and
(\ref{edb2-ev0}), we have $x_j,x_k\not\in A$ and 
\begin{equation}\label{edb2-str}
(x_j-x_k,M)=M/p_i \hbox{ and } \frac{1}{\phi(p_j)}\bbA_{M/p_j}[x_j]=\frac{1}{\phi(p_k)}\bbA_{M/p_k}[x_k]=1.
\end{equation}

From this point, the proof is organized as follows. We prove in Lemma \ref{edb2-lines} that for each $b\in B$, the saturating set $A_{x_j,b}$ is contained in one of the lines $\ell_i(x_j), \ell_k(x_j), \ell_k(x_k)$, and that the same is true with $j$ and $k$ interchanged. We then ask which combinations of these lines can work for both of the points $x_j,x_k$ simultaneously. 
Define
\begin{align*}
&B_0:=\{b\in B:\,A_{x_j,b}, A_{x_k,b}\subset \ell_i(x_j)\},
\\
&B_1:=\{b\in B:\,A_{x_j,b}\subset \ell_k(x_j) \hbox{ and } A_{x_k,b}\subset\ell_j(x_k)\}.
\end{align*}
(Note that $\ell_i(x_j)=\ell_i(x_k).$) 
We will prove that
\begin{equation}\label{Bdecomp}
B=B_0 \hbox{ or } B= B_1.
\end{equation}
In both cases, we will be able to use fiber shifts to prove Theorem \ref{db-theorem}.

\begin{lemma}\label{edb2-lines}
Assume (DB2). Then, for each $b\in B$, $A_{x_j,b}$ is contained in exactly one of the lines $\ell_i(x_j), \ell_k(x_j), \ell_k(x_k)$. The same statement holds with $j$ and $k$ interchanged. 
In particular, the sets $B_0$ and $B_1$ are disjoint.
Moreover, for any $b\in B$,
\begin{equation}\label{equiv-iline}
A_{x_j,b}\subset \ell_i(x_j) \ \Leftrightarrow \ A_{x_k,b}\subset \ell_i(x_j).
\end{equation}
\end{lemma}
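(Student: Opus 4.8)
The plan is to analyze the saturating set $A_{x_j,b}$ using the geometric constraints of (\ref{bispan}) together with the divisor structure coming from (DB2), and then appeal to enhanced divisor exclusion (Lemma \ref{triangles}, including part (ii) since $p_i=2$) to eliminate all but one of the candidate lines. First I would compute $\bigcap_{a\in A}\Bispan(x_j,a)$. From (\ref{A=A1}) and (\ref{edb2-str}) we have $\bbA_{M/p_j}[x_j]=\phi(p_j)$, so $M/p_j\in\Div(A)$ forces $A_{x_j}\subset\Pi(x_j,p_j^2)$; similarly $x_k\in A^c$ lies at distance $M/p_i$ from $x_j$ and $A_1$ contains a point $a_k\in I^c\times J\times K$ with $(x_j-a_k,M)=M/p_ip_k$, and a point $a_0\in I\times J\times K$ with $(x_j-a_0,M)=M/p_k$, giving $M/p_k\in\Div(A)$ and hence $A_{x_j}\subset\Pi(x_j,p_k^2)\cup\Pi(a_0,p_k^2)$ (here I need to track the exact root of the relevant $p_k$-plane). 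Intersecting, and using that $A_{x_j}\subset\Pi(x_j,p_i^2)\cup\Pi(x_k,p_i^2)$ from the $M/p_i$ difference to $x_k$, I expect to land on $A_{x_j}\subset \ell_i(x_j)\cup\ell_k(x_j)\cup\ell_k(x_k)$, with no $\ell_j$-lines surviving because $M/p_j\in\Div(A)$ kills the $p_j$-planes through $x_j$ down to the line, and the only line in the $p_k^2$-plane through $a_0$ that is also in a $p_i^2$-plane through $x_j$ or $x_k$ is $\ell_k(x_k)$ (or $\ell_k(x_j)$ itself).

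Next I would show $A_{x_j,b}$ is contained in a \emph{single} one of these three lines for each fixed $b$. This is the standard argument: if $A_{x_j,b}$ met two of the lines, then the box product $\langle\bbA[x_j],\bbB[b]\rangle$ would record nonzero contributions at two divisors $m,m'$ differing appropriately in the $p_i,p_j,p_k$ exponents, and we would get $\bbA_m[x_j]\bbA_{m'}[x_j]\bbB_m[b]\bbB_{m'}[b]>0$, contradicting Lemma \ref{triangles}; I must check the exponent-pattern hypothesis (\ref{triangles-e1}) or, for the $p_i$-coordinate, the relaxed (\ref{triangles-e2}) available since $p_i=2$, in each of the three pairings ($\ell_i$ vs $\ell_k(x_j)$, $\ell_i$ vs $\ell_k(x_k)$, $\ell_k(x_j)$ vs $\ell_k(x_k)$). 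The case $\ell_k(x_j)$ versus $\ell_k(x_k)$ is the most delicate: these two lines differ only in the $p_i$ direction, both sitting at $p_k$-distance $M/p_k^2$-ish from $x_j$, so here I will genuinely need part (ii) of Lemma \ref{triangles} (the $\alpha_i=\alpha_i'=n_i-1$ clause) — this is where being in the even case actually helps rather than hurts, and I expect this to be the main technical point. The statement for $x_k$ with $j$ and $k$ interchanged is symmetric: replay the argument with the roles of the two diagonal boxes and the two lines $\ell_j(x_k),\ell_j(x_j)$ swapped in.

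Finally, for (\ref{equiv-iline}), note $\ell_i(x_j)=\ell_i(x_k)$ and $(x_j-x_k,M)=M/p_i$. Suppose $A_{x_j,b}\subset\ell_i(x_j)$ but $A_{x_k,b}\not\subset\ell_i(x_j)$; then by the first part $A_{x_k,b}$ lies in $\ell_j(x_k)$ or $\ell_j(x_j)$. Apply (\ref{setplusspan}) with the pair $x_j,x_k$: $A_{x_k,b}\subset A_{x_j,b}\cup\Bispan(x_j,x_k)=\ell_i(x_j)\cup\Pi(x_j,p_i^2)\cup\Pi(x_k,p_i^2)$, which forces $A_{x_k,b}$ into the $p_i$-planes through $x_j$ or $x_k$; combined with membership in an $\ell_j$-line this would put $A_{x_k,b}$ on $\ell_i$ after all (the only $\ell_j$-line inside $\Pi(x_j,p_i^2)$ passing through the relevant point is forced to coincide with $\ell_i(x_j)$ only if it is a single shared point, so more carefully: the contribution would have to occur at a divisor $m$ with $p_i^2\mid$ the relevant difference, contradicting that $A_{x_k,b}\subset\ell_j(\cdot)$ means $p_i$-coordinate is free). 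I would make this precise by a short divisor-exclusion computation exactly as in the proof of Lemma \ref{intersections}. Then reverse the roles of $x_j,x_k$ to get the converse implication, completing the proof; the disjointness of $B_0$ and $B_1$ is then immediate since $b\in B_0\cap B_1$ would require $A_{x_j,b}$ to lie simultaneously in $\ell_i(x_j)$ and in $\ell_k(x_j)$.
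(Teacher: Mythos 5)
Your overall skeleton (span constraints giving the three candidate lines, pairwise exclusion of lines via divisor arguments, then the equivalence for $\ell_i$) matches the paper's strategy, but the execution has a genuine gap exactly at the step you yourself flag as the main technical point. For the pairing $\ell_k(x_j)$ versus $\ell_k(x_k)$, the possible contributions are $M/p_k^2$ from $\ell_k(x_j)$ (since $x_j\notin A$ and $M/p_k\in\Div(A)$) against $M/p_ip_k$ or $M/p_ip_k^2$ from $\ell_k(x_k)$. The pair $(M/p_k^2,\,M/p_ip_k)$ is indeed killed by Lemma \ref{triangles}(i). But for the pair $(M/p_k^2,\,M/p_ip_k^2)$ the exponent hypothesis fails in the $p_k$ coordinate: both divisors have $p_k$-exponent $0<n_k$, and neither clause of Lemma \ref{triangles} covers this, since part (ii) relaxes the condition only for the prime $p_i=2$ (where, moreover, the exponents $2$ and $1$ already differ, so the relaxed clause is not even in play). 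No form of enhanced divisor exclusion rules this pair out; what the paper uses instead is the structural input from (DB2): if $a\in A\cap\ell_k(x_k)$ has $(x_j-a,M)=M/p_ip_k^2$, then $(x_k-a,M)=M/p_k^2$, and since by (\ref{edb2-str}) the box $I^c\times J^c\times K^c\subset A$ fills $x_k*F_k\setminus\{x_k\}$, the difference of $a$ with those points gives $M/p_k^2\in\Div(A)$, contradicting $\bbB_{M/p_k^2}[b]>0$ (i.e.\ $M/p_k^2\in\Div(B)$) forced by the $\ell_k(x_j)$ contribution in (\ref{edb2-div2}). Without this extra argument your trichotomy does not close. (Incidentally, the paper's proof of this lemma never needs Lemma \ref{triangles}(ii); the evenness enters through the (DB2) structure, not through the relaxed cuboid condition.)

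Your argument for (\ref{equiv-iline}) also does not go through as written. Applying (\ref{setplusspan}) to the pair $x_j,x_k$ only yields $A_{x_k,b}\subset \ell_i(x_j)\cup\Pi(x_j,p_i^2)\cup\Pi(x_k,p_i^2)$, which excludes nothing: the competing lines satisfy $\ell_j(x_j)\subset\Pi(x_j,p_i^2)$ and $\ell_j(x_k)\subset\Pi(x_k,p_i^2)$, and your patch asserting that any contribution ``would have to occur at a divisor $m$ with $p_i^2\mid$ the relevant difference'' is not justified by anything you have established. The paper's proof is shorter and different: since $\ell_i(x_j)=\ell_i(x_k)$ and $x_j,x_k\notin A$, every $a\in A$ on this line satisfies $(x_j-a,M)=(x_k-a,M)=M/p_i^2$, so $\bbA_{M/p_i^2}[x_j]=\bbA_{M/p_i^2}[x_k]$; hence if $A_{x_k,b}\subset\ell_i(x_k)$, the saturation $\bbA_{M/p_i^2}[x_k]\bbB_{M/p_i^2}[b]=\phi(p_i^2)$ transfers verbatim to $x_j$ and already saturates $\langle\bbA[x_j],\bbB[b]\rangle$, forcing $A_{x_j,b}\subset\ell_i(x_j)$, and symmetrically. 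You should replace your bispan argument with this transfer-of-saturation argument (or at least combine nonemptiness of $A_{x_k,b}\cap\ell_i$ with the already-proved ``exactly one line'' statement). Finally, some of your bookkeeping is off — the first box $I\times J\times K$ sits at distance $M/p_j$ (not $M/p_k$) from $x_j$, and $I^c\times J\times K$ is not contained in $A_1$; these slips do not affect the final span conclusion but should be corrected.
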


\begin{proof}
Fix $b\in B$. By (\ref{bispan}) we have
$$
A_{x_j,b}\subset\ell_i(x_j)\cup \ell_k(x_j)\cup \ell_k(x_k).
$$
We prove first that $A_{x_j,b}$ cannot intersect both $\ell_k(x_j)$ and $\ell_k(x_k)$ simultaneously. Suppose that 
$$
A_{x_j,b}\cap \ell_k(x_j)\neq \emptyset.
$$
Since $M/p_k\in \Div(A)$, we must have 
\begin{equation}\label{edb2-div2}
\bbA_{M/p_k^2}[x_j]\bbB_{M/p_k^2}[b]>0.
\end{equation}
Assume furthermore that $A_{x_j,b}\cap\ell_k(x_k)\neq \emptyset$. If 
$$\bbA_{M/p_ip_k^2}[x_j|\ell_k(x_k)]\bbB_{M/p_ip_k^2}[b]>0,$$
then this together with (\ref{edb2-str}) would imply that $M/p_k^2\in \Div(A\cap\ell_k(x_k))$, contradicting (\ref{edb2-div2}). Hence $\bbA_{M/p_ip_k}[x_j|\ell_k(x_k)]\bbB_{M/p_ip_k}[b]>0.$
But by Lemma \ref{triangles}, this cannot hold concurrently with (\ref{edb2-div2}). 
	
Next, suppose that $A_{x_j,b}\cap\ell_i(x_j)\neq\emptyset$.
Since $x_k\not\in A$, 
we must have $\bbA_{M/p_i^2}[x_j]\bbB_{M/p_i^2}[b]>0$. By Lemma \ref{triangles}, this implies that
$$A_{x_j,b}\cap(\ell_k(x_j)\cup\ell_k(x_k))=\emptyset.$$
Hence $A_{x_j,b}\subset\ell_i(x_j)$, and the first conclusion of the lemma follows.

Finally, we prove (\ref{equiv-iline}). 
Suppose that $A_{x_k,b_k}\subset \ell_i(x_k)$. Then 
$$
\phi(p_i^2)=\bbA_{M/p_i^2}[x_k]\bbB_{M/p_i^2}[b_k]=\bbA_{M/p_i^2}[x_j]\bbB_{M/p_i^2}[b_k],
$$
hence $A_{x_j,b_k}\subset\ell_i(x_j)$ as claimed. The same argument works in the other direction.
\end{proof}

\begin{lemma}\label{edb2-ell_jx_j}
Assume that (DB2) holds, and let $b\in B$. Then:
\begin{itemize}
\item[(i)] if $A_{x_j,b}\subset \ell_i(x_j)$, then
\begin{equation}\label{edb2-p_isat}
\bbA_{M/p_i^2}[x_j]\bbB_{M/p_i^2}[b]=\phi(p_i^2),
\end{equation}
\item[(ii)] if $A_{x_j,b}\subset \ell_k(x_j)$, then 
\begin{equation}\label{edb2-p_ksat}
\bbA_{M/p_k^2}[x_j]=p_k,\ \bbB_{M/p_k^2}[b_k]=\phi(p_k),
\end{equation}
\item[(iii)] if $A_{x_j,b}\subset \ell_k(x_k)$, then

\begin{equation}\label{edb2-Bell_jx_j} 
\bbB_{M/p_ip_k}[b]=1.
\end{equation}
\end{itemize}
The same statements hold with $j$ and $k$ interchanged.
\end{lemma}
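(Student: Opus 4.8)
The plan is to run everything through the box product identity $\langle\bbA[x_j],\bbB[b]\rangle=1$ of Theorem \ref{GLW-thm} together with its saturating‑set refinement $\langle\bbA[x_j],\bbB[b]\rangle=\sum_{m}\frac1{\phi(M/m)}\bbA^M_m[x_j\mid A_{x_j,b}]\bbB^M_m[b\mid B_{b,x_j}]$, restricting the sum to whichever line $A_{x_j,b}$ is assumed to lie in. First I record the geometry. Writing $I=\{0\},I^c=\{1\}$ (legitimate since $p_i=2$), on $\ell_i(x_j)=\ell_i(x_k)$ the set $A$ meets the line only at distance $M/p_i^2$ from $x_j$, since the distances $M$ and $M/p_i$ are realized there only by $x_j$ and $x_k$, and $x_j,x_k\notin A$; on $\ell_k(x_j)$ one has $A\cap\Lambda\cap\ell_k(x_j)=\emptyset$, hence $\bbA_{M/p_k}[x_j]=0$ and $A$ meets $\ell_k(x_j)$ only at distance $M/p_k^2$ from $x_j$; and on $\ell_k(x_k)$ the $A$‑points are exactly the long box $A\cap\Lambda\cap\ell_k(x_k)=I^c\times J^c\times K^c$, at distance $M/(p_ip_k)$ from $x_j$, together with points outside $\Lambda$ at distance $M/(p_ip_k^2)$ from $x_j$. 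I also record, using that $p_k\neq p_i=2$, that $|K^c|=p_k-1\ge2$, so the long box contains pairs at distance $M/p_k$; thus $M/p_k\in\Div(A)$, and by Theorem \ref{thm-sands} $M/p_k\notin\Div(B)$. Consequently, for every $b\in B$, the map $z\mapsto\pi_k(z)\bmod p_k$ is injective on $B\cap\ell_k(b)$, which gives $\bbB_{M/p_k^2}[b]=|B\cap\ell_k(b)|-1\le p_k-1=\phi(p_k)$, and likewise $\bbB_{M/(p_ip_k)}[b]\le1$ and $\bbB_{M/(p_ip_k^2)}[b]\le\phi(p_k)$.

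With these in hand, parts (i) and (ii) are immediate. For (i): if $A_{x_j,b}\subset\ell_i(x_j)$, the only surviving term in the box product is $m=M/p_i^2$, and since both boxes are then positive (otherwise the sum is $0$) the saturating sets capture all relevant $A$‑ and $B$‑points, so $\bbA_{M/p_i^2}[x_j]\bbB_{M/p_i^2}[b]=\phi(p_i^2)$. For (ii): since $x_j\notin A$, $M/p_k\in\Div(A)$ and $A_{x_j,b}\subset\ell_k(x_j)$, Lemma \ref{1dim_sat-cor} applies with $\nu=k$, $x=x_j$, yielding $\bbA_{M/p_k^2}[x_j]\bbB_{M/p_k^2}[b]=\phi(p_k^2)$ with the product saturated by a $(1,2)$‑cofiber pair whose $B$‑component is an $M/p_k$‑fiber through $b$; such a fiber supplies $p_k-1$ points at distance $M/p_k^2$ from $b$, so $\bbB_{M/p_k^2}[b]\ge\phi(p_k)$, and combined with the bound $\bbB_{M/p_k^2}[b]\le\phi(p_k)$ above we get $\bbB_{M/p_k^2}[b]=\phi(p_k)$ and $\bbA_{M/p_k^2}[x_j]=\phi(p_k^2)/\phi(p_k)=p_k$.

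For (iii), suppose $A_{x_j,b}\subset\ell_k(x_k)$. Restricting the box product to $\ell_k(x_k)$ and dropping the $m=M/p_i$ term ($x_k\notin A$) gives
$$\phi(p_ip_k^2)=p_k\,\bbA_{M/(p_ip_k)}[x_j\mid A_{x_j,b}]\,\bbB_{M/(p_ip_k)}[b\mid B_{b,x_j}]+\bbA_{M/(p_ip_k^2)}[x_j\mid A_{x_j,b}]\,\bbB_{M/(p_ip_k^2)}[b\mid B_{b,x_j}],$$
using $\phi(p_ip_k)=p_k-1$ and $\phi(p_ip_k^2)=p_k(p_k-1)$. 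Since the long box $I^c\times J^c\times K^c\subset A$ sits at distance $M/(p_ip_k)$ from $x_j$, we have $\bbB_{M/(p_ip_k)}[b]\ge1$ precisely when that box is contained in $A_{x_j,b}$, and then $\bbB_{M/(p_ip_k)}[b]=1$ by the bound above, which is the claim. So it remains to rule out $\bbB_{M/(p_ip_k)}[b]=0$: in that case the first term vanishes and, using $\bbB_{M/(p_ip_k^2)}[b]\le\phi(p_k)$, the identity forces $\bbA_{M/(p_ip_k^2)}[x_j\mid A_{x_j,b}]\ge p_k$, i.e.\ $A$ contains at least $p_k$ points of $\ell_k(x_k)\setminus\Lambda$. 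These points lie in at most $\phi(p_k)=p_k-1$ of the $D(M)$‑grids parallel to $\Lambda$, each of which meets $\ell_k(x_k)$ in a single $M$‑fiber in the $p_k$ direction; by pigeonhole some such grid $\Lambda'$ contains two of them, and on $\Lambda'$ one has $\Phi_M\mid A$, so $A\cap\Lambda'$ is a $\ZZ$‑combination of $M$‑fibers. Combining this fiber structure with divisor exclusion (and, if needed, with the box product evaluated at $x_k$, where these same $A$‑points reappear at distance $M/p_k^2$, or with the structural classification of $A$ on $D(M)$‑grids) yields the contradiction. This exclusion of the degenerate configuration in which $A_{x_j,b}$ avoids the long box entirely is the step I expect to be the main obstacle; the rest is routine bookkeeping with the box product. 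The assertions with $j$ and $k$ interchanged follow by the symmetric argument, swapping the roles of the two boxes and of $x_j,x_k$ (note $M/p_j\in\Div(A)$ as well, via box $I\times J\times K$, since $|J|=p_j-1\ge2$).
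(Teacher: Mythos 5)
Parts (i) and (ii) of your argument are correct and essentially the paper's: (i) is immediate from $x_j,x_k\notin A$, and (ii) follows from Lemma \ref{1dim_sat-cor} together with your bound $\bbB_{M/p_k^2}[b]\le\phi(p_k)$, which comes from $M/p_k\in\Div(A)$. The problem is in (iii), in exactly the subcase you flag: when $\bbB_{M/p_ip_k}[b]=0$ you only extract $\bbA_{M/p_ip_k^2}[x_j\mid \ell_k(x_k)]\ge p_k$ and then gesture at a pigeonhole/fiber-structure argument (``yields the contradiction'') that is never carried out. This is a genuine gap, and the weak bound you have cannot close it: $\ell_k(x_k)\subset\Pi(x_k,p_j^2)$, where Lemma \ref{planebound} allows up to $p_ip_k=2p_k$ points of $A$, and the long box accounts for only $\phi(p_k)=p_k-1$ of them, so $p_k$ additional points of $A$ at distance $M/p_ip_k^2$ from $x_j$ are perfectly consistent with everything you have established. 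Your proposed pigeonhole into $\phi(p_k)$ parallel $D(M)$-grids does not help either: two such points in one grid merely create a difference $M/p_k$ or $M/p_k^2$ in $A$, and $M/p_k\in\Div(A)$ already, so divisor exclusion is not violated on the $A$ side.

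The missing step, which is how the paper closes this case, is to upgrade the bound on the $B$-side rather than the $A$-side. Once the second term of your identity is nonzero, there is at least one $a\in A\cap\ell_k(x_k)$ with $(a-x_k,M)=M/p_k^2$, and its differences with the long box give $M/p_k^2\in\Div(A)$ in addition to $M/p_k$. Now if $y_1,y_2\in B$ both satisfy $(b-y_\mu,M)=M/p_ip_k^2$, then (using $p_i=2$: the $p_i$-components of $b-y_1$ and $b-y_2$ are each exactly divisible by $2$, hence cancel mod $4$) one gets $(y_1-y_2,M)\in\{M/p_k,M/p_k^2\}\subset\Div(A)$, a contradiction; hence $\bbB_{M/p_ip_k^2}[b]=1$, and therefore $\bbA_{M/p_ip_k^2}[x_j\mid\ell_k(x_k)]=\phi(p_ip_k^2)=\phi(p_k^2)$. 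Then $|A\cap\Pi(x_k,p_j^2)|\ge\phi(p_k^2)+\phi(p_k)=p_k^2-1>p_ip_k$, contradicting Lemma \ref{planebound}. Note that you had already run this very parity computation to prove $\bbB_{M/p_ip_k^2}[b]\le\phi(p_k)$ using only $M/p_k\notin\Div(B)$; the point you missed is that the additional divisor $M/p_k^2\in\Div(A)$, available precisely in this subcase, sharpens that to $\bbB_{M/p_ip_k^2}[b]\le 1$, which is what makes the plane bound bite.
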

\begin{proof}
The first statement is true since $x_k\not\in A$. For (ii), 
since $M/p_k\in\Div(A)$, we must have
$\bbA_{M/p_k^2}[x_j]\bbB_{M/p_k^2}[b]=\phi(p_k^2)$, implying (\ref{edb2-p_ksat}). 

We now prove (iii).
The assumption that $A_{x_j,b}\subset \ell_k(x_k)$ implies 
$$
1=\frac{1}{\phi(p_ip_k)}\bbA_{M/p_ip_k}[x_j|\ell_k(x_k)]\bbB_{M/p_ip_k}[b]
+\frac{1}{\phi(p_ip_k^2)}\bbA_{M/p_ip_k^2}[x_j|\ell_k(x_k)]\bbB_{M/p_ip_k^2}[b].
$$
On the other hand, since $p_i=2$, we have by (\ref{edb2-str}) 
$$
\bbA_{M/p_ip_k}[x_j|\ell_k(x_k)]=\bbA_{M/p_k}[x_k]=\phi(p_k)=\phi(p_ip_k).
$$
If $\bbB_{M/p_ip_k}[b]>0$, this implies (\ref{edb2-Bell_jx_j}) and we are done. Suppose now that
$\bbB_{M/p_ip_k}[b]=0$. Then 
$$
\bbA_{M/p_ip_k^2}[x_j|\ell_k(x_k)]\bbB_{M/p_ip_k^2}[b]=\phi(p_ip_k^2)=\phi(p_k^2),
$$
and by (\ref{edb2-str}) we have $M/p_k,M/p_k^2\in \Div(A)$. Hence $\bbB_{M/p_ip_k^2}[b]=1$, so that
$$
\bbA_{M/p_ip_k^2}[x_j|\ell_k(x_k)]=\bbA_{M/p_k^2}[x_k]=\phi(p_k^2).
$$
But now 
$$
|A\cap\Pi(x_k,p_j^2)|\geq \bbA_{M/p_k^2}[x_k]+\bbA_{M/p_k}[x_k]=\phi(p_k^2)+\phi(p_k)=p_k^2-1>p_ip_k,
$$
contradicting Lemma \ref{planebound}. 
\end{proof}

\begin{lemma}\label{edb2-unif1}
Assume that (DB2) holds, and that there exist $b_i,b_k\in B$ such that $A_{x_j,b_i}\subset \ell_i(x_j)$ and $A_{x_j,b_k}\subset \ell_k(x_j)$. Then we have the following:
\begin{itemize}
\item[(i)] $\bbA_{M/p_i^2}[x_j]=1$ and $\bbB_{M/p_i^2}[b]=p_i$ (in particular, $M/p_i\in\Div(B)$),
\item[(ii)] $\Phi_{p_j^2}|A$, $A\subset \Pi(x_j,p_j)$, and $A_{x_k}\cap \ell_j(x_k)=\emptyset$,
\item[(iii)] $A_{x_k,b_k}\subset \ell_j(x_j)$, with $\bbB_{M/p_ip_j}[b_k]=1$.
\end{itemize}
The same statement holds with $j$ and $k$ interchanged.
\end{lemma}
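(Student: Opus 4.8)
The plan is to pin down the boxes $\bbA_m[x_j]$ from the hypothesis, transfer the information to $x_k$ via Lemma~\ref{edb2-lines}, then to $B$ via the box product identity \eqref{e-ortho2}, and finally to all of $A$ through plane and cyclotomic bounds. From (DB2) and \eqref{edb2-str} one first records $x_j\notin A$, $\bbA_{M}[x_j]=\bbA_{M/p_i}[x_j]=\bbA_{M/p_k}[x_j]=0$, and $\bbA_{M/p_j}[x_j]=\phi(p_j)$, the $\phi(p_j)$ witnesses being exactly the non-root points of $x_j*F_j$, all lying in $\Lambda$ (symmetrically for $x_k$ with $j,k$ exchanged, since \eqref{edb2-str} is invariant under $x_j\leftrightarrow x_k,\ j\leftrightarrow k$). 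Lemma~\ref{edb2-ell_jx_j} applied to the hypotheses gives $\bbA_{M/p_k^2}[x_j]=p_k$ and $\bbB_{M/p_k^2}[b_k]=\phi(p_k)$ (from the $\ell_k(x_j)$-saturation) and $\bbA_{M/p_i^2}[x_j]\,\bbB_{M/p_i^2}[b_i]=\phi(p_i^2)=p_i=2$ (from the $\ell_i(x_j)$-saturation; here $p_i=2$). Since $\ell_i(x_j)$ meets $\Lambda$ only in $\{x_j,x_k\}$, both outside $A$, and $p_i=2$, the set $A\cap\ell_i(x_j)$ is either one point at distance $M/p_i^2$ from $x_j$ or the pair of such points, which then differ by $M/p_i$; thus $\bbA_{M/p_i^2}[x_j]\in\{1,2\}$, the value $2$ being equivalent to $A$ containing an $M$-fiber in the $p_i$ direction, i.e. to $M/p_i\in\Div(A)$.

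\emph{Part (i).} The crux is to exclude $\bbA_{M/p_i^2}[x_j]=2$. If it held then $M/p_i\in\Div(A)$, so $M/p_i\notin\Div(B)$ by divisor exclusion (Theorem~\ref{thm-sands}), forcing $\bbB_{M/p_i^2}[b_i]=1$ and hence $\bbA_{M/p_i^2}[x_j]=2$. I then aim for a contradiction by confronting the hypothetical $p_i$-fiber $A\cap\ell_i(x_j)$, the large set witnessing $\bbA_{M/p_k^2}[x_j]=p_k$, and the arm $(x_j*F_j)\setminus\{x_j\}\subset A$ across the planes $\Pi(x_j,p_j^2),\Pi(x_j,p_k^2)$ and their neighbours; combining Lemma~\ref{planebound} with Corollary~\ref{planegrid} applied to both $A$ and $B$ (as in the parallel step inside the proof of Lemma~\ref{db1-saturating}) and with the $p_i=2$ enhancement of Lemma~\ref{triangles} should either overflow a plane bound or produce $\Phi_{p_\nu^2}\mid A$ for some $\nu\in\{j,k\}$ incompatible with $\bbB_{M/p_i^2}[b_i]=1$; should the local count be too tight, the no-diagonal-divisor results (Lemma~\ref{edb2-nodiag}, Proposition~\ref{nodiagdiv}) should contradict $M/p_i\in\Div(A)$ directly. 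Once $\bbA_{M/p_i^2}[x_j]=1$ is established, the saturation identity gives $\bbB_{M/p_i^2}[b_i]=p_i$; the same computation applies to every $b$ with $A_{x_j,b}\subset\ell_i(x_j)$, i.e. every $b\in B_0$ by \eqref{equiv-iline}, so $\bbB_{M/p_i^2}[b]=p_i$ for all such $b$, and in particular $M/p_i\in\Div(B)$. I expect this exclusion of $\bbA_{M/p_i^2}[x_j]=2$ to be the main obstacle, since the naive plane bounds are only barely violated (they give estimates like $p_k+2\le p_ip_k$), so the contradiction must come from the more global interplay of the hypothetical $p_i$-fiber in $A$, the $M/p_i$-fiber it would impose on $B$, the $\ell_k$-saturation data, and the corner $A_1$, or from the dedicated no-diagonal-divisor machinery.

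\emph{Part (ii) and the symmetric statement.} From (i), $M/p_i\in\Div(B)$, hence $M/p_i\notin\Div(A)$, so $A$ carries no $M$-fiber in the $p_i$ direction. Feeding this into the slab-reduction circle of ideas — the presence of a $B$-pair at distance $M/p_i$ together with Corollary~\ref{doublediv}, the plane and cyclotomic bookkeeping of Lemma~\ref{planebound} and Corollary~\ref{planegrid}, the fact that $\Phi_M\mid A$, and the corner shape $A\cap\Lambda=A_1$ — one obtains $\Phi_{p_j^2}\mid A$. Corollary~\ref{doublediv} then exhibits $A$ as a union of $N_j$-fibers in the $p_i$ and $p_k$ directions which, being pinned by $A\cap\Lambda=A_1$, all lie in the $p_j$-residue class of $x_j$; thus $A\subset\Pi(x_j,p_j)$, and since $x_j$ and $x_k$ differ by $M/p_i$, $\Pi(x_k,p_j)=\Pi(x_j,p_j)\supseteq A$. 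Finally $A\cap\ell_j(x_k)=\emptyset$: any such element is at distance $M$, $M/p_j$, or $M/p_j^2$ from $x_k$, and these options are excluded by $x_k\notin A$, by $\bbA_{M/p_j}[x_k]=0$, and by $A\subset\Pi(x_k,p_j)$ respectively; in particular $A_{x_k}\cap\ell_j(x_k)=\emptyset$. The "$j\leftrightarrow k$" version is obtained by running the same argument with $x_j,x_k$ and $j,k$ swapped, which \eqref{edb2-str} respects.
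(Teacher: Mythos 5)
Your proposal does not close the key step, and the main point it misses is a concrete one. In part (i), the exclusion of $\bbA_{M/p_i^2}[x_j]=2$ is exactly where you stop and speculate ("should either overflow a plane bound or produce $\Phi_{p_\nu^2}\mid A$\,\dots"), and your remark that the naive bound only gives something like $p_k+2\le p_ip_k$ shows which ingredient is missing: the diagonal arm $(x_k*F_k)\setminus\{x_k\}=I^c\times J^c\times K^c\subset A$ lies in the \emph{same} plane $\Pi(x_j,p_j^2)$ as the other witnesses, since its points share the $p_j$-coordinate of $x_j$ (both in $J^c$) and sit at distance $M/p_ip_k$ from $x_j$. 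Thus three pairwise disjoint families lie in $\Pi(x_j,p_j^2)$: these $\phi(p_k)$ points, the $p_k$ points witnessing $\bbA_{M/p_k^2}[x_j]=p_k$ from \eqref{edb2-p_ksat}, and the $\bbA_{M/p_i^2}[x_j]$ points on $\ell_i(x_j)$. Lemma \ref{planebound} then gives $\phi(p_k)+p_k+\bbA_{M/p_i^2}[x_j]\le p_ip_k$, i.e.\ $\bbA_{M/p_i^2}[x_j]\le 1$ because $p_i=2$; together with \eqref{edb2-p_isat} this finishes (i) in one line, with no need for the global arguments, the no-diagonal-divisor machinery, or the $p_i=2$ enhancement of Lemma \ref{triangles} that you invoke tentatively.

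The same count is what drives part (ii), where your argument is again only gestured at: "feeding this into the slab-reduction circle of ideas" is not a derivation of $\Phi_{p_j^2}\mid A$, and the appeal to Corollary \ref{doublediv} is misapplied, since in the $p_j$ direction it requires $\Phi_{N_j}\Phi_{M_j}\mid A$, which is not available (you only aim to get $\Phi_{p_j^2}\mid A$). The correct continuation is that (i) forces equality $|A\cap\Pi(x_j,p_j^2)|=p_ip_k$, so adding $\bbA_{M/p_j}[x_j]=\phi(p_j)$ gives $|A\cap\Pi(x_j,p_j)|>p_ip_k$; Corollary \ref{planegrid} then yields $\Phi_{p_j^2}\mid A$, and combining this with Lemma \ref{planebound} on the $p_j^2$-subplanes gives $A\subset\Pi(x_j,p_j)$, after which your argument for $A_{x_k}\cap\ell_j(x_k)=\emptyset$ is fine. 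Finally, part (iii) is not addressed at all: it is not an instance of the $j\leftrightarrow k$ symmetry, but a separate step in which Lemma \ref{edb2-lines} (with $j$ and $k$ interchanged) restricts $A_{x_k,b_k}$ to one of $\ell_i(x_k),\ell_j(x_j),\ell_j(x_k)$, the option $\ell_i(x_k)$ is excluded by \eqref{equiv-iline} and the choice of $b_k$, the option $\ell_j(x_k)$ is excluded by part (ii) since it would force $\bbA_{M/p_j^2}[x_k]>0$, and then Lemma \ref{edb2-ell_jx_j} (iii) with $j,k$ interchanged gives $\bbB_{M/p_ip_j}[b_k]=1$.
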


\begin{proof}
Let $b_i,b_k$ be as in the assumptions of the lemma. Then $b_i$ satisfies (\ref{edb2-p_isat}), and 
$b_k$ satisfies (\ref{edb2-p_ksat}).
Hence
\begin{align*}
|A\cap\Pi(x_j,p_j^2)|&\geq \bbA_{M/p_k}[x_k]+\bbA_{M/p_k^2}[x_j]+\bbA_{M/p_i^2}[x_j]\\
&=\phi(p_k)+p_k+\bbA_{M/p_i^2}[x_j]\\
&=p_ip_k -1 + \bbA_{M/p_i^2}[x_j].
\end{align*}
By Lemma \ref{planebound}, the last line must be less than or equal to $p_ip_k$. Hence $\bbA_{M/p_i^2}[x_j]=1$, and 
by (\ref{edb2-p_isat}) we must have
$\bbB_{M/p_i^2}[b_i]=\phi(p_i^2)=2$, proving (i). Furthermore, it follows that 
$$
|A\cap\Pi(x_j,p_j^2)|=p_ip_k.
$$
Hence
$$
|A\cap\Pi(x_j,p_j)|\geq |A\cap\Pi(x_j,p_j^2)|+ \bbA_{M/p_j}[x_j]>p_ip_k. 
$$
By Corollary \ref{planegrid}, we have $\Phi_{p_j^2}|A$ and $A\subset\Pi(x_j,p_j)$. In particular, $M/p_j^2\not\in\Div(x_\nu, A)$ for $\nu\in\{j,k\}$. Since we also have $(x_k*F_j)\cap A=\emptyset$, it follows that $A_{x_k}\cap \ell_j(x_k)=\emptyset$. This proves (ii).

We now prove (iii). By Lemma \ref{edb2-lines} with $j$ and $k$ interchanged, $A_{x_k,b_k}$ must be contained in one of the lines $\ell_i(x_k), \ell_j(x_j), \ell_j(x_k)$. 
\begin{itemize}
\item Suppose that $A_{x_k,b_k}\subset \ell_i(x_k)$. By (\ref{equiv-iline}), this would imply 
$A_{x_j,b_k}\subset\ell_i(x_j)$, contradicting the choice of $b_k$.
\item Suppose now that $A_{x_k,b_k}\subset \ell_j(x_k)$. Then (\ref{edb2-p_ksat}) holds with $j$ and $k$ interchanged, and in particular $\bbA_{M/p_j^2}[x_k]>0$, contradicting part (ii) of the lemma.
\end{itemize}
Hence $A_{x_k,b_k}\subset \ell_j(x_k)$, and the last part follows from (\ref{edb2-Bell_jx_j}) with $j$ and $k$ interchanged.
\end{proof}

\begin{lemma}\label{edb2-nodiag}
Assume (DB2). For every $b\in B$, we have the following:

\begin{itemize}
\item[(i)] $\bbB_{M/p_ip_j}[b]\bbB_{M/p_ip_k}[b]=0$,
\item[(ii)] if $A_{x_j,b}\subset \ell_k(x_k)$, then $A_{x_k,b}\subset \ell_j(x_k)$.
\end{itemize}
The same holds with $j$ and $k$ interchanged. 
\end{lemma}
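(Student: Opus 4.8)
The plan is to reduce part (ii) to part (i), and to attack part (i) by a contradiction argument built on three ingredients: the saturating-set dichotomies of Lemmas \ref{edb2-lines}--\ref{edb2-unif1}, enhanced divisor exclusion (Lemma \ref{triangles}), and a splitting argument for the two ``almost complete'' fibers $x_j*F_j$ and $x_k*F_k$.

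First I would dispose of (ii) assuming (i). Given $A_{x_j,b}\subset\ell_k(x_k)$, Lemma \ref{edb2-lines} with $j$ and $k$ interchanged puts $A_{x_k,b}$ in exactly one of $\ell_i(x_k),\ell_j(x_j),\ell_j(x_k)$. The case $\ell_i(x_k)$ is ruled out by (\ref{equiv-iline}): it would force $A_{x_j,b}\subset\ell_i(x_j)$, impossible since $A_{x_j,b}$ already lies in $\ell_k(x_k)$ and, by Lemma \ref{edb2-lines}, in only one of its admissible lines. The case $\ell_j(x_j)$ is ruled out because Lemma \ref{edb2-ell_jx_j}(iii), applied once to the hypothesis and once with $j,k$ interchanged to this case, forces $\bbB_{M/p_ip_k}[b]=\bbB_{M/p_ip_j}[b]=1$, contradicting (i). Hence $A_{x_k,b}\subset\ell_j(x_k)$, which is (ii); the interchanged statement is symmetric.

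For (i) I would argue by contradiction: suppose $\bbB_{M/p_ip_j}[b_0]\bbB_{M/p_ip_k}[b_0]>0$ for some $b_0\in B$. The first step is bookkeeping: a computation in array coordinates from (\ref{A=A1}) and (\ref{edb2-str}) gives $\bbA_{M/p_j}[x_j]=\phi(p_j)$, $\bbA_{M/p_ip_k}[x_j]=\phi(p_k)$, $\bbA_{M/p_ip_j}[x_j]=0$ and the $j\leftrightarrow k$-symmetric values at $x_k$; choosing $b_1,b_2\in B$ that realize the two hypotheses yields $(b_1-b_2,M)=M/p_jp_k$, so $M/p_jp_k\in\Div(B)$ and $M/p_jp_k\notin\Div(A)$. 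The second step is to grind Lemma \ref{triangles} against the points $x_j,x_k,b_0$: the divisor pairs $(M/p_j,M/p_ip_k)$ at $x_j$ and $(M/p_k,M/p_ip_j)$ at $x_k$ force $\bbB_{M/p_j}[b_0]=\bbB_{M/p_k}[b_0]=0$; the pair $(M/p_k^2,M/p_ip_k)$ at $x_j$ together with Lemma \ref{edb2-ell_jx_j}(ii) excludes $A_{x_j,b_0}\subset\ell_k(x_j)$, and symmetrically $A_{x_k,b_0}\not\subset\ell_j(x_k)$. By Lemma \ref{edb2-lines} and (\ref{equiv-iline}) this leaves exactly two configurations: (a) $A_{x_j,b_0}\subset\ell_i(x_j)$ and $A_{x_k,b_0}\subset\ell_i(x_k)$ together; and (b) $A_{x_j,b_0}\subset\ell_k(x_k)$, $A_{x_k,b_0}\subset\ell_j(x_j)$, in which case Lemma \ref{edb2-ell_jx_j}(iii) forces $\bbB_{M/p_ip_j}[b_0]=\bbB_{M/p_ip_k}[b_0]=1$.

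The final step is the splitting input. After translating so $0\in A\cap B$, the $M$-fiber $Z_j:=x_j*F_j$ lies inside $\Lambda$, meets $A$ exactly in the $\phi(p_j)=|J|\ge 2$ points of $A_1$ on that line (here $p_j\ne 2$ since $p_i=2$), and those points pairwise differ with $p_j$-valuation exactly $1$ and are each tiled by a pair $(a,0)$. By Lemma \ref{no-upgrades}, $Z_j$ cannot split with parity $(A,B)$, so it splits with parity $(B,A)$, which pins the $B$-element tiling $x_j$ into $\Pi(0,p_j^2)$; symmetrically the $B$-element tiling $x_k$ lies in $\Pi(0,p_k^2)$. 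Combining these locations with the saturating-set products in (a) and (b) and with the plane bound (Lemma \ref{planebound}, Corollary \ref{planegrid}) yields the contradiction, proving (i); the interchanged statement is symmetric. I expect configuration (b) to be the main obstacle: there the saturating sets on $\ell_i(x_j),\ell_j(x_j),\ell_k(x_k)$ ``just fit'' inside the ambient planes, so a naive point count against Lemma \ref{planebound} does not close the argument, and one genuinely needs the splitting of $Z_j$ and $Z_k$ to locate the relevant elements of $B$ and extract the contradiction, while configuration (a) should be more routine but still requires tracking the parity of $\bbB_{M/p_i^2}[b_0]$.
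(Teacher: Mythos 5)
Your part (ii) is correct and is essentially the paper's own reduction, and your preparatory steps for part (i) check out: the computation giving $M/p_jp_k\in\Div(B)$ (which uses $p_i=2$), the applications of Lemma \ref{triangles} forcing $\bbB_{M/p_j}[b_0]=\bbB_{M/p_k}[b_0]=0$ and excluding $A_{x_j,b_0}\subset\ell_k(x_j)$ and $A_{x_k,b_0}\subset\ell_j(x_k)$, and the resulting dichotomy (a)/(b) are all verifiable. But the proof of (i) stops exactly where the real work begins: the claim that combining the splitting-derived locations with the saturation data in (a)/(b) and the plane bound ``yields the contradiction'' is an assertion, not an argument, and you yourself concede that in configuration (b) the naive count fails. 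The information you have assembled is genuinely weaker than what is needed: splitting the two special fibers $x_j*F_j$ and $x_k*F_k$ only locates, modulo $p_j^2$ resp.\ $p_k^2$, the two elements of $B$ tiling $x_j$ and $x_k$ in the original tiling, while configurations (a)/(b) are statements about box products, i.e.\ (via Lemma \ref{satsets-to-dilations}) about dilated tilings $A\oplus rB$; nothing in your proposal connects the two, and your normalization $0\in A\cap B$ does not tie $b_0$ to the grid $\Lambda$ at all. Note also that $\Div(A_1)$ imposes no restriction on $M/p_i$ or $M/p_i^2$, so configuration (a) is not pinned down by your data either. This endgame is a genuine gap.

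For comparison, the paper's proof of (i) works on the $B$ side and extracts much more. It considers the cuboid spanned by $b_0$, the two witnesses $b_j,b_k$, and the point $y$ with $(b_0-y,M)=M/p_i$, and splits on whether the fourth admissible vertex $u$ (with $(u-b_0,M)=M/p_jp_k$) lies in $B$. If $u\notin B$, the saturating sets $B_{y,a}$ for \emph{every} $a\in A$ collapse to a single point, forcing each $a\in A\cap\Lambda$ to lie in a full $M$-fiber, which is false under (DB2). If $u\in B$, the paper normalizes $x_j=b_0=0$ (so $b_0$ sits inside $\Lambda$) and dilates $B$ so that $b_j,b_k,u$ occupy explicit points of $\Lambda$, giving $B\cap\Lambda=\{0,b_j,b_k,u\}$; splitting is then applied not to the two special fibers but to the fibers $z*F_j$ and $z*F_k$ through every generic point $z=\mu M/p_j+\nu M/p_k$ of the grid, which forces $\bbB_{M/p_i^2}[z]\geq 1$ for all such $z$, and a pigeonhole on three collinear such points (using $p_i=2$ and $p_k\geq 5$) produces the contradiction. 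To complete your route you would need an argument of comparable strength at the final step; as written, it is missing.
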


\begin{proof}
Assume, by contradiction, that $\bbB_{M/p_ip_j}[b]\bbB_{M/p_ip_k}[b]>0$ for some $b\in B$. 
Let $y\in \ZZ_{M}$ be the unique point with $(b-y,M)=M/p_i$. Then there exist $b_j,b_k\in B$ such that 
$(y-b_\nu,M)=M/p_\nu$ for $\nu=j,k$. 

Let $a\in A$, and consider the saturating set $B_{y,a}$. Then, by (\ref{bispan}) 
$$
B_{y,a}\subset \bigcap_{b'\in B, (y-b',M)=M/p_\nu,\nu\in\{i,j,k\}} (\Pi(y,p_j^2)\cup \Pi(b',p_\nu^2))
$$ 
and thus contained in the vertices of the $M$-cuboid with vertices at $b,b_j,b_k$ and $y$. By (DB2), we have 
\begin{equation}\label{divB-diag}
\{M/p_j,M/p_k,M/p_ip_jp_k\}\subset \Div(A),
\end{equation}
hence no other elements of $B$ are permitted at the vertices of that cuboid except possibly at the point $u$ with $(u-b,M)=M/p_jp_k$. We consider two cases.

\medskip\noindent
{\bf Case 1.} Suppose that $u\not\in B$.
By Lemma \ref{triangles}, $B_{y,a}$ must consist of just one of the points $b,b_j,b_k$. The latter, in turn, implies that $a$ must be contained in an $M$-fiber in some direction, which is clearly false for any $a\in A\cap\Lambda$.

\medskip\noindent
{\bf Case 2.}
Assume now that $u\in B$. In this case, we will use a splitting argument. By translational invariance, we may assume that $x_j=b=0$, so that $\Lambda=\Lambda(0,D)$, $x_k=M/p_i$, and $A_1=(F_j\setminus\{0\}) \cup ((x_k*F_k)\setminus\{x_k\})$. Replacing $B$ by $rB$ for some $r\in R$ if necessary (see \cite[Lemma 3.6]{LaLo3}), we may assume that
$$
b_j=M/p_i+M/p_j,\ b_k=M/p_i+M/p_k, \ u=M/p_j+M/p_k.
$$
By (\ref{divB-diag}), no other points of $\Lambda$ are permitted in $B$, so that \begin{equation}\label{nootherb}
B\cap\Lambda=\{0,b_j,b_k,u\}.
\end{equation}

Let $z=\mu M/p_j + \nu M/p_k$ for some $\mu\in\{2,3,\dots,p_j-1\}$ and $\nu\in\{2,3,\dots,p_k-1\}$, and let 
$a'\in A,b'\in B$ be the elements such that $a'+b'=z$. We first consider splitting for the fiber $z*F_j$. We have
\begin{align*}
&\nu M/p_k = b_k+ (M/p_i+(\nu-1)M/p_k), \hbox{ with }
 b_k\in B,\ M/p_i+(\nu-1)M/p_k\in  A, 
\\
&\nu M/p_k+ M/p_j = b_j + (M/p_i+ \nu M/p_k), \hbox{ with }
 b_j \in B,\ M/p_i+\nu M/p_k\in  A.
\end{align*}
Hence $z*F_j$ splits with parity $(A,B)$, so that 
\begin{equation}\label{pinnedplanej}
a'\in\Pi(0,p_j^2),\ b'\in\Pi(z,p_j^2).
\end{equation}
Next, consider the fiber $z*F_k$. We have
\begin{align*}
&\mu M/p_j = 0 + \mu M/p_j,\ \hbox{ with }
 0\in B,\ \mu M/p_j \in  A, 
\\
&\mu M/p_j + M/p_k = u + (\mu-1) M/p_j, \ \hbox{ with }
 u\in B,\ (\mu-1) M/p_j \in  A.
\end{align*}
Hence $z*F_k$ also splits with parity $(A,B)$, so that 
\begin{equation}\label{pinnedplanek}
a'\in\Pi(0,p_k^2),\ b'\in\Pi(z,p_k^2).
\end{equation}
By (\ref{pinnedplanej}) and (\ref{pinnedplanek}), we have
$$
a'\in \ell_i(0),\ b'\in\ell_i(z).
$$
This together with (\ref{nootherb}) implies that
\begin{equation}\label{toomanyb}
\bbB_{M/p_i^2}[\mu M/p_j + \nu M/p_k]\geq 1\ \ \forall 
\mu\in\{2,3,\dots,p_j-1\},  \nu\in\{2,3,\dots,p_k-1\}.
\end{equation}
However, since $p_i=2$, by (\ref{divB-diag}) we also have $\bbB_{M/p_j}[b']=\bbB_{M/p_k}[b']=0$ and $\bbB_{M/p_i p_j}[b']$, $\bbB_{M/p_i p_k}[b']\leq 1$. At least one of $p_j$ and $p_k$ is greater than or equal to 5, say $p_k\geq 5$. Applying (\ref{toomanyb}) with $\mu=2$ and $\nu=2,3,4$, we get a contradiction.
This completes the proof of (i).

We now prove (ii). Assume that  $A_{x_j,b}\subset \ell_k(x_k)$ for some $b\in B$. By Lemma 
\ref{edb2-ell_jx_j}, we have $\bbB_{M/p_ip_k}[b]=1$. Consider now $A_{x_k,b}$.
\begin{itemize}
\item If $A_{x_k,b}\subset \ell_j(x_j)$, then we also have $\bbB_{M/p_ip_j}[b]=1$, contradicting (i).
\item We cannot have $A_{x_k,b}\subset \ell_i(x_k)$, since that would contradict (\ref{equiv-iline}).
\end{itemize}
It follows by Lemma \ref{edb2-lines} that $A_{x_k,b}\subset \ell_j(x_k)$, as claimed.
\end{proof}


\begin{proposition}\label{nodiagdiv}
Assume (DB2). Then $\bbB_{M/p_ip_j}[b]=\bbB_{M/p_ip_k}[b]=0$
for all $b\in B$.
\end{proposition}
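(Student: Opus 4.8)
The plan is to argue by contradiction. Suppose the conclusion fails. Since the statement is symmetric under interchanging $j$ and $k$, we may assume there is some $b^*\in B$ with $\bbB_{M/p_ip_j}[b^*]>0$; by Lemma \ref{edb2-nodiag}~(i) this forces $\bbB_{M/p_ip_k}[b^*]=0$. Translating $A$ and $B$ independently and then applying a dilation $\psi_r$ with $r\in R$ (each operation preserving the hypotheses of (DB2) — the dilation fixes $0$ and, since $p_i=2$, preserves the shape of the diagonal boxes), we may normalize so that $x_j=0$, $b^*=0\in B$ and $w:=M/p_ip_j\in B$. After this, $A\cap\Lambda$ is precisely the $M$-fiber $F_j$ with its point $x_j$ deleted, together with the $M$-fiber $x_k*F_k$ with its point $x_k$ deleted, where $(x_k,M)=M/p_i$. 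Recall from (DB2) that $\{M/p_j,M/p_k,M/p_ip_jp_k\}\subset\Div(A)$, so these three divisors are excluded from $\Div(B)$ by divisor exclusion, and that $M/p_i\notin\Div(A\cap\Lambda)$ by Lemma \ref{even_struct}~(ii).

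First I would pin down how the two deleted points are tiled. Each of the fibers $x_j*F_j$ and $x_k*F_k$ contains $p_\nu-1\ge 2$ elements of $A$, all tiled with $B$-part $0$; by Lemma \ref{no-upgrades} the parity $(A,B)$ is impossible for these fibers (two equal $B$-parts cannot differ by exactly $p_\nu^1$), so both split with parity $(B,A)$, whence the $B$-parts of $x_j$ and $x_k$ are divisible by $p_j^2$, resp.\ $p_k^2$. Combined with the excluded divisors, with $\bbB_{M/p_ip_k}[0]=0$, and with the remark that (by uniqueness of the tiling decomposition) $B\cap(x_j-(A\cap\Lambda))$ and $B\cap(x_k-(A\cap\Lambda))$ are each a singleton at most, this restricts how $x_j$ and $x_k$ can be tiled. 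I would then bring in the saturating-set dichotomy: since $\bbB_{M/p_ip_k}[0]=0$, Lemma \ref{edb2-ell_jx_j}~(iii) rules out $A_{x_j,0}\subset\ell_k(x_k)$, so by Lemma \ref{edb2-lines} $A_{x_j,0}$ lies in $\ell_i(x_j)$ or $\ell_k(x_j)$, and dually for $A_{x_k,0}$. The ``transverse'' alternatives ($A_{x_j,0}\subset\ell_k(x_j)$, $A_{x_k,0}\subset\ell_j(x_k)$) I would eliminate by the plane-counting used in the proof of Lemma \ref{edb2-unif1}: they would put more than $p_ip_k$ resp.\ $p_ip_j$ points of $A$ in a single plane, forcing $\Phi_{p_j^2}\mid A$ and $\Phi_{p_k^2}\mid A$ simultaneously via Lemma \ref{planebound} and Corollary \ref{planegrid}, which is impossible. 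This should leave only configurations in which $A_{x_j,0}$ and $A_{x_k,0}$ both lie in the common $p_i$-line $\ell_i(x_j)=\ell_i(x_k)$.

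With that structure available, I would close by a splitting argument modeled on Case 2 in the proof of Lemma \ref{edb2-nodiag}. Using the two known elements $0,w\in B$ and the lines $F_j$, $x_k*F_k$ of $A\cap\Lambda$, I would exhibit, for each point $z=\mu M/p_j+\nu M/p_k$ in a suitable two-parameter range, explicit decompositions of two elements of $z*F_j$ and of two elements of $z*F_k$, conclude that both fibers split with parity $(A,B)$, and hence that $z$ is tiled with an $A$-part divisible by $p_j^2p_k^2$, i.e.\ lying in $\ell_i(0)$; therefore $\bbB_{M/p_i^2}[z]+\bbB_{M/p_i}[z]\ge 1$. Since $p_i=2$ and $M/p_j,M/p_k,M/p_ip_jp_k\notin\Div(B)$ while $\bbB_{M/p_ip_j}[b],\bbB_{M/p_ip_k}[b]\le 1$ for all $b$, letting the parameter corresponding to whichever of $p_j,p_k$ is $\ge 5$ run through three consecutive values produces three elements of $B$ that are pairwise at distance of $p_i$-adic valuation exactly $1$ — impossible for $p_i=2$, since the three $\ZZ_{p_i^2}$-components would then satisfy incompatible congruences. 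This contradiction proves the proposition.

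I expect the middle step to be the main obstacle. In case (DB2) the set $A\cap\Lambda$ can be as small as $p_j+p_k-2$ points, and essentially nothing is known a priori about $B$ beyond $\{0,w\}$, so the saturating-set analysis of $x_j$ and $x_k$ genuinely branches, and in the ``transverse'' branches there are far fewer geometric restrictions from (\ref{bispan}) than in the odd case; squeezing out enough forced structure there to overflow a plane bound, and then verifying that the final family of fibers in the last step is actually available (which is exactly where $p_i=2$, equivalently $\max(p_j,p_k)\ge 5$, is used), is the delicate part.
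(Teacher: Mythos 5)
Your overall strategy coincides with the paper's: argue by contradiction, normalize by translations and a dilation so that $0\in A\cap B$ (with $x_j=0$, $x_k=M/p_i$) and a second element of $B$ realizing the offending divisor, then run a splitting argument at the points $z=\mu M/p_j+\nu M/p_k$ and finish with a counting contradiction using $p_i=2$. The gap is in your closing step. With only the two elements $0,w\in B$ at your disposal, each of the fibers $z*F_j$ and $z*F_k$ contains, for generic $(\mu,\nu)$, exactly \emph{one} point whose decomposition you know explicitly (namely $w+(M/p_i+\nu M/p_k)\in z*F_j$ and $0+\mu M/p_j\in z*F_k$); a short coordinate check shows no second known decomposition is available, in contrast with Case 2 of Lemma \ref{edb2-nodiag}, where the four known elements $0,b_j,b_k,u$ of $B$ supply two decompositions per fiber. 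Consequently Lemma \ref{no-upgrades} does \emph{not} determine the splitting parity, and you cannot conclude $a_z\in\ell_i(0)$. All you get is $\{a_z,b_z\}\subset\Pi(0,p_j^2)\cup\Pi(z,p_j^2)$ and $\{a_z,b_z\}\subset\Pi(0,p_k^2)\cup\Pi(z,p_k^2)$, i.e.\ containment in the four lines $\ell_i(0),\ell_i(z),\ell_i(\mu M/p_j),\ell_i(\nu M/p_k)$. The configuration $a_z\in\ell_i(\mu M/p_j)$, $b_z\in\ell_i(\nu M/p_k)$ survives, and in that branch no element of $B$ is produced on $\ell_i(z)$, so your ``three consecutive values'' contradiction never gets off the ground. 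This is precisely the branch the paper must handle separately: it first shows the two branches cannot mix as $(\mu,\nu)$ varies (one forces $M/p_i^2p_k\in\Div(A)$, the other $M/p_i^2p_k\in\Div(B)$), and then kills the second branch by observing that it yields $\bbA_{M/p_i^2}[\mu M/p_j]\geq 1$ for all $\mu$ together with a saturated product giving $A_{x_j,0}\subset\ell_k(x_k)$, whence Lemma \ref{edb2-nodiag}\,(ii) and (\ref{edb2-p_ksat}) force $\bbA_{M/p_j^2}[x_k]=p_j$ and the plane $\Pi(0,p_k^2)$ overflows Lemma \ref{planebound}. Without some substitute for this second-branch analysis, your proof is incomplete.

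Two smaller remarks. Your preliminary step of localizing $A_{x_j,0}$ and $A_{x_k,0}$ to the common $p_i$-line is not needed for the paper's argument, and your proposed elimination of the ``transverse'' alternative by plane counting is itself doubtful: that configuration is exactly the case $B=B_1$ in the endgame of Theorem \ref{db-theorem}, which genuinely occurs (it is resolved there by fiber shifts, not by contradiction), and the count it produces, $(p_k-1)+p_k=2p_k-1\le p_ip_k$, does not violate Lemma \ref{planebound}. Under the contradiction hypothesis it is indeed excluded, but via Lemma \ref{edb2-ell_jx_j}\,(iii) and Lemma \ref{edb2-nodiag}\,(i)--(ii) (as in the paper's deduction of (\ref{Bdecomp})), not by the count you describe.
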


\begin{proof}
Assume for contradiction that $(b-b_k,M)=M/p_ip_k$. We again use a splitting argument. By translational invariance, we may assume that $x_j=b=0$, so that $\Lambda=\Lambda(0,D)$, $x_k=M/p_i$, and $A_1=(F_j\setminus\{0\}) \cup ((x_k*F_k)\setminus\{x_k\})$. Replacing $B$ by $rB$ for some $r\in R$ if necessary (see \cite[Lemma 3.6]{LaLo3}), we may assume that $b'=M/p_i+M/p_k$.

Let $z=\mu M/p_j + \nu M/p_k$ for some $\mu\in\{1,2,\dots,p_j-1\}$ and $\nu\in\{2,3,\dots,p_k-1\}$, and let 
$a_z\in A,b_z\in B$ be the elements such that $a_z+b_z=z$. We first consider splitting for the fiber $z*F_j$. Since
$$
\nu M/p_k = b'+ (M/p_i+(\nu-1)M/p_k), \hbox{ with }
 b'\in B,\ M/p_i+(\nu-1)M/p_k\in  A, 
$$
we have
\begin{equation}\label{twoplanesj}
\{a_z,b_z\}\subset\Pi(0,p_j^2)\cup \Pi(z,p_j^2).
\end{equation}
Next, consider the fiber $z*F_k$. Since
$$
\mu M/p_j = 0 + \mu M/p_j,\ \hbox{ with }
 0\in B,\ \mu M/p_j \in  A, 
$$
it follows that
\begin{equation}\label{twoplanesk}
\{a_z,b_z\}\subset \Pi(0,p_k^2)\cup \Pi(z,p_k^2).
\end{equation}
Combining (\ref{twoplanesj}) and (\ref{twoplanesk}), we have
$$
\{a_z,b_z\}\subset\ell_i(0)\cup\ell_i(z)\cup\ell_i(\mu M/p_j)\cup\ell_i(\nu M/p_k).
$$
Taking also into account that $a_z+b_z=z$, we only need to consider the following two cases:
\begin{equation}\label{diag-case1}
\{a_z,b_z\}\subset\ell_i(0)\cup\ell_i(z),
\end{equation}
\begin{equation}\label{diag-case2}
\{a_z,b_z\}\subset\ell_i(\mu M/p_j)\cup\ell_i(\nu M/p_k).
\end{equation}

\medskip\noindent
{\bf Case 1.} Assume that (\ref{diag-case1}) holds. We cannot have $a_z,b_z\in\Lambda$, since $A\cap\Lambda$ has no points on these two lines. Hence $(a_z,p_i)=(b_z,p_i)=1$.

Suppose that $a_z\in\ell_i(z)$ and $b_z\in\ell_i(0)$. Then 
$$
(a_z-\mu M/p_j,M)=M/p_i^2p_k = (b_z-b',M),
$$
contradicting divisor exclusion. Hence $a_z\in\ell_i(0)$ and $b_z\in\ell_i(z)$, with
\begin{equation}\label{diag1-e1}
(a_z,M)=(b_z-z,M)=M/p_i^2.
\end{equation}
Note that this implies that
\begin{equation}\label{diag1-e2}
(a_z-(M/p_i+M/p_k),M)=M/p_i^2p_k\in\Div(A).
\end{equation}

\medskip\noindent
{\bf Case 2.} Assume that (\ref{diag-case2}) holds. Suppose first that $a_z,b_z\in\Lambda$, then $a_z\in\{\mu M/p_j,M/p_i+\nu M/p_k\}$. If $a_z=\mu M/p_j$, then $b_z-b=b_z=\nu M/p_k$, contradicting divisor exclusion since $M/p_k\in\Div(A)$. If on the other hand $a_z=M/p_i+\nu M/p_k$, then $b_z=M/p_i+\mu M/p_j$, contradicting Lemma \ref{edb2-nodiag} (i).

Hence $a_z,b_z\not\in\Lambda$. If $a_z\in\ell_i(\nu M/p_k)$ and $b_z\in\ell_i(\mu M/p_j)$, it follows that
$$
(a_z-\mu M/p_j,M)=M/p_i^2p_jp_k=(b_z-b',M),
$$
contradicting divisor exclusion. Hence  $a_z\in\ell_i(\mu M/p_j)$ and $b_z\in\ell_i(\nu M/p_k)$, with 
$$
(a_z-\mu M/p_j,M)=(b_z-\nu M/p_k,M)=M/p_i^2.
$$
In this case, we have
\begin{equation}\label{diag2-e2}
(b'-b_z,M)=M/p_i^2p_k\in\Div(B).
\end{equation}

\medskip

We now allow $\mu$ and $\nu$ to vary. Clearly, (\ref{diag1-e2}) and (\ref{diag2-e2}) are mutually exclusive, so that we are either always in Case 1 or always in Case 2. More precisely, either  (\ref{diag1-e1}) holds for all $\mu,\nu$, so that
\begin{equation}\label{toomanyb2}
\bbB_{M/p_i^2}[\mu M/p_j + \nu M/p_k]\geq 1\ \ \forall 
\mu\in\{1,2,\dots,p_j-1\},  \nu\in\{2,3,\dots,p_k-1\},
\end{equation}
or else (\ref{diag1-e2}) holds for all $\mu,\nu$, so that 
\begin{equation}\label{toomanya}
\begin{split}
&\bbA_{M/p_i^2}[\mu M/p_j ]\geq 1\ \ \forall 
\mu\in\{1,2,\dots,p_j-1\},\\
&\bbB_{M/p_i^2}[\nu M/p_k]\geq 1\ \ \forall 
\nu\in\{2,3,\dots,p_k-1\}.
\end{split}
\end{equation}

Since $p_i=2$, by (\ref{divB-diag}) we have $\bbB_{M/p_j}[b'']=\bbB_{M/p_k}[b'']=0$ and $\bbB_{M/p_i p_j}[b''],\bbB_{M/p_i p_k}[b'']\leq 1$ for all $b''\in B$. As in the proof of Lemma \ref{edb2-nodiag} (i), this means that (\ref{toomanyb2}) cannot hold. 
It remains to consider the case when (\ref{toomanya}) holds. In this case, we have
$$
\bbA_{M/p_ip_k}[x_j]\bbB_{M/p_ip_k}[0]=p_k-1=\phi(p_ip_k),
$$
so that $A_{x_j,0}\subset\ell_k(x_k)$. 
By Lemma \ref{edb2-nodiag} (ii) with $j$ and $k$ interchanged, we have $A_{x_k,0}\subset \ell_j(x_k)$, so that by (\ref{edb2-p_ksat}) with $j$ and $k$ interchanged,
\begin{equation*}
\bbA_{M/p_j^2}[x_k]=p_j.
\end{equation*}
Hence
\begin{align*}
|A\cap\Pi(0,p_k^2)|& \geq \bbA_{M/p_j}[x_j] + \bbA_{M/p_j^2}[x_k]
+\sum_{\mu=1}^{p_j-1}\bbA_{M/p_i^2}[\mu M/p_j ]
\\
&\geq (p_j-1) + p_j + (p_j-1) 
\\
& = p_ip_j + (p_j-2)>p_ip_j,
\end{align*}
contradicting Lemma \ref{planebound}. 

This proves that $M/p_ip_k\not\in\Div(B)$. The proof that $M/p_ip_j\not\in\Div(B)$ is identical, with the $j$ and $k$ indices interchanged.
This ends the proof of the proposition.
\end{proof}


We are now ready to complete the proof of Theorem \ref{db-theorem} under the assumption (DB2).
We first prove (\ref{Bdecomp}). If we had $A_{x_j,b}\subset \ell_k(x_k)$ or $A_{x_k,b}\subset \ell_j(x_j)$ for some $b\in B$, it would follow by Lemma (\ref{edb2-ell_jx_j}) (iii) that $\{M/p_ip_j,M/p_ip_k\}\cap \Div(B)\neq\emptyset$; however, that is impossible by Proposition \ref{nodiagdiv}.
By Lemmas \ref{edb2-lines} and \ref{edb2-ell_jx_j}, for every $b\in B$ we must have either 
\begin{equation}\label{db2-sati}
A_{x_j,b}\subset \ell_i(x_j)
\end{equation}
or
\begin{equation}\label{db2-satk}
A_{x_j,b}\subset \ell_k(x_j).
\end{equation}
Furthermore, if there were elements $b_i,b_k\in B$ such that (\ref{db2-sati}) holds for $b_i$ and (\ref{db2-satk}) holds for $b_k$, it would follow by Lemma \ref{edb2-unif1} (iii) that $\bbB_{M/p_ip_j}[b_k]=1$, which, again, is impossible by Proposition \ref{nodiagdiv}. The same holds with the indices $i$ and $j$ interchanged.

Hence either (\ref{db2-sati}) holds for all $b\in B$, or (\ref{db2-satk}) holds for all $b\in B$. In the first case, we have $B=B_0$ by (\ref{equiv-iline}). In the second case, also by (\ref{equiv-iline}), we have $B=B_1$.

Assume first that $B=B_1$. Then $A_{x_j}\subset\ell_k(x_j)$. By (\ref{edb2-p_ksat}), it follows that $(A,B)$ has a (1,2)-cofibered structure in the $p_k$ direction, with a cofiber in $A$ at distance $M/p_k^2$ from $x_j$. 	
We now use Lemma \ref{fibershift} to shift that cofiber to $x_j$. We then do the same with $j$ and $k$ indices interchanged, using that $A_{x_k}\subset\ell_j(x_k)$.
Let $A'$ be the set thus obtained, so that $A'\cap\Lambda=\{x_j,x_k\}*(F_j\cup F_k)$.
Then $A'$ is T2-equivalent to $A$, and satisfies (DB3). By Proposition \ref{db3-cor}, $A'$ is T2-equivalent to $\Lambda$, therefore so is $A$.

It remains to consider the case when $B=B_0$. By (\ref{edb2-p_isat}), for each $b\in B$ we have either
\begin{equation}\label{pi-littlestep}
\bbA_{M/p_i^2}[x_j]=1, \bbB_{M/p_i^2}[b]=p_i, \hbox{ and } M/p_i\in \Div(B),
\end{equation}
or 
\begin{equation}\label{pi-bigstep}
\bbA_{M/p_i^2}[x_j]=p_i, \bbB_{M/p_i^2}[b]=1, \hbox{ and } M/p_i\in \Div(A).
\end{equation}
We claim that 
\begin{equation}\label{db2-isatfinal}
\hbox{(\ref{pi-bigstep}) holds for all }b\in B.
\end{equation}
Assume, by contradiction, that (\ref{pi-littlestep}) holds for some $b_0\in B$. Then $\bbA_{M/p_i^2}[x_j]=1$, hence
we cannot have (\ref{pi-bigstep}) for any $b\in B$. Therefore, all $b\in B$ must satisfy (\ref{pi-littlestep}).
But this implies that
$$
\bbB_{M}[b]+\bbB_{M/p_i}[b]+ \bbB_{M/p_i^2}[b]=p_i^2 \ \ \forall b\in B,
$$
and in particular $p_i^2$ must divide $|B|$, which is not allowed. This proves the claim.

We see from (\ref{db2-isatfinal}) that $(A,B)$ has a (1,2)-cofibered structure in the $p_i$ direction, with a cofiber in $A$ at distance $M/p_i^2$ from $x_j$.
We now apply Lemma \ref{fibershift} to shift that cofiber to $x_j$. Let $A'$ be the set thus obtained, so that $x_j*F_i\in A'$ and $A\cap(\Lambda\setminus(x_j*F_i))=A'\cap(\Lambda\setminus(x_j*F_i))$. Then $A'$ is T2-equivalent to $A$, and contains a $p_i$ corner structure. By Theorem \ref{cornerthm}, $A'$ (therefore $A$) is T2-equivalent to $\Lambda$. This completes the proof of the theorem.


\section{Fibered grids} \label{fibered-sec}


\subsection{Main results for fibered grids}
Throughout most of this section we will work under the following assumption. 

\medskip\noindent
{\bf Assumption (F):} We have $A\oplus B=\ZZ_M$, where $M=p_i^{2}p_j^{2}p_k^{2}$. Furthermore,
$|A|=|B|=p_ip_jp_k$, $\Phi_M|A$, and $A$ is fibered on $D(M)$-grids.

\medskip
We are not making any assumptions on the parity of $M$ at this point, but some of the arguments below will differ between the odd and even cases. We will indicate this as appropriate.

Let $\cali$ be the set of elements of $A$ that belong to an $M$-fiber in the $p_i$ direction, that is, 
$$
\cali=\{a\in A:\ \bbA_{M/p_i}[a]=\phi(p_i)\}.
$$
The sets $\calj$ and $\calk$ are defined similarly.  
The assumption (F) implies that every element of $A$ belongs to an $M$-fiber in some direction, hence $A=\cali\cup\calj\cup\calk$. We emphasize that this does {\em not} have to be a disjoint union and that it is possible for an element of $A$ to belong to two or three of these sets.

Our main result on fibered grids is the following theorem.

\begin{theorem}\label{fibered-mainthm}
Assume that (F) holds.

\medskip\noindent
{\rm (I)}  
If $\cali\cap\calj\cap\calk\neq\emptyset$, then the tiling $A\oplus B=\ZZ_M$ is T2-equivalent to $\Lambda\oplus B=\ZZ_M$, where $\Lambda:=\Lambda(0,D(M))$. By Corollary \ref{get-standard}, both $A$ and $B$ satisfy (T2).

\medskip\noindent
{\rm (II)} Assume that $\cali\cap\calj\cap\calk=\emptyset$. Then, after a permutation of the $i,j,k$ indices if necessary, the following holds.

\begin{itemize}
\item [(II\,a)] At least one of the sets $\cali,\calj,\calk$ is empty. Without loss of the generality, we may assume that $\cali=\emptyset$, so that $A\subset \calj\cup\calk$.

\item [(II\,b)] If $A\subset \calj$ or $A\subset\calk$, then $A$ is $M$-fibered in the $p_j$ or $p_k$ direction, respectively. Consequently, 
the conditions of Theorem \ref{subtile} are satisfied in that direction. By Corollary \ref{slab-reduction}, both $A$ and $B$ satisfy (T2).

\item [(II\,c)] Suppose that $\cali=\emptyset$, and that $\calj\setminus\calk$ and $\calk\setminus\calj$ are both nonempty. 

\begin{itemize}
\item [$\bullet$]  If $\Phi_{p_i}|A$, then, after interchanging $A$ and $B$, the conditions of Theorem \ref{subtile} are satisfied in the $p_i$ direction. By Corollary \ref{slab-reduction}, both $A$ and $B$ satisfy (T2).

\item [$\bullet$]  If $\Phi_{p_i^2}|A$, then $A\subset \Pi(a,p_i)$ for any $a\in A$.
By Theorem \ref{subgroup-reduction}, both $A$ and $B$ satisfy (T2).

\end{itemize}
\end{itemize}

\end{theorem}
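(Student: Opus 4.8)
The statement decomposes into the already-known case~(I) and the new case~(II), and the substance of~(II) is concentrated in~(II\,a). For~(I), observe that an element $a_0\in\cali\cap\calj\cap\calk$ is, unwinding the definitions of these three sets, exactly an element with $a_0*F_\nu\subset A$ for every $\nu\in\{i,j,k\}$; that is, it satisfies~(\ref{intro-inter}). Hence, after translating so that $a_0=0$, Theorem~\ref{fibered-thm}\,(I)---proved in~\cite{LaLo2} in both parities---gives the asserted T2-equivalence of $A\oplus B=\ZZ_M$ with $\Lambda(0,D(M))\oplus B=\ZZ_M$. For~(II) I would first record two elementary facts. Each of $\cali,\calj,\calk$ is a disjoint union of $M$-fibers in its own direction (if $a\in\cali$ and $a'\in a*F_i$, then $a'*F_i=a*F_i\subset A$), and under~(F) every element of $A$ lies in some $M$-fiber, so $A=\cali\cup\calj\cup\calk$. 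Moreover, since $A$ tiles and $|A|=p_ip_jp_k$, condition~(T1) of Theorem~\ref{CM-thm} forces $S_A$ to contain, for each $\nu$, exactly one of $p_\nu,p_\nu^{2}$; equivalently, for each $\nu$ exactly one of $\Phi_{p_\nu}\mid A$, $\Phi_{p_\nu^{2}}\mid A$ holds, and (by~\cite{CM}) the complementary prime power cyclotomic divides $B$.

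\emph{Case~(II\,a) (the main obstacle).} Assume $\cali\cap\calj\cap\calk=\emptyset$, and suppose for contradiction that all three of $\cali,\calj,\calk$ are nonempty; fix $M$-fibers $a_i*F_i,\ a_j*F_j,\ a_k*F_k$ contained in $A$. Following the reorganization advertised in the introduction, I would not split into cases by the intersection pattern of these fibers but by the finite list (cut down by symmetry) of assignments of $\Phi_{p_\nu}$ versus $\Phi_{p_\nu^{2}}$ to $A$ (hence to $B$) as above. In each case, one analyses the $D(M)$-grids through the elements of the three fibers: since $\cali\cap\calj\cap\calk=\emptyset$ no such grid is $M$-fibered in all three directions, and Lemma~\ref{no-upgrades} together with Lemma~\ref{intersections} forces the elements of $B$ tiling these grids to concentrate in planes of small codimension. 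One then plays these plane restrictions against the plane bound (Lemma~\ref{planebound}) and the enhanced divisor exclusion (Lemma~\ref{triangles}, using part~(ii) in the even case) to crowd too many points of $A$ or of $B$ into a single plane, which is the contradiction. The even case is the delicate one: unlike the odd case one cannot a priori exclude that $B$ is $M$-fibered in the even-prime direction, and splitting arguments---carrying less information than saturating sets but enough to close the argument without a long case analysis---are what make it work.

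\emph{Cases~(II\,b) and~(II\,c).} By~(II\,a) we may assume $\cali=\emptyset$, so $A\subset\calj\cup\calk$. If $A\subset\calj$ or $A\subset\calk$, then $A$ is a disjoint union of $M$-fibers in a single direction, so $\Phi_{p_j^{2}}\mid A$ (resp.\ $\Phi_{p_k^{2}}\mid A$) and Corollary~\ref{slab-reduction} applies at once: this is~(II\,b). Otherwise $\calj\setminus\calk$ and $\calk\setminus\calj$ are both nonempty. If $\Phi_{p_i^{2}}\mid A$ (equivalently $\Phi_{p_i}\nmid A$), I would show that $A$ is contained in a single plane $\Pi(a,p_i)$: the $D(M)$-grids meeting $\calj\setminus\calk$ are fibered only in the $p_j$ direction and those meeting $\calk\setminus\calj$ only in the $p_k$ direction, and combining this with $\Phi_{p_i^{2}}\mid A$, $\Phi_{p_i}\nmid A$, $\cali=\emptyset$ and the plane bound forces a common value of $\pi_i$; then $A\subset p_i\ZZ_M$ up to translation and, since $p_i\parallel|B|$, Theorem~\ref{subgroup-reduction} gives~(T2). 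If instead $\Phi_{p_i}\mid A$, then $\Phi_{p_i^{2}}\mid B$, and after interchanging $A$ and $B$ I would verify condition~(ii) of Theorem~\ref{subtile} in the $p_i$ direction; here the splitting reformulation is the efficient route, since by Lemma~\ref{splittingslab} and Corollary~\ref{slabcor} it is enough to rule out configurations $m\in\Div(B)$, $m/p_i\in\Div(A)$ with $p_i^{2}\mid m\mid M$, and $\cali=\emptyset$ together with the two-direction fibered structure of $A$ excludes precisely these. Corollary~\ref{slab-reduction} then yields~(T2) for $A$ and $B$, completing the proof.
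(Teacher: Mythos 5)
Parts (I) and (II\,b) of your proposal are fine and coincide with the paper's treatment: an element of $\cali\cap\calj\cap\calk$ is exactly an element satisfying (\ref{intro-inter}), so (I) is the case already proved in \cite[Corollary 9.6]{LaLo2}, and (II\,b) is an immediate application of Corollary \ref{slab-reduction} to a fibered set. The substance of the theorem, however, lies in (II\,a) and (II\,c), and there your argument has genuine gaps. For (II\,a), the case split you propose --- the assignment of the prime-power cyclotomics $\Phi_{p_\nu}$ versus $\Phi_{p_\nu^{2}}$ to $A$ --- does not supply the structural input that the proof needs, and the sentence about ``crowding too many points into a plane'' via Lemmas \ref{no-upgrades}, \ref{intersections}, \ref{planebound} and \ref{triangles} is a hope rather than an argument. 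The actual proof splits on whether $\Phi_{M_\nu}\mid A$ for some $\nu$ (cases (F1)/(F2)): since $M_\nu=M/p_\nu^{2}$ has only two prime factors, Lemma \ref{2d-cyclo} and Lemma \ref{M_kfibering} convert that divisibility into fibering of $\calk$ (or of $B$) on the intermediate scales $M_\nu$ and $N_\nu$, which saturates the plane bound; the eventual contradictions are divisibility obstructions such as $p_i\mid p_j$, obtained by counting $B$ along nested grids, and they require the lower-scale analysis of Section \ref{lower-fibering}, the preliminary reductions of Section \ref{F1-prelim} (Corollary \ref{subset}, Proposition \ref{no unfibered grids}), and the eight claims of Proposition \ref{initialstatement}. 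Knowing only which prime-power cyclotomics divide $A$ yields counting information (Corollary \ref{planegrid}), not fibering on the scales $N_\nu,M_\nu$, and nothing in your sketch replaces that input.

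In (II\,c) you make two specific unsupported claims. In the case $\Phi_{p_i}\mid A$, you assert that $\cali=\emptyset$ together with the two-direction fibered structure of $A$ rules out every configuration $m\in\Div(B)$, $m/p_i\in\Div(A)$ with $p_i^{2}\mid m\mid M$. This is not so: taking $m=M$ the condition demands $M/p_i\notin\Div(A)$, and two elements of $\calj$ at distance $M/p_i$ are entirely compatible with $\cali=\emptyset$ and with $A$ being a union of fibers in the $p_j$ and $p_k$ directions. What actually verifies the slab conditions (through Lemma \ref{splittingslab} and Corollary \ref{slabcor}) is the fibering of $B$ in the $p_i$ direction on the scales $N_j$ and $N_k$ (Lemma \ref{Bjfiver}, (\ref{BTp_ifiber})--(\ref{BTp_jfiber}), Corollary \ref{cor-subtile1}, Lemma \ref{noM_nusubtile}), which in turn rests on showing $\Phi_{N_j}\nmid A$ and $\Phi_{N_j}\Phi_{M_j}\mid B$; none of this appears in your outline. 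In the case $\Phi_{p_i^{2}}\mid A$, the claim that the plane bound ``forces a common value of $\pi_i$'' is unjustified: $\Phi_{p_i^{2}}\mid A$ by itself only fibers $A$ modulo $p_i^{2}$, and the containment $A\subset\Pi(a,p_i)$ is obtained in the paper from Lemma \ref{splitting}(i), which requires $\Phi_{M_k}\mid A$ (or $\Phi_{M_j}\mid A$) as an additional hypothesis, while in the complementary case (\ref{cycassumption}) one shows that $\Phi_{p_i^{2}}\mid A$ cannot occur at all. So the easy parts are dispatched correctly, but the core of (II\,a) and both bullets of (II\,c) remain unproved.
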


Theorem \ref{fibered-mainthm} extends \cite[Theorem 9.1]{LaLo2}, covering both odd and even cases. 
Part (I) of the theorem was already proved in \cite[Corollary 9.6]{LaLo2}, with no assumptions on the parity of $M$. We will therefore assume from now on that
\begin{equation}\label{emptyint}
\cali\cap\calj\cap\calk=\emptyset.
\end{equation}
Moreover, part (II\,b) is simply an application of Theorem \ref{subtile} and Corollary \ref{slab-reduction} to a fibered set. Again, no parity assumptions are needed.

It remains to prove (II\,a) and (II\,c). 
We first prove in Section \ref{sec-highdivisors} that Theorem \ref{fibered-mainthm} (II) holds when
\begin{equation}\label{highdivisors}
\{D(M)|m|M\}\cap \Div(B)=\{M\}.
\end{equation}
The proof is simple and based on the structural results on fibered grids in \cite{LaLo3}. This leaves us with the case when
\begin{equation}\label{notallhigh}
\{D(M)|m|M\}\cap \Div(B)\neq \{M\},
\end{equation}
which we assume from now on. 
In \cite{LaLo2}, the proof of (II\,a) in the odd case is organized according to whether the sets $\cali,\calj,\calk$ are pairwise disjoint or not. We depart from that here, considering instead the following sets of assumptions.

\medskip\noindent
\textbf{Assumption (F1):} We have $A\oplus B=\ZZ_M$, where $M=p_i^{2}p_j^{2}p_k^{2}$. Furthermore, $|A|=|B|=p_ip_jp_k$, $\Phi_M|A$, $A$ is fibered on $D(M)$-grids, (\ref{emptyint}) and (\ref{notallhigh}) hold, and $\Phi_{M_\nu}|A$ for some $\nu\in \{i,j,k\}$.

\medskip\noindent
\textbf{Assumption (F2):} We have $A\oplus B=\ZZ_M$, where $M=p_i^{2}p_j^{2}p_k^{2}$. Furthermore, $|A|=|B|=p_ip_jp_k$, $\Phi_M|A$, $A$ is fibered on $D(M)$-grids, (\ref{emptyint}) and (\ref{notallhigh}) hold, and $\Phi_{M_\nu}\nmid A$ for all $\nu\in \{i,j,k\}$.

\medskip

We start with preliminaries in Section
\ref{lowerfibering}. Our results on unfibered grids in that section both strengthen those of \cite{LaLo2} and extend them to the even case. 
We then prove Theorem \ref{fibered-mainthm} (II\,a) under the assumptions (F1) and (F2) in Sections \ref{caseF1} and \ref{caseF2}, respectively.

Finally, we prove Theorem \ref{fibered-mainthm} (II\,c) in Section \ref{fibered-F3}. With (II\,a), (II\,b), and (\ref{notallhigh}) all in place, we are left with the following assumption.

\medskip\noindent
\textbf{Assumption (F3):} We have $A\oplus B=\ZZ_M$, where $M=p_i^{2}p_j^{2}p_k^{2}$. Furthermore,  $|A|=|B|=p_ip_jp_k$, $\Phi_M|A$, $A$ is fibered on $D(M)$-grids, (\ref{notallhigh}) holds, $\cali=\emptyset$, $\calj\setminus \calk\neq \emptyset$, and $\calk\setminus \calj\neq \emptyset$.

\medskip

Assuming (F3), we will prove that either the slab reduction or the subgroup reduction applies as indicated in the theorem. The proof follows the same outline as in \cite{LaLo2}, but with the even case included. We also simplify the argument in several places by using the splitting formulations of the slab reduction in Theorem \ref{subtile}. (Compare e.g. our proof of Corollary \ref{cor-subtile1} to the proof of Lemmas 9.37 and 9.38 in \cite{LaLo2}.)

\subsection{Proof of Theorem \ref{fibered-mainthm}, assuming (\ref{highdivisors})}
\label{sec-highdivisors}


If (\ref{highdivisors}) holds, Theorem \ref{fibered-mainthm} (II) has a short proof based on the results of \cite[Section 7]{LaLo3}. Both Lemma \ref{maxout1} and Corollary \ref{maxout2} are valid under more general assumptions on the cardinalities of $A$ and $B$ than (F). In particular, Corollary \ref{maxout2} assumes that $|A|=p_ip_jp_k$, but does not require the same of $B$. Part (II\,a) of the theorem in this case follows from Corollary \ref{maxout2}, and (II\,c) follows from Theorem \ref{divisor-shortcut}.

\begin{lemma}\label{maxout1}
Assume that $A\oplus B=\ZZ_M$, where $M=p_i^{n_i}p_j^{n_j}p_k^{n_k}$, and that $A$ is fibered on $D(M)$-grids. Assume further that (\ref{highdivisors}) holds.
Then:
\begin{itemize}
\item[(i)] For any $D(M)$-grid $\Lambda$, we have $|\Sigma_A(\Lambda)|=|\Lambda|$. 
\item[(ii)] If $|A|=|\Lambda|=p_ip_jp_k$, then $A$ is contained in the union of just two of the sets $\cali,\calj,\calk$.
\end{itemize}
\end{lemma}

\begin{proof}
We first prove (i). For each $z\in\Lambda$, let $a_z\in A$ and $b_z\in B$ satisfy $z=a_z+b_z$. Suppose that $a_z=a_{z'}$ for some $z,z'\in \Lambda$ with $z\neq z'$. Then $b_z-b_{z'}=(z-a_z)-(z'-a_{z'})=z-z'$, so that $b_z\neq b_{z'}$ and $D(M)|(b_z-b_{z'})$. But that contradicts (\ref{highdivisors}).

If $|A|=p_ip_jp_k$, then (i) implies that $\Sigma_A(\Lambda)=A$.
By  \cite[Lemma 8.4]{LaLo3}, $\Sigma_A(\Lambda)$ may be written as a union of fibers in just two directions. This implies (ii).
\end{proof}

\begin{corollary}\label{maxout2}
Assume that $A\oplus B=\ZZ_M$, where $M=p_i^{n_i}p_j^{n_j}p_k^{n_k}$,
and that $|A|=p_ip_jp_k$, $\Phi_M|A$, and $A$ is fibered on $D(M)$-grids. Assume further that (\ref{emptyint}) and (\ref{highdivisors}) hold. Then at least one of the sets $\cali,\calj,\calk$ is empty.
\end{corollary}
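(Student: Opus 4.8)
\textbf{Proof of Corollary \ref{maxout2}.} The plan is to combine Corollary \ref{subset} (i) with Lemma \ref{maxout1} (ii) and the emptiness hypothesis (\ref{emptyint}). Since $|A|=p_ip_jp_k$, the hypothesis (\ref{highdivisors}) puts us exactly in the setting of Lemma \ref{maxout1}. Applying part (ii) of that lemma, we conclude that $A$ is contained in the union of just two of the sets $\cali,\calj,\calk$; after a permutation of the indices, we may assume
$$
A\subset\calj\cup\calk.
$$
Equivalently, $\cali\subset\calj\cup\calk$. The task is then to rule out the possibility that $\cali$ is nonempty.

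Suppose for contradiction that $\cali\neq\emptyset$, and fix $a\in\cali$. Then $a\in\cali\cap(\calj\cup\calk)$, so $a$ lies in at least two of the three sets. After a further permutation (swapping $j$ and $k$ if necessary, which does not affect $A\subset\calj\cup\calk$), assume $a\in\cali\cap\calj$. Then $a*F_i*F_j\subset A$, and by Lemma \ref{fiberedstructure} applied with the roles of the indices adjusted — here using (\ref{emptyint}) — we get $A\cap\Pi(a,p_k^2)=a*F_i*F_j$, a full plane consisting of fibers in the $p_i$ and $p_j$ directions. In particular every point of $a*F_j$ lies in $\cali$, so $\cali\cap\Pi(a,p_k^2)$ is not fibered in a single direction only if it already contains such fibers; more to the point, we have produced $M$-fibers in the $p_i$ direction inside $A$. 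Now I would invoke Corollary \ref{subset} (i): after relabelling so that $p_i>\min_\nu p_\nu$ (if $p_i$ is the smallest prime, one instead argues with whichever of $j,k$ is largest, since $A\subset\calj\cup\calk$ already forbids fibers in the third direction and one of $\calj,\calk$ must contain fibers in a direction with the largest prime), the set $\cali$ cannot be $N_i$-fibered on any $D(N_i)$-grid in the $p_i$ direction by the plane bound.

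Actually the cleanest route avoids invoking the lower-scale machinery. Once we know $A\subset\calj\cup\calk$ from Lemma \ref{maxout1} (ii), suppose $\cali\neq\emptyset$ and pick $a\in\cali$. Then $a\in\calj$ or $a\in\calk$; say $a\in\cali\cap\calj$. By (\ref{emptyint}), $a\notin\calk$, so $a*F_i*F_k\not\subset A$; combined with $a*F_i*F_j\subset A$ and Lemma \ref{fiberedstructure} (applied so that the excluded direction is $p_k$), we obtain $A\cap\Pi(a,p_k^2)=a*F_i*F_j$. This plane has $p_ip_j$ points, all of them in $\cali\cup\calj$, which is consistent; so this alone is not a contradiction. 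The genuine contradiction must come from counting fibers of all three types. Here is the argument: since $A\subset\calj\cup\calk$ with $\cali\neq\emptyset$, and $\cali\subset\calj\cup\calk$, every element of $\cali$ lies in $\calj\cap\cali$ or $\calk\cap\cali$; write $\cali=(\cali\cap\calj)\cup(\cali\cap\calk)$. If both pieces are nonempty, pick $a'\in\cali\cap\calj$ and $a''\in\cali\cap\calk$; the line $\ell_i(a')$ lies in $A$ and also carries a $p_j$-fiber through each point, while $\ell_i(a'')$ carries a $p_k$-fiber through each point. One then chases these fibers around a suitable $D(M)$-grid $\Lambda$ through $a'$ and $a''$ using Lemma \ref{hitandmiss} and (\ref{emptyint}) to force a point of $\cali\cap\calj\cap\calk$, contradicting (\ref{emptyint}); if instead only one piece is nonempty, say $\cali\subset\calj$, then $\cali*F_i*F_j\subset A$, and counting $|A|=p_ip_jp_k$ against the stratified structure of Lemma \ref{2dir} on an appropriate grid forces $\calk=\emptyset$, which is a permissible conclusion and hence no contradiction — so in that sub-case we simply relabel and we are done with $\calk$ (not $\cali$) playing the role of the empty set.

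The main obstacle, as the above vacillation shows, is bookkeeping: Lemma \ref{maxout1} (ii) already hands us that one pairwise union contains $A$, but the \emph{named} empty set in the statement should be one of $\cali,\calj,\calk$ after a permutation, so the real content is showing that the two-direction conclusion of Lemma \ref{maxout1} is not just ``some two of the three'' but genuinely forces one to be empty rather than merely ``small''. I expect the honest proof to run: apply Lemma \ref{maxout1} (ii) to get $A\subset\calj\cup\calk$; then observe that $\Sigma_A(\Lambda)=A$ for every $D(M)$-grid $\Lambda$ by Lemma \ref{maxout1} (i); then apply Lemma \ref{2dir} to each $\Lambda$ — by (\ref{emptyint}) and part (ii) of that lemma, $\Sigma_A(\Lambda)\cap\calk=\emptyset$ whenever $\cald_i$ and $\cald_j$ are both nonempty, and the alternative is that $\Sigma_A(\Lambda)$ lies entirely in one of $\cali,\calj,\calk$ — and finally patch the per-grid information together. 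Since $A=\bigcup_\Lambda\Sigma_A(\Lambda)$ over a partition of $\ZZ_M$ into $D(M)$-grids, if on \emph{every} grid we have $\Sigma_A(\Lambda)\cap\calk=\emptyset$ then $\calk=\emptyset$ and we are done; otherwise some grid has $\Sigma_A(\Lambda)$ entirely inside a single direction, and one propagates via Lemma \ref{hitandmiss} (exactly as in the proof of Lemma \ref{2dir}, Case 1) to conclude $A$ itself is fibered in one direction, so two of $\cali,\calj,\calk$ are empty. Either way at least one of the three sets is empty, which is the claim. $\qed$
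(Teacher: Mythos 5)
Your overall strategy is the same as the paper's (Lemma \ref{maxout1} to get $\Sigma_A(\Lambda)=A$ and a two-direction containment, then Lemma \ref{2dir}, Lemma \ref{hitandmiss} and (\ref{emptyint})), but the proof has a genuine gap: the configuration in which all three of $\cali,\calj,\calk$ are nonempty is never actually ruled out. In your final sketch the dichotomy coming from Lemma \ref{2dir} is fine: if $\cald_i$ and $\cald_j$ can both be chosen nonempty, part (ii) of that lemma together with $\Sigma_A(\Lambda)=A$ gives that one of the three sets is empty, and you are done. But in the alternative branch you pass from ``$\Sigma_A(\Lambda)$ lies entirely in one of $\cali,\calj,\calk$'' (equivalently, $A$ is $M$-fibered in a single direction) to ``two of $\cali,\calj,\calk$ are empty.'' That implication is false: if, say, $A\subset\cali$, elements of $A$ may still belong to $M$-fibers in the $p_j$ or $p_k$ directions (for instance $A$ could contain full planes $a*F_i*F_j$ and $a'*F_i*F_k$), so neither $\calj$ nor $\calk$ need be empty, and this is precisely the configuration the corollary has to exclude. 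Your earlier attempt acknowledges the missing step --- ``one then chases these fibers around a suitable $D(M)$-grid using Lemma \ref{hitandmiss} and (\ref{emptyint}) to force a point of $\cali\cap\calj\cap\calk$'' --- but the chase is never carried out, and it is the entire content of the proof beyond the two cited lemmas. (Likewise, ``counting against the stratified structure forces $\calk=\emptyset$'' in your other sub-case is an assertion of the conclusion, not an argument.)

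For comparison, the paper's treatment of the remaining case runs as follows: with $A\subset\cali\cup\calj$ and both nonempty, fix tiled witnesses $z_i=a_i+b_i$, $z_j=a_j+b_j$ in the fixed grid $\Lambda$ with $a_i\in\cali$, $a_j\in\calj$; if $\calk\neq\emptyset$, then since $\calk\subset\cali\cup\calj$ there is (WLOG) $a_k\in\cali\cap\calk$, and the containment half of Lemma \ref{fiberedstructure} gives $a_k*F_i*F_k\subset A$. Since $\Sigma_A(\Lambda)=A$, the element $a_k$ tiles some point of $\Lambda$, and Lemma \ref{hitandmiss} in the $p_k$ direction moves this to a point $z\in\Lambda(z_j,M/p_ip_j)$ tiled by some $a\in a_k*F_i*F_k\subset\cali\cap\calk$; two further applications of Lemma \ref{hitandmiss} (in the $p_j$ direction from $z_j$ and in the $p_i$ direction from $z$) then exhibit an element of $\cali\cap\calj\cap\calk$, contradicting (\ref{emptyint}). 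Without an argument of this kind your proof is incomplete. Two smaller points: Corollary \ref{subset}(i) concerns $N_k$-fibering at a lower scale (and needs $p_k>\min_\nu p_\nu$), so it says nothing about the $M$-fibers you produce; and $a\in\cali$ gives only $a*F_i\subset A$, not the full line $\ell_i(a)\subset A$.
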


\begin{proof}
Let $\Lambda:=\Lambda(0,D(M))$. By Lemma \ref{maxout1}, we have $\Sigma_A(\Lambda)=A$. 
By Lemma \ref{maxout1}, and after a permutation of the $i,j,k$ indices if necessary, we may assume that $A\subset\cali\cup\calj$. 

If $\cali=\emptyset$ or $\calj=\emptyset$, we are done. Assume therefore that there exist $z_i,z_j\in\Lambda$ such that $z_i=a_i+b_i,z_j=a_j+b_j$ with $a_i\in\cali,a_j\in\calj$, and $b_i,b_j\in B$. Assume also, for contradiction, that $\calk\neq\emptyset$. Since $A\subset \cali\cup\calj$, we may assume without loss of generality that there exists an element $a_k\in\cali\cap\calk$. By Lemma \ref{fiberedstructure}, we have $a_k*F_i*F_k\subset A$. In particular, there exists $z\in \Lambda(z_j,M/p_ip_j)$ such that $z=a+b$ for some $a\in a_k*F_i*F_k$. Then $a\in\cali\cap\calk$; but also, by the choice of $z_j$, we have $a\in\calj$. This contradicts (\ref{emptyint}).
\end{proof}

\begin{theorem}\label{divisor-shortcut}
Assume that $A\oplus B=\ZZ_M$, where $M=p_i^{2}p_j^{2}p_k^{2}$,
and that $|A|=p_ip_jp_k$, $\Phi_M|A$, and $A$ is fibered on $D(M)$-grids. Assume further that $\calk=\emptyset$ and that (\ref{highdivisors}) holds. Then we have at least one of the following:

\begin{itemize}
\item $A$ is fibered in at least one of the $p_i$ and $p_j$ directions, hence
the conditions of Theorem \ref{subtile} are satisfied in that direction. By Corollary \ref{slab-reduction}, both $A$ and $B$ satisfy (T2).

\item  $A\subset \Pi(a,p_k)$ for any $a\in A$.
By Theorem \ref{subgroup-reduction}, both $A$ and $B$ satisfy (T2).

\end{itemize}

\end{theorem}

\begin{proof}
Assume that (\ref{highdivisors}) holds, and that $A\subset\cali\cup\calj$ but $A$ is not fibered in either the $p_i$ or the $p_j$ direction. Let $\Lambda=\Lambda(0,D(M))$. By Lemma \ref{maxout1} (i), we have
$|\Sigma_A(\Lambda)|=|\Lambda|=|A|$, hence
$$
\Sigma_A(\Lambda)=A.
$$
In particular, the non-fibering assumption implies that 
$$
\Sigma_A(\Lambda)\cap\cali=A\cap\cali \neq\emptyset, \ \ \Sigma_A(\Lambda)\cap\calj=A\cap\calj \neq\emptyset.
$$
By \cite[Proposition 7.5 (ii)]{LaLo3}, we have 
$$
A=\Sigma_A(\Lambda)\subset \Pi(a,p_k^{n_k-1}) \hbox{ for any }a\in A,
$$
and the theorem is proved.
\end{proof}


\subsection{Fibering on lower scales}\label{lowerfibering}


We now begin the proof of the theorem in the case when both (\ref{emptyint}) and (\ref{notallhigh}) hold. 
We will continue to use the notation of \cite{LaLo2}.
Given $N|M$, we will use $\bbI^N$, $\bbJ^N$, $\bbK^N$ to denote the $N$-boxes associated with $\cali$, $\calj$, $\calk$ respectively. Recall also that $M_\nu=M/p_\nu^2$ for $\nu\in \{i,j,k\}$. We will need a few preliminary results from \cite{LaLo2}. Lemmas \ref{fiberedstructure} and \ref{fibercyc3} 
do not require any assumptions on the parity of $M$, so that the same results hold for any permutation of the indices $i,j,k$.

\begin{lemma}\label{fiberedstructure}\cite[Lemma 9.5]{LaLo2}. 
Assume (F) and (\ref{emptyint}), and let $a\in \calj\cap\calk$. Then $A\cap\Pi(a,p_i^{2})=a*F_j*F_k.$
\end{lemma}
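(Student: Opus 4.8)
\textbf{Proof strategy for Lemma \ref{fiberedstructure}.}

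The plan is to show that if $a\in\calj\cap\calk$, then the plane $\Pi(a,p_i^2)$ (a copy of $\ZZ_{p_j}\times\ZZ_{p_k}$ inside the $D(M)$-grid through $a$, viewed on the relevant scale) contains exactly the $p_jp_k$ points of $a*F_j*F_k$ and no others. The key point is the plane bound: by Lemma \ref{planebound} with $\alpha_i=0$, for every $x\in\ZZ_M$ we have $|A\cap\Pi(x,p_i^2)|\le p_jp_k$, since $|A|=p_ip_jp_k$. So it suffices to exhibit $p_jp_k$ distinct elements of $A$ inside $\Pi(a,p_i^2)$, and $a*F_j*F_k$ will be forced to be exactly $A\cap\Pi(a,p_i^2)$.

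First I would use the hypothesis $a\in\calj$: there is an $M$-fiber $F\subset A$ in the $p_j$ direction through $a$, i.e. $a*F_j\subset A$. Since all of these $p_j$ points lie in $\Pi(a,p_i^2)\cap\Pi(a,p_k^2)$, and each of them again lies in some $M$-fiber of $A$, I want to argue that through each point $a'=a+\mu M/p_j$ of $a*F_j$ there must pass an $M$-fiber of $A$ in the $p_k$ direction. This is where I would invoke the fibered-grids structure from the classification in \cite{LaLo2} restricted to the plane $\Pi(a,p_i^2)$: by Assumption (F) together with $\Phi_M\mid A$, the set $A$ restricted to a $D(M)$-grid is a union of $M$-fibers (possibly cancelling), and Lemma \ref{fiberedstructure} is precisely the statement that near a point lying on fibers in both the $p_j$ and $p_k$ directions, the two families of fibers fill out a full $p_j\times p_k$ block. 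Concretely, $a\in\calk$ gives $a*F_k\subset A$; combined with $a*F_j\subset A$, I would use the $2$-dimensional de Bruijn--Rédei--Schoenberg description (Lemma \ref{2d-cyclo}) of $A\cap\Pi(a,p_i^2)$ modulo $M_i=M/p_i^2$: since $\Phi_{M_i}$ divides this restriction (because $\Phi_M\mid A$ forces $\Phi_{M_i}$ divisibility on the appropriate quotient, using that the grid is fibered), $A\cap\Pi(a,p_i^2)$ is a non-negative integer combination of $M$-fibers in the $p_j$ and $p_k$ directions. Having both a $p_j$-fiber and a $p_k$-fiber through the common point $a$ then forces, by a short counting argument, that the coefficient configuration produces at least the full union $a*F_j*F_k$, hence at least $p_jp_k$ points.

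Combining the two bounds: $p_jp_k\le |a*F_j*F_k|\le |A\cap\Pi(a,p_i^2)|\le p_jp_k$, so equality holds throughout and $A\cap\Pi(a,p_i^2)=a*F_j*F_k$, which is the claim.

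The main obstacle I anticipate is justifying cleanly that through \emph{every} point of $a*F_j$ there runs a $p_k$-direction fiber of $A$ (and symmetrically), rather than just through $a$ itself. A priori the classification only says each point of $A$ sits on \emph{some} $M$-fiber, and that fiber could be in the $p_i$ direction or the ``wrong'' one of $p_j,p_k$. The resolution is the non-negativity of the coefficients in the $2$-dimensional cyclotomic decomposition (Lemma \ref{2d-cyclo}(i)): once $A\cap\Pi(a,p_i^2)$ is written as $\sum Q_j F_j + \sum Q_k F_k$ with non-negative integer $Q_j, Q_k$, the presence of a $p_j$-fiber through $a$ and a $p_k$-fiber through $a$ means $Q_k$ is nonzero on the $p_k$-line through $a$ and $Q_j$ is nonzero on the $p_j$-line through $a$; but then each of the other points of $a*F_j$ is covered by the $Q_k$-term only if it too lies on an active $p_k$-fiber — and the point is that if it does not, the plane-bound count gives strictly fewer than $p_jp_k$ points unless the missing coverage is made up elsewhere, which a careful tally rules out. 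In the $n_i=n_j=n_k=2$ setting this bookkeeping is elementary; for safety I would simply cite the relevant computation from \cite[Section 9]{LaLo2}, since this lemma is quoted verbatim from there.
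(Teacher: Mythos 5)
Your endgame is the same as the paper's (exhibit the $p_jp_k$ points of $a*F_j*F_k$ inside $A$ and then invoke Lemma \ref{planebound} to force equality), but the way you try to produce those points has a genuine gap. You propose to apply Lemma \ref{2d-cyclo} to $A\cap\Pi(a,p_i^2)$ reduced mod $M_i$, on the grounds that ``$\Phi_M\mid A$ forces $\Phi_{M_i}$ divisibility on the appropriate quotient.'' That implication is not available: $\Phi_M\mid A$ only controls $A$ on $D(M)$-grids at scale $M$, and whether $\Phi_{M_i}\mid A$ is a genuinely separate issue — indeed the paper later splits the analysis into the cases $\Phi_{M_\nu}\mid A$ for some $\nu$ (assumption (F1)) and $\Phi_{M_\nu}\nmid A$ for all $\nu$ (assumption (F2)), so under (F) you cannot assume the divisibility your decomposition needs. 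Your fallback ``careful tally'' is also circular: since the $p_i^2$ planes $\Pi(x,p_i^2)$ partition $\ZZ_M$ and $|A|=p_ip_jp_k$, the average occupancy of such a plane is only $p_jp_k/p_i$, so there is no a priori lower bound of $p_jp_k$ points in $\Pi(a,p_i^2)$ — that lower bound is exactly what has to be proved, and a count coming out below $p_jp_k$ is not by itself a contradiction.

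The missing idea is that assumption (F) is stronger than ``every element of $A$ lies on some $M$-fiber'': it says that for each $a\in A$ the whole set $A\cap\Lambda(a,D(M))$ is $M$-fibered in a single direction (depending on $a$), i.e. it is of the form $A'*F_\nu$. This is precisely what resolves the obstacle you flagged. If that direction were $p_i$, then $a$ itself would lie on a $p_i$-fiber, giving $a\in\cali\cap\calj\cap\calk$ and contradicting (\ref{emptyint}); so the grid through $a$ is fibered in, say, the $p_j$ direction. Since $a\in\calk$, the fiber $a*F_k$ lies in $A\cap\Lambda(a,D(M))$, hence every point of $a*F_k$ lies on an $M$-fiber of $A$ in the $p_j$ direction, which gives $a*F_j*F_k\subset A\cap\Pi(a,p_i^2)$ directly, with no cyclotomic decomposition and no bookkeeping; Lemma \ref{planebound} then yields equality. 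I recommend replacing the Lemma \ref{2d-cyclo} step (and the citation of the computation in \cite[Section 9]{LaLo2}) with this two-line use of (F) and (\ref{emptyint}).
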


\begin{lemma}\label{fibercyc3}\cite[Lemma 9.8]{LaLo2}
Assume (F). For each $\alpha\in\{1,2\}$, we have
$$\Phi_{M/p_k^{\alpha_k} }|A \ \Leftrightarrow \ \Phi_{M/p_k^{\alpha_k} }|\calk.
$$
In particular, if $\Phi_{M/p_k^{\alpha_k} }\nmid A$ for some $\alpha_k\in\{1,2\}$, then $\calk\neq\emptyset$.
\end{lemma}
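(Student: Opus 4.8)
The plan is to work modulo $M/p_k^{\alpha_k}$ and exploit the fact that, under assumption (F), $A$ is fibered on $D(M)$-grids, so each $D(M)$-grid contributes an $M$-fiber in one of the three directions. Fix $\alpha_k\in\{1,2\}$ and write $N = M/p_k^{\alpha_k}$. The direction $\Leftarrow$ is the easy one: if $\Phi_N|\calk$ then, since $A = \cali\cup\calj\cup\calk$ and both $\cali$ and $\calj$ consist of $M$-fibers in the $p_i$ and $p_j$ directions respectively, the mask polynomials $\cali(X)$ and $\calj(X)$ are divisible by $F_i(X)$ and $F_j(X)$; because $N$ is a multiple of $p_i^{n_i}$ and $p_j^{n_j}$ (indeed $N = p_i^2p_j^2p_k^{2-\alpha_k}$), the fiber polynomials $F_i(X)$ and $F_j(X)$ reduce mod $X^N-1$ to polynomials still divisible by $\Phi_{p_i^2}$ and $\Phi_{p_j^2}$ respectively, which in turn forces $\Phi_N | \cali$ and $\Phi_N|\calj$. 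Hence $\Phi_N$ divides each of $\cali(X),\calj(X),\calk(X)$, and since $A(X) = \cali(X) + \calj(X) + \calk(X) - (\text{overlaps})$ — more carefully, one should argue that $A$ can be written as a sum (with integer, not necessarily positive, coefficients after inclusion–exclusion) of the three, or simply observe that each $a\in A$ lies in $\cali\cup\calj\cup\calk$ and on each $D(M)$-grid $A\cap\Lambda$ is an integer combination of $F_i,F_j,F_k$ — we conclude $\Phi_N|A$.

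For the harder direction $\Rightarrow$, suppose $\Phi_N|A$ but $\Phi_N\nmid\calk$; I would derive a contradiction by a local (grid-by-grid) analysis. Pass to $\ZZ_N$ and consider a $D(N)$-grid $\Lambda'$ in $\ZZ_N$. By Lemma \ref{cyclo-grid} (cyclotomic divisibility on grids), $\Phi_N|A$ implies $\Phi_N | (A\cap\Lambda')(X)$ for every such grid. Now $A\cap\Lambda'$ is the image mod $N$ of $A$ restricted to the preimage of $\Lambda'$, which is a union of $D(M)$-grids in $\ZZ_M$; on each of those, $A$ is fibered in the $p_i$, $p_j$, or $p_k$ direction. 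The $p_i$- and $p_j$-fibers survive reduction mod $N$ as genuine $N$-fibers (again because $N$ retains the full powers $p_i^2,p_j^2$), while a $p_k$-fiber of $A$ reduces mod $N$ either to an $N$-fiber in the $p_k$ direction (if $\alpha_k=1$) or collapses to a point of multiplicity $p_k$ (if $\alpha_k=2$). The key point is that the contribution of $\cali$ and $\calj$ to $A\cap\Lambda'$ is automatically divisible by $\Phi_N$, so $\Phi_N | A\cap\Lambda'$ forces $\Phi_N$ to divide the contribution of $\calk\cap\Lambda'$ as well; summing over all $D(N)$-grids and applying Lemma \ref{cyclo-grid} in the reverse direction yields $\Phi_N|\calk$, the desired contradiction. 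The final assertion is then immediate: if $\Phi_{M/p_k^{\alpha_k}}\nmid A$ then $\Phi_{M/p_k^{\alpha_k}}\nmid\calk$, so in particular $\calk(X)$ is a nonzero polynomial and $\calk\neq\emptyset$.

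The main obstacle I anticipate is making precise the bookkeeping around inclusion–exclusion: the union $A = \cali\cup\calj\cup\calk$ need not be disjoint, so $A(X)\neq \cali(X)+\calj(X)+\calk(X)$ in general. The cleanest way around this is to avoid global mask-polynomial identities entirely and argue entirely on individual $D(M)$-grids: on each such grid $\Lambda$, $A\cap\Lambda$ is by assumption an integer linear combination $Q_i F_i + Q_j F_j + Q_k F_k$ (as recalled in Section \ref{classified1}), and since $A\cap\Lambda$ is moreover a $\{0,1\}$-set that is fibered, the structure is rigid enough that the $p_k$-fibered part is exactly $\calk\cap\Lambda$. One must also handle the $\alpha_k=2$ case with care, since there the $p_k$-fibers collapse to points of multiplicity $p_k$ and one needs to track multiplicities — but this only affects a multiplicative constant $c_0\in\{1,p_k\}$ and does not change divisibility by $\Phi_N$. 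I would present the proof by first establishing the reduction-mod-$N$ behavior of $F_i,F_j,F_k$ as a short preliminary computation, then running the grid-by-grid argument in both directions.
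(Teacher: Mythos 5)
Your overall strategy -- separating the $p_k$-fibered part of $A$ from the rest and exploiting that $M$-fibers in the $p_i$ and $p_j$ directions have mask polynomials divisible by $\Phi_N$, $N=M/p_k^{\alpha_k}$, while $p_k$-fibers do not -- is the right one (the paper itself does not reprove this lemma but cites \cite[Lemma 9.8]{LaLo2}, and the argument needed is exactly of this grid-by-grid type). However, the step that carries all the content is asserted rather than proved. Because $\cali,\calj,\calk$ overlap, phrases like ``the contribution of $\cali$ and $\calj$'' are not well defined, and your inclusion--exclusion fallback would additionally require $\Phi_N\mid(\cali\cap\calj)(X)$, etc., which you do not establish and which is not obvious. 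What actually closes the gap is the disjoint decomposition $A=\calk\sqcup(A\setminus\calk)$ together with the claim: on every $D(M)$-grid $\Lambda$ on which $A\cap\Lambda$ is $M$-fibered in the $p_\nu$ direction, $\nu\in\{i,j\}$, the set $\calk\cap\Lambda$ is a disjoint union of full planes $x*F_\nu*F_k$, hence $(A\setminus\calk)\cap\Lambda$ is a disjoint union of full $M$-fibers in the $p_\nu$ direction; while on grids $M$-fibered in the $p_k$ direction one has $A\cap\Lambda\subset\calk$, so $(A\setminus\calk)\cap\Lambda=\emptyset$. This claim does follow from (F): if $a\in\calk\cap\Lambda$ then $a*F_k\subset A\cap\Lambda$, the $p_\nu$-fibering of the grid gives $a*F_k*F_\nu\subset A$, and every point of that plane lies in $\calk$. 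With it, $\Phi_N$ divides $(A\setminus\calk)(X)$ (since $\Phi_N\mid F_i,F_j$), and both directions of the equivalence follow at once from $A(X)=\calk(X)+(A\setminus\calk)(X)$, working entirely in $\ZZ_M$ with no reduction mod $N$. In your sketch this key structural fact appears only as ``the structure is rigid enough that the $p_k$-fibered part is exactly $\calk\cap\Lambda$''; as written that is a gap.

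There are also two technical errors worth fixing. First, your description of the reduction mod $N$ is wrong for $\alpha_k=1$: since $M/p_k\equiv 0 \pmod{M/p_k^{\alpha_k}}$ for both $\alpha_k=1,2$, an $M$-fiber in the $p_k$ direction always collapses to a single point of multiplicity $p_k$; it is never an $N$-fiber in the $p_k$ direction. This matters, because an $N$-fiber in the $p_k$ direction would have mask polynomial divisible by $\Phi_N$, and your argument would then ``prove'' $\Phi_{M/p_k}\mid A$ unconditionally, which is false and would trivialize the lemma. Second, Lemma \ref{cyclo-grid} gives only the implication from divisibility on all grids to $\Phi_s\mid A$; the converse you invoke ($\Phi_N\mid A\Rightarrow\Phi_N\mid A\cap\Lambda(x,D(N))$ for all $x$) is true, but it is the cuboid criterion recalled in Section \ref{cuboid-section}, not that lemma -- and, once the decomposition above is in place, it is not needed at all.
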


%
%

Proposition \ref{no unfibered grids} below is the main result of this subsection.

\begin{proposition}\label{no unfibered grids}
Assume that (F), (\ref{emptyint}), and (\ref{notallhigh}) hold. Assume further that $\Phi_{N_k}|A$. Then $\calk$ mod $N_k$ cannot be unfibered on any $D(N_k)$-grid. 
\end{proposition}

\begin{proof}
Assume first that $M$ is odd. Suppose that $\calk$ mod $N_k$ is unfibered on $\Lambda:=\Lambda(a,D(N_k))$ for some $a\in A$. If
\begin{equation}\label{baddivisors}
\{N_k/p_i,N_k/p_j,N_k/p_ip_j\}\subset\Div_{N_k}(\calk),
\end{equation}
then (\ref{highdivisors}) holds, contradicting the assumptions of the proposition. Assume therefore that (\ref{baddivisors}) fails. By the classification results in \cite[Section 6.2]{LaLo2}, $\calk$ mod $N_k$ must then have one of the following structures on $\Lambda$.

\begin{itemize}
\item $p_\nu$-full plane as in \cite[Lemma 6.3]{LaLo2} for some $\nu\in\{i,j\}$, with $c_0=p_k$. Assume without loss of generality that $\nu=i$. Then, for $x\in\Lambda$ indicated in \cite[Lemma 6.2]{LaLo2}, we have 
$$
|\calk \cap\Pi(x,p_i^2) |\geq \phi(p_jp_k^2)>p_jp_k,
$$
contradicting Lemma \ref{planebound}.

\item $p_k$-corner as in \cite[Lemma 6.4]{LaLo2} (i), with $c_0= p_k$. Then there exist $a,a'\in\calk$ such that $(a-a',M)=N_k/p_k=M/p_k^2$ and $(a*F_i*F_k)\cup(a'*F_j*F_k)\subset\calk$. But then 
$$
|\calk \cap\Pi(a,p_i^2) |\geq (p_j+1)p_k>p_jp_k,
$$
contradicting Lemma \ref{planebound}.

\item $p_k$-almost corner as in \cite[Lemma 6.4]{LaLo2} (i), with $c_0= p_k$. Then there exists $a\in\calk$ such that 
$$
|\calk \cap\Pi(a,p_i^2) |\geq 2p_k \phi(p_j)>p_jp_k,
$$
again contradicting Lemma \ref{planebound}.

\end{itemize}
This proves the proposition in the odd case.

Assume now that $M$ is even, with $p_i=2$. Again, it suffices to consider the case when (\ref{baddivisors}) fails, so that 
it suffices to consider the unfibered structures in Lemma \ref{even_struct}. For any corner structures as in Lemma \ref{even_struct} (i), the proof is the same as for the odd case. It remains to consider the diagonal boxes structures from Lemma \ref{even_struct} (ii). We distinguish between the cases $\Phi_{N_i}|A$ (Lemma \ref{subset}, below) and $\Phi_{N_k}|A$ for $k\neq i$ (Lemma \ref{almostmaxedout}). These two lemmas complete the proof of the proposition.
\end{proof}

\begin{lemma}\label{subset}
Assume that (F), (\ref{emptyint}), and (\ref{notallhigh}) hold. Assume further that $p_i=2$ and $\Phi_{N_k}|A$. 
Let $\Lambda_0=\Lambda(a,D(N_k))$ for some $a\in\calk$. Then
 $\calk\cap\Lambda_0$ mod $\ZZ_{N_k}$ cannot be a set of diagonal boxes as in Lemma \ref{even_struct} (ii).

\end{lemma}

\begin{proof}
We identify $\Lambda_0$ with $\ZZ_{p_i}\times\ZZ_{p_j}\times\ZZ_{p_k}$, where for every $x\in \Lambda_0$ we have $\bbK^{N_k}_{N_k}[x]\in \{0,p_k\}$. Note that under this representation, elements that differ only in their $p_k$ coordinate are at distance $N_k/p_k$ from one another.
Assume, by contradiction, that 
$\calk\cap\Lambda_0$ is a set of diagonal boxes $(I\times J\times K)\cup (I^c\times J^c\times K^c)$ with multiplicity $p_k$, and let
\begin{equation}\label{aIJK}
a\in I\times J\times K.
\end{equation}
Since $p_i=2$, we may assume without loss of generality that $I=\{0\}, I^c=\{1\}$. 

\noindent
{\bf Claim 1.} Suppose that $\calk\cap\Lambda_0$ is a set of diagonal boxes as above. Then 
\begin{enumerate}
\item [(i)] $\max\{|J||K|,|J^c||K^c|\}\leq p_j$,
\item [(ii)] $\max\{|K|,|K^c|\}\leq 2$.
\end{enumerate}

\begin{proof}
For (i), assuming the contrary, we get
$$
|A\cap\Pi(a,p_i^2)|\geq p_k|I\times J\times K|=p_k|J||K|>p_jp_k,
$$
which contradicts Lemma \ref{planebound}.

For (ii), if $|K|>2$, then
$$
|A\cap\Pi(a,p_j^2)|\geq \bbK^{N_k}_{N_k}[a]+\bbK^{N_k}_{N_k/p_k}[a]>p_k|K|>p_ip_k,
$$
again contradicting Lemma \ref{planebound}. 

Applying the same argument with $a$ replaced by an element of $I^c\times J^c\times K^c$, we get  $|J^c||K^c|\leq p_j$ and $|K^c|\leq 2$.
\end{proof}

Claim 1(i) implies that $p_k=|K|+|K^c|\leq 4$, hence $p_k=3$ and $p_j\geq 5$. We assume this from here on, with $|K|=2, |K^c|=1$. Then 
\begin{equation}\label{Aell_k(a)count}
|A\cap\ell_k(a)|=p_k|K|=p_ip_k,
\end{equation}
with $A\cap\ell_k(a)= \calk\cap\ell_k(a)$.
By Lemma \ref{planebound}, $|A\cap\Pi(a,p_j^2)|=p_ip_k$ and 
\begin{equation}\label{Ainp_jplane}
(A\cap\Pi(a, p_j^2))\subset \ell_k(a).
\end{equation}
By Claim 1 (i), we have $|J|\leq p_j/2$ and therefore
\begin{equation}\label{J^cbound}
|J^c|\geq p_j/2>2.
\end{equation}

\medskip\noindent
{\bf Claim 2. } Suppose that $\calk\cap\Lambda_0$ is a set of diagonal boxes as above. Then $\Phi_{p_i^2}|A$.
\begin{proof}

We have
\begin{align*}
|A\cap\Pi(a, p_i)|&\geq p_k(|K||J|+|K^c||J^c|)\\
&=p_k(2|J|+|J^c|)>p_jp_k.
\end{align*}
By Corollary \ref{planegrid}, this implies the claim.
\end{proof}

\medskip\noindent
{\bf Claim 3. } Suppose that $\calk\cap\Lambda_0$ is a set of diagonal boxes as above. Then $\Phi_{p_i^2}|B$.
\begin{proof}

Let $x\in I^c\times J\times K^c$ with $(x-a,M)=M/p_ip_k^2$, where $a$ is as in (\ref{aIJK}). By (\ref{Ainp_jplane}), we have $x\in \ZZ_M\setminus A$.
 Consider the saturating set $A_x$. By (\ref{J^cbound}), we have $A_x\subset \Pi(x,p_j^2)=\Pi(a,p_j^2)$. By (\ref{Ainp_jplane}), we further have $A_x\subset \ell_k(a)$. As in (\ref{Aell_k(a)count}), we see that $A\cap\ell_k(a)= \calk\cap\ell_k(a)$, so that
$\bbA_{M/p_ip_k}[x]=0$ and
$$
\bbA_{M/p_ip_k^2}[x|\ell_k(a)]\bbB_{M/p_ip_k^2}[b]=\phi(p_ip_k^2) \hbox{ for any } b\in B.
$$
With $p_k=3$ and $p_i=2$, we have $\phi(p_ip_k^2)=6=p_ip_k=\bbA_{M/p_ip_k^2}[x|\ell_k(a)]$, so that 
$$\bbB_{M/p_ip_k^2}[b]=1.$$
Since $|K|=2$, we also have $M/p_k,M/p_k^2\in \Div(A)$, so that $\bbB^{M_k}_{M_k}[b]=1$ for any $b\in B$. These, in turn, imply that the set $B$ is organized in pairs so that for every $b\in B$ we have 
$$
B\cap\ell_k(b)=\{b\}
$$
and there exists a unique $b'\in B$ with $(b-b',M)=M/p_ip_k^2$, so that $B\cap\Lambda(b,M/p_ip_k^2)=\{b,b'\}$. Since any plane $\Pi(y,p_i)$ with $y\in\ZZ_M$ may be written as a disjoint union of nonintersecting $M/p_ip_k^2$-grids, and the intersection of $B$ with each such grid is either empty or a two-point set as above, we see that 
$$
|B\cap \Pi(y,p_i^2)|=\frac{1}{p_i} |B\cap \Pi(y,p_i)| \ \ \forall y\in\ZZ_M,
$$
so that $\Phi_{p_i^2}|B$.
\end{proof}

Since Claim 3 contradicts Claim 2, the lemma is proved..
\end{proof}


\begin{lemma}\label{almostmaxedout}
Assume (F) and (\ref{emptyint}) with $p_i=2$. Suppose that $\Phi_{N_i}|A$, and that there exists a $D(N_i)$-grid $\Lambda$ on which $\cali$ mod $N_i$ is a set of diagonal boxes as in Lemma \ref{even_struct} (ii).
Then (\ref{highdivisors}) must hold.
\end{lemma}

\begin{proof}
Assume $\Phi_{N_i}|A$, and let $\Lambda$ be as in the statement of the lemma. Then
$\cali\cap\Lambda$ mod $N_i$ contains diagonal boxes $(I\times J\times K)\cup (I^c\times J^c\times K^c)$ 
and
for any $x\in (I\times J^c\times K^c)\cup (I^c\times J\times K)$ we have
$$
\bbI^{N_i}_{N_i/p_i}[x]=\phi(p_i^2)=p_i.
$$

Without loss of generality, we may assume that $2=p_i<p_k<p_j$.
If $\min(|J|,|K|)\geq 2$, we have
$$
\{D(M)|m|M\}\subset \Div(I\times J\times K)\subset \Div(A),
$$
and similarly (\ref{highdivisors}) holds when $\min(|J^c|,|K^c|)\geq 2$. 
After relabelling $I,J,K$ as $I^c,J^c,K^c$ and vice versa if necessary, 
it remains to consider the case when 
$$
|J|=|K^c|=1.
$$
We then have
\begin{equation}\label{diagboxdiv2}
\{M/p_i,M/p_j,M/p_k,M/p_ip_j,M/p_ip_k\}\subset \Div(A\cap\Lambda),
\end{equation}
with $\{M/p_i,M/p_k,M/p_ip_k\}\subset \Div(I\times J\times K)$ and 
$M/p_ip_j\in \Div(I^c\times J^c\times K^c)$.

Assume, by contradiction, that $m\in \{D(M)|m|M\}\cap \Div(B)$ and $m\neq M$. By (\ref{diagboxdiv2}), we have $m\in\{M/p_jp_k,M/p_ip_jp_k\}$. Suppose that $b,b'\in B$ satisfy $(b-b',M)=m$. By translational invariance, we may assume that $b'=0$ and that $0\in I\times J\times K$. 

Let $\Lambda=\Lambda(0,D(M))$. Let $G\subset\Lambda$ be the set that projects to $I\times J\times K$ modulo $N_i$, so that $G$ is the union of $p_k-1$ disjoint $M$-fibers in the $p_i$ direction. Consider the sets $G$ and $b*G$, with $b$ as above. 
Since $p_k\geq 3$, we must have $M/p_j\in\Div(G,b*G)$. In other words, 
there exist $z,z'\in\Lambda$ such that $(z-z',M)=M/p_j$ and $z=a+b$, $z'=a'+b'=a'+0$, with $a,a'\in G$. 
It follows that $z*F_j$ splits with parity $(A,B)$, so that for every $z''\in z*F_j$ we have $z''=a''+b''$ for some $a''\in A\cap \Pi(0,p_j^2)$ and $b''\in B$.

On the other hand, if $z_1,z_2\in z*F_j$ satisfy $z_1\neq z_2$ and $z_\nu=a_\nu+b_\nu$ for some $a_\nu\in  A\cap \Pi(0,p_j^2)$ and $b_\nu\in B$, we cannot have $M/p_i|(a_1-a_2)$. Indeed, otherwise $b_1-b_2=(z_1-z_2)-(a_1-a_2)$ would be divisible by $M/p_i$, which is impossible since $M/p_i\in\Div(\cali)\subset \Div(A)$. Therefore at most $p_k-1$ points of $z*F_j$ can be tiled by points of $G$. Since $p_j-(p_k-1)\geq 3$, there are at least 3 more points of $z*F_j$ that need to be tiled some other way, each one requiring a different point of $A\cap \Pi(0,p_j^2)$. However, $|G|=p_i(p_k-1)$, so that by Lemma \ref{planebound}, $(A\cap \Pi(0,p_j^2))\setminus G$ may contain at most 2 distinct points. This contradiction proves the lemma.
\end{proof}

We collect a few results on fibered $N_i$-grids for future reference.

\begin{lemma}\label{fiberfibering-new}
Assume that (F) and (\ref{notallhigh}) hold.

\smallskip

(i) Let $\Lambda:= \Lambda(a_0,D(N_i))$ for some $a_0\in\cali$. If $\cali\cap\calj\cap\Lambda=\emptyset$ and $A$ is $N_i$-fibered on $\Lambda$, it cannot be fibered in the $p_j$ direction.

\smallskip
(ii) Assume that $p_k>\min_\nu p_\nu$. If $\Phi_{N_k}|A$, then $\calk$ is $N_k$-fibered on each $D(N_k)$-grid in one of the $p_i$ and $p_j$ directions. In particular, $\calk\subset\cali\cup\calj$.

\smallskip

(iii) If $\Phi_{N_i}|B$, then $B$ must be $N_i$-fibered in the $p_i$ direction.

\end{lemma}

\begin{proof}
Parts (i) and (iii) are from \cite[Lemma 9.11]{LaLo2}. While the lemma was only stated there for odd $M$, the parity assumption is not needed in the proof.

For (ii), assume that $\Phi_{N_k}|A$ and $p_k>p_i$. By Proposition \ref{no unfibered grids}, $\calk$ is $N_k$-fibered on each $D(N_k)$-grid. If there existed an element $a\in\calk$ such that $\calk\cap\Lambda(a,D(N_k))$ is $N_k$-fibered in the $p_k$ direction, we would have $|A\cap\Pi(a,p_j^2)|\geq|\calk\cap\ell_k(a)|\geq p_k^2>p_ip_k$, contradicting Lemma \ref{planebound}.
\end{proof}

Recall that $M_\nu=M/p_\nu^2$ has only two distinct prime factors for each $\nu\in\{i,j,k\}$. In particular, all $M_\nu$-cuboids are 2-dimensional, so that Lemma \ref{2d-cyclo} applies on that scale. Thus, if $\Phi_{M_i}|A$, then $A$ mod $M_i$ is a linear combination of $M_i$-fibers in the $p_j$ and $p_k$ directions, with non-negative integer coefficients. In particular, if $\Phi_{M_i}|A$
and
$$
\bbA^{M_i}_{M_i}[x]\in\{0,c_0\}\ \ \forall x\in\ZZ_M,
$$
then $A$ is $M_i$-fibered in one of the $p_j$ and $p_k$ directions on every $D(M_i)$-grid. Similar statements hold with $A$ replaced by $B$, as well as for other permutations of the indices $i,j,k$. In particular, we have the following fibering result. 

\begin{lemma}\label{M_kfibering}
Assume that (F) holds. Suppose $\Phi_{M_k}|A$, then for every $a_k\in \calk$ we have $\bbK^{M_k}_{M_k}[a_k]=p_k$. Moreover, we must have one of the following:
\begin{equation}\label{M_kfiberp_i}
a_k \text{ belongs to an } M_k \text{-fiber in the } p_i \text{ direction, i.e. } \bbK^{M_k}_{M_k/p_i}[a_k]=p_k\phi(p_i),
\end{equation}
or
\begin{equation}\label{M_kfiberp_j}
a_k \text{ belongs to an } M_k \text{-fiber in the } p_j \text{ direction, i.e. } \bbK^{M_k}_{M_k/p_j}[a_k]=p_k\phi(p_j).
\end{equation} 
In addition, if (\ref{M_kfiberp_i}) holds then 
\begin{equation}\label{M_kfiber}
|A\cap\Pi(a_k,p_j^2)|=p_ip_k \text{ and } (A\cap\Pi(a_k,p_j^2))\subset \calk,
\end{equation}
and if (\ref{M_kfiberp_j}) holds then $|A\cap\Pi(a_k,p_i^2)|=p_jp_k$ and $(A\cap\Pi(a_k,p_i^2))\subset \calk$.
\end{lemma}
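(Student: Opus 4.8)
Here is my plan for proving Lemma \ref{M_kfibering}.

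\textbf{Setup and first claim.} The plan is to work modulo $M_k = M/p_k^2$, where $M_k$ has only two distinct prime factors $p_i,p_j$, so that Lemma \ref{2d-cyclo} applies. First I would observe that, by Lemma \ref{fibercyc3}, $\Phi_{M_k}|A$ forces $\Phi_{M_k}|\calk$. The set $\calk$ is by definition a union of (disjoint) $M$-fibers in the $p_k$ direction, so reducing mod $M_k$ sends each such $M$-fiber to a point of multiplicity $p_k$: indeed an $M$-fiber in the $p_k$ direction is $a*F_k$ with $F_k = \{0, M/p_k, \dots\}$, and all of these elements are congruent mod $M_k = M/p_k^2$ (since $M/p_k$ is a multiple of $M/p_k^2$... wait, no: $M/p_k = p_k \cdot M/p_k^2$, so yes $M_k \mid M/p_k$). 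Hence $\calk$ mod $M_k$ is a multiset of constant multiplicity $p_k$ on its support, i.e. $\bbK^{M_k}_{M_k}[a_k] = p_k$ for every $a_k\in\calk$; this gives the first assertion. Then condition (\ref{binary-cond-2})-type hypothesis holds with $c_0 = p_k$, so by Lemma \ref{2d-cyclo}(ii) (applied with $N=M_k$ on the $D(M_k)$-grid through $a_k$), $\calk\cap\Lambda(a_k,D(M_k))$ mod $M_k$ is $M_k$-fibered in either the $p_i$ or $p_j$ direction. Being $M_k$-fibered in the $p_i$ direction through $a_k$ means exactly $\bbK^{M_k}_{M_k/p_i}[a_k] = p_k\phi(p_i)$, which is (\ref{M_kfiberp_i}); similarly for $p_j$. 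This establishes the dichotomy.

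\textbf{The plane-count conclusions.} Now suppose (\ref{M_kfiberp_i}) holds. Then the $M_k$-fiber through $a_k$ in the $p_i$ direction, when lifted back to $\ZZ_M$, shows that $\calk\cap\Pi(a_k,p_j^2)$ contains a full set of $M$-fibers in both the $p_k$ and $p_i$ directions: concretely, $\bbK^{M_k}_{M_k}[a_k] + \bbK^{M_k}_{M_k/p_i}[a_k] = p_k + p_k\phi(p_i) = p_k p_i$ points of $\calk$ lying in the plane $\Pi(a_k, p_j^2)$ (note $M_k/p_i$ and $M_k$ both divide $M/p_j^2 = M_j$... I should double-check that every $M_k$-divisor of the form $M_k$ or $M_k/p_i$ corresponds to a point in $\Pi(a_k,p_j^2)$; this holds since $p_j^2 \mid M_k$ and $p_j^2 \mid M_k/p_i$). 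Hence $|A\cap\Pi(a_k,p_j^2)| \geq |\calk\cap\Pi(a_k,p_j^2)| \geq p_ip_k$. On the other hand, Lemma \ref{planebound} with $|A| = p_ip_jp_k$ gives $|A\cap\Pi(a_k,p_j^2)| = |A\cap\Pi(a_k,p_j^{n_j-\alpha_j})|$ with $\alpha_j = 1$, i.e. $\leq p_i \cdot p_j^0 \cdot \dots$, wait — with $n_j = 2$ and the plane $\Pi(a_k,p_j^2) = \Pi(a_k, p_j^{n_j})$ we take $\alpha_j = 0$, giving the bound $p_j^0 p_i p_k = p_ip_k$. Therefore $|A\cap\Pi(a_k,p_j^2)| = p_ip_k$ exactly, and since this count is already achieved inside $\calk$, we must have $A\cap\Pi(a_k,p_j^2) = \calk\cap\Pi(a_k,p_j^2) \subset \calk$, which is (\ref{M_kfiber}). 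The case (\ref{M_kfiberp_j}) is identical after interchanging $i$ and $j$, using $|A\cap\Pi(a_k,p_i^2)| \leq p_jp_k$ from Lemma \ref{planebound}.

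\textbf{Main obstacle.} I expect the only real subtlety — more bookkeeping than a genuine obstacle — is being careful about how divisors reduce under the map $\ZZ_M \to \ZZ_{M_k}$: that is, verifying that $(a_k - a'_k, M_k) = M_k/p_i$ for a point $a'_k$ in the lifted fiber really does translate into $(a_k - a'_k, M) = M/p_i$ or a related divisor that still lands in the plane $\Pi(a_k,p_j^2)$, and that no points are double-counted. Everything else is a direct application of Lemma \ref{2d-cyclo}, Lemma \ref{fibercyc3}, and Lemma \ref{planebound}, exactly in the spirit of the proof of Corollary \ref{doublediv}. A cleaner way to package the plane-count step may be to note that $\calk$ restricted to $\Pi(a_k,p_j^2)$ is, mod $M_k$, a single $M_k$-fiber in the $p_i$ direction of multiplicity $p_k$ — hence genuinely $p_ip_k$ points — and then invoke Lemma \ref{planebound} to force equality with $A$ on that plane.
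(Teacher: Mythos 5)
The second half of your argument (the plane counts, forcing equality in Lemma \ref{planebound} and deducing (\ref{M_kfiber})) is fine and is essentially what the paper does. The genuine gap is at the very first step: you claim that, because $\calk$ is a union of $M$-fibers in the $p_k$ direction, its reduction mod $M_k$ automatically has \emph{constant} multiplicity $p_k$ on its support, i.e.\ $\bbK^{M_k}_{M_k}[a_k]=p_k$ for every $a_k\in\calk$. This does not follow from the definition. Indeed $\bbK^{M_k}_{M_k}[a_k]=|\calk\cap\ell_k(a_k)|$, and nothing a priori prevents two (or more) disjoint $M$-fibers of $\calk$ in the $p_k$ direction from lying in the same line $\ell_k(a_k)$ (their roots differing by a multiple of $M/p_k^2$ that is not a multiple of $M/p_k$), in which case the multiplicity at $a_k$ is $2p_k$ or larger. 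The equality $\bbK^{M_k}_{M_k}[a_k]=p_k$ is in fact part of the conclusion of the lemma, not an input. Since Lemma \ref{2d-cyclo}(ii) requires precisely this constant-multiplicity hypothesis (condition of type (\ref{bin_cond})), your derivation of the dichotomy (\ref{M_kfiberp_i})/(\ref{M_kfiberp_j}) rests on an unproven claim.

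The paper avoids this by using Lemma \ref{2d-cyclo}(i) instead: since $\Phi_{M_k}|\calk$ (via Lemma \ref{fibercyc3}) and $M_k$ has two prime factors, $\calk$ mod $M_k$ is a non-negative integer combination of $M_k$-fibers in the $p_i$ and $p_j$ directions, with no multiplicity hypothesis needed. All weights of $\calk$ mod $M_k$ are positive multiples of $p_k$, so if a fiber in the $p_i$ direction passes through $a_k$ one gets $\bbK^{M_k}_{M_k}[a_k]\geq p_k$ and $\bbK^{M_k}_{M_k/p_i}[a_k]\geq p_k\phi(p_i)$, hence at least $p_ip_k$ points of $\calk$ inside $\Pi(a_k,p_j^2)$; Lemma \ref{planebound} then forces equality throughout, which \emph{simultaneously} pins down the multiplicity $\bbK^{M_k}_{M_k}[a_k]=p_k$, the exact value in (\ref{M_kfiberp_i}), and the conclusion (\ref{M_kfiber}) (similarly with $i$ and $j$ interchanged). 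So the plane bound cannot be postponed to the second half of the argument as in your plan: it is exactly what rules out the multiplicity $\geq 2p_k$ scenario and makes the first step legitimate. With that rearrangement your outline becomes the paper's proof.
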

\begin{proof}
We write $\calk$ mod $M_k$ as a linear combination of $M_k$-fibers in the $p_i$ and $p_j$ directions with non-negative integer coefficients, as permitted by 
Lemma \ref{2d-cyclo} with $N=M_k$. Then for every $a_k\in\calk$ we have
$\bbK^{M_k}_{M_k}[a_k]=(c_i+c_j)p_k,$ where $c_\nu$ is the number of $M_k$-fibers in the $p_\nu$ direction passing through $a_k$ in that representation.
Without loss of generality assume that $c_i>0$. It follows that
\begin{align*}
|A\cap\Pi(a_k,p_j^2)|&\geq \bbK^{M_k}_{M_k}[a_k]+\bbK^{M_k}_{M_k/p_i}[a_k]\\
&\geq (c_i+c_j) p_k+c_ip_k\phi(p_i)\\
&\geq p_ip_k
\end{align*}	 
Recall from Lemma \ref{planebound} that $p_ip_k\geq |A\cap\Pi(a_k,p_j^2)|$, so that the chain of inequalities above must holds with equalities. In particular, $\bbK^{M_k}_{M_k}[a_k]=c_i=1$ and $c_j=0$. 
\end{proof}


\subsection{Proof of Theorem \ref{fibered-mainthm} (II\,a), 
Case (F1)}\label{caseF1}

For the reader's convenience, we recall the assumption (F1) and the result we need to prove.

\medskip\noindent
\textbf{Assumption (F1):} We have $A\oplus B=\ZZ_M$, where $M=p_i^{2}p_j^{2}p_k^{2}$. Furthermore, $|A|=|B|=p_ip_jp_k$, $\Phi_M|A$, $A$ is fibered on $D(M)$-grids, (\ref{emptyint}) and (\ref{notallhigh}) hold, and $\Phi_{M_\nu}|A$ for some $\nu\in \{i,j,k\}$.

\begin{proposition}\label{emptyset-F1}
	Assume that (F1) holds. Then one of the sets $\cali,\calj,\calk$ is empty.
\end{proposition}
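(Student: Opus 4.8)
Assume (F1), so in particular $\Phi_{M_\nu}\mid A$ for some $\nu\in\{i,j,k\}$; relabel so that $\nu=k$, i.e.\ $\Phi_{M_k}\mid A$. By Lemma \ref{fibercyc3} we then also have $\Phi_{M_k}\mid\calk$, so $\calk$ in particular is nonempty and, viewed modulo $M_k$, is a multiset of constant multiplicity $p_k$ that decomposes (via Lemma \ref{2d-cyclo} / Lemma \ref{M_kfibering}) into $M_k$-fibers in the $p_i$ and $p_j$ directions. Suppose for contradiction that all three of $\cali,\calj,\calk$ are nonempty. The strategy is to show that the $M_k$-fibers carried by $\calk$ together with the presence of $M$-fibers in the remaining direction(s) force the plane bound of Lemma \ref{planebound} to be exceeded somewhere, or else force a cyclotomic divisibility of both $A$ and $B$ by some $\Phi_{p_\nu^2}$ (contradicting the fact that $\Phi_{p_\nu^2}$ can divide at most one of $A(X),B(X)$).

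\textbf{Key steps.} First I would fix $a_k\in\calk$ and apply Lemma \ref{M_kfibering}: $a_k$ belongs to an $M_k$-fiber in the $p_i$ or in the $p_j$ direction, and in either case the corresponding plane $\Pi(a_k,p_j^2)$ or $\Pi(a_k,p_i^2)$ is \emph{entirely} filled by $\calk$ and already saturates the plane bound ($|A\cap\Pi(a_k,p_j^2)|=p_ip_k$ and $A\cap\Pi(a_k,p_j^2)\subset\calk$, respectively with $i,j$ swapped). This is the crucial rigidity: wherever $\calk$ has an $M_k$-fiber in the $p_i$ direction, the whole plane perpendicular to $p_j$ through it is $\calk$, leaving no room for fibers in the $p_j$ direction in that plane, and symmetrically. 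Next, using Corollary \ref{subset}(ii) when $p_k>\min_\nu p_\nu$ (so $\calk\subset\cali\cup\calj$), or Lemma \ref{evennoecorner}, Proposition \ref{no unfibered grids} and Lemma \ref{M_kfibering} when $p_k=\min_\nu p_\nu$, I would establish that $\calk$ is $M_k$-fibered in one of the $p_i,p_j$ directions on every $D(M_k)$-grid, and in particular $\calk\subset\cali\cup\calj$. Now take $a\in\cali\setminus(\calj\cup\calk)$ or $a\in\calj\setminus(\cali\cup\calk)$ (one of these is nonempty unless $A\subset\calk$, in which case $A$ is $M$-fibered in $p_k$ and, being contained in $\cali\cup\calj$ via $\calk\subset\cali\cup\calj$ applied to all of $A$, forces $\cali$ or $\calj$ empty directly). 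Then I would count: in a plane $\Pi(x,p_k^2)$ containing both a portion of $\calk$'s $M_k$-fibers (which, projected to $\ZZ_{M_k}$, are $M_k$-fibers of multiplicity $p_k$) and an $M$-fiber through $a$, the cardinality exceeds $|A\cap\Pi(x,p_k^2)|\le p_ip_j$; alternatively, if everything fits, then the density of $A$ in consecutive planes $\Pi(x,p_k)$ forces $\Phi_{p_k^2}\mid A$ by Corollary \ref{planegrid}, and then a parallel count for $B$ (using that $M/p_k,M/p_k^2\in\Div(A)$ so $\bbB^{M_k}_{M_k}[b]=1$ and $B$ too is organized in tight $M_k$-fiber-complementary blocks, exactly as in the proof of Corollary \ref{subset}, Claim 3) yields $\Phi_{p_k^2}\mid B$, a contradiction. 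Throughout, Lemma \ref{fiberedstructure} supplies that $a\in\calj\cap\calk$ implies $A\cap\Pi(a,p_i^2)=a*F_j*F_k$, which pins down the local structure enough to make these counts rigorous.

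\textbf{Main obstacle.} The delicate part is the bookkeeping in the ``everything fits'' branch: ruling out the possibility that the $M_k$-fibers of $\calk$ in the $p_i$ direction and the $M$-fibers in the $p_j$ direction (or in the $p_i$ direction) can be interleaved across the $p_k$-coordinate so that no single plane $\Pi(x,p_k^2)$ overflows. This is where one must either run a splitting argument — along the lines of Lemma \ref{no-upgrades} and its use in Section \ref{res-boxes}, showing that a fiber $z*F_k$ cannot be tiled consistently — or else push the plane-count to the scale $\Pi(x,p_k)$ (spanning all $p_k$ slabs) and invoke Corollary \ref{planegrid} to extract $\Phi_{p_k^2}\mid A$, then mirror the argument on $B$. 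The symmetry-breaking between the odd case ($p_k$ could be any of the three primes, handled via Corollary \ref{subset}) and the even case ($p_i=2$, where $\calk$ might a priori have diagonal-box structure on scale $N_k$, excluded by Corollary \ref{subset}(ii) and Lemma \ref{evenunfiberedN_igrid}) also needs care, but the bulk of the parity-sensitive work has already been isolated in Section \ref{lower-fibering}. I expect the final contradiction to come, in all cases, from comparing the forced cyclotomic divisibility $\Phi_{p_k^2}\mid A$ against $\Phi_{p_k^2}\mid B$, since (F1) already tilts the structure of $\calk$ toward maximal planes.
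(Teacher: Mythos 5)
Your opening step is fine and matches the paper: under (F1) with $\Phi_{M_k}\mid A$, Lemma \ref{fibercyc3} gives $\Phi_{M_k}\mid\calk$, and Lemma \ref{M_kfibering} gives the $M_k$-fibering of $\calk$ together with the saturated planes in (\ref{M_kfiber}). But the pivot of your argument -- ``establish that $\calk$ is $M_k$-fibered in one of the $p_i,p_j$ directions on every $D(M_k)$-grid, and in particular $\calk\subset\cali\cup\calj$'' -- is a genuine gap. An $M_k$-fiber in the $p_i$ direction only controls differences modulo $M_k$: lifted to $\ZZ_M$, the differences between its elements can be $M/p_ip_k^2$ or $M/p_ip_k$ rather than $M/p_i$, so its elements need not belong to $\cali$ at all. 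The tool that would give $\calk\subset\cali\cup\calj$ is Corollary \ref{subset}(ii), but that requires $\Phi_{N_k}\mid A$ (and $p_k>\min_\nu p_\nu$), which is not part of (F1) and is not available here. In fact the paper's own analysis shows your claimed containment is typically false in this case: Corollary \ref{F1assumcor}(iii) proves $\calj\cap\calk=\emptyset$, and the structure (\ref{Kplanestruc}) isolated in Claim 3 of the proof of Proposition \ref{initialstatement} is exactly the configuration $\bbK_{M/p_ip_k^2}[a_k]=p_k\phi(p_i)$, i.e.\ the $M_k$-fibers of $\calk$ realized with differences $M/p_ip_k^2$, whose elements lie in none of $\cali,\calj$. (Your parenthetical ``unless $A\subset\calk$\dots forces $\cali$ or $\calj$ empty directly'' is also a non sequitur: $A\subset\calk$ together with $\calk\subset\cali\cup\calj$ would not make any of the three sets empty.)

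The second gap is the endgame, which you yourself flag as the main obstacle but leave essentially unaddressed. The ``everything fits'' branch is where nearly all of the paper's work lies, and the contradiction there is not of the form ``$\Phi_{p_k^2}\mid A$ and $\Phi_{p_k^2}\mid B$.'' After pinning down the direction of the $M_k$-fibers (Lemmas \ref{M_kfibercyc}, \ref{M_kfiber_unif}, giving $p_j<p_k$ and $\Phi_{p_j}\mid A$), the paper must control the tiling complement $B$: it rules out $\Phi_{M_i}\mid A$, proves $\Phi_{N_i}\nmid A$ via an eight-claim argument (Proposition \ref{initialstatement}) that combines saturating sets with the fibering of $B$ at scales $N_k$, $M_i$, $N_j$, then deduces that $B$ is $N_i$-fibered in the $p_j$ direction and that $\Phi_{M_j},\Phi_{N_j}\nmid A$, and finally obtains a contradiction because the forced $N_j$-fibers of $B$ would require differences lying in $\Div(A)$ or a divisibility of one prime by another ($p_j=\min_\nu p_\nu$ rules both out). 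Your proposed mechanism (plane overflow for $A$, or a ``parallel count for $B$'' yielding $\Phi_{p_k^2}\mid B$ as in Claim 3 of Corollary \ref{subset}) does not engage with $B$ at this level and there is no indication it can be made to close the argument; in particular, nothing in (F1) puts you in the special situation of Corollary \ref{subset}'s Claim 3 (which lives at scale $N_k$ with $p_k=3$, $p_i=2$), so that count does not transfer.
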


\medskip
Without loss of generality we shall assume that
\begin{equation}\label{Phi_M_kdiv}
	\Phi_{M_k}|A.
\end{equation}
Assume for contradiction that the proposition fails, so that
\begin{equation}\label{nonempty}
	\cali,\calj,\calk\neq\emptyset.
\end{equation}
We will prove that this is impossible.

\begin{lemma}\label{M_kfibercyc}
Assume that (F1), (\ref{Phi_M_kdiv}), and (\ref{nonempty}) hold. If there exists $a_k\in\calk$ satisfying (\ref{M_kfiberp_i}), then $\Phi_{p_j^2}\nmid A$. 
The same holds with $i$ and $j$ interchanged.
\end{lemma}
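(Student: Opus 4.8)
The plan is to argue by contradiction: assume there is $a_k\in\calk$ with $a_k$ lying in an $M_k$-fiber in the $p_i$ direction (condition (\ref{M_kfiberp_i})), and also assume $\Phi_{p_j^2}\mid A$. By Lemma \ref{M_kfibering}, (\ref{M_kfiberp_i}) gives us $|A\cap\Pi(a_k,p_j^2)|=p_ip_k$ with $A\cap\Pi(a_k,p_j^2)\subset\calk$, so the whole plane $\Pi(a_k,p_j^2)$ is filled (up to the plane bound of Lemma \ref{planebound}) by points of $\calk$ that come from $M_k$-fibers in the $p_i$ direction. I would then want to use the hypothesis $\Phi_{p_j^2}\mid A$ to produce points of $A$ at distance $M/p_j^2$ from this plane, and show this pushes $|A\cap\Pi(a_k,p_j)|$ (or some related plane or grid) strictly above its allowed bound, contradicting Lemma \ref{planebound} — or alternatively that it forces $\cali\cap\calj\cap\calk\neq\emptyset$, contradicting (\ref{emptyint}).

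\textbf{Key steps.} First, from (\ref{M_kfiberp_i}) and Lemma \ref{M_kfibering}, record that $A\cap\Pi(a_k,p_j^2)= \calk\cap\Pi(a_k,p_j^2)$ has exactly $p_ip_k$ points, each belonging to an $M_k$-fiber in the $p_i$ direction. Second, since $\Phi_{p_j^2}\mid A$, translate the standard divisibility fact $\bbA^{N}_{N}[\Delta]=0$ on $p_j^2$-cuboids, or more simply use that $\Phi_{p_j^2}\mid A$ implies $A$ is invariant in a suitable averaged sense under shifts by $M/p_j^2$; concretely, for a fixed $D(p_j^2)$-grid meeting $\Pi(a_k,p_j^2)$, the count $|A\cap\Lambda(a,p_j^2)|$ must be constant on the $p_j$ cosets it splits into. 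Because $\Pi(a_k,p_j^2)$ is already full with $p_ip_k$ points, this constancy forces the neighbouring plane $\Pi(a_k+M/p_j^2,p_j^2)$ also to contain $p_ip_k$ points of $A$ in the matching positions. Third, combine the two filled adjacent planes inside $\Pi(a_k,p_j)$: this would give $|A\cap\Pi(a_k,p_j)|\geq 2p_ip_k>p_ip_k$ only if we can rule out that the points coincide — but they lie in different $p_j^2$-planes, so they are genuinely distinct, contradicting Lemma \ref{planebound} applied with $\alpha_j$ chosen so that $|A\cap\Pi(x,p_j^{2-1})|\leq p_ip_k$. (One has to be slightly careful about which plane-bound exponent to invoke; the relevant statement is $|A\cap\Pi(x,p_j)|\leq p_j\cdot |A|/(p_jp_k\cdot\text{something})$ — I would pin this down using Lemma \ref{planebound} with $|A|=p_ip_jp_k$, giving $|A\cap\Pi(x,p_j)|\leq p_ip_k\cdot p_j / p_j=p_ip_k$ when measuring the $p_j$-thin plane, or more precisely use Corollary \ref{planegrid}.)

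\textbf{Alternative route via (\ref{emptyint}).} If the plane-bound contradiction is awkward to set up cleanly, the cleaner argument may be: the filled plane $\Pi(a_k,p_j^2)\subset\calk$ consisting of $M_k$-fibers in the $p_i$ direction already means every point there belongs to an $M_k$-fiber in the $p_i$ direction; combined with $\Phi_{p_j^2}\mid A$ producing, via Lemma \ref{cyclo-grid}-type reasoning, a genuine $M$-fiber in the $p_j$ direction through some point of $A\cap\Pi(a_k,p_j^2)$, we would get a point $a\in\cali\cap\calj\cap\calk$ (it is in $\calk$ and in the $p_i$-fiber structure, hence in $\cali$ after checking multiplicities via Lemma \ref{planebound} forcing $\bbA_{M/p_i}[a]=\phi(p_i)$, and now also in $\calj$), contradicting (\ref{emptyint}). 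I would present whichever of these is shorter once the details are written out.

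\textbf{Main obstacle.} The hard part will be extracting usable combinatorial information from $\Phi_{p_j^2}\mid A$: this is a global cyclotomic condition, and I need it to act locally on the already-saturated plane $\Pi(a_k,p_j^2)$. The cleanest tool is the cuboid characterization of $\Phi_{p_j^2}$-divisibility (Section \ref{cuboid-section}) together with the fact that $A\cap\Pi(a_k,p_j^2)$ is completely determined; feeding a $p_j^2$-cuboid with one vertex in this full plane should force structure on the adjacent plane. Making this rigorous — in particular verifying the multiplicities match so that the ``extra'' points are genuinely distinct from points already counted, and handling the $M_k$-multiplicity-$p_k$ bookkeeping from Lemma \ref{M_kfibering} — is where the real work lies. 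The symmetry statement ($i\leftrightarrow j$) is immediate once the main case is done, since none of the hypotheses single out $i$ versus $j$ asymmetrically.
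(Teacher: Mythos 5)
Your first route has a genuine error at the decisive step. The consequence you draw from $\Phi_{p_j^2}\mid A$ is correct: the counts $|A\cap\Pi(x,p_j^2)|$ are constant over the $p_j$ parallel $p_j^2$-planes inside a fixed $\Pi(a_k,p_j)$, so since one of them is already full with $p_ip_k$ points (by (\ref{M_kfiber})), \emph{all} of them are, i.e.\ $A\subset\Pi(a_k,p_j)$. But this is not a contradiction: Lemma \ref{planebound} bounds $|A\cap\Pi(x,p_j^2)|$ by $p_ip_k$ and $|A\cap\Pi(x,p_j)|$ by $p_j\cdot p_ip_k=|A|$, so your claimed inequality $|A\cap\Pi(x,p_j)|\leq p_ip_k$ is simply not what the lemma gives, and $2p_ip_k$ (indeed $p_ip_jp_k$) points in the thick plane is perfectly consistent. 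In the actual proof this stage is structural rather than contradictory: $A\subset\Pi(a_k,p_j)$ is the \emph{starting point}, and the contradiction comes later from (\ref{emptyint}), which is also why the hypothesis (\ref{nonempty}) -- which your sketch never really uses -- is essential.

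Your alternative route aims at the right target but both of its key steps are unjustified. First, $\Phi_{p_j^2}\mid A$ does not ``produce'' an $M$-fiber in the $p_j$ direction inside $\Pi(a_k,p_j^2)$; what one does instead is take $a_j\in\calj$ (nonempty by (\ref{nonempty})), note that $a_j*F_j\subset A\subset\Pi(a_k,p_j)$ forces some $a'_j\in(a_j*F_j)\cap\Pi(a_k,p_j^2)$, and then $a'_j\in\calj\cap\calk$ by (\ref{M_kfiber}); Lemma \ref{fiberedstructure} then yields $A\cap\Pi(a_j,p_i^2)=a_j*F_j*F_k$ and $\calj\subset\calk$. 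Second, your inference ``belongs to the $M_k$-fiber structure in the $p_i$ direction, hence lies in $\cali$'' is false: an $M_k$-fiber in the $p_i$ direction only constrains differences $d$ with $(d,M_k)=M_k/p_i$, which allows $d\in\{M/p_ip_k^2,\,M/p_ip_k,\,M/p_i\}$, so it does not give $\bbA_{M/p_i}[a]=\phi(p_i)$ and no element of $\cali$ is produced this way. The missing chain is: from (\ref{M_kfiberp_i}) and $A\cap\Pi(a_j,p_i^2)=a_j*F_j*F_k$ one gets $|A\cap\Pi(a_j,p_i)|>p_jp_k$, hence $\Phi_{p_i^2}\mid A$ by Corollary \ref{planegrid} and then $A\subset\Pi(a_j,p_i)$ by the same constancy argument in the $p_i$ direction; only now does one use $\cali\neq\emptyset$: a genuine $M$-fiber $a_i*F_i\subset A\subset\Pi(a_j,p_i)$ must meet $\Pi(a_j,p_i^2)=a_j*F_j*F_k$, producing a point of $\cali\cap\calj\cap\calk$ and contradicting (\ref{emptyint}). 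Without these intermediate steps your proposal does not close.
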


\begin{proof}
Assume for contradiction that (\ref{M_kfiberp_i}) holds for some $a_k\in\calk$, but 
$\Phi_{p_j^2}|A$.
The second assumption, together with (\ref{M_kfiber}), implies that $A\subset\Pi(a_k,p_j)$. 
	
Suppose that $a_j\in\calj$. Then $a_j*F_j\subset \Pi(a_k,p_j)$, and there exists an element $a'_j\in (a_j*F_j)\cap \Pi(a_k,p_j^2)$. By (\ref{M_kfiber}), we have $a'_j\in \calj\cap \calk$, so that 
\begin{equation}\label{p_i^2planefiber}
a_j*F_j*F_k= a'_j*F_j*F_k= A\cap\Pi(a_j,p_i^2)
\end{equation}
by Lemma \ref{fiberedstructure}. In particular, $a_j\in\calk$. Since $a_j\in\calj$ was arbitrary, we have $\calj\subset \calk$. 
	
Additionally, (\ref{M_kfiberp_i}) and (\ref{p_i^2planefiber}) imply that 
$|A\cap\Pi(a_j,p_i^2)|=p_jp_k$ and 
$|A\cap\Pi(a_j,p_i)|>p_jp_k$. By Corollary \ref{planegrid}, we have $\Phi_{p_i^2}|A$ and  $A\subset \Pi(a_j,p_i)$.

Now let $a_i\in\cali$. By (\ref{p_i^2planefiber}) and Lemma \ref{planebound}, there must be an element $a'_i\in a_i*F_i$ such that $a'_i\in a_j*F_j*F_k$. But then $a'_i\in\cali\cap\calj\cap \calk$, contradicting (\ref{emptyint}).
\end{proof}

\begin{lemma}\label{M_kfiber_unif}
Assume that (F1), (\ref{Phi_M_kdiv}) and (\ref{nonempty}) hold. If there exists $a_k\in\calk$ satisfying (\ref{M_kfiberp_i}), then the same holds for all $a\in\calk$. Moreover, $\Phi_{p_j}|A$. The same is true with the $i$ and $j$ indices interchanged. 
\end{lemma}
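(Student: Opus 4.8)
The plan is to prove both claims of Lemma \ref{M_kfiber_unif} together by a propagation argument, using the rigidity that comes from the plane bound being saturated by $M_k$-fibers. First I would fix $a_k\in\calk$ satisfying (\ref{M_kfiberp_i}), so that by (\ref{M_kfiber}) we have $|A\cap\Pi(a_k,p_j^2)|=p_ip_k$ and $A\cap\Pi(a_k,p_j^2)\subset\calk$, with this plane being an exact union of $M_k$-fibers in the $p_i$ direction. The key point is that every element $a'\in A\cap\Pi(a_k,p_j^2)$ then also satisfies (\ref{M_kfiberp_i}): indeed $a'\in\calk$, so $\bbK^{M_k}_{M_k}[a']=p_k$ by Lemma \ref{M_kfibering}, and $a'$ satisfies either (\ref{M_kfiberp_i}) or (\ref{M_kfiberp_j}); but (\ref{M_kfiberp_j}) would force $|A\cap\Pi(a',p_i^2)|=p_jp_k$ with $A\cap\Pi(a',p_i^2)\subset\calk$, and since $\Pi(a',p_i^2)$ meets $\Pi(a_k,p_j^2)$ in a line $\ell_k$, we would get too many points of $\calk$ in $\Pi(a_k,p_j^2)$ unless... — here I'd need to check the count carefully, but the cleaner route is: by Lemma \ref{M_kfibercyc}, the existence of an element satisfying (\ref{M_kfiberp_i}) gives $\Phi_{p_j^2}\nmid A$, and symmetrically if some element satisfied (\ref{M_kfiberp_j}) we would get $\Phi_{p_i^2}\nmid A$; I would use this to control the global plane structure.

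Next I would propagate from the plane $\Pi(a_k,p_j^2)$ to all of $\calk$. Take any $a\in\calk$. Since $\Phi_{M_k}|A$, the grid $\Lambda(a,D(M_k))$ carries an $M_k$-fibered (or structured) configuration; by Corollary \ref{subset} (ii) applied with $p_k$ in the role there (using $p_k>\min p_\nu$ when that holds) $\calk$ is $M_k$-fibered on each $D(M_k)$-grid, hence $a$ satisfies (\ref{M_kfiberp_i}) or (\ref{M_kfiberp_j}). Suppose for contradiction $a$ satisfies (\ref{M_kfiberp_j}). Then $|A\cap\Pi(a,p_i^2)|=p_jp_k$ with $A\cap\Pi(a,p_i^2)\subset\calk$. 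Now I claim this is incompatible with the existence of $a_k$ satisfying (\ref{M_kfiberp_i}): the planes $\Pi(a_k,p_j^2)$ and $\Pi(a,p_i^2)$ are perpendicular, meeting in a single line in the $p_k$ direction; counting points of $\calk$ in the plane $\Pi$ through their common line perpendicular to $p_k$ — or rather, using that an $M_k$-fiber in the $p_i$ direction and one in the $p_j$ direction passing through nearby points of $\calk$ would combine (via Lemma \ref{fiberedstructure}-type arguments or a direct plane-bound count in $\Pi(\cdot,p_k^2)$) to exceed $p_ip_k$ in some $\Pi(\cdot,p_j^2)$ or $p_jp_k$ in some $\Pi(\cdot,p_i^2)$. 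The cleanest implementation is probably to show directly that $\calk$ cannot contain both an element with property (\ref{M_kfiberp_i}) and one with (\ref{M_kfiberp_j}): if it did, pick them at minimal distance, intersect their columns, and derive a plane-bound violation via Lemma \ref{planebound}. This gives uniformity: all $a\in\calk$ satisfy (\ref{M_kfiberp_i}).

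Finally, for $\Phi_{p_j}|A$: once every $a\in\calk$ satisfies (\ref{M_kfiberp_i}), the set $\calk$ is a union of $M_k$-fibers in the $p_i$ direction each carrying, inside its plane $\Pi(a,p_j^2)$, exactly $p_ip_k$ points — i.e. $\calk$ mod $p_j^2$ (more precisely, the projection structure) shows that $\calk$ occupies full $p_i$-columns, and its mask polynomial contains $F_i(X)\cdot(\text{something})$ that is itself divisible by the relevant cyclotomic. To get $\Phi_{p_j}|A$ I would argue: since $\calj\neq\emptyset$ and $\calk\neq\emptyset$ and (\ref{emptyint}) holds, Lemma \ref{fiberedstructure} gives $A\cap\Pi(a,p_i^2)=a*F_j*F_k$ for $a\in\calj\cap\calk$ — but we must first know $\calj\cap\calk\neq\emptyset$ or handle its failure. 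Alternatively, and more robustly, I'd use Lemma \ref{fibercyc3}: $\Phi_{p_j}|A\Leftrightarrow\Phi_{p_j}|\calj$, and analyze $\calj$. Actually the intended argument is likely shorter: the uniform $p_i$-direction structure of $\calk$ means $|A|=|\calk|/(\text{overlap})+\dots$ and counting $|A|=p_ip_jp_k$ against the fact that $\calk$ consists of $p_ip_k$-point planes forces, via $\Phi_{M_k}|A$ and the two-prime cyclotomic analysis (Lemma \ref{2d-cyclo}) on scale $M_j$ or on scale $p_j^{2}$, that $\Phi_{p_j}$ divides the complementary factor.

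\textbf{Anticipated main obstacle.} The hard part will be establishing the uniformity — that $\calk$ cannot mix elements with property (\ref{M_kfiberp_i}) and property (\ref{M_kfiberp_j}) — cleanly, without a messy case split on the relative positions of the two perpendicular families of $M_k$-columns and on which of $\Phi_{p_i^2}, \Phi_{p_j^2}$ divides $A$. The plane-bound count (Lemma \ref{planebound}) is the right tool, but setting up the configuration so that two columns in perpendicular directions are forced into a single plane $\Pi(\cdot,p_\nu^2)$ requires care with which grid one intersects; Lemma \ref{M_kfibercyc} (giving $\Phi_{p_j^2}\nmid A$ once an $(\ref{M_kfiberp_i})$-element exists) is what makes it go through, since it removes the "both $\Phi_{p_i^2}$ and $\Phi_{p_j^2}$ divide $A$" branch. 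The derivation of $\Phi_{p_j}|A$ at the end should then be routine given Lemma \ref{fibercyc3} and the two-prime fibering lemma.
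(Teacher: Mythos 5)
There are two genuine gaps. First, the uniformity claim is never actually proved. You correctly note (via Lemma \ref{M_kfibercyc}) that an element satisfying (\ref{M_kfiberp_i}) gives $\Phi_{p_j^2}\nmid A$ and, symmetrically, that an element satisfying (\ref{M_kfiberp_j}) would give $\Phi_{p_i^2}\nmid A$ — but these two non-divisibilities are perfectly compatible (they would just mean $\Phi_{p_i}\Phi_{p_j}\mid A$), so they do not by themselves "remove a branch" or yield a contradiction. Your fallback, a direct plane-bound violation from two perpendicular saturated planes ("pick them at minimal distance, intersect their columns"), is left unexecuted and is not obviously workable: $|A\cap\Pi(a,p_j^2)|=p_ip_k$ and $|A\cap\Pi(a',p_i^2)|=p_jp_k$ each saturate Lemma \ref{planebound} in their own plane, and their intersection is only a line in the $p_k$ direction, so no local count is forced above the bound. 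The missing idea in the paper's proof is a \emph{positive} divisibility: assume WLOG $p_i>p_j$ for the two hypothetical elements; then for the element $a$ with property (\ref{M_kfiberp_i}) one has $|A\cap\Pi(a,p_i)|\ \geq\ \bbK^{M_k}_{M_k}[a]+\bbK^{M_k}_{M_k/p_i}[a]=p_k+p_k\phi(p_i)=p_ip_k>p_jp_k$, so Corollary \ref{planegrid} forces $\Phi_{p_i^2}\mid A$, which contradicts the $\Phi_{p_i^2}\nmid A$ you already extracted from Lemma \ref{M_kfibercyc} with $i$ and $j$ interchanged. Without this step (or an equivalent), your argument does not close.

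Second, the derivation of $\Phi_{p_j}\mid A$ is muddled. Lemma \ref{fibercyc3} does not say $\Phi_{p_j}\mid A\Leftrightarrow\Phi_{p_j}\mid\calj$; it concerns $\Phi_{M/p_k^{\alpha}}$ (i.e. $\Phi_{N_k},\Phi_{M_k}$) only, so that route is a misapplication, and the closing "counting against $p_ip_k$-point planes via Lemma \ref{2d-cyclo}" is too vague to check. The intended argument is one line and you already have its main ingredient in your first paragraph: Lemma \ref{M_kfibercyc} gives $\Phi_{p_j^2}\nmid A$, and since $A$ is a tile with $|A|=p_ip_jp_k$ (so $p_j\parallel|A|$ and, by (T1) and the splitting of prime-power cyclotomic divisors between $A$ and $B$, exactly one of $p_j,p_j^2$ lies in $S_A$), it follows that $\Phi_{p_j}\mid A$. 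You never make this connection, so as written neither conclusion of the lemma is established.
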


\begin{proof}
Assume for contradiction that there exist $a,a'\in \calk$ such that $a$ satisfies (\ref{M_kfiberp_i}) and $a'$ satisfies (\ref{M_kfiberp_j}). We also assume, without loss of generality, that $p_i>p_j$. 
	
From (\ref{M_kfiberp_i}) and (\ref{M_kfiber}) we see that 
$$
|A\cap\Pi(a,p_i)|\geq p_ip_k>p_jp_k.
$$
By Corollary \ref{planegrid}, we get $\Phi_{p_i^2}|A$. This, however, implies by Lemma \ref{M_kfibercyc} that (\ref{M_kfiberp_j}) cannot hold for $a'$, a contradiction.
The second conclusion follows from Lemma \ref{M_kfibercyc}.
\end{proof}

In light of Lemmas \ref{M_kfibercyc} and \ref{M_kfiber_unif}, we may assume from now on that
\begin{equation}\label{lastM_kcase}
(\ref{M_kfiberp_i}) \text{ holds for all } a\in\calk\text{, and } \Phi_{p_j}|A.
\end{equation}

We record the following simple consequence.
\begin{corollary}\label{F1assumcor}
Assume that (F1), (\ref{Phi_M_kdiv}), (\ref{nonempty}) and (\ref{lastM_kcase}) hold. Then:
\begin{itemize}
\item[(i)] $|A\cap\Pi(a_k,p_i)|\geq p_ip_k$ for all $a_k\in\calk$,
\item[(ii)] $|A\cap\Pi(x,p_j)|=p_ip_k$ for all $x\in \ZZ_M$,
\item[(iii)] $\calk\cap\Pi(a_j,p_j)=\emptyset$ for all $a_j\in \calj$ (in particular, $\calj\cap\calk=\emptyset$),
\item[(iv)] $(\cali\setminus\calj)\cap\Pi(a_j,p_j)\neq \emptyset$ for all $a_j\in \calj$,
\item[(v)] $p_j<p_k$.
\end{itemize} 
\end{corollary}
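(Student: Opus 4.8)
The plan is to prove the five items in the order stated, using throughout the explicit array‑coordinate description of fibers. As preliminaries I would record the elementary facts: an $M$‑fiber in the $p_j$ direction has constant $\pi_i,\pi_k$ and its $p_j$ points run through all of one residue class of $\pi_j$ modulo $p_j$ (so it lies in a single ``thick'' plane $\Pi(\cdot,p_j)$ and meets each ``thin'' plane $\Pi(\cdot,p_j^2)$ inside it exactly once); an $M$‑fiber in the $p_i$ direction has constant $\pi_j$, hence lies in a single thin plane $\Pi(\cdot,p_j^2)$; and $M_k/p_i=p_ip_j^2$ is a multiple of $p_i$, so $\Lambda(a_k,M_k/p_i)\subset\Pi(a_k,p_i)$. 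Since $A$ is fibered on $D(M)$‑grids, $A=\cali\cup\calj\cup\calk$ throughout.

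For (i): by (\ref{lastM_kcase}) every $a_k\in\calk$ satisfies (\ref{M_kfiberp_i}), and by Lemma \ref{M_kfibering} also $\bbK^{M_k}_{M_k}[a_k]=p_k$. Hence $\calk$ contains $p_k\phi(p_i)+p_k=p_ip_k$ elements $a$ with $(a_k-a,M_k)\in\{M_k,M_k/p_i\}$, i.e. with $a\in\Lambda(a_k,M_k/p_i)\subset\Pi(a_k,p_i)$; as $\calk\subset A$ this gives $|A\cap\Pi(a_k,p_i)|\ge p_ip_k$. For (ii): $\Phi_{p_j}\mid A$ (part of (\ref{lastM_kcase})) is equivalent to $A(X)\bmod(X^{p_j}-1)$ being a constant multiple of $1+X+\dots+X^{p_j-1}$; reading off coefficients, $A$ has exactly $|A|/p_j=p_ip_k$ elements in each residue class modulo $p_j$, that is $|A\cap\Pi(x,p_j)|=p_ip_k$ for all $x$. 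For (iii): suppose $\calk\cap\Pi(a_j,p_j)\neq\emptyset$ for some $a_j\in\calj$ and pick $a'$ there; by (\ref{M_kfiber}) in Lemma \ref{M_kfibering} (applicable since $a'$ satisfies (\ref{M_kfiberp_i})) the thin plane satisfies $|A\cap\Pi(a',p_j^2)|=p_ip_k$ with $A\cap\Pi(a',p_j^2)\subset\calk$. Since $\Pi(a',p_j^2)\subset\Pi(a_j,p_j)$ and $|A\cap\Pi(a_j,p_j)|=p_ip_k$ by (ii), the two intersections coincide, so $A\cap\Pi(a_j,p_j)\subset\calk$. But $a_j*F_j\subset A\cap\Pi(a_j,p_j)$ meets $p_j\ge2$ distinct thin $p_j$‑planes, contradicting containment in the single thin plane $\Pi(a',p_j^2)$. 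Taking $a_j=a$ for any $a\in\calj\cap\calk$ yields $\calj\cap\calk=\emptyset$.

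For (iv): if $(\cali\setminus\calj)\cap\Pi(a_j,p_j)=\emptyset$, then by (iii) and $A=\cali\cup\calj\cup\calk$ every point of $A\cap\Pi(a_j,p_j)$ lies in $\calj$, so (each such point pulling a full $M$‑fiber in the $p_j$ direction, which stays inside $\Pi(a_j,p_j)$) the set $A\cap\Pi(a_j,p_j)$ is a disjoint union of $p_j$‑fibers; thus $p_j\mid p_ip_k$ by (ii), which is impossible. For (v) I would split on whether $\cali\cap\calj$ is empty. If $\cali\cap\calj\neq\emptyset$, pick $a$ there; Lemma \ref{fiberedstructure} (in the permutation where $p_k$ is the ``third'' direction) gives $A\cap\Pi(a,p_k^2)=a*F_i*F_j$, so $a*F_i*F_j\subset A$ is $p_ip_j$ distinct points, all lying in $\Pi(a,p_j)$; hence $p_ip_j\le p_ip_k$ by (ii), i.e. $p_j<p_k$. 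If $\cali\cap\calj=\emptyset$, fix $a_j\in\calj$; by (iii), $A\cap\Pi(a_j,p_j)\subset\cali\cup\calj$ is a genuinely disjoint union of $M$‑fibers in the $p_i$ and $p_j$ directions, with at least the fiber $a_j*F_j$ present, so $p_ip_k=cp_j+dp_i$ with $c\ge1$; then $p_i\mid c$ and $p_k-d=(c/p_i)p_j\ge p_j$, so $p_k\ge p_j$, and since the primes are distinct, $p_j<p_k$.

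I expect part (v) to be the main obstacle: the naive count inside $\Pi(a_j,p_j)$ only yields $p_i+p_j\le p_ip_k$, and the clean divisibility conclusion requires that $A\cap\Pi(a_j,p_j)$ decompose as a \emph{disjoint} union of $p_i$‑ and $p_j$‑fibers. The case split on $\cali\cap\calj$ is the device that circumvents this: when the intersection is nonempty, Lemma \ref{fiberedstructure} produces a full $p_i\times p_j$ block which forces $p_j<p_k$ immediately through (ii), and when it is empty the disjointness is automatic.
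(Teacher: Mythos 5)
Your proof is correct, and for (i)--(iii) it is essentially the argument the paper intends: (i) is exactly the box count $\bbK^{M_k}_{M_k}[a_k]+\bbK^{M_k}_{M_k/p_i}[a_k]=p_ip_k$ inside $\Pi(a_k,p_i)$ coming from (\ref{lastM_kcase}) and Lemma \ref{M_kfibering}, (ii) is the standard equidistribution consequence of $\Phi_{p_j}\mid A$, and your (iii) (equality of cardinalities forces $A\cap\Pi(a_j,p_j)$ into the single thin plane $\Pi(a',p_j^2)$, which the fiber $a_j*F_j$ cannot fit) is just a reorganization of the paper's ``$|A\cap\Pi(a_j,p_j)|>p_ip_k$'' contradiction via (\ref{M_kfiber}). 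Where you genuinely diverge is (iv)--(v): the paper writes a single decomposition $p_ip_k=|A\cap\Pi(a_j,p_j)|=c_ip_i+c_jp_j$ with $c_j>0$, reads off $p_j<p_k$ from $p_k=c_i+c'_jp_j$, and gets (iv) from $c_i>0$ because $p_j\nmid p_ip_k$; this tacitly uses that $(\cali\setminus\calj)\cap\Pi(a_j,p_j)$ is itself a disjoint union of $M$-fibers in the $p_i$ direction (which, as in your discussion, ultimately rests on Lemma \ref{fiberedstructure} to rule out a $p_i$-fiber meeting both $\cali\setminus\calj$ and $\cali\cap\calj$). Your case split on $\cali\cap\calj$ handles the same issue more explicitly: when $\cali\cap\calj\neq\emptyset$, Lemma \ref{fiberedstructure} gives the full block $a*F_i*F_j\subset A\cap\Pi(a,p_j)$ and (ii) yields $p_j<p_k$ at once, while when $\cali\cap\calj=\emptyset$ the disjoint-fiber count $p_ip_k=dp_i+cp_j$ is automatic; your separate divisibility proof of (iv) ($p_j\nmid p_ip_k$) is the same mechanism as the paper's $c_i>0$ step. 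The paper's version is shorter; yours makes the implicit disjointness hypothesis visible, at the cost of an extra case and an extra invocation of Lemma \ref{fiberedstructure}. Both are sound.
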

\begin{proof}
Parts (i) and (ii) follow directly from, respectively, the first and second part of (\ref{lastM_kcase}). 
	
We now prove (iii). Let $a_j\in \calj$, and assume for contradiction that there exists an element $a_k\in\calk\cap\Pi(a_j,p_j)$. By (\ref{M_kfiber}), we have $|A\cap\Pi(a_k,p_j^2)|=p_ip_k$.
Since $a_j*F_j$ cannot all be contained in $\Pi(a_k,p_j^2)$, we get $|A\cap \Pi(a_j,p_j)|>p_ip_k$, contradicting (ii).
	
By (iii), we have $A\cap\Pi(a_j,p_j)\subset\cali\cup\calj$. It follows that there exist integers $c_j>0, c_i\geq0$ such that 
$$
p_ip_k=|A\cap\Pi(a_j,p_j)|=c_ip_i+c_jp_j,
$$
with $c_ip_i$ accounting for the elements of $(\cali\setminus\calj)\cap\Pi(a_j,p_j)$. Hence $p_k=c_i+c'_jp_j$ for some $c'_j>0$, and $p_j<p_k$ as claimed in (v). Furthermore, we must have $c_i>0$, since $p_j$ does not divide $p_k$. This proves (iv).
\end{proof}

\begin{lemma}\label{Phi_M_inotA}
Assume that (F1), (\ref{Phi_M_kdiv}), (\ref{nonempty}), and (\ref{lastM_kcase}) hold. Then $\Phi_{M_i}\nmid A$.
\end{lemma}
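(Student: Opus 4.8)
The plan is to argue by contradiction: assume $\Phi_{M_i}\mid A$ and produce a configuration violating (\ref{emptyint}). The first step is to pin down where $\calk$ sits. Since $\Phi_{M_k}\mid A$, Lemma \ref{fibercyc3} gives $\Phi_{M_k}\mid\calk$, and by (\ref{lastM_kcase}) together with Lemma \ref{M_kfibering} every $a_k\in\calk$ lies in an $M_k$-fiber in the $p_i$ direction modulo $M_k$, with $A\cap\Pi(a_k,p_j^{2})\subset\calk$ and $|A\cap\Pi(a_k,p_j^{2})|=p_ip_k$. Combining this with Corollary \ref{F1assumcor}(ii) (which, using $\Phi_{p_j}\mid A$, gives $|A\cap\Pi(x,p_j)|=p_ip_k$ for every $x$), and noting $\Pi(a_k,p_j^{2})\subset\Pi(a_k,p_j)$, forces $A\cap\Pi(a_k,p_j)=A\cap\Pi(a_k,p_j^{2})\subset\calk$. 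Hence a first-level $p_j$-block either meets $\calk$ and then all of $A$ on it is contained in $\calk$ and in a single full-level $p_j$-plane, or it is disjoint from $\calk$; so $\calk$ is exactly $A$ restricted to some number $t$ of first-level $p_j$-blocks, with $A$ vanishing on the remaining full-level $p_j$-planes inside each of those blocks, and $t\le p_j-1$ because $\calj\neq\emptyset$ and $\calj\cap\calk=\emptyset$ (Corollary \ref{F1assumcor}(iii)).

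Next I would feed in the contradiction hypothesis. Applying Lemma \ref{M_kfibering} with the roles of $k$ and $i$ interchanged (valid since (F) holds and $\Phi_{M_i}\mid A$): every $a\in\cali$ lies in a unique $M_i$-fiber modulo $M_i$, in the $p_j$ direction or the $p_k$ direction, and the full-level plane through $a$ perpendicular to the remaining direction is saturated and contained in $\cali$. Pick $a_j\in\calj$ (it lies in one of the $p_j-t$ non-$\calk$ blocks) and pick $a_i\in(\cali\setminus\calj)\cap\Pi(a_j,p_j)$, available by Corollary \ref{F1assumcor}(iv). If $a_i$ lies in an $M_i$-fiber in the $p_k$ direction, then $A\cap\Pi(a_i,p_j^{2})\subset\cali$ with $p_ip_k$ elements; sliding the $M$-fiber of $a_j$ in the $p_j$ direction to the full-level $p_j$-plane of $a_i$ yields an element $a_j'\in\cali\cap\calj$, and Lemma \ref{fiberedstructure} then gives $A\cap\Pi(a_j',p_k^{2})=a_j'*F_i*F_j\subset\calj$. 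Since every $M$-fiber in the $p_k$ direction spans an entire first-level $p_k$-block in its $p_k$-coordinate, no element of $\calk$ can lie in the first-level $p_k$-block of $a_j'$; moreover, reapplying the $\cali$-dichotomy to $a_j'$ (which, being in $\calj$, must be in the $p_j$-direction case) forces $a_j'*F_i*F_j\subset\cali\cap\calj$, hence every full-level $p_j$-plane of the block of $a_j'$ carries an element of $\cali\cap\calj$. Running this over enough choices of $a_j$, and comparing with the fact that the $t$ $\calk$-blocks carry $A=\calk$ while every block carries exactly $p_ip_k$ elements of $A$, should exhaust the available room and force $\calk=\emptyset$, contradicting (\ref{nonempty}). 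The complementary branch, where $a_i$ lies in an $M_i$-fiber in the $p_j$ direction, I would handle by the symmetric construction: it produces a full-level $p_j$-plane contained in $\cali$ which, stacked against the $\calk$-structure from the first paragraph and Lemma \ref{planebound}, overflows the plane bound; alternatively one can dispatch this branch directly from the $\calt$-nullity furnished by $\Phi_M\Phi_{M_i}\mid A$ via the cuboid type of Section \ref{cuboid-section}.

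The main obstacle I anticipate is the joint bookkeeping in the previous paragraph: one must simultaneously track the planes saturated by the two applications of Lemma \ref{M_kfibering} (for $\cali$ via $\Phi_{M_i}$, and for $\calk$ via $\Phi_{M_k}$), the exact cardinality $p_ip_k$ of $A$ on each first-level $p_j$-block dictated by $|A|=p_ip_jp_k$, and the non-fibering hypothesis (\ref{emptyint}), all without splitting into cases according to the intersection pattern of $\cali,\calj,\calk$. The delicate point is to verify that the saturated full-level planes genuinely leave no slack, so that the presence of a fiber in a third direction either overflows Lemma \ref{planebound} or collides with $\calj\cap\calk=\emptyset$; keeping precise count of which first-level blocks are consumed by $\calk$, by $\calj$, and by $\cali$ is where the argument requires the most care.
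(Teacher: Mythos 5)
Your setup is sound and starts the same way as the paper: from $\Phi_{M_i}\mid A$ and Lemma \ref{fibercyc3} you get $\Phi_{M_i}\mid\cali$, apply Lemma \ref{M_kfibering} with $k$ and $i$ interchanged to obtain the dichotomy ($M_i$-fiber in the $p_j$ or the $p_k$ direction) for elements of $\cali$, and you correctly use Corollary \ref{F1assumcor} (iii)--(iv) to place an element $a_i\in(\cali\setminus\calj)\cap\Pi(a_j,p_j)$; your preliminary description of the $\calk$-blocks and the local construction of $a_j'\in\cali\cap\calj$ via Lemma \ref{fiberedstructure} are also correct. The gap is in how you close the two branches. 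In the $p_k$-direction branch you end with ``running this over enough choices of $a_j$ \dots should exhaust the available room and force $\calk=\emptyset$'': this is not an argument, and it is not clear it can be made into one, since the $p_k$-blocks you exclude are those containing the constructed points $a_j'$, which need not have any relation to the blocks where $\calk$ actually sits. You also miss that this branch closes immediately: if $a_i$ lies in an $M_i$-fiber in the $p_k$ direction, then $A\cap\Pi(a_i,p_j^2)\subset\cali$ has exactly $p_ip_k$ elements, and adding the $p_j-1$ points of $a_j*F_j$ outside $\Pi(a_i,p_j^2)$ gives $|A\cap\Pi(a_j,p_j)|\geq p_ip_k+p_j-1>p_ip_k$, contradicting Corollary \ref{F1assumcor} (ii) --- this is exactly the paper's step.

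In the $p_j$-direction branch your claimed contradiction does not hold as stated. An $M_i$-fiber through $a_i$ in the $p_j$ direction saturates the plane $\Pi(a_i,p_k^2)$ (perpendicular to $p_k$) with exactly $p_ip_j$ elements of $\cali$; this meets, rather than overflows, the bound of Lemma \ref{planebound}, and stacking it against the $\calk$-structure produces no overflow because those saturated planes and the $\calk$-planes $\Pi(a_k,p_j^2)$ need not intersect inside $A$ (note also that $\cali\cap\calk=\emptyset$ has not been established, only $\calj\cap\calk=\emptyset$). There is, as far as I can see, no contradiction from a \emph{single} element of $\cali$ being $p_j$-direction fibered; what the paper does instead is first rule out the $p_k$-direction option for \emph{every} $a_i\in\cali\cap\Pi(a_j,p_j)$ (by the overflow above), and then count the whole block: $A\cap\Pi(a_j,p_j)\subset\cali\cup\calj$ decomposes into $p_j$-direction $M_i$-fiber structures of size $p_ip_j$ and full $M$-fibers of $\calj\setminus\cali$ of size $p_j$, so $p_ip_k=c_ip_ip_j+c_jp_j$, which is impossible since $p_j\nmid p_ip_k$. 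This global divisibility count inside one first-level $p_j$-block is the missing idea; your per-element case split, together with the unproved ``exhaustion'' and ``overflow'' claims, does not substitute for it. The alternative you mention (dispatching the branch via $\calt$-nullity from $\Phi_M\Phi_{M_i}\mid A$) is likewise not worked out and cannot be assessed.
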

\begin{proof}
Assume, by contradiction, that $\Phi_{M_i}| A$. By Lemma \ref{fibercyc3}, we have $\Phi_{M_i}|\cali$. 
	
Let $a_j\in\calj$. By Corollary \ref{F1assumcor} (iii),  $A\cap\Pi(a_j,p_j)\subset\cali\cup\calj$. 
By Lemma \ref{M_kfibering} with $M_k$ replaced by $M_i$, 
each $a_i\in\cali\cap\Pi(a_j,p_j)$ must satisfy either
\begin{equation}\label{Mi-choosep_j}
\bbI^{M_i}_{M_i/p_j}[a_i]=p_i\phi(p_j).
\end{equation}
or 
\begin{equation}\label{Mi-choosep_k}
\bbI^{M_i}_{M_i/p_k}[a_i]=p_i\phi(p_k)
\end{equation}
Suppose that (\ref{Mi-choosep_k}) holds for some $a_i\in\cali\cap\Pi(a_j,p_j)$. Such an element must exist due to Corollary \ref{F1assumcor} (iv). Then
$|\cali\cap \Pi(a_i,p_j^2)|=p_ip_k$, so that
$$
|A\cap\Pi(a_j,p_j)|\geq  |\cali\cap\Pi(a_i,p_j^2)|+ |(a_j*F_j)\setminus\{a_j\}|
>p_ip_k,
$$
contradicting Corollary \ref{F1assumcor} (ii).
	
Hence (\ref{Mi-choosep_j}) holds for all $a_i\in\cali\cap\Pi(a_j,p_j)$.
We conclude that there must be two nonnegative integers $c_i,c_j$ such that 
$$
p_ip_k=|A\cap\Pi(a_j,p_j)|=c_ip_ip_j+c_jp_j,
$$
with the first term accounting for all $a_i$ as above and the second term accounting for all $a_j\in(\calj\setminus\cali)\cap\Pi(a_j,p_j)$.
This, however, is not allowed since $p_j$ does not divide $p_ip_k$. 
\end{proof}

The main step in the proof of Proposition \ref{emptyset-F1} is the following proposition. 
\begin{proposition}\label{initialstatement}
Assume that (F1), (\ref{Phi_M_kdiv}), (\ref{nonempty}) and (\ref{lastM_kcase}) hold. Then 
\begin{equation}\label{Phi_N_inotA}
\Phi_{N_i}\nmid A.
\end{equation}
\end{proposition}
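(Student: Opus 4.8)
The strategy is to assume for contradiction that $\Phi_{N_i}\mid A$, and to combine the structural information already extracted in this subsection with the results on unfibered $N_i$-grids from Section~\ref{lower-fibering}. First I would invoke Lemma~\ref{fibercyc3} to conclude that $\Phi_{N_i}\mid\cali$, so that $\cali$ may be viewed modulo $N_i$ as a multiset of constant multiplicity $p_i$ in $\ZZ_{N_i}$, and I would analyze the behaviour of $\cali$ on $D(N_i)$-grids. By Corollary~\ref{subset}(ii) together with Proposition~\ref{no unfibered grids} (and Lemma~\ref{almostmaxedout} in the even case with $p_i=2$), either some $D(N_i)$-grid carrying $\cali$ is unfibered---in which case one of $\cali,\calj,\calk$ is empty, contradicting (\ref{nonempty})---or $\cali$ is $N_i$-fibered in some direction on \emph{every} $D(N_i)$-grid. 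Note that by Corollary~\ref{subset}(i) applied with the roles appropriate to $p_i$, and by part (v) of Corollary~\ref{F1assumcor} giving $p_j<p_k$, I can control which of the directions $p_j,p_k$ is available: $\cali$ cannot be $N_i$-fibered in the $p_i$ direction when $p_i>\min p_\nu$, and when $p_i=\min p_\nu$ one handles it by the same plane-counting as in Corollary~\ref{subset}. So the remaining case is that $\cali$ mod $N_i$ is $N_i$-fibered in one of the $p_j$ or $p_k$ directions on each such grid.

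\textbf{Key counting step.} With $\cali$ so fibered, I would then run the same kind of plane-bound argument that drives Lemma~\ref{Phi_M_inotA}. Fix $a_j\in\calj$ (nonempty by (\ref{nonempty})). By Corollary~\ref{F1assumcor}(iii), $A\cap\Pi(a_j,p_j)\subset\cali\cup\calj$, and by part (ii) of that corollary $|A\cap\Pi(a_j,p_j)|=p_ip_k$. Each $a_i\in\cali\cap\Pi(a_j,p_j)$ lies on an $N_i$-fiber of $\cali$ in the $p_j$ or $p_k$ direction; using Lemma~\ref{M_kfibering} (with $M_k$ replaced by $N_i$, which is legitimate since all $N_i$-cuboids in the relevant sub-picture are two-dimensional once we pass to the appropriate scale—this is exactly the mechanism used throughout Section~\ref{lower-fibering}) I would argue that the $p_k$-fibered alternative forces $|\cali\cap\Pi(a_i,p_j^2)|$ to be so large that, together with the $M$-fiber $a_j*F_j$ disjoint from it, the plane $\Pi(a_j,p_j)$ contains more than $p_ip_k$ points of $A$, contradicting Corollary~\ref{F1assumcor}(ii). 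Hence $\cali$ is $N_i$-fibered in the $p_j$ direction near $a_j$, contributing blocks of size $p_i\cdot(\text{something divisible by }p_j)$ to $\Pi(a_j,p_j)$; adding the $p_j$-multiples from $(\calj\setminus\cali)\cap\Pi(a_j,p_j)$, we obtain $p_ip_k=c_i p_i p_j+c_j p_j$ for nonnegative integers $c_i,c_j$, which is impossible since $p_j\nmid p_ip_k$ (as $p_j\notin\{p_i,p_k\}$). This contradiction establishes (\ref{Phi_N_inotA}).

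\textbf{Main obstacle.} The delicate point I expect to fight with is the passage from "$\Phi_{N_i}\mid A$" to a usable two-dimensional fibering statement for $\cali$ on $D(N_i)$-grids. Unlike the $M_k$-scale situation (where $M_k=M/p_k^2$ genuinely has only two prime factors so Lemma~\ref{2d-cyclo} applies verbatim), $N_i=M/p_i$ still has three prime factors, so I must instead route through the cuboid-type divisibility criteria: $\Phi_{N_i}\mid A$ combined with $\Phi_{M_i}\nmid A$ (Lemma~\ref{Phi_M_inotA}) is what pins down the structure on the relevant sub-grids, via Lemmas~\ref{even_struct}, \ref{odd_struct}, \ref{Kunfibered} and \ref{evenunfiberedN_igrid}. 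Ensuring that these apply with all hypotheses (the binary-multiplicity condition (\ref{bin_cond}) with $c_0=p_i$ coming from the $p_i$-fibering of $\cali$, the non-emptiness of the grid, and the exclusion of the corner and diagonal-box structures by Lemmas~\ref{evennoecorner}, \ref{almostmaxedout} and the cardinality bound $|A|=p_ip_jp_k$) is where the real case-checking lives, and is presumably why the excerpt flags that it will defer to Claims 6--8 in the proof for some of the more specialized technical steps; I would isolate those as separate claims rather than inlining them. Once the fibering of $\cali$ mod $N_i$ is in hand, the arithmetic contradiction $p_j\mid p_ip_k$ is immediate.
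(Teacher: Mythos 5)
There is a genuine gap, and it sits exactly at the case your argument tries to dismiss. After reducing (via Lemma \ref{fibercyc3} and Proposition \ref{no unfibered grids}) to the situation where every $D(N_i)$-grid meeting $\cali$ is $N_i$-fibered in some direction, you assert that the $p_i$-direction option can be ruled out: "when $p_i=\min p_\nu$ one handles it by the same plane-counting as in Corollary \ref{subset}". This is false. An $N_i$-fiber of $\cali$ in the $p_i$ direction means a full line $\ell_i(a_i)\subset A$ with $p_i^2$ points, and when $p_i=\min_\nu p_\nu$ this is perfectly compatible with Lemma \ref{planebound} (since $p_i^2\leq p_ip_j,p_ip_k$); the counting in Corollary \ref{subset}(i) only excludes the $p_\nu$-direction when $p_\nu>\min$. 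Worse, this is not a stray case: in the paper's proof, Claim 1 shows that for $a_i\in(\cali\setminus\calj)\cap\Pi(a_j,p_j)$ the grid $\Lambda(a_i,D(N_i))$ \emph{cannot} be $N_i$-fibered in the $p_j$ direction (that would force $a_i\in\calj$) nor in the $p_k$ direction (excluded by Corollary \ref{F1assumcor}(iii)), so it \emph{must} be fibered in the $p_i$ direction, which forces $p_i=\min_\nu p_\nu$ and $\ell_i(a_i)\subset A$. Consequently your key counting step collapses: the contribution of $\cali$ to $\Pi(a_j,p_j)$ comes in blocks of size $p_i^2$, not $c_ip_ip_j$, and the identity $p_ip_k=cp_i^2+c_jp_j$ has solutions (e.g. $p_i=2,p_j=3,p_k=5$ gives $10=4+2\cdot 3$), so no arithmetic contradiction of the form $p_j\mid p_ip_k$ arises.

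This is precisely why the paper's proof does not stop there: after Claim 1 it establishes $(a_i-a_j,M)=M/p_k^2$ (Claim 2), extracts a list of divisors of $A$, and then spends Claims 3--8 deriving progressively stronger fibering properties of $B$ at several scales via saturating-set computations ($B$ is $N_k$-fibered in the $p_j$ direction, $\Phi_{N_j}\nmid A$, $\Phi_{M_i/p_j}\Phi_{M_i/p_k}\mid B$, $B$ is $M_i$-fibered in both the $p_j$ and $p_k$ directions and $N_j$-fibered in the $p_i$ direction), and the final contradiction is the divisibility $p_i\mid p_j$ obtained by comparing the $M_i$-scale and $N_j$-scale fiberings of $B$ on a grid $\Lambda(b,M_i/p_j)$. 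None of this machinery, which is the actual content of the proposition, appears in your proposal; the parts you do sketch (the analogue of Lemma \ref{Phi_M_inotA}, and an appeal to Lemma \ref{M_kfibering} "with $M_k$ replaced by $N_i$", which is not available since $N_i$ has three prime factors) do not reach the conclusion.
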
 
We first finish the proof of Proposition \ref{emptyset-F1}, assuming Proposition \ref{initialstatement}.

\begin{lemma}\label{N_iM_ifiber}
Assume that (F1), (\ref{Phi_M_kdiv}), (\ref{nonempty}), and (\ref{lastM_kcase}) hold. Then $B$ is $N_i$-fibered in the $p_j$ direction, with $\bbB_{M/p_ip_j}[b]=\phi(p_j)$ for all $b\in B$. Consequently, $p_j=\min_\nu p_\nu,M/p_ip_j\in \Div(B)$, and $\cali\cap\calj=\emptyset$.
\end{lemma}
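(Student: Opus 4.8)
The statement to establish is Lemma \ref{N_iM_ifiber}: under (F1), (\ref{Phi_M_kdiv}), (\ref{nonempty}), and (\ref{lastM_kcase}), the set $B$ is $N_i$-fibered in the $p_j$ direction with $\bbB_{M/p_ip_j}[b]=\phi(p_j)$ for all $b\in B$, and consequently $p_j=\min_\nu p_\nu$, $M/p_ip_j\in\Div(B)$, and $\cali\cap\calj=\emptyset$. The strategy is to combine Proposition \ref{initialstatement} (which gives $\Phi_{N_i}\nmid A$) with Lemma \ref{Phi_M_inotA} (which gives $\Phi_{M_i}\nmid A$) to force cyclotomic divisibility of $B$ in the $p_i$ direction, and then upgrade this to a full fibering statement using the quantitative constraints already assembled in Corollary \ref{F1assumcor}.

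\textbf{First steps.} Since $\Phi_{p_i^2}$ and $\Phi_{p_i}$ divide exactly one of $A(X)$ and $B(X)$, and since $\Phi_M|A$ (so $\Phi_M\nmid B$), I would first determine which prime power cyclotomic polynomials $\Phi_{p_i^{\alpha}}$ divide $A$. By Proposition \ref{initialstatement} we have $\Phi_{N_i}=\Phi_{M/p_i}\nmid A$, and by Lemma \ref{Phi_M_inotA} we have $\Phi_{M_i}=\Phi_{M/p_i^2}\nmid A$; I would also need the divisibility of $A$ by $\Phi_{M/p_jp_k}\Phi_{M/p_i^2p_jp_k}$ etc. — here the plan is to invoke the cuboid-type divisibility criteria of Section \ref{cuboid-section}. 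The key point is that $\Phi_{N_i}\nmid A$ combined with the second cuboid bullet at the end of Section \ref{cuboid-section} (or the analysis in \cite[Section 9]{LaLo2}) will force $\Phi_{N_i}\Phi_{M_i}|B$, and moreover by Lemma \ref{fiberfibering} (ii) — whose hypothesis (\ref{kwartet}) I need to verify — $B$ must be $N_i$-fibered in the $p_i$ direction OR I must locate the divisor that violates (\ref{kwartet}). Actually, since $\Phi_{p_i^2}|A$ would via Corollary \ref{F1assumcor}-type arguments lead to a contradiction with (\ref{emptyint}) (as in the proof of Lemma \ref{M_kfibercyc}), I expect $\Phi_{p_i}|A$, hence $\Phi_{p_i^2}|B$, and then the relevant $N_i$-scale divisibility of $B$ follows.

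\textbf{Upgrading to a fibering.} Once I know that $\Phi_{N_i}\Phi_{M_i}|B$ in some form, I would apply Corollary \ref{doublediv} (with the roles of $A$ and $B$ swapped, and $i$ playing the distinguished role): $B$ mod $N_i$ is then a union of pairwise disjoint $N_i$-fibers in the $p_j$ and $p_k$ directions, each of some fixed multiplicity $c_0$. To pin down that all these fibers point in the $p_j$ direction and that $c_0=1$ (so $\bbB_{M/p_ip_j}[b]=\phi(p_j)$), I would use the counting constraints from Corollary \ref{F1assumcor}: part (ii) says $|A\cap\Pi(x,p_j)|=p_ip_k$ for all $x$, which dualizes to a statement on $|B\cap\Pi(y,p_j^2)|$ via the plane bound (Lemma \ref{planebound}) applied to $B$. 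An $N_i$-fiber of $B$ in the $p_k$ direction would force too many points of $B$ into a $\Pi(\cdot,p_i^2)$ plane, or would be incompatible with the divisor restrictions on $A$ coming from $M/p_k\in\Div(A)$ (which holds since $\calk\neq\emptyset$ gives $M$-fibers in the $p_k$ direction, so $M/p_k\in\Div(A)$). I expect the $p_k$-direction option to be excluded because it would make $\Phi_{M_i}|A$ (contradicting Lemma \ref{Phi_M_inotA}) or violate Corollary \ref{F1assumcor} (v)'s inequality $p_j<p_k$ in the wrong direction.

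\textbf{Consequences and the main obstacle.} With $B$ $N_i$-fibered in the $p_j$ direction established, the three consequences follow quickly: $M/p_ip_j\in\Div(B)$ is immediate from the fiber structure; $\cali\cap\calj=\emptyset$ follows because an element $a\in\cali\cap\calj$ gives $M$-fibers of $A$ in both $p_i$ and $p_j$ directions through $a$, forcing $M/p_ip_j\in\Div(A)$, which together with $M/p_ip_j\in\Div(B)$ contradicts divisor exclusion (Theorem \ref{thm-sands}); and $p_j=\min_\nu p_\nu$ follows since $\bbB_{M/p_ip_j}[b]=\phi(p_j)$ for all $b\in B$ forces $p_j\mid|B|=p_ip_jp_k$ with the fibering saturating a plane bound, combined with Corollary \ref{F1assumcor} (v) which already gives $p_j<p_k$, and a symmetric argument (or the structure of $\calk$ from (\ref{lastM_kcase})) ruling out $p_j>p_i$. \emph{The main obstacle} I anticipate is the precise bookkeeping needed to go from "$\Phi_{N_i}\nmid A$ and $\Phi_{M_i}\nmid A$" to the clean statement "$B$ is $N_i$-fibered in the $p_j$ direction with multiplicity giving $\phi(p_j)$": one must carefully track which cyclotomic polynomials $\Phi_d$ with $p_i\mid d$ divide $A$ versus $B$ (using that each divides exactly one), verify the hypothesis (\ref{kwartet}) of Lemma \ref{fiberfibering} or find the offending divisor, and rule out the $p_k$-direction alternative in Corollary \ref{doublediv} — this last step is where the counting from Corollary \ref{F1assumcor} and the asymmetry $p_j<p_k$ must be deployed with care, and is likely to require a short sub-argument of its own (possibly referencing Claims 6–8 in the proof of Proposition \ref{initialstatement} as the text hints).
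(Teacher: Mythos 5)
Your proposal follows the same skeleton as the paper's proof: Proposition \ref{initialstatement} ($\Phi_{N_i}\nmid A$) together with Lemma \ref{Phi_M_inotA} ($\Phi_{M_i}\nmid A$) gives $\Phi_{N_i}\Phi_{M_i}\mid B$; Corollary \ref{doublediv} applied to $B$ then writes $B$ as a disjoint union of $N_i$-fibers in the $p_j$ and $p_k$ directions; one excludes the $p_k$ direction; and the stated consequences follow by reading off divisors. Your treatment of the consequences is essentially right ($M/p_ip_j\in\Div(B)$ is immediate, $\cali\cap\calj=\emptyset$ by divisor exclusion, $p_j<p_i$ because $p_j$ elements with pairwise differences of type $M/p_ip_j$ have $\pi_i$-coordinates congruent mod $p_i$ and distinct mod $p_i^2$, and $p_j<p_k$ is Corollary \ref{F1assumcor}(v)). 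Two points of bookkeeping, though: the hypothesis (\ref{binary-cond-2}) for $B$ with $c_0=1$ is not obtained by ``dualizing'' Corollary \ref{F1assumcor}(ii); it follows simply because $\cali\neq\emptyset$ gives $M/p_i\in\Div(A)$, hence $M/p_i\notin\Div(B)$. And the detour through Lemma \ref{fiberfibering}(ii) is misdirected: its hypothesis (\ref{kwartet}) fails here (indeed $M/p_ip_j\in\Div(B)$ is part of what you are proving), and its conclusion would be $N_i$-fibering of $B$ in the $p_i$ direction, which is not the statement at hand. Likewise, your claim that $\Phi_{p_i^2}\mid A$ would contradict (\ref{emptyint}) is false: the paper derives $\Phi_{p_i^2}\mid A$ in (\ref{Phi_p_i^2|A}) immediately after this lemma; fortunately none of this is needed once you pass directly to $\Phi_{N_i}\Phi_{M_i}\mid B$, as you do in your opening sentence.

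The genuine gap is the step you yourself flag as needing ``a short sub-argument of its own'': ruling out an $N_i$-fiber of $B$ in the $p_k$ direction. None of the mechanisms you propose works: such a fiber does not force $\Phi_{M_i}\mid A$, it has no direct bearing on the inequality $p_j<p_k$, and Claims 6--8 of Proposition \ref{initialstatement} are unavailable here because they were established under the contradiction hypothesis $\Phi_{N_i}\mid A$, which is now known to be false. The correct argument is a divisor-exclusion/pigeonhole interplay with the structure of $\calk$: since $\calk\neq\emptyset$ gives $M/p_k\in\Div(A)$, every difference inside a hypothetical fiber $\{b_1,\dots,b_{p_k}\}\subset B$ must satisfy $(b_\mu-b_\nu,M)=M/p_ip_k$; then the $\pi_i$-coordinates of the $b_\mu$ are congruent mod $p_i$ but pairwise distinct mod $p_i^2$, so $p_k\le p_i$, i.e.\ $p_k<p_i$. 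But (\ref{lastM_kcase}) provides $M_k$-fibers of $\calk$ in the $p_i$ direction, and with $p_i>p_k$ a pigeonhole in the $\pi_k$-coordinate (together, if needed, with the $M$-fibers of $\calk$ in the $p_k$ direction) produces a pair of elements of $A$ at distance $M/p_ip_k$, so $M/p_ip_k\in\Div(A)\cap\Div(B)$, a contradiction. Without this argument the lemma is not proved, since the case distinction from Corollary \ref{doublediv} genuinely leaves the $p_k$-direction alternative open.
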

\begin{proof}
Consider $B$ modulo $N_i$. Since $M/p_i\notin\Div(B)$, we have 
$$
\bbB^{N_i}_{N_i}[y]=\bbB_M[y]+\bbB_{M/p_i}[y]\in\{0,1\}\hbox{ for all }y\in\ZZ_M.
$$
By Lemma \ref{Phi_M_inotA} and (\ref{Phi_N_inotA}), we have $\Phi_{N_i}\Phi_{M_i}|B$.
Furthermore, since $M/p_i\in\Div(A)$, (\ref{binary-cond-2}) holds with $c_0=1$.
It follows from Corollary \ref{doublediv} that $B$ is a union of pairwise disjoint $N_i$-fibers in the $p_j$ and $p_k$ directions. We note that if there exists an element $b\in B$ which belongs an $N_i$-fiber in the $p_j$ direction, then $p_j<p_i$. In that case, together with Corollary \ref{F1assumcor} (v), we have $p_j=\min_\nu p_\nu$.
	
It remains to prove that $B$ mod $N_i$ cannot contain an $N_i$-fiber in the $p_k$ direction. Indeed, assume for contradiction that $\{b_1,\dots,b_{p_k}\}$ is such a fiber. Since $M/p_k\in\Div(A)$, we must have $(b_\mu-b_\nu,M)=M/p_ip_k$ for $\mu\neq\nu$. This is only possible if $p_k<p_i$.  However, consider any $M_k$-fiber in the $p_i$ direction in $\calk$, as provided by (\ref{lastM_kcase}). If $p_k<p_i$, then any such fiber must include $M/p_ip_k$ as a difference. Thus $M/p_ip_k\in \Div(A)\cap \Div(B)$, a contradiction.
\end{proof}

By Corollary \ref{F1assumcor} and Lemma \ref{N_iM_ifiber}, we have $|A\cap(a_k,p_i)|\geq p_ip_k>p_jp_k$. Corollary \ref{planegrid} implies that
\begin{equation}\label{Phi_p_i^2|A}
\Phi_{p_i^2}|A.
\end{equation}

\begin{lemma}\label{Phi_N_inotAlemma}
Assume (F1), (\ref{Phi_M_kdiv}), (\ref{nonempty}), and (\ref{lastM_kcase}). Then $\Phi_{M_j}\nmid A$.
	
\end{lemma}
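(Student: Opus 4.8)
I will argue by contradiction, assuming $\Phi_{M_j}\mid A$. By Lemma \ref{fibercyc3}, applied with the roles of the indices $j$ and $k$ interchanged (legitimate, since that lemma has no parity restriction), this gives $\Phi_{M_j}\mid\calj$. Hence by Lemma \ref{M_kfibering}, again with $j$ and $k$ interchanged, every $a_j\in\calj$ satisfies $\bbJ^{M_j}_{M_j}[a_j]=p_j$ and lies in an $M_j$-fiber, contained in $\calj$, running in one of the $p_i$ or $p_k$ directions; in the first case the plane $\Pi(a_j,p_k^2)$ has exactly $p_ip_j$ points of $A$, all of them in $\calj$, and in the second case $\Pi(a_j,p_i^2)$ has exactly $p_jp_k$ points of $A$, all in $\calj$. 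In both cases the plane in question is $A$-saturated by Lemma \ref{planebound}. I will freely use what is already known under (F1), (\ref{Phi_M_kdiv}), (\ref{nonempty}), (\ref{lastM_kcase}): $\cali\cap\calj=\emptyset$ and $\calj\cap\calk=\emptyset$ (Corollary \ref{F1assumcor}(iii) and Lemma \ref{N_iM_ifiber}); $|A\cap\Pi(x,p_j)|=p_ip_k$ for all $x$ (Corollary \ref{F1assumcor}(ii)); $p_j<p_i$ and $p_j<p_k$; $\Phi_{p_i^2}\mid A$ (see (\ref{Phi_p_i^2|A})); and, via (T1) together with $\Phi_{p_i^2}\Phi_{p_j}\mid A$, that $\Phi_{p_i}\nmid A$, $\Phi_{p_j^2}\nmid A$ (also from Lemma \ref{M_kfibercyc}), and exactly one of $\Phi_{p_k},\Phi_{p_k^2}$ divides $A$.

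First I would rule out that any $a_j\in\calj$ lies in an $M_j$-fiber in the $p_k$ direction. In that case $\Pi(a_j,p_i^2)$ carries $p_jp_k$ points of $A$, which is the plane bound; since $\Phi_{p_i^2}\mid A$ forces the $A$-count to be equal in all $p_i$ sub-cosets $\Pi(\cdot,p_i^2)$ of $\Pi(a_j,p_i)$, we get $|A\cap\Pi(a_j,p_i)|=p_i\cdot p_jp_k=|A|$, i.e. $A\subset\Pi(a_j,p_i)$. Picking any $a_i\in\cali$ (nonempty by (\ref{nonempty})), the fiber $a_i*F_i\subset A$ lies in $\Pi(a_j,p_i)$, so $\pi_i(a_i)\equiv\pi_i(a_j)\pmod{p_i}$, and hence $a_i*F_i$ contains exactly one point with $\pi_i=\pi_i(a_j)$, i.e. one point of $\Pi(a_j,p_i^2)$; that point lies in $A\cap\Pi(a_j,p_i^2)\subset\calj$ and also in $\cali$, contradicting $\cali\cap\calj=\emptyset$. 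Therefore every $a_j\in\calj$ lies in an $M_j$-fiber in the $p_i$ direction, so $\calj$ is a disjoint union of $A$-saturated planes $\Pi(\cdot,p_k^2)$ of size $p_ip_j$, each entirely contained in $\calj$; equivalently $\calj=\{a\in A:\pi_k(a)\in S\}$ for some nonempty $S\subset\ZZ_{p_k^2}$.

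It remains to contradict this ``$p_i$-type'' structure. Fix $v\in S$. If $\Phi_{p_k^2}\mid A$, then the saturation $|A\cap\Pi(\cdot,p_k^2)_{\pi_k=v}|=p_ip_j$ propagates through the $p_k$ sub-cosets of the enclosing $\Pi(\cdot,p_k)$, giving $A\subset\Pi(\cdot,p_k)$ with $\pi_k\equiv v\pmod{p_k}$; then for any $a_k\in\calk$ the fiber $a_k*F_k\subset A$ meets the plane $\pi_k=v$, whose $A$-points are all in $\calj$, contradicting $\calj\cap\calk=\emptyset$. If instead $\Phi_{p_k}\mid A$, then $|A\cap\Pi(x,p_k)|=p_ip_j$ for all $x$, so the plane $\pi_k=v$ already exhausts the $A$-content of its enclosing $\Pi(\cdot,p_k)$; consequently no $\cali$-fiber (which has constant $\pi_k$) and no $\calk$-fiber (which has constant $\pi_k$ modulo $p_k$) can touch the residue class of $v$, so every $A$-point with $\pi_k\equiv v\pmod{p_k}$ lies in $\calj$. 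Combining this over all $v\in S$ with the symmetric description of $\calk$ coming from (\ref{lastM_kcase}) and Lemma \ref{M_kfibering} — namely $\calk=\{a\in A:\pi_j(a)\in S'\}$ with each plane $\Pi(\cdot,p_j^2)_{\pi_j\in S'}$ $A$-saturated of size $p_ip_k$ — and with $\Phi_{M_k}\mid A$ and the plane bound, one is led to express $|A\cap\Pi(a_j,p_j)|=p_ip_k$ as a sum of multiples of $p_j$ (the lengths of the $p_j$-fibers it must consist of), which is impossible since $p_j\nmid p_ip_k$, exactly as in the closing step of the proof of Lemma \ref{Phi_M_inotA}. The main obstacle is this last case: unlike the others it does not collapse at once, and it requires tracking the $\pi_j$- and $\pi_k$-residues of the saturated $\calj$- and $\calk$-planes simultaneously and playing them against $\Phi_{M_k}\mid A$ before the counting contradiction materializes.
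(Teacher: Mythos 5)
Your reduction via Lemma \ref{fibercyc3} and Lemma \ref{M_kfibering} (with $j$ and $k$ interchanged) matches the paper's, and your disposal of the case where some $a_j\in\calj$ lies in an $M_j$-fiber in the $p_k$ direction is a correct variant of the paper's argument: you reach $A\subset\Pi(a_j,p_i)$ exactly as the paper does, and then contradict $\cali\cap\calj=\emptyset$ using an $\cali$-fiber, where the paper instead uses an element of $\calk$ and the plane bound; both work. The genuine gap is in the remaining case, where every $a_j\in\calj$ lies in an $M_j$-fiber in the $p_i$ direction. Your sub-case $\Phi_{p_k^2}\mid A$ is fine, but the sub-case $\Phi_{p_k}\mid A$ --- which you yourself flag as ``the main obstacle'' --- is only a sketch, and the counting you propose does not go through as stated. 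In the plane $\Pi(a_j,p_j)$ one has $A\cap\Pi(a_j,p_j)\subset\cali\cup\calj$ (no $\calk$ by Corollary \ref{F1assumcor}(iii)); the $\calj$-part is a union of $M$-fibers in the $p_j$ direction and so contributes a multiple of $p_j$, but the $\cali$-part is a union of $M$-fibers in the $p_i$ direction contributing multiples of $p_i$, and the resulting identity $p_ip_k=c_ip_i+c_jp_j$ has plenty of solutions, so no contradiction ``as in the closing step of Lemma \ref{Phi_M_inotA}'' materializes. In that earlier lemma the contradiction hypothesis $\Phi_{M_i}\mid A$ forced the $\cali$-points of the plane into blocks of size $p_ip_j$; here you have no such control on $\cali$ (indeed $\Phi_{M_i}\nmid A$ by Lemma \ref{Phi_M_inotA}), and nothing in your sketch supplies it.

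What you are missing is the paper's one-line resolution of the $p_i$-direction case, which needs no $\Phi_{p_k}$/$\Phi_{p_k^2}$ branching at all. An $M_j$-fiber in the $p_i$ direction in $\calj$ consists, upstairs in $\ZZ_M$, of $p_i$ disjoint $M$-fibers of $\calj$ in the $p_j$ direction, one over each of the $p_i$ residues mod $M_j$. Since $p_j<p_i$ (Lemma \ref{N_iM_ifiber}), two of these fibers have the same $\pi_j$-class mod $p_j$, and a difference between suitable points of those two fibers has gcd exactly $M/p_ip_j$, hence $M/p_ip_j\in\Div(A)$. But Lemma \ref{N_iM_ifiber} gives $M/p_ip_j\in\Div(B)$, contradicting divisor exclusion (Theorem \ref{thm-sands}). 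With this observation the whole $p_i$-direction case closes immediately, and your structural set-up ($\calj=\{a\in A:\pi_k(a)\in S\}$, the tracking of $S$ and $S'$, etc.) becomes unnecessary.
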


\begin{proof}
Assume, by contradiction, that $\Phi_{M_j}|A$. By Lemma \ref{fibercyc3}, this implies that $\Phi_{M_j}|\calj$. By Lemma \ref{M_kfibering} with $k$ and $j$ interchanged, $\calj$ mod $M_j$ is a union of disjoint $M_j$-fibers in the $p_i$ and $p_k$ directions. Suppose first that there exists an $M_j$-fiber in the $p_i$ direction in $\calj$, with $\bbJ^{M_j}_{M_j/p_i}[a_j]=p_j\phi(p_i)$ for some $a_j\in\calj$. Since $p_j<p_i$ by Lemma \ref{N_iM_ifiber}, it follows that $M/p_ip_j\in \Div(A)$; but this also contradicts Lemma \ref{N_iM_ifiber}. 
	
Hence $\bbJ^{M_j}_{M_j/p_k}[a_j]=p_j\phi(p_k)$ for all $a_j\in\calj$. In particular, $|\calj\cap\Pi(a_j,p_i^2)|\geq p_jp_k$, and by Lemma \ref{planebound}, the latter holds with equality. This together with (\ref{Phi_p_i^2|A}) implies that $A\subset \Pi(a_j,p_i)$. 
	
Now, let $a_k\in \calk$. Since $a_k\in \Pi(a_j,p_i)$ and satisfies (\ref{M_kfiberp_i}), there must be an element $a_k'\in\calk\cap \Pi(a_j,p_i^2)$. Since  $a_k'\notin\calj$ by Corollary \ref{F1assumcor} (iii),
we get $|A\cap \Pi(a_j,p_i^2)|>p_jp_k$, contradicting Lemma \ref{planebound}.
\end{proof}

\begin{lemma}\label{Phi_M_jnotA}
Assume (F1), (\ref{Phi_M_kdiv}), (\ref{nonempty}), and (\ref{lastM_kcase}). Then $\Phi_{N_j}\nmid A$. 
\end{lemma}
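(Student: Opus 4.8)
The plan is to show that $\Phi_{N_j}\nmid A$ by a counting argument over a plane $\Pi(a_j,p_i^2)$ for $a_j\in\calj$, mirroring the structure of the preceding lemmas but now exploiting the fibering of $\calj$ on the scale $N_j$ rather than $M_j$. First I would assume for contradiction that $\Phi_{N_j}|A$. By Lemma \ref{fibercyc3} (with $k$ replaced by $j$) this gives $\Phi_{N_j}|\calj$. Next I would record what is already known under the standing assumptions: by Lemma \ref{Phi_N_inotAlemma} we have $\Phi_{M_j}\nmid A$, hence $\Phi_{M_j}|B$; and by (\ref{Phi_N_inotA}) (Proposition \ref{initialstatement}) $\Phi_{N_i}\nmid A$, while (\ref{Phi_p_i^2|A}) gives $\Phi_{p_i^2}|A$. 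I would also use Lemma \ref{N_iM_ifiber}, which tells us $p_j=\min_\nu p_\nu<p_i$ and $M/p_ip_j\in\Div(B)$, hence $M/p_ip_j\notin\Div(A)$, and that $\cali\cap\calj=\emptyset$.

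The key step is to analyze $\calj$ on the scale $N_j$. Since $M/p_j\notin\Div(B)$ (it is a ``top difference'' one step down, and we have $M/p_j\in\Div(A)$ whenever $\calj\neq\emptyset$, so by divisor exclusion $M/p_j\notin\Div(B)$; more carefully, $\Phi_{N_j}|A$ means we should check $\bbA^{N_j}_{N_j}[x]$ is binary, which follows since $M/p_j\notin\Div_{N_j}$ reductions — here I would instead argue directly that for $a_j\in\calj$, $\bbJ^{N_j}_{N_j}[a_j]=p_j$ using $M/p_j\notin\Div(\calj)$, which holds because $p_j\parallel|A|$ forces $\calj$ to have no $M/p_j$ difference internal to a single would-be double-fiber). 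Concretely: since $\Phi_{N_j}\Phi_{M_j}|\calj$ and the binary condition (\ref{binary-cond-2}) holds with $c_0=p_j$ (because $\calj$ consists of $M$-fibers in the $p_j$ direction, so $\bbJ^{N_j}_{N_j}[x]\in\{0,p_j\}$), Corollary \ref{doublediv} applies to $\calj$ mod $N_j$: it is a union of pairwise disjoint $N_j$-fibers in the $p_i$ and $p_k$ directions, each of multiplicity $p_j$. An $N_j$-fiber in the $p_i$ direction contributes $M/p_ip_j\in\Div(A)$, which contradicts Lemma \ref{N_iM_ifiber} (since $M/p_ip_j\in\Div(B)$). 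Hence every $a_j\in\calj$ lies on an $N_j$-fiber in the $p_k$ direction, giving $\bbJ^{N_j}_{N_j/p_k}[a_j]=p_j\phi(p_k)$, so $|\calj\cap\Pi(a_j,p_i^2)|\geq p_jp_k$; by Lemma \ref{planebound} this is an equality, and combined with $\Phi_{p_i^2}|A$ via Corollary \ref{planegrid} we get $A\subset\Pi(a_j,p_i)$.

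Now I would derive the contradiction exactly as in the proof of Lemma \ref{Phi_N_inotAlemma}: take $a_k\in\calk$; since $A\subset\Pi(a_j,p_i)$ we have $a_k\in\Pi(a_j,p_i)$, and $a_k$ satisfies (\ref{M_kfiberp_i}), so its $M_k$-fiber in the $p_i$ direction forces an element $a_k'\in\calk\cap\Pi(a_j,p_i^2)$. By Corollary \ref{F1assumcor} (iii), $a_k'\notin\calj$, so $\Pi(a_j,p_i^2)$ contains the $p_jp_k$ points of $\calj\cap\Pi(a_j,p_i^2)$ together with the extra point $a_k'$, giving $|A\cap\Pi(a_j,p_i^2)|>p_jp_k$, contradicting Lemma \ref{planebound}. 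This completes the proof.

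The main obstacle I anticipate is verifying cleanly that the binary condition (\ref{binary-cond-2}) holds for $\calj$ on scale $N_j$ with $c_0=p_j$, and more precisely that $\bbJ^{N_j}_{N_j}[x]\in\{0,p_j\}$ rather than taking larger values — this requires knowing $M/p_j\notin\Div(\calj)$, i.e. that $\calj$ cannot contain two $M$-fibers in the $p_j$ direction at distance $M/p_j$. This should follow from $p_j\parallel|A|$ together with $\Phi_M|A$ and the plane bound, but the cleanest route is probably to invoke Lemma \ref{planebound} directly: two such fibers would give $|A\cap\Pi(a_j,p_i^2p_k^2)|\geq 2p_j>p_j$ in a line, which is fine, so actually the relevant point is that an $M/p_j$ difference in $\calj$ combined with $\Phi_{M_j}|B$ and Tijdeman-type dilation would create an incompatible divisor — I would instead simply note $M/p_j\in\Div(A)$ already (whenever $\calj\neq\emptyset$), hence $M/p_j\notin\Div(B)$, and that the multiplicity statement $\bbJ^{N_j}_{N_j}[x]\in\{0,p_j\}$ is immediate from $\calj$ being a disjoint union of $M$-fibers in the $p_j$ direction (each such fiber projects mod $N_j$ to $p_j$ copies of a single point). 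With that observation the rest is routine, so I expect this lemma to be among the easier ones in the section.
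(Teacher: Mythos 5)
Your argument has a fatal gap at its central step. To apply Corollary \ref{doublediv} to $\calj$ on the scale $N_j$ you assert $\Phi_{N_j}\Phi_{M_j}\mid\calj$, but by Lemma \ref{fibercyc3} (with $k$ replaced by $j$) we have $\Phi_{M_j}\mid\calj$ if and only if $\Phi_{M_j}\mid A$, and the immediately preceding Lemma \ref{Phi_N_inotAlemma} — which you yourself cite — says precisely that $\Phi_{M_j}\nmid A$ under the standing assumptions. So the hypothesis of Corollary \ref{doublediv} is not merely unverified, it is false, and the conclusion that $\calj$ mod $N_j$ decomposes into disjoint $N_j$-fibers in the $p_i$ and $p_k$ directions is unjustified. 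With only $\Phi_{N_j}\mid\calj$ available, you must also contend with $N_j$-fibering of $\calj$ in the $p_j$ direction (this is \emph{not} excluded by Corollary \ref{subset}, since Lemma \ref{N_iM_ifiber} gives $p_j=\min_\nu p_\nu$, so full lines $\ell_j\subset A$ are compatible with the plane bound) and with unfibered $D(N_j)$-grids as in Lemmas \ref{Kunfibered} and \ref{evenunfiberedN_igrid}; your argument addresses neither. A secondary, smaller issue: your justification of the binary condition $\bbJ^{N_j}_{N_j}[x]\in\{0,p_j\}$ ignores the possibility that two disjoint $M$-fibers of $\calj$ in the $p_j$ direction lie in one line $\ell_j$ and project to the same point mod $N_j$, giving multiplicity $2p_j$; the divisor $M/p_j$ lies in $\Div(A)$ anyway, so divisor exclusion does not rule this out. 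The final plane-count contradiction (forcing $A\subset\Pi(a_j,p_i)$ via $\Phi_{p_i^2}\mid A$ and then colliding with (\ref{M_kfiberp_i}) and Corollary \ref{F1assumcor} (iii)) is fine, but it never gets off the ground because the structural claim feeding it is unsupported.

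For comparison, the paper's proof avoids all of this with a short cuboid argument: since $a_j\in\calj$ satisfies $a_j\notin\cali\cup\calk$ (Corollary \ref{F1assumcor} (iii) and Lemma \ref{N_iM_ifiber}), there are points $x_i,x_k\notin A$ with $(a_j-x_\nu,M)=M/p_\nu$ and $\bbJ^{N_j}_{N_j}[x_\nu]=0$; evaluating $\calj$ on an $N_j$-cuboid with one face through $a_j,x_i,x_k$ and the opposite face in $\Pi(a_k,p_j)$ for some $a_k\in\calk$, the divisibility $\Phi_{N_j}\mid\calj$ forces $\calj\cap\Pi(a_k,p_j)\neq\emptyset$, hence $\calj\cap\Pi(a_k,p_j^2)\neq\emptyset$, contradicting (\ref{M_kfiber}). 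If you want to salvage your route, you would need a legitimate source for the fibering of $\calj$ on $D(N_j)$-grids that does not pass through $\Phi_{M_j}\mid\calj$, and you would still have to eliminate the $p_j$-direction and unfibered alternatives.
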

\begin{proof}
Let $a_j\in\calj$. By Corollary \ref{F1assumcor} (iii) and 
Lemma \ref{N_iM_ifiber}, we have $a_j\notin\cali\cup\calk$.
Hence $A\cap\Lambda(a_j,D(M))\subset \calj$, and there exist 
$x_i,x_k\in\ZZ_M\setminus A$ such that $(a_j-x_\nu,M)=M/p_\nu$ and $\bbJ^{N_j}_{N_j}[a_\nu]=0$ for $\nu\in \{i,k\}$. 
	
We have $\Phi_{N_j}|A$ if and only if $\Phi_{N_j}| \calj$. Consider the evaluation of $\calj$ on an $N_j$-cuboid with one face containing vertices at $a_j, x_i$, and $x_k$, and the other face in $\Pi(a_k,p_j)$ for some $a_k\in\calk$. In order to balance that cuboid, $\calj\cap\Pi(a_k,p_j)$ must be nonempty. But then $\calj\cap\Pi(a_k,p_j^2)$ is nonempty, contradicting (\ref{M_kfiber}).
\end{proof}

\begin{proof}[Proof of Proposition \ref{emptyset-F1}]
By Lemmas \ref{Phi_N_inotAlemma} and \ref{Phi_M_jnotA}, we have $\Phi_{N_j}\Phi_{M_j}|B$. It follows from Corollary \ref{doublediv}, with $c_0=1$ since $M/p_j\in\Div(A)$, that $B$ is a union of pairwise disjoint $N_j$-fibers in the $p_i$ and $p_k$ directions. 
	
Let $\nu\in\{i,k\}$, and suppose that 
$\{b_1,\dots,b_{p_\nu}\}$ is a $N_j$-fiber in the $p_\nu$ direction. Since $M/p_\nu\in\Div(A)$, we must have $(b_\mu-b_{\mu'},M)=M/p_jp_\nu$ for all $\mu\neq\mu'$. 
However, this is not possible, since $p_j=\min_\nu p_\nu$ by Lemma \ref{N_iM_ifiber}. This contradiction concludes the proof of the proposition. 
\end{proof}

\begin{proof}[Proof of Proposition \ref{initialstatement}]
	
The proof is divided into several steps. In each of the following claims, the assumptions of the proposition are assumed to hold. We will also assume, by contradiction, that (\ref{Phi_N_inotA}) does not hold, so that $\Phi_{N_i}|A$. By Lemma \ref{fibercyc3}, this implies that
$$
\Phi_{N_i}|\cali.
$$
	
\medskip
\noindent
{\bf Claim 1.} {\it Let $a_j\in \calj$. Then:
\begin{itemize}
\item[(i)] There must exist $a_i\in \cali\cap\Pi(a_j,p_j)$ such that
\begin{equation}\label{longaline}
\ell_i(a_i)\subset A, \hbox{ and } a_i\not\in\calj.
\end{equation}
\item[(ii)] Consequently, $p_i=\min_\nu p_\nu$. 
\end{itemize}
} 
\begin{proof}
Let $a_i\in (\cali\setminus \calj) \cap\Pi(a_j,p_j)$, as provided by 
Corollary \ref{F1assumcor} (iv).
By Proposition \ref{no unfibered grids}, the grid $\Lambda(a_i,D(N_i))$ must be $N_i$-fibered in some direction. However, fibering in the $p_\nu$ direction for some $\nu\in\{j,k\}$ would imply that $a_i*F_i*F_\nu\subset A$; for $\nu=j$, this is impossible by the assumption that $a_i\in \cali\setminus \calj$, and for $\nu=k$, this is prohibited by Corollary \ref{F1assumcor} (iii).
		
It follows that any $a_i\in(\cali\setminus \calj)\cap\Pi(a_j,p_j)$ must belong to an $N_i$-fiber in the $p_i$ direction, so that (\ref{longaline}) holds. This, moreover, implies that $|A\cap\ell_i(a_i)|=p_i^2$.
If $p_i>p_j$ or $p_i>p_k$, this contradicts Lemma \ref{planebound}.
\end{proof}

Let $a_j\in\calj$, and let $a_i\in \cali\cap\Pi(a_j,p_j)$ satisfy (\ref{longaline}).  Replacing $a_j$ by another element of $a_j*F_j$, and $a_i$ by another element of $\ell_i(a_i)$, if necessary, we may assume without loss of generality that $M_k|a_i-a_j$. 
	
\medskip
\noindent
{\bf Claim 2.} {\it With $a_i$ and $a_k$ as above, we have 
\begin{equation}\label{p_j^2planedif}
(a_i-a_j,M)=M/p_k^2.
\end{equation}
}
\begin{proof}
Assume for contradiction that  (\ref{p_j^2planedif}) fails, so that $(a_i-a_j,M)=M/p_k$. Then $a_i\in\Lambda:=\Lambda(a_j,D(M))$. The set $A\cap\Lambda$ contains no elements of $\calk$ by Corollary \ref{F1assumcor} (iii), hence it cannot be $M$-fibered in the $p_k$ direction. Furthermore, $A\cap \Lambda$ cannot be $M$-fibered in the $p_j$ direction, since $a_i\not\in\calj$. Thus it must be $M$-fibered in the $p_i$ direction, so that
\begin{equation}\label{planebound-met}
a_j*F_i*F_j\subset A,
\end{equation}
and $|A\cap\Pi(a_j,p_k)|>p_ip_j$ since $a_i\not\in a_j*F_i*F_j$.
It follows by Corollary \ref{planegrid} that $\Phi_{p_k^2}|A$ and $A\subset\Pi(a_j,p_k)$. However, this implies that $\calk\subset \Pi(a_j,p_k)$, hence $\calk\cap \Pi(a_j,p_k^2)\neq\emptyset$. Thus $\Pi(a_j,p_k^2)$ must contain the $p_ip_j$ points in (\ref{planebound-met}), and at least one additional point of $\calk$ which, by Corollary \ref{F1assumcor} (iii), does not belong to $a_j*F_i*F_j$. This violates Lemma \ref{planebound}.
\end{proof}
	
Claim 2, together with (\ref{nonempty}), yields the following (partial) list of divisors in $A$:
\begin{equation}\label{list}
M/p_i,M/p_j,M/p_k, M/p_i^2,M/p_k^2,M/p_ip_k^2,M/p_i^2p_k^2\in \Div(A)
\end{equation}

\medskip
\noindent
{\bf Claim 3.} {\it $B$ is $N_k$-fibered in the $p_j$ direction, with $\bbB_{M/p_jp_k}[b]=\phi(p_j)$ for all $b\in B$.}
\begin{proof}
We claim that 
\begin{equation}\label{M/pipknotA}
M/p_ip_k\notin \Div(A)
\end{equation}
Indeed, if (\ref{M/pipknotA}) were not true, then this together with (\ref{list}) would imply
$$
|B\cap \Pi(b,p_j^2)|\leq p_i \text{ for all } b\in B.
$$
By Corollary \ref{F1assumcor} (v),
$$
|B|\leq p_ip_j^2<p_ip_jp_k,
$$
a contradiction.  
		
Next, let $a_k\in \calk$. By (\ref{lastM_kcase}), $a_k$ satisfies (\ref{M_kfiberp_i}), and due to (\ref{M/pipknotA}) we must have
\begin{equation}\label{Kplanestruc}
\bbK_{M/p_ip_k^2}[a_k]=p_k\phi(p_i).
\end{equation}
		
Fix $b\in B$ and $x\in \ZZ_M$ with $(x-a_k,M)=M/p_j$, and consider the saturating set $A_{x,b}$. Recall from Corollary \ref{F1assumcor} (ii) that $A\cap \Pi(a_k,p_j)\subset \Pi(a_k,p_j^2)$, hence by (\ref{bispan}) $A_{x,b}\subset \Pi(a_k,p_j^2)$. Together with (\ref{M/pipknotA}) and (\ref{Kplanestruc}), the latter implies 
\begin{align*}
1&=\langle\bbA[x],\bbB[b]\rangle\\
&=\frac{1}{\phi(p_jp_k)}\bbA_{M/p_jp_k}[x|\Pi(a_k,p_j^2)]\bbB_{M/p_jp_k}[b]+\frac{1}{\phi(p_ip_jp_k^2)}\bbA_{M/p_ip_jp_k^2}[x|\Pi(a_k,p_j^2)]\bbB_{M/p_ip_jp_k^2}[b]\\
& =\frac{1}{\phi(p_j)}\bbB_{M/p_jp_k}[b]+\frac{1}{\phi(p_jp_k)}\bbB_{M/p_ip_jp_k^2}[b]\\
&=\frac{1}{\phi(p_j)}\sum_{(y-b,M)=M/p_j}\Big(\bbB_{M/p_k}[y]+\frac{1}{\phi(p_k)}\bbB_{M/p_ip_k^2}[y]\Big)
\end{align*}

Since $M/p_ip_k^2\in\Div(A)$ by (\ref{list}), Lemma \ref{triangles} implies that
\begin{equation}\label{notwodiv}
\bbB_{M/p_k}[y]\cdot\bbB_{M/p_ip_k^2}[y]=0, \text{ for all } y \in\ZZ_M.
\end{equation}
On the other hand, again by (\ref{list}), we have 
\begin{equation}\label{M/p_ip_k^2Bbound}
\bbB_{M/p_ip_k^2}[y]\leq \phi(p_i) < \phi(p_k),
\end{equation}
where at the last step we used Claim 1 (ii). 
Given (\ref{notwodiv}) and (\ref{M/p_ip_k^2Bbound}), the only way to saturate
$\langle\bbA[x],\bbB[b]\rangle$ is to have $\bbB_{M/p_jp_k}[b]=\phi(p_j)$ as claimed. 
\end{proof}

At this point, it may be useful to pause and consider the geometric meaning of what we have proved so far. Suppose that $0\in A\cap B$, with $0\in\calj$. By \cite[Lemma 8.4]{LaLo3}, the grid $\Lambda:= \Lambda(0,D(M))$ must be tiled by fibers in at most 2 directions. However, the subgrid $F_j*F_k$ cannot be tiled solely by fibers of $\calj$, since by Claim 3 each such fiber would tile $p_j$ fibers of $F_j*F_k$ and $p_k$ is not divisible by $p_j$. Therefore $\Sigma_A(\Lambda)\subset\cali\cup\calj$, with $(\cali\setminus\calj)\cap\Sigma_A(\Lambda)\neq\emptyset$. We will see that this forces $B$ to have very strong fibering properties, which eventually become incompatible with each other.

We now return to the proof of the proposition.

\medskip
\noindent
{\bf Claim 4.} {\it We have 
$$
\Phi_{N_j}\nmid A .
$$
}
\begin{proof}
Assume by contradiction that $\Phi_{N_j}|A$. 
By Lemma \ref{fibercyc3}, $\Phi_{N_j}|\calj$. We have $p_j>p_i$ by Claim 1, so that by Lemma \ref{fiberfibering-new} (ii), $\calj$ must be $N_j$-fibered on each $D(N_j)$-grid in one of the $p_i$ and $p_k$ directions. However, Claim 3 implies that $M/p_jp_k\in\Div(B)$. Hence $\calj$ is $N_j$-fibered in the $p_i$ direction, and
$\calj\subset \cali$. We prove that this is not allowed. 

Let $a_j\in \calj$, so that $a_j*F_i*F_j\subset A$. By Lemma \ref{planebound}, $A\cap\Pi(a_j,p_k^2)=a_j*F_i*F_j$. If we had $\Phi_{p_k^2}| A$, then $A$ would be contained in $\Pi(a_j,p_k)$; this, however, contradicts the conclusion of Claim 2. It follows that $\Phi_{p_k}|A$, $\Phi_{p_k^2}|B$, and 
\begin{equation}\label{Ap_kplane}
A\cap\Pi(a_j,p_k)=a_j*F_i*F_j.
\end{equation}

Let $x\in \ZZ_M$ with $(x-a_j,M)=M/p_k$ and consider the saturating set $A_{x,b_0}$, where $b_0\in B$ is arbitrary. By (\ref{Ap_kplane}), for every $y\in \ZZ_M$ with $(y-b_0,M)=M/p_k$ we have 
$$
1=\bbB_M[y]+\bbB_{M/p_i}[y]+\bbB_{M/p_j}[y]+\bbB_{M/p_ip_j}[y].
$$
This implies that $B\cap\Lambda(b_0,D(M))=\{b_0,b_1,b_2,\ldots, b_{p_k-1}\}$, where 
$p_k\parallel b_\mu-b_{\mu'}$ for all $\mu,\mu'\in \{0,1,\dots,p_k-1\}, \mu\neq\mu'$. On the other hand, taking Claim 3 into account, this $p_k$-tuple of elements $b_0,\ldots, b_{p_k-1}$ can be grouped into pairwise disjoint $N_k$-fibers in the $p_j$ direction, each of cardinality $p_j$. This implies $p_k$ is divisible by $p_j$, which is not allowed. 
\end{proof}

\medskip
\noindent
{\bf Claim 5.} {\it Given $a_i\in \cali$ satisfying (\ref{longaline}), we have $\bbI^{M_i}_{M_i/p_\mu}[a_i]=0$ for $\mu\in \{j,k\}$.
}
\begin{proof}
Let $a_i$ satisfy (\ref{longaline}). Let $b,b'\in B$ with $(b-b',M)=M/p_jp_k$, as follows from Claim 3. Let $y,y'\in \ZZ_M\setminus B$ with $(b-y,M)=(b'-y',M)=M/p_k$, $(b-y',M)=(b'-y,M)=M/p_j$, and consider the saturating set $B_{y,a_i}$. Then by (\ref{bispan})
$$
B_{y,a_i}\subset\ell_i(b)\cup\ell_i(y)\cup\ell_i(b')\cup\ell_i(y').
$$
If $B_{y,a_i}\cap(\ell_i(b)\cup\ell_i(b'))$ were nonempty, then $\{M/p_i,M/p_i^2\}\cap \Div(B)$ would be nonempty, contradicting (\ref{list}). Thus $B_{y,a_i}\subset(\ell_i(y)\cup \ell_i(y'))$. It follows that $\{M/p^\delta_ip_j,M/p^\delta_ip_k\}\subset \Div(B)$ for some $\delta\in\{1,2\}$. By (\ref{longaline}), in both cases we get $\bbI^{M_i}_{M_i/p_\mu}[a_i]=0$ for $\mu\in \{j,k\}$.	 
\end{proof}

The next three claims are identical to Claims 7, 8, and 9 in the proof of Proposition 9.14 of \cite{LaLo2}. The proofs are exactly the same as in \cite{LaLo2}, and are therefore omitted. The only difference is that, in the proof of Claim 6, we have to start by choosing $a_i$ satisfying (\ref{longaline}). 
	
\medskip
\noindent
{\bf Claim 6.} {\it We have $\Phi_{M_i/p_j}\Phi_{M_i/p_k}|B$.}

\medskip
\noindent
{\bf Claim 7.} {\it For each $\mu\in\{j,k\}$, $B$ is $M_i$-fibered in the $p_\mu$ direction, with
$\bbB^{M_i}_{M_i/p_\mu}[b]=\phi(p_\mu)$ for each $b\in B$.
}
	
\medskip
\noindent
{\bf Claim 8.} {\it $B$ is $N_j$-fibered in the $p_i$ direction, with
$\bbB^{M}_{M/p_ip_j}[b]=\phi(p_i)$ for each $b\in B$.
}
	
We are now in a position to finish the proof of Proposition \ref{initialstatement}. Fix $b\in B$. By Claims 1 and 7, $\bbB^{M_i}_{M_i}[b]=1$ and $\bbB^{M_i}_{M_i/p_j}[b]=\phi(p_j)$. Thus
$$
|B\cap \Lambda(b,M_i/p_j)|=p_j \hbox{ for all }b\in B.
$$
On the other hand, we can write $\Lambda(b,M_i/p_j)$ as a union of pairwise disjoint grids $\Lambda(y_\nu, M/p_ip_j)$ for an appropriate choice of $y_\nu$. By Claim 8, each such grid contains either 0 or $p_i$ elements of $B$. But this implies that $p_i$ divides $p_j$, a contradiction.
%
%
\end{proof}



\subsection{Proof of Theorem \ref{fibered-mainthm} (II\,a), 
Case (F2)}\label{caseF2}


We recall the assumptions and our main result in this case.

\medskip\noindent
\textbf{Assumption (F2):} We have $A\oplus B=\ZZ_M$, where $M=p_i^{2}p_j^{2}p_k^{2}$. Furthermore, $|A|=|B|=p_ip_jp_k$, $\Phi_M|A$, $A$ is fibered on $D(M)$-grids, (\ref{emptyint}) holds, and $\Phi_{M_\nu}\nmid A$ for all $\nu\in \{i,j,k\}$.

\begin{proposition}\label{emptyset-F2}
	Assume that (F2) holds. Then one of the sets $\cali,\calj,\calk$ is empty.
\end{proposition}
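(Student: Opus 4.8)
The proof of Proposition \ref{emptyset-F2} should be short, in sharp contrast to the case (F1). The key observation, as already flagged in the outline after the statement of Assumption (F2), is that the divisibility condition $\Phi_{M_\nu}\nmid A$ for all $\nu\in\{i,j,k\}$ places us squarely in the setting of Lemma \ref{fibercyc3}: applying it with $\alpha_\nu\in\{1,2\}$ and with each permutation of the indices, we get that $\Phi_{M_\nu}\mid B$ for all $\nu\in\{i,j,k\}$, and, via Lemma \ref{fibercyc3} again, that $\cali,\calj,\calk\neq\emptyset$ is equivalent to certain fibering patterns inside $A$. So first I would record that $\Phi_{M_i}\Phi_{M_j}\Phi_{M_k}\mid B$.

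The second step is to extract very strong fibering properties of $B$ from these three cyclotomic divisibilities. Since each $M_\nu=M/p_\nu^2$ has only two distinct prime factors, Lemma \ref{2d-cyclo} (in the form invoked just before Lemma \ref{M_kfibering}) applies on each scale $M_\nu$: because $|B|=p_ip_jp_k$ saturates the plane bound of Lemma \ref{planebound} in every plane $\Pi(b,p_\nu^2)$, the multiplicity $\bbB^{M_\nu}_{M_\nu}[b]$ is forced to be constant, equal to $p_\nu$, for all $b\in B$. Then $B$ is a disjoint union of $M_\nu$-fibers in the other two directions, for \emph{each} $\nu$ simultaneously. I would now argue, exactly along the lines of Lemma \ref{M_kfibering} applied to $B$, that for each $\nu$ and each $b\in B$, the plane $\Pi(b,p_\nu^2)$ is tiled by $M_\nu$-fibers of $B$ in a single one of the two remaining directions, and moreover $|B\cap\Pi(b,p_\nu^2)|$ takes the maximal value $p_\nu\cdot(\text{one of the other two primes})$. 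Combining the constraints for the three values of $\nu$ gives a rigid combinatorial structure on $B$; in fact one should be able to show that $B$ itself must be $M$-fibered in some direction, or at least that $\{m: D(M)\mid m\mid M\}\cap\Div(B)$ is forced to contain most top differences, which is in direct conflict with $\{D(M)\mid m\mid M\}\cap\Div(B)=\{M\}$ that would be needed, or — more directly — conflicts with the nonemptiness of all three of $\cali,\calj,\calk$ via Lemma \ref{fiberedstructure}.

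The third step is to derive the contradiction from $\cali,\calj,\calk$ all being nonempty. Pick $a_i\in\cali$, $a_j\in\calj$, $a_k\in\calk$. By (\ref{emptyint}) no element lies in all three, so by Lemma \ref{fiberedstructure} any element in the intersection of two of them forces a full $p_\nu^2$-plane structure in $A$, which in turn (via Lemma \ref{planebound} and Corollary \ref{planegrid}) forces $\Phi_{p_\nu^2}\mid A$ and $A\subset\Pi(a,p_\nu)$ for suitable $a$. Tracking how the $M_\nu$-fibered structure of $B$ (from step 2) interacts with the fibers of $A$ in the three directions — specifically, that an $M$-fiber of $A$ in the $p_\nu$ direction together with an $M_\mu$-fiber of $B$ ($\mu\neq\nu$) produces a divisor of the form $M/p_\nu p_\mu^2$ or $M/p_\nu^2 p_\mu^2$ in both $\Div(A)$ and $\Div(B)$ — yields a violation of divisor exclusion (Theorem \ref{thm-sands}). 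The cleanest route is probably: use $\Phi_{M_k}\mid B$ to write $B$ mod $M_k$ as disjoint $M_k$-fibers in the $p_i,p_j$ directions, observe that a fiber $a_k*F_k\subset A$ then forces $M/p_k p_i^2$ or $M/p_k p_j^2$ (whichever direction $B$ is fibered in, in the relevant plane) to lie in $\Div(A)$; but that same difference lies in $\Div(B)$ by the $M_k$-fibered structure of $B$, contradiction — unless $\calk=\emptyset$.

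\textbf{Main obstacle.} The delicate point is bookkeeping the orientations: $B$ being $M_\nu$-fibered on each scale $\nu$ does not a priori fix \emph{which} of the two remaining directions is used in each plane, and one must ensure that, no matter the choice, at least one of $\cali,\calj,\calk$ is forced empty. I expect this to require a short case analysis on the prime ordering (which of $p_i,p_j,p_k$ is smallest), as is typical in these arguments, using Lemma \ref{planebound} to rule out the configurations where the fiber directions of $A$ and $B$ would be compatible. This is exactly the sort of step where Lemma \ref{Knotunfiberedunstructured} and the enhanced divisor exclusion of Lemma \ref{triangles} are likely to do the final work; since the constraints here are so much tighter than in (F1) (no room for the $\Phi_{M_\nu}\mid A$ escape), I anticipate the argument closes in a page or so.
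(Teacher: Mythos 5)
Your step 1 is fine ($\Phi_{M_i}\Phi_{M_j}\Phi_{M_k}\mid B$ is immediate under (F2), and Lemma \ref{fibercyc3} does force $\cali,\calj,\calk\neq\emptyset$, so the task is to show (F2) is self-contradictory). The genuine gap is in your step 2. From $\Phi_{M_\nu}\mid B$ alone, Lemma \ref{2d-cyclo} only gives that $B$ mod $M_\nu$ is a non-negative integer combination of $M_\nu$-fibers; to upgrade this to a \emph{disjoint union} of $M_\nu$-fibers one needs constant multiplicity, i.e.\ control of $\bbB^{M_\nu}_{M_\nu}[x]=\bbB_M[x]+\bbB_{M/p_\nu}[x]+\bbB_{M/p_\nu^2}[x]$, and while divisor exclusion kills the middle term, $\bbB_{M/p_\nu^2}[\cdot]$ is a priori unconstrained. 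Your claim that the plane bound forces $\bbB^{M_\nu}_{M_\nu}[b]=p_\nu$ for all $b\in B$ is unjustified and in fact wrong in the relevant situation: the multiplicity $p_\nu$ in Lemma \ref{M_kfibering} comes from the hypothesis that elements of $\calk\subset A$ carry full $M$-fibers in the $p_k$ direction, and no such fibering is assumed (or true) for $B$; in the paper's analysis the corresponding multiplicity for $B$ is $1$. This is precisely why the paper cannot argue as you propose: the bulk of its proof (Lemma \ref{N_JN_KnotA}, Corollary \ref{BN_jfibering}, and above all Proposition \ref{F2Phi_N_inotA} with its five claims, two of them saturating-set arguments imported from \cite{LaLo2}) is devoted to showing $\Phi_{N_\nu}\nmid A$, hence $\Phi_{N_\nu}\Phi_{M_\nu}\mid B$, for all $\nu$; only the pair of cyclotomics together with the binary condition (Corollary \ref{doublediv}, $c_0=1$ since $M/p_\nu\in\Div(A)$) yields the disjoint $N_\nu$-fiber structure of $B$, after which the ordering $p_i<p_j<p_k$ gives a pigeonhole contradiction. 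You have assumed away exactly this main difficulty.

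Your proposed endgame also fails on the divisor bookkeeping. All differences within a single fiber $a_k*F_k\subset A$ equal $M/p_k$; they do not produce $M/p_kp_i^2$ or $M/p_kp_j^2$ in $\Div(A)$. Conversely, if $B$ were $M_k$-fibered in the $p_i$ direction, the resulting differences in $B$ have the form $M/p_ip_k^\delta$ with $\delta\in\{0,1,2\}$, not $M/p_kp_i^2$, so the claimed element of $\Div(A)\cap\Div(B)$ never materializes. The correct final contradiction (in the paper) is different: once $B$ is a disjoint union of $N_i$-fibers in the $p_j$ and $p_k$ directions with multiplicity $1$, the fact that $M/p_j,M/p_k\in\Div(A)$ forces all within-fiber differences to be $M/p_ip_j$ or $M/p_ip_k$, and since $p_i=\min_\nu p_\nu$ one cannot fit $p_j$ (or $p_k$) elements with such pairwise differences, so some pair has difference $M/p_j$ or $M/p_k$, violating divisor exclusion.
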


Assume, for contradiction, that (\ref{nonempty}) holds. Without loss of generality, we may also assume that
\begin{equation}\label{order}
p_i<p_j<p_k.
\end{equation}

\begin{lemma}\label{N_JN_KnotA}
Assume that (F2), (\ref{nonempty}), and (\ref{order}) hold. Then neither $\Phi_{N_j}$ nor $\Phi_{N_k}$ divides $A$.
\end{lemma}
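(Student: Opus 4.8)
\textbf{Plan of the proof of Lemma \ref{N_JN_KnotA}.} The plan is to treat the two cases $\Phi_{N_j}|A$ and $\Phi_{N_k}|A$ in parallel, since the argument is symmetric in $j,k$ once we invoke the ordering (\ref{order}). Suppose first, for contradiction, that $\Phi_{N_k}|A$. By Lemma \ref{fibercyc3} this gives $\Phi_{N_k}|\calk$, so we may regard $\calk$ mod $N_k$ as a multiset of constant multiplicity $p_k$ in $\ZZ_{N_k}$. Since $p_k=\max_\nu p_\nu>\min_\nu p_\nu$, Corollary \ref{subset} (ii) applies: $\calk$ is $N_k$-fibered on each $D(N_k)$-grid in one of the $p_i,p_j$ directions, and in particular $\calk\subset\cali\cup\calj$. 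This already forces strong fibering on lower scales.

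First I would exploit the plane bound. Take $a_k\in\calk$; on the grid $\Lambda(a_k,D(N_k))$ the set $\calk$ is, say, $N_k$-fibered in the $p_i$ direction, so $\bbK^{N_k}_{N_k/p_i}[a_k]=p_k\phi(p_i)$, whence $|A\cap\Pi(a_k,p_j^2)|\geq p_k+p_k\phi(p_i)=p_ip_k$. Combined with Lemma \ref{planebound} this gives $|A\cap\Pi(a_k,p_j^2)|=p_ip_k$ and $A\cap\Pi(a_k,p_j^2)\subset\calk$ — this is exactly the content of Lemma \ref{M_kfibering}, applied here with $k$ in the role of $k$. So the analysis of Lemmas \ref{M_kfibercyc}, \ref{M_kfiber_unif} and their consequences (Corollary \ref{F1assumcor}) would be available verbatim. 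The key point to extract is that, since $\calk\subset\cali\cup\calj$ and fibers in all three directions are present by (\ref{nonempty}), one runs into a divisibility obstruction: on a plane $\Pi(a_\mu,p_\nu)$ the count $p_ip_k$ (or $p_jp_k$) must be written as a non-negative integer combination of $p_i$'s, $p_j$'s and lower-scale fiber lengths, and the arithmetic — with $p_i<p_j<p_k$ pairwise coprime — cannot be met. I would follow the template of Lemmas \ref{M_kfibercyc}--\ref{Phi_M_jnotA}: show the $M_k$-fibers must all point in the same direction, deduce $\Phi_{p^2_\mu}|A$ for the appropriate $\mu$, and then locate an element of $\cali\cap\calj\cap\calk$, contradicting (\ref{emptyint}); or alternatively derive that $B$ must simultaneously be $N_k$-fibered in two incompatible ways.

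The case $\Phi_{N_j}|A$ is handled the same way with $j$ in place of $k$: Lemma \ref{fibercyc3} gives $\Phi_{N_j}|\calj$, and since $p_j>p_i=\min_\nu p_\nu$, Corollary \ref{subset} (ii) again yields that $\calj$ is $N_j$-fibered on each $D(N_j)$-grid in one of the $p_i,p_k$ directions, so $\calj\subset\cali\cup\calk$. One then repeats the plane-bound / divisibility argument. The main obstacle I anticipate is bookkeeping: making sure that in the case $\Phi_{N_j}|A$ the roles of the primes are tracked correctly (here $p_j$ need not be the largest prime appearing in the relevant plane, unlike in the $N_k$ case), and in particular checking that the plane-bound inequalities still close when $p_k>p_j$. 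I expect this to reduce, after the structural reductions, to a short arithmetic contradiction of the form ``$p_\mu \mid p_\nu$ for distinct primes $p_\mu,p_\nu$,'' exactly as in the final step of the proof of Proposition \ref{initialstatement}. A secondary subtlety is the even case: if $p_i=2$ one should confirm that the corner structures excluded by Lemma \ref{evennoecorner} and the diagonal-box structures in Corollary \ref{subset} (ii)'s proof are genuinely ruled out here too, but since we are only claiming non-divisibility and not classifying, Corollary \ref{subset} (ii) already packages everything we need.
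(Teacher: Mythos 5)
Your opening moves coincide with the paper's: from $\Phi_{N_k}\mid A$ you get $\Phi_{N_k}\mid\calk$ by Lemma \ref{fibercyc3}, and Corollary \ref{subset} (ii) (using (\ref{order})) gives that $\calk$ is $N_k$-fibered on every $D(N_k)$-grid in the $p_i$ or $p_j$ direction, so $\calk\subset\cali\cup\calj$; symmetrically for $\Phi_{N_j}\mid A$. But at this point you miss that the proof is already over. An $N_k$-fiber in the $p_i$ or $p_j$ direction has mask polynomial divisible by $\Phi_{M_k}$ (since $M_k\mid N_k$ while $M_k\nmid N_k/p_i$ and $M_k\nmid N_k/p_j$), so the fibering gives $\Phi_{M_k}\mid\calk$, hence $\Phi_{M_k}\mid A$ by Lemma \ref{fibercyc3} --- a direct contradiction with (F2). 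This is exactly how the paper argues: ``$\calj\subset\cali\cup\calk$ would imply $\Phi_{M_j}\mid A$, contradicting (F2),'' and the same one line disposes of both cases. No plane bounds, no appeal to (\ref{emptyint}) or (\ref{nonempty}) beyond the setup, and no analysis of $B$ are needed.

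Instead you propose a long detour modeled on case (F1): applying Lemma \ref{M_kfibering} and, as you put it, the analysis of Lemmas \ref{M_kfibercyc}, \ref{M_kfiber_unif} and Corollary \ref{F1assumcor} ``verbatim.'' This is where the genuine gap lies: all of those statements are proved under the hypothesis $\Phi_{M_k}\mid A$ (assumption (\ref{Phi_M_kdiv}) inside (F1)), which is precisely what (F2) denies, so they are not available here, verbatim or otherwise; under (F2) the structure you have lives on the scale $N_k$, not $M_k$, and $\Phi_{M_k}$ divides $B$ rather than $A$. The remaining steps of your plan (``show the $M_k$-fibers must all point in the same direction, deduce $\Phi_{p_\mu^2}\mid A$, locate an element of $\cali\cap\calj\cap\calk$, or derive that $B$ is fibered in two incompatible ways'') are only announced, not carried out, so the contradiction is never actually reached. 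The one computation you do perform --- $|A\cap\Pi(a_k,p_j^2)|=p_ip_k$ with $A\cap\Pi(a_k,p_j^2)\subset\calk$ from the $N_k$-fibering and Lemma \ref{planebound} --- is correct, but it is superfluous once the cyclotomic contradiction above is observed; to salvage your route you would have to rebuild the (F1) machinery from scratch under (F2), which the proposal does not do.
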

\begin{proof}
By (\ref{order}) and Lemma \ref{fiberfibering-new} (ii), if $\Phi_{N_j}|A$, then $\calj\subset \cali\cup\calk$. However, that would imply $\Phi_{M_j}|A$, contradicting (F2). The same argument holds with $j$ and $k$ interchanged.
\end{proof}

\begin{corollary}\label{BN_jfibering}
Assume that (F2), (\ref{nonempty}), and (\ref{order}) hold.
Then $\Phi_{N_j}\Phi_{M_j}|B$. Consequently, $B$ is $N_j$-fibered in the $p_i$ direction, with 
\begin{equation}\label{BN_jM_jfibering}
\bbB_{M/p_ip_j}[b]=\phi(p_i) \text{ for all } b\in B.
\end{equation}
Moreover, we have $\Phi_{p_i}|A$ and $\Phi_{p_i^2}|B$.
\end{corollary}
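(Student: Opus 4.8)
\textbf{Proof strategy for Corollary \ref{BN_jfibering}.}
The plan is to extract the divisibility $\Phi_{N_j}\Phi_{M_j}\mid B$ from Lemma \ref{N_JN_KnotA} together with the general fact that $\Phi_s$ divides exactly one of $A$ and $B$ for each prime power (and each $s\mid M$) appearing in (\ref{poly-e2}). Since $\Phi_{N_j}\nmid A$ by Lemma \ref{N_JN_KnotA}, we get $\Phi_{N_j}\mid B$. For $\Phi_{M_j}$, recall $M_j=M/p_j^2$; since the assumption (F2) says $\Phi_{M_j}\nmid A$, irreducibility forces $\Phi_{M_j}\mid B$. So $\Phi_{N_j}\Phi_{M_j}\mid B$.

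Next I would feed this into the 2-prime fibering machinery. Because $N_j=M/p_j$ still has all three prime factors, I cannot apply Corollary \ref{doublediv} directly to $N_j$; instead I would work modulo $N_j$ and observe that $N_j/p_j^2 = M_j/p_j$ — wait, more carefully: $N_j = M/p_j$, so $N_j$ has $p_j$ to the first power, and $N_j/p_j = M/p_j^2 = M_j$. Thus on the scale $N_j$, the relevant ``doubled'' cyclotomic pair is $\Phi_{N_j}\Phi_{M_j}$, and $M_j = N_j/p_j$ plays the role of ``$N_i$'' in Corollary \ref{doublediv} with the excluded prime being $p_j$. I would check the binary condition (\ref{binary-cond-2}): since $M/p_j\in\Div(A)$ (indeed $M/p_j=N_j\in\Div(A)$ because $\cali,\calj,\calk$ all nonempty forces plenty of fibers, and in any case $M/p_j\notin\Div(B)$ would follow, but more simply $\Phi_{M_j}\nmid A$ combined with $\Phi_{N_j}\nmid A$ and Corollary \ref{doublediv}'s parenthetical remark gives $c_0=1$ provided $M/p_j\notin\Div(B)$). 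Actually the cleanest route: apply Corollary \ref{doublediv} in the form stated for the excluded prime $p_j$, using that $\Phi_{N_j}\nmid A$ implies $B$ mod $N_j$ has $\bbB^{N_j}_{N_j}\in\{0,1\}$ (multiplicity bounded by $1$ since no $M/p_j$-difference in $B$), hence $c_0=1$. Corollary \ref{doublediv} then says $B$ is a union of pairwise disjoint $N_j$-fibers in the $p_i$ and $p_k$ directions. To pin down the direction as $p_i$, I would invoke (\ref{order}): an $N_j$-fiber in the $p_k$ direction inside $B$ would need $M/p_jp_k\notin\Div(A)$, but $\calk\neq\emptyset$ together with the $M$-fiber structure of $\calk$ and $p_k>p_j$ means every $M$-fiber in $\calk$ (or its neighbors in other directions) contributes $M/p_jp_k$ as a difference in $A$ — I'd spell out the contradiction exactly as in the analogous step of Lemma \ref{N_iM_ifiber} or the proof of Proposition \ref{emptyset-F1}. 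This yields that $B$ is $N_j$-fibered in the $p_i$ direction, and reading off the multiplicity gives (\ref{BN_jM_jfibering}).

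Finally, for $\Phi_{p_i}\mid A$ and $\Phi_{p_i^2}\mid B$: from (\ref{BN_jM_jfibering}), every $b\in B$ has $\bbB_{M/p_ip_j}[b]=\phi(p_i)$, which combined with $|B|=p_ip_jp_k$ and the plane bound (Lemma \ref{planebound}) forces $|B\cap\Pi(b,p_i^{\cdot})|$ to be large enough that Corollary \ref{planegrid} applies and gives $\Phi_{p_i^2}\mid B$ (or $\Phi_{p_i}\mid B$, which I'd need to exclude — here the assumption $\Phi_M\mid A$ and the fibering of $\calk$ should force the $p_i^2$ case, parallel to (\ref{Phi_p_i^2|A}) but for $B$). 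Then $\Phi_{p_i^2}\mid B$ means $\Phi_{p_i^2}\nmid A$; and $\Phi_{p_i}$ must divide one of $A,B$. I'd argue $\Phi_{p_i}\nmid B$: if both $\Phi_{p_i}$ and $\Phi_{p_i^2}$ divided $B$ then $p_i^2\mid\Phi_{p_i}(1)\Phi_{p_i^2}(1)\mid|B|=p_ip_jp_k$, impossible. Hence $\Phi_{p_i}\mid A$, as claimed.

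\textbf{Main obstacle.} The delicate point is the direction-selection step — showing $B$'s $N_j$-fibers point in the $p_i$ direction rather than $p_k$ — which requires carefully producing the difference $M/p_jp_k$ in $\Div(A)$ from the hypothesis that all of $\cali,\calj,\calk$ are nonempty (using (\ref{order})), and the symmetric cleanup distinguishing $\Phi_{p_i}\mid B$ from $\Phi_{p_i^2}\mid B$. Both are the kind of plane-bound-plus-divisor bookkeeping that appears repeatedly in Section \ref{caseF1}, so the tools are all in place, but the argument must be assembled correctly.
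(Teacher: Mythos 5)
The first half of your plan coincides with the paper's proof: Lemma \ref{N_JN_KnotA} gives $\Phi_{N_j}\mid B$, assumption (F2) gives $\Phi_{M_j}\mid B$, and then Corollary \ref{doublediv} (applied to $B$, with excluded prime $p_j$ and $c_0=1$) shows that $B$ is a disjoint union of $N_j$-fibers in the $p_i$ and $p_k$ directions. One caution on $c_0=1$: the clean reason is simply that $\calj\neq\emptyset$ puts $N_j=M/p_j$ into $\Div(A)$, so $M/p_j\notin\Div(B)$ and $\bbB^{N_j}_{N_j}[x]=\bbB_M[x]+\bbB_{M/p_j}[x]\le 1$; your side remark that ``$\Phi_{N_j}\nmid A$ implies no $M/p_j$-difference in $B$'' is not a valid implication, but since you also state the correct reason this is only a presentation issue.

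The genuine gap is in the direction-selection step, which you yourself flag as the delicate point. Your mechanism is to produce $M/p_jp_k\in\Div(A)$, justified by the claim that every $M$-fiber in $\calk$ ``contributes $M/p_jp_k$ as a difference in $A$''; that claim is false (an $M$-fiber in the $p_k$ direction only produces differences with gcd $M/p_k$), and nothing in the hypotheses hands you $M/p_jp_k\in\Div(A)$ at this stage. The argument that actually closes this step -- and is the one used in the paper, as well as in the analogous Lemma \ref{N_iM_ifiber} you point to -- is a counting one: in a putative $N_j$-fiber of $B$ in the $p_k$ direction, every pairwise difference has gcd $M/p_jp_k$ or $M/p_k$ with $M$; since $\calk\neq\emptyset$ puts $M/p_k$ in $\Div(A)$, all differences would have to equal $M/p_jp_k$, forcing the $p_k$ elements into pairwise distinct residues mod $p_j^2$ inside a single class mod $p_j$, of which there are only $p_j<p_k$ by (\ref{order}) -- impossible (equivalently, pigeonhole would force a forbidden $M/p_k$-difference into $\Div(B)$). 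Separately, your route to $\Phi_{p_i^2}\mid B$ via Lemma \ref{planebound} and Corollary \ref{planegrid} does not get off the ground: (\ref{BN_jM_jfibering}) only yields $|B\cap\Pi(b,p_i)|\ge p_i$, far below the $>p_jp_k$ threshold, and it leaves exactly the $\Phi_{p_i}$-versus-$\Phi_{p_i^2}$ ambiguity you acknowledge. The paper instead reads $\Phi_{p_i^2}\mid B$ directly off the $N_j$-fibering of $B$ in the $p_i$ direction via Lemma \ref{cyclo-grid} (each grid $\Lambda(b,M/p_ip_j)$ meets $B$ in complete fibers at that scale, so $\Phi_{p_i^2}$ divides each restriction). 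Your closing step -- $\Phi_{p_i}\nmid B$ since otherwise $p_i^2$ would divide $|B|=p_ip_jp_k$, hence $\Phi_{p_i}\mid A$ -- is correct and matches the intended reasoning.
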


\begin{proof}
The first part follows from (F2) and Lemma \ref{N_JN_KnotA}. This, in turn, implies by Corollary \ref{doublediv} (with $c_0=1$, since $N_j\in\Div(A)$) that $B$ is a union of pairwise disjoint $N_j$-fibers in the $p_i$ and $p_k$ directions. We also have $p_j<p_k$ by  (\ref{order}), so that having an $N_j$-fiber in the $p_k$ direction in $B$ would imply that $N_j\in\Div(B)$, a contradiction. This proves the fibering claim. Finally, (\ref{BN_jM_jfibering}) and Lemma \ref{cyclo-grid} imply that $\Phi_{p_i^2}|B$ as claimed.
\end{proof}

\begin{proposition}\label{F2Phi_N_inotA}
Assume that (F2), (\ref{nonempty}), and (\ref{order}) hold.
Then $\Phi_{N_i}\nmid A$. 
\end{proposition}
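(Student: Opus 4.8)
\textbf{Proof strategy for Proposition \ref{F2Phi_N_inotA}.}

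The plan is to argue by contradiction: assume $\Phi_{N_i}\mid A$, and derive an incompatibility with the fibering structure forced by Corollary \ref{BN_jfibering}. The first step is to record what $\Phi_{N_i}\mid A$ gives us: by Lemma \ref{fibercyc3}, $\Phi_{N_i}\mid\cali$, so we may view $\cali$ mod $N_i$ as a multiset of constant multiplicity $p_i$ in $\ZZ_{N_i}$. Since $M/p_i\in\Div(A)$ (indeed $\cali\neq\emptyset$), the hypothesis (\ref{binary-cond-2}) holds for $B$ with $c_0=1$, and we already know from Corollary \ref{BN_jfibering} that $B$ is $N_j$-fibered in the $p_i$ direction. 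I would next use Proposition \ref{no unfibered grids}: under (F2) and (\ref{emptyint}), if $\Phi_{N_i}\mid A$ and $\cali$ mod $N_i$ had an unfibered $D(N_i)$-grid, then one of $\cali,\calj,\calk$ would be empty, contradicting (\ref{nonempty}). Hence $\cali$ is $N_i$-fibered on \emph{every} $D(N_i)$-grid. By Corollary \ref{subset}(i) and the ordering $p_i<p_j<p_k$, the fibering cannot be in the $p_k$ direction on any such grid unless $p_k$ is not the largest prime — so actually $\cali$ mod $N_i$ is $N_i$-fibered in the $p_j$ direction on grids where it is not fibered in $p_i$; combined with $\Phi_{M}\mid A$ this should give $\Phi_{N_i}\Phi_{M_i}\mid \cali$ after the appropriate accounting, but more to the point it forces $\cali\subset\calj\cup(\text{something in }p_i)$.

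The core of the argument should then be a counting/divisor contradiction. From Corollary \ref{BN_jfibering} we have $\Phi_{p_i^2}\mid B$, hence $\Phi_{p_i}\nmid A$ and $\Phi_{p_i^2}\nmid A$; in particular $M/p_i^2\notin\Div(A)$ is false—rather, I should be careful: $\Phi_{p_i^2}\mid B$ means $\Phi_{p_i^2}\nmid A$, so $M/p_i^2\notin\Div(A)$? No—$\Phi_{p_i^2}\nmid A$ does not by itself exclude $M/p_i^2$ from $\Div(A)$. The clean route is: since $\Phi_{N_i}\mid A$ forces each $a\in\cali$ to lie in an $N_i$-fiber in the $p_i$ direction or in the $p_j$ direction (by the fibering conclusion above), and $B$ is $N_j$-fibered in the $p_i$ direction with $\bbB_{M/p_ip_j}[b]=\phi(p_i)$ for all $b$, I would look at a grid $\Lambda=\Lambda(0,D(M))$ through a point of $\calj$ and run the splitting analysis of Lemma \ref{2dir} together with Proposition \ref{consistency3}. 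The subgrid $F_i*F_j$ cannot be tiled purely by fibers of $\calj$ (each $\calj$-fiber meeting it would, via the $N_j$-fibering of $B$ in the $p_i$ direction, be tied to $p_i$ fibers, and $p_j$ need not be divisible by $p_i$—this is exactly the obstruction already exploited in Claim 3 and the final paragraph of the proof of Proposition \ref{initialstatement}). So $\Sigma_A(\Lambda)$ must contain a fiber of $\cali\setminus\calj$. Tracking that $\cali$-fiber on scale $N_i$ via the $N_i$-fibering just established, and feeding it into the saturating-set relation $\langle\bbA[x],\bbB[b]\rangle=1$ for a suitable $x$, one extracts strong fibering of $B$ in the $p_j$ and $p_k$ directions on scale $M_i$ (mirroring Claims 6–7 of Proposition \ref{initialstatement}), and finally a divisibility relation forcing $p_i\mid p_j$ or $p_i\mid p_k$, which is impossible.

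\textbf{Main obstacle.} The delicate point is the same one that makes case (F1) hard: after establishing $\Phi_{N_i}\mid A$ and the resulting $N_i$-fibering of $\cali$, I must propagate this through a sequence of saturating-set computations to pin down $B$'s fibering in two directions on scale $M_i$ and in the $p_i$ direction on scale $N_j$, then collide the two. Each individual step is an application of Lemma \ref{triangles}, the plane bound (Lemma \ref{planebound}), and the box-product identity, but keeping the divisor bookkeeping consistent—especially distinguishing which of $M/p_ip_j$, $M/p_ip_k$, $M/p_i^2p_j$, $M/p_i^2p_k$ lie in $\Div(A)$ versus $\Div(B)$—is where the work lies. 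I expect the proof here to be somewhat shorter than that of Proposition \ref{initialstatement}, since (F2) already forbids $\Phi_{M_\nu}\mid A$ for all $\nu$, which removes several branches; the cleanest presentation will likely reuse Claims 5–8 of Proposition \ref{initialstatement} verbatim with $M_k$, $N_k$ replaced by the appropriate $M_j$, $N_j$, and only the combinatorial ordering argument at the end adjusted to the ordering (\ref{order}).
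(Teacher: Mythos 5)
Your overall plan (argue by contradiction, use Lemma \ref{fibercyc3} to pass to $\cali$ mod $N_i$, exploit the $N_j$-fibering of $B$ from Corollary \ref{BN_jfibering}, and eventually recycle the imported claims of Proposition \ref{initialstatement} to collide two fiberings of $B$) is the right general shape, and it matches the paper's strategy in outline. But your key structural step is wrong. You assert, via Corollary \ref{subset}(i), that the $N_i$-fibering of $\cali$ cannot be in the $p_k$ direction and must therefore be in the $p_i$ or $p_j$ direction, concluding ``$\cali\subset\calj\cup\cdots$''. Corollary \ref{subset} does not apply here: it concerns $\cal{N}$-fibering of $\calk$ in the $p_k$ direction under $p_k>\min_\nu p_\nu$, and with the index $i$ in place of $k$ the hypothesis fails because $p_i=\min_\nu p_\nu$ under (\ref{order}). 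The correct exclusion goes the other way: since $\bbB_{M/p_ip_j}[b]=\phi(p_i)$ for all $b$ (Corollary \ref{BN_jfibering}), we have $M/p_ip_j\in\Div(B)$, hence $N_i/p_j\notin\Div_{N_i}(\cali)$, and Lemma \ref{gen_top_div_mis} forces $\cali$ mod $N_i$ to be a union of $N_i$-fibers in the $p_i$ and $p_k$ directions — it is precisely the $p_j$ direction that is forbidden, the opposite of what you claim. Moreover, not all of these fibers can be in the $p_k$ direction (that would give $\cali\subset\calk$, hence $\Phi_{M_i}|A$, contradicting (F2)), so at least one is in the $p_i$ direction and $M/p_i^2\in\Div(A)$; this divisor fact, together with the later observation that $\Phi_{M_i}|B$ makes $B$ $M_i$-fibered in the $p_k$ (and not the $p_j$) direction — proved by exactly the ``$p_i\mid p_j$'' divisibility obstruction you mention — is what then rules out the $p_k$-direction fibers in $\cali$ as well. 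Since all of the subsequent divisor bookkeeping (which of $M/p_ip_j$, $M/p_ip_k$, $M/p_jp_k$, $M/p_i^2$ sit in $\Div(A)$ versus $\Div(B)$) hangs on these directions, your version cannot be repaired by local edits; also your side remark that $\Phi_{p_i^2}\mid B$ implies $\Phi_{p_i}\nmid A$ is a non sequitur.

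Beyond this, the middle of your argument is too vague to certify: you gesture at Lemma \ref{2dir}, Proposition \ref{consistency3}, and ``mirroring Claims 6–7 of Proposition \ref{initialstatement}'', but importing those claims requires first establishing the prerequisite facts (pairwise disjointness of $\cali,\calj,\calk$, i.e.\ $M/p_ip_j,M/p_ip_k,M/p_jp_k\in\Div(B)$, the last of which needs the separate argument that $\Phi_{N_k}\Phi_{M_k}|B$ forces some $b_0$ with $\bbB_{M/p_jp_k}[b_0]=\phi(p_j)$). In the paper the final contradiction is then not a bare ``$p_i\mid p_j$ or $p_i\mid p_k$'' statement but the clash between $B$ being $M_i$-fibered in both the $p_j$ and $p_k$ directions (the imported claims) and the earlier fact that no $b\in B$ lies in an $M_i$-fiber in the $p_j$ direction. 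You would need to fix the fibering directions as above and then carry out these intermediate steps explicitly for the proof to go through.
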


As in Section \ref{caseF2}, we first finish the proof of Proposition \ref{emptyset-F2}, assuming Proposition \ref{F2Phi_N_inotA}.

\begin{proof}[Proof of Proposition \ref{emptyset-F2}]
By (F2) and Proposition \ref{F2Phi_N_inotA}, we have $\Phi_{N_i}\Phi_{M_i}|B$. 
Applying Corollary \ref{doublediv} with $c_0=1$ once more, we see that $B$ is a union of pairwise disjoint $N_i$-fibers in the $p_j$ and $p_k$ directions, so that for every $b\in B$ we must have
$$
\bbB_{M/p_ip_\nu}[b]=\phi(p_\nu) \hbox{ for some }\nu=\nu(b)\in\{j,k\}.
$$ 
By (\ref{order}), this implies $M/p_\nu\in \Div(A)\cap \Div(B)$ for at least one $\nu\in\{j,k\}$, which is a contradiction. 
\end{proof}

\begin{proof}[Proof of Proposition \ref{F2Phi_N_inotA}]
Again, we split the proof into several steps. In each of the following claims, the assumptions of the proposition are assumed to hold. We will also assume,  by contradiction, that $\Phi_{N_i}|A$. By Lemma \ref{fibercyc3}, this implies that $\Phi_{N_i}|\cali$. 

\medskip
\noindent
{\bf Claim 1.} {\it $\cali$ is a union of pairwise disjoint $N_i$-fibers in the $p_i$ and $p_k$ directions. Moreover, $\bbI^{N_i}_{N_i/p_i}[a_i]=\phi(p_i^2)$ must hold for at least one element $a_i\in\cali$, and $M/p_i^2\in \Div(A)$.}
\begin{proof}
By Proposition \ref{no unfibered grids}, $\cali$ mod $N_i$ must be $N_i$-fibered on each $D(N_i)$-grid. 
By (\ref{BN_jM_jfibering}), we have $M/p_ip_j\notin \Div(A)$, therefore $N_i/p_j\notin \Div_{N_i}(\cali)$ and $\cali$ cannot be $N_i$-fibered in the $p_j$ direction on any $D(N_i)$-grid. This implies the first part of the lemma.

If $\cali$ were $N_i$-fibered in the $p_k$ direction, this would imply $\cali\subset\calk$. Then, however, we would have $\Phi_{M_i}|A$, contradicting (F2). Hence at least one $a_i\in\cali$ must belong to an $N_i$-fiber in the $p_i$ direction, as claimed.
\end{proof}

\medskip
\noindent
{\bf Claim 2.} {\it $B$ is $M_i$-fibered in the $p_k$ direction, with 
$\bbB^{M_i}_{M_i/p_k}[b]=\phi(p_k)$ for each $b\in B$. Consequently, 
\begin{equation}\label{f2-Bdivisors}
M/p_ip_k \in \Div(B).
\end{equation}
Moreover:
\begin{itemize}
\item[(i)] There is no element $b\in B$ satisfying $\bbB^{M_i}_{M_i/p_j}[b]=\phi(p_j)$. 
\item[(ii)] $\cali$ is $N_i$-fibered in the $p_i$ direction.
\end{itemize}
}
\begin{proof}
We have $\Phi_{M_i}|B$ by (F2). This means that $B$ satisfies the assumptions of Lemma \ref{M_kfibering}, with $\calk$ replaced by $B$, the $p_k$ and $p_i$ directions interchanged, and with multiplicity 1 (instead of $p_k$) by Claim 1. It follows that $B$ is a union of pairwise disjoint $M_i$-fibers in the $p_j$ and $p_k$ directions.

We now argue as in the proof of Proposition \ref{initialstatement}.
Assume for contradiction that (i) fails, so that some $b\in B$ belongs to an $M_i$-fiber in the $p_j$ direction, with $\bbB^{M_i}_{M_i/p_j}[b]=\phi(p_j)$. 
This means that
$$
|B\cap \Lambda(b,M_i/p_j)|=p_j \hbox{ for all }b\in B.
$$
On the other hand, we can write $\Lambda(b,M_i/p_j)$ as a union of pairwise disjoint $M/p_ip_j$-grids. By (\ref{BN_jM_jfibering}), each such grid contains either 0 or $p_i$ elements of $B$. But then $p_i|p_j$, which is obviously false. This proves (i), and the fibering claim for $B$ follows.
Part (ii) now follows from (\ref{f2-Bdivisors}) and Claim 1. 
\end{proof}

\medskip
\noindent
{\bf Claim 3.} {\it There exists $b_0\in B$ such that $\bbB_{M/p_jp_k}[b_0]=\phi(p_j)$.} 
\begin{proof}
By (F2) and Lemma \ref{N_JN_KnotA}, we have $\Phi_{N_k}\Phi_{M_k}|B$.
It follows from Corollary \ref{doublediv} with $c_0=1$ that $B$ is a union of pairwise disjoint $N_k$-fibers in the $p_i$ and $p_j$ directions, so that for every $b\in B$ we have $\bbB_{M/p_\mu p_k}[b]=\phi(p_\mu)$ for some $\mu\in \{i,j\}$. However, if the latter was true with $\mu=i$ for all $b\in B$, then Claim 2 and the divisibility argument in its proof would show that $p_i|p_k$. This contradiction proves the claim. 
\end{proof}	

We note that (\ref{BN_jM_jfibering}), (\ref{f2-Bdivisors}), and Claim 3 show that $\{M/p_ip_j,M/p_ip_k,M/p_jp_k\}\subset \Div(B)$. Hence $\cali, \calj$ and $\calk$ are pairwise disjoint.

The next two claims are proved in \cite{LaLo2}, Claim 7 and 8 of Proposition 9.14. The proof is identical. 

\medskip
\noindent
{\bf Claim 4.} {\it $\Phi_{M_i/p_j}\Phi_{M_i/p_k}|B$.}

\medskip
\noindent
{\bf Claim 5.} {\it $B$ is $M_i$-fibered in both of the $p_j$ and $p_k$ directions, so that for all $b\in B$ we have
$$
\frac{1}{\phi(p_j)}\bbB^{M_i}_{M_i/p_j}[b]=\frac{1}{\phi(p_k)}\bbB^{M_i}_{M_i/p_k}[b]=1.
$$
}

Since Claim 5 is in direct contradiction with Claim 2 (i), the proposition follows. 
\end{proof}


\subsection{Proof of Theorem \ref{fibered-mainthm}, part (II\,c)}\label{fibered-F3}


In this section, we will work under the following assumption.

\medskip\noindent
\textbf{Assumption (F3):} We have $A\oplus B=\ZZ_M$, where $M=p_i^{2}p_j^{2}p_k^{2}$. Furthermore,  $|A|=|B|=p_ip_jp_k$, $\Phi_M|A$, $A$ is fibered on $D(M)$-grids, $\cali=\emptyset$, (\ref{notallhigh}) holds, and
\begin{equation}\label{symdif}
	\text{the sets } \calj\setminus\calk \text{ and } \calk\setminus\calj \text{ are nonempty}.
\end{equation}

\begin{proposition}\label{F3structure}
Assume (F3). Then the conclusion (II\,c) of Theorem \ref{fibered-mainthm} holds.
\end{proposition}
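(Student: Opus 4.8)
The goal is to show that, under the assumption (F3), we can apply one of the structural reductions: either the slab reduction (Corollary~\ref{slab-reduction}) in the $p_i$ direction for a suitable relabelling of $A$ and $B$, or the subgroup reduction (Theorem~\ref{subgroup-reduction}) when $\Phi_{p_i^2}|A$. The first step is to settle the fibering status of $A$ in the $p_i$ direction. Since $\Phi_M|A$, exactly one of $\Phi_{p_i},\Phi_{p_i^2}$ divides one of $A,B$ for each prime power; by interchanging $A$ and $B$ if necessary we may separate into the cases $\Phi_{p_i}|A$ and $\Phi_{p_i^2}|A$. The dichotomy in the statement is exactly along these lines, so the proof naturally splits into these two branches, and in each branch we aim to verify the hypotheses of the corresponding reduction theorem.

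\emph{The case $\Phi_{p_i^2}|A$.} Here the claim is $A\subset\Pi(a,p_i)$ for every $a\in A$, i.e. $A$ is supported on a single plane $\Pi(a,p_i)$ modulo $p_i$, after which Theorem~\ref{subgroup-reduction} applies (with $p_i\parallel|B|$, which holds since $|B|=p_ip_jp_k$). To prove $A\subset\Pi(a,p_i)$, I would argue that $M/p_i\notin\Div(A)$: otherwise there would be $a,a'\in A$ with $(a-a',M)=M/p_i$, and combining this with the assumption $\cali=\emptyset$ (no $M$-fiber in the $p_i$ direction through $a$) and the fibering of $A$ on $D(M)$-grids, one forces $A\cap\Lambda(a,D(M))$ to be $M$-fibered in one of the $p_j,p_k$ directions; a plane-bound argument via Lemma~\ref{planebound} combined with $\Phi_{p_i^2}|A$ then yields a contradiction (the total mass $|A|=p_ip_jp_k$ cannot be accommodated if $A$ spreads over more than one $\Pi(\cdot,p_i^2)$-level while also being $\Phi_{p_i^2}$-divisible and containing a $p_i$-difference). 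Since $M/p_i\notin\Div(A)$ and $\Phi_{p_i^2}|A$ forces $\Phi_{p_i}\nmid A$, hence $\Phi_{p_i}|B$, we get that $\bbA_m[x]$ vanishes for $p_i|m$ except $m$ a multiple of $p_i^2$; unwinding, this is precisely $A\subset\Pi(a,p_i)$ for all $a\in A$. This case I expect to be relatively short.

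\emph{The case $\Phi_{p_i}|A$ (the main obstacle).} Here we must show that, after interchanging $A$ and $B$, the conditions of Theorem~\ref{subtile} hold in the $p_i$ direction. Because $\Phi_{p_i}|A$, we have $\Phi_{p_i^2}\nmid A$ (as $\Phi_{p_i^2}$ divides only one of $A,B$), so $\Phi_{p_i^2}|B$, and $B$ plays the role of the ``$A$'' in Theorem~\ref{subtile} since $\Phi_{p_i^{n_i}}|B$ with $n_i=2$. I would invoke Lemma~\ref{splittingslab} and Corollary~\ref{slabcor}, which reduce verifying the slab reduction conditions to a statement about splitting parity or about plane-size bounds. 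The natural target is Corollary~\ref{slabcor}(ii) applied to the pair $(B,A)$: namely that for every $a\in A$, $|A\cap\Pi(a,p_i^2)|=|A|/(|A|,p_i^2)=p_jp_k$. This would follow if $A$ is ``as flat as possible'' in the $p_i$ direction. The structural input here is the hypothesis (F3): $\cali=\emptyset$ and both $\calj\setminus\calk$, $\calk\setminus\calj$ nonempty. Using Lemma~\ref{fiberedstructure} (for $a\in\calj\cap\calk$, $A\cap\Pi(a,p_i^2)=a*F_j*F_k$) and Lemma~\ref{2dir} (each $D(M)$-grid is tiled by $M$-fibers in at most two of the three directions, with the stratified structure), I would show that every $D(M)$-grid $\Lambda$ is in fact tiled by fibers of $\calj\cup\calk$ only (since $\cali=\emptyset$), and that the mass of $A$ on each plane $\Pi(a,p_i^2)$ is forced to equal $p_jp_k$ — the fibers in the $p_j$ and $p_k$ directions are ``vertical'' relative to the $p_i$ direction, so each contributes its full length to exactly one $\Pi(\cdot,p_i^2)$-level. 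The delicate part, and what I expect to be the hardest, is handling the even case and ruling out mixed configurations where a $p_i$-difference in $A$ coexists with the $\calj,\calk$ fiber structure: one must show $M/p_i\notin\Div(A)$ here too (using $\Phi_{p_i}|A$ and $\cali=\emptyset$, via Lemma~\ref{gen_top_div_mis} or a direct cuboid/splitting argument), which then gives $\bbA_{p_i^{?}}^{M}$-control and hence the desired plane count. With Corollary~\ref{slabcor}(ii) verified for $(B,A)$, Lemma~\ref{splittingslab} gives the slab reduction conditions in the $p_i$ direction, and Corollary~\ref{slab-reduction} finishes. The main obstacle throughout is that, unlike the case analysis in \cite{LaLo2}, we want a uniform argument covering $p_i=2$; the even case is where the weaker geometric constraints bite, and I anticipate needing a splitting argument (in the spirit of Lemma~\ref{edb2-nodiag} and Proposition~\ref{nodiagdiv}) rather than pure saturating-set bookkeeping to close the gap.
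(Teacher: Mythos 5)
Your high-level targets are right (split according to which reduction applies, use Lemma \ref{splittingslab} and Corollary \ref{slabcor} rather than checking Theorem \ref{subtile} directly, expect a splitting argument in the even case), but both branches of your plan have genuine gaps. In the branch $\Phi_{p_i^2}|A$, the final inference is invalid: excluding $M/p_i$ from $\Div(A)$ (or, more generally, excluding all differences whose $p_i$-adic valuation is exactly $1$) does not give $A\subset \Pi(a,p_i)$. The containment $A\subset\Pi(a,p_i)$ requires that \emph{no} difference in $\Div(A)$ is coprime to $p_i$ — e.g.\ a pair $a,a'\in A$ with $(a-a',M)=M/p_i^2$ is perfectly consistent with $M/p_i\notin\Div(A)$ and destroys the conclusion — so the step ``$\bbA_m[x]$ vanishes for $p_i\,\|\,m$, hence $A\subset\Pi(a,p_i)$'' does not follow. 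In the paper, this branch is not a quick corollary of $\Phi_{p_i^2}|A$ alone: the subgroup containment comes from Lemma \ref{splitting}(i), which needs $\Phi_{p_i^2}\Phi_{M_k}|A$, and the fact that $\Phi_{p_i^2}|A$ forces $\Phi_{M_j}|A$ or $\Phi_{M_k}|A$ under (F3) is itself an output of the later analysis (Lemma \ref{BPhi_{N_j}}(iii) shows that if $\Phi_{M_j}\nmid A$ and $\Phi_{M_k}\nmid A$ then $\Phi_{p_i}|A$). None of this is supplied by your sketch.

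In the branch $\Phi_{p_i}|A$, your uniform plan — verify Corollary \ref{slabcor}(ii) for the pair $(B,A)$, i.e.\ $|A\cap\Pi(a,p_i^2)|=p_jp_k$ for all $a\in A$ — is not justified and is only established in the paper under additional cyclotomic hypotheses. Lemma \ref{splitting}(ii) gives $|A\cap\Pi(a,p_i)|=p_jp_k$ with each such plane mono-type, but a mono-type plane is a union of $p_j$- or $p_k$-fibers that could a priori sit in \emph{different} residue classes mod $p_i^2$ inside the same class mod $p_i$; your remark that each fiber lies in one $\Pi(\cdot,p_i^2)$-level does not show they all lie in the \emph{same} level. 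The paper obtains the plane count $|A\cap\Pi(a,p_i^2)|=p_jp_k$ only when $\Phi_{M_k}|A$ (via Lemma \ref{Kp_i^2plane} and Corollary \ref{slab1}), and when in addition $\Phi_{M_j}|A$ it can then invoke Corollary \ref{slabcor}(ii); in the complementary cases it must instead prove lower-scale fibering of $B$ in the $p_i$ direction ($\Phi_{N_j}\Phi_{M_j}|B$ or $\Phi_{N_k}\Phi_{M_k}|B$, then Corollary \ref{doublediv}, Lemmas \ref{Phi_N_j|B}, \ref{Bjfiver}, \ref{BPhi_{N_j}}) and verify the slab conditions through the saturating-set criterion (Lemma \ref{splittingslab}(III), Corollary \ref{cor-subtile1}) or the splitting-parity criterion (Lemma \ref{splittingslab}(II), Lemma \ref{noM_nusubtile}). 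This case division by divisibility of $A$ by $\Phi_{M_j},\Phi_{M_k},\Phi_{N_j}$ — rather than by $\Phi_{p_i}$ versus $\Phi_{p_i^2}$ — is the actual engine of the proof, and without it (or a substitute argument for the plane count and for ruling out the problematic divisors such as $M/p_ip_j$, $M/p_ip_k$ in $\Div(B)$ versus $\Div(A)$) the proposal does not close.
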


The proof below works regardless of whether $\calj$ and $\calk$ are disjoint or not. If $\calj\cap\calk\neq\emptyset$, then
(since $\cali=\emptyset$) any element $a\in\calj\cap\calk$ must satisfy the conditions of Lemma \ref{fiberedstructure} (i), so that 
$$
A\cap\Pi(a,p_i^{2})=a*F_j*F_k.
$$
It follows that the set $\calj\setminus\calk$ is $M$-fibered in the $p_j$ direction, and 
$\calk\setminus\calj$ is $M$-fibered in the $p_k$ direction.

We begin with the case when at least one of $\Phi_{M_j}$ and $\Phi_{M_k}$ divides $A$.

\begin{lemma}[\cite{LaLo2}, Lemma 9.33]\label{Kp_i^2plane} 
Assume (F3), and that $\Phi_{M_k}|A$. Then 
$$
\Phi_{p_j^2}|A.
$$
Furthermore, $\calk$ is $M_k$-fibered in the $p_j$ direction, so that for every $a_k\in\calk$ we have
\begin{equation}\label{KM/p_k^2jfiber}
\mathbb{K}^{M_k}_{M_k/p_j}[a_k]=p_k\cdot\phi(p_j).
\end{equation}
and 
$$
A\cap\Pi(a_k,p_i^2)\subset\Lambda(a_k,p_i^2p_j).
$$
The same holds with $p_k$ and $p_j$ interchanged.
\end{lemma}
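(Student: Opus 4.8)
The plan is to analyze the structure of $\calk$ modulo $M_k$ under the assumption $\Phi_{M_k}|A$, using the two-prime fibering results available on that scale, and then to leverage the fact that $|A|=p_ip_jp_k$ forces fibers to saturate the plane bound of Lemma \ref{planebound}. First I would invoke Lemma \ref{fibercyc3}: since $\Phi_{M_k}|A$, we have $\Phi_{M_k}|\calk$, so $\calk$ mod $M_k$ is a multiset in $\ZZ_{M_k}$ of constant multiplicity $p_k$ (because $\cali=\emptyset$ forces $\calk$ to consist of genuine $M$-fibers in the $p_k$ direction, possibly together with $p_j$-fibers on the overlap $\calj\cap\calk$, but in all cases $\bbK^{M_k}_{M_k}[a_k]=p_k$ for $a_k\in\calk$). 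Then I would apply Lemma \ref{M_kfibering} (with the roles as stated there) to conclude that each $a_k\in\calk$ belongs to an $M_k$-fiber in either the $p_i$ or the $p_j$ direction, i.e. satisfies (\ref{M_kfiberp_i}) or (\ref{M_kfiberp_j}).

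Next I would rule out the $p_i$-direction alternative (\ref{M_kfiberp_i}). The point is that (F3) gives $\calj\setminus\calk\neq\emptyset$: pick $a_j\in\calj\setminus\calk$, so $a_j*F_j\subset A$ but $a_j$ lies in no $M$-fiber in the $p_k$ direction. If some $a_k\in\calk$ satisfied (\ref{M_kfiberp_i}), then by (\ref{M_kfiber}) we would have $|A\cap\Pi(a_k,p_j^2)|=p_ip_k$ with $A\cap\Pi(a_k,p_j^2)\subset\calk$; combining this with Corollary \ref{planegrid} applied in a plane that also meets $a_j*F_j$ (whose $p_j$-fiber contributes points outside $\calk$) would push the count in some plane $\Pi(\cdot,p_i^2)$ or $\Pi(\cdot,p_i)$ strictly above the bound allowed by Lemma \ref{planebound}, a contradiction. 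This is essentially the argument used in \cite[Lemma 9.33]{LaLo2}, which I would follow, taking care that the structural input from (F3)---namely the presence of a $p_j$-fiber not lying in $\calk$---is what obstructs the $p_i$-direction option. Hence every $a_k\in\calk$ satisfies (\ref{M_kfiberp_j}), which is precisely (\ref{KM/p_k^2jfiber}).

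With (\ref{KM/p_k^2jfiber}) established, $|\calk\cap\Pi(a_k,p_i^2)|\geq p_jp_k$ for each $a_k\in\calk$, and by Lemma \ref{planebound} this is an equality with $A\cap\Pi(a_k,p_i^2)=\calk\cap\Pi(a_k,p_i^2)\subset\Lambda(a_k,p_i^2p_j)$ (the $M_k$-fiber in the $p_j$ direction through $a_k$, together with its $p_k$-multiplicity in the $p_k$ direction, already exhausts the plane). To get $\Phi_{p_j^2}|A$, I would estimate $|A\cap\Pi(a_k,p_j)|$: this plane contains $\bbK^{M_k}_{M_k}[a_k]=p_k$ points at the root plus, by (\ref{KM/p_k^2jfiber}), the additional $p_k\phi(p_j)$ points of the $p_j$-fiber, for a total $\geq p_jp_k$ coming from $\calk$ alone, and since $\calj\setminus\calk\neq\emptyset$ (or more directly since the $M_k$-fiber structure forces strictly more points in the coarser plane $\Pi(a_k,p_j)$ than in $\Pi(a_k,p_j^2)$ when one also counts a further $p_j$-layer), we exceed $p_jp_k$, so Corollary \ref{planegrid} yields $\Phi_{p_j^2}|A$. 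The symmetric statement follows by interchanging $j$ and $k$ throughout. The main obstacle I anticipate is the bookkeeping in excluding (\ref{M_kfiberp_i}): one must combine the plane-saturation from (\ref{M_kfiber}) with the existence of the stray $p_j$-fiber carefully enough that the resulting over-count is genuine and not merely an artifact of double-counting overlapping fibers in $\calj\cap\calk$; the cleanest route is to fix $a_j\in\calj\setminus\calk$ and work inside $\Pi(a_j,p_i^2)$, using Lemma \ref{fiberedstructure}-type reasoning to see that its intersection with $A$ must then be a disjoint union of a $p_j$-fiber and whatever $\calk$ contributes, which is too large.
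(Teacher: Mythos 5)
The heart of this lemma is ruling out the alternative (\ref{M_kfiberp_i}), and that is precisely the step you do not actually prove. (Note also that this paper contains no proof to compare against: the lemma is imported from \cite{LaLo2}, so your appeal ``this is essentially the argument used in [LaLo2, Lemma 9.33], which I would follow'' is circular in a blind attempt — it defers exactly the step the lemma is being quoted for.) Concretely: if some $a_k\in\calk$ satisfies (\ref{M_kfiberp_i}), then (\ref{M_kfiber}) saturates the plane $\Pi(a_k,p_j^2)$, which is perpendicular to the $p_j$ direction; it says nothing about planes $\Pi(\cdot,p_i^2)$ or $\Pi(\cdot,p_i)$, so there is no basis for ``pushing the count in some plane $\Pi(\cdot,p_i^2)$ or $\Pi(\cdot,p_i)$ above the bound.'' Your chosen witness $a_j\in\calj\setminus\calk$ does not help as stated: the fiber $a_j*F_j$ lies in a single residue class mod $p_j$ and meets $\Pi(a_k,p_j^2)$ only if $a_j\equiv a_k \pmod{p_j}$, which you do not control; and inside $\Pi(a_j,p_i^2)$ you only know the $p_j$ points of $a_j*F_j$ plus an uncontrolled (possibly empty) contribution of $\calk$, so no overcount against Lemma \ref{planebound} results. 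A genuine argument here has to bring in more than the plane bound (e.g.\ the interaction with $B$, divisor exclusion, or the fibering of $D(M)$-grids); as written this is a gap, not bookkeeping.

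The derivation of $\Phi_{p_j^2}|A$ is also quantitatively off. Corollary \ref{planegrid} gives $\Phi_{p_j^2}|A$ only from $|A\cap\Pi(x,p_j)|>p_ip_k$ (with $\alpha_0=1$); your count from the $\calk$-structure is $p_k+p_k\phi(p_j)=p_jp_k$, which exceeds $p_ip_k$ only when $p_j>p_i$, and ``we exceed $p_jp_k$'' compares against the wrong threshold. The extra points you invoke are not shown to lie in $\Pi(a_k,p_j)$: a $p_j$-direction $M$-fiber such as $a_j*F_j$ sits in one class mod $p_j$, which need not be that of $a_k$. What is correct in your proposal: the reduction via Lemmas \ref{fibercyc3} and \ref{M_kfibering}, and the passage from (\ref{KM/p_k^2jfiber}) to $A\cap\Pi(a_k,p_i^2)\subset\Lambda(a_k,p_i^2p_j)$ via Lemma \ref{planebound}. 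For the cyclotomic claim, once (\ref{KM/p_k^2jfiber}) is known for all $a_k$, a cleaner route is equidistribution mod $p_j^2$: $\calj$ is a disjoint union of $M$-fibers in the $p_j$ direction, $\calj\cap\calk$ is a union of sets $a*F_j*F_k$ (Lemma \ref{fiberedstructure}), and the $M_k$-fibering places the $p_k$-columns of $\calk$ one per residue class mod $p_j^2$ within each class mod $p_j$; hence $A$ mod $p_j^2$ is constant on cosets of $p_j$, which is exactly $\Phi_{p_j^2}|A$. Some argument of this kind (or one passing through $B$) is needed; the plane count alone does not deliver it.
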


\begin{lemma}[\cite{LaLo2}, Lemma 9.34]\label{splitting}
Assume (F3).
The following holds true:
\begin{enumerate}
\item [(i)] If $\Phi_{p_i^2}\Phi_{M_k}|A$, then $A$ is contained in a subgroup.
\item [(ii)] If $\Phi_{p_i}|A$, then $|A\cap\Pi(a,p_i)|=p_jp_k$ for all $a\in A$.
Moreover, for every $a\in A$ we have either $A\cap \Pi(a,p_i)\subset \calj$ or $A\cap \Pi(a,p_i)\subset \calk$.  \end{enumerate}
\end{lemma}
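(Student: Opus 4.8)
\textbf{Proof plan for Lemma \ref{splitting}.}

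For part (i), assume $\Phi_{p_i^2}\Phi_{M_k}|A$. The plan is to combine the two divisibility hypotheses to force $A$ into a single plane perpendicular to the $p_i$ direction, and then iterate. First I would invoke Lemma \ref{Kp_i^2plane}: since $\Phi_{M_k}|A$ we get $\Phi_{p_j^2}|A$ together with the $M_k$-fibering of $\calk$ in the $p_j$ direction and the grid restriction $A\cap\Pi(a_k,p_i^2)\subset\Lambda(a_k,p_i^2p_j)$ for every $a_k\in\calk$. Because $\cali=\emptyset$, we have $A=\calj\cup\calk$, so combining the structure of $\calk$ with the $M$-fibering of $\calj\setminus\calk$ in the $p_j$ direction shows that $A\cap\Pi(a_k,p_i^2)$ is $M$-fibered in the $p_j$ direction and saturates the plane bound (Lemma \ref{planebound}), giving $|A\cap\Pi(a_k,p_i^2)|=p_jp_k$. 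Then $\Phi_{p_i^2}|A$ forces $|A\cap\Pi(a_k,p_i)|=|A\cap\Pi(a_k,p_i^2)|$ (otherwise Corollary \ref{planegrid} would be violated or, more precisely, $\Phi_{p_i^2}|A$ says the $p_i$-plane and the $p_i^2$-plane carry the same mass), so $A\subset\Pi(a_k,p_i)$. Since we already have $\Phi_{p_j^2}|A$ and $A\subset\Pi(a_k,p_i)$, the symmetric use of Lemma \ref{Kp_i^2plane} with $p_j$ and $p_k$ interchanged (using $\Phi_{M_k}$ or re-deriving $\Phi_{M_j}|A$ from the cofibered structure of $\calj$) pins $A$ into a plane in the $p_j$ direction as well; hence $A\subset\Pi(a_k,p_i)\cap\Pi(a,p_j)$ for appropriate $a$, which is a coset of the subgroup $p_i\ZZ_M\cap p_j\ZZ_M$, i.e. $A$ is contained in a subgroup (after translation). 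I would double-check the edge case $p_j=p_k$ does not arise since $p_i,p_j,p_k$ are distinct.

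For part (ii), assume $\Phi_{p_i}|A$. Then $\Phi_{p_i}\nmid B$, so $M/p_i\notin\Div(B)$; combined with $\Phi_{p_i}|A$ and the standard counting via $|A|=p_ip_jp_k$, each plane $\Pi(a,p_i)$ must carry exactly $|A|/p_i=p_jp_k$ points of $A$ — this is the plane-saturation consequence of $\Phi_{p_i}|A$ (formally: $A$ mod $p_i$ is constant across the $p_i$ cosets, which follows from $\Phi_{p_i}|A$ by the $p_i$-cuboid criterion, and the total is $p_ip_jp_k$). That gives the first assertion. For the second, fix $a\in A$ and consider the plane $\Pi(a,p_i)$; since $\cali=\emptyset$, every element of $A\cap\Pi(a,p_i)$ lies in $\calj\cup\calk$. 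Now I would run the $R^3$-columns dichotomy of Lemma \ref{2dir} restricted to the $p_i$-plane: each $D(M)$-grid inside $\Pi(a,p_i)$ is tiled by $M$-fibers in at most two directions, and the only two available directions are $p_j,p_k$ (no $p_i$-fibers since $\cali=\emptyset$), so on each such grid $\Sigma_A$ lies in $\calj$ or in $\calk$. The plane $\Pi(a,p_i)$ is a single $D(M)$-grid in the $p_i$ direction quotient — more precisely $\Pi(a,p_i)$ is exactly a $D(M)$-grid since $D(M)=M/(p_ip_jp_k)$ and $p_i\|\,|A|$; wait, I need $\Pi(a,p_i)$ to be $p_i\ZZ_M+a$, whose intersection structure with $A$ I analyze grid-by-grid. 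If two grids in $\Pi(a,p_i)$ used different directions, an $M$-fiber of $\calj$ and one of $\calk$ inside the same $p_i$-plane would overflow the plane bound (as in the count $p_jp_k+|J||K|>p_jp_k$ appearing repeatedly in Section \ref{res-boxes}), so in fact the whole plane $\Pi(a,p_i)$ is tiled in one direction: $A\cap\Pi(a,p_i)\subset\calj$ or $\subset\calk$.

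\textbf{Main obstacle.} The delicate point in part (ii) is upgrading the ``at most two directions per $D(M)$-grid'' statement of Lemma \ref{2dir} to ``exactly one direction on the whole $p_i$-plane.'' The naive plane-bound count only rules out having $M$-fibers in \emph{both} directions rooted at the \emph{same} $p_i^2$-plane; I will need the full-plane saturation $|A\cap\Pi(a,p_i)|=p_jp_k$ from the first part together with Lemma \ref{fiberedstructure} (which says any $a\in\calj\cap\calk$ carries the full $F_j*F_k$ block) to see that a mix of a $p_j$-fiber and a $p_k$-fiber in $\Pi(a,p_i)$ forces an element of $\calj\cap\calk$ and hence, by Lemma \ref{fiberedstructure}, a $p_jp_k$-block that already exhausts the plane, leaving no room for the remaining fiber of the other type — contradiction. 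For part (i) the analogous danger is an off-by-one in the passage from $\Phi_{p_i^2}|A$ to $A\subset\Pi(a_k,p_i)$; I expect this to be routine given Corollary \ref{planegrid}, but it must be stated carefully.
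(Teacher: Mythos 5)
Note first that this paper does not prove Lemma \ref{splitting} at all: it is imported verbatim from \cite[Lemma 9.34]{LaLo2}, so your argument has to stand on its own — and at the two pivotal points it does not, although the surrounding ingredients (Lemma \ref{Kp_i^2plane}, Lemma \ref{M_kfibering}, the plane bound, Lemma \ref{fiberedstructure}) are the right ones. In part (i), the step ``$\Phi_{p_i^2}|A$ forces $|A\cap\Pi(a_k,p_i)|=|A\cap\Pi(a_k,p_i^2)|$, so $A\subset\Pi(a_k,p_i)$'' is wrong twice over. Divisibility by $\Phi_{p_i^2}$ says that the projection of $A$ to $\ZZ_{p_i^2}$ is an integer combination of the fibers $X^c(1+X^{p_i}+\dots+X^{(p_i-1)p_i})$, i.e.\ that $|A\cap\Pi(x,p_i^2)|$ depends only on $\pi_i(x)$ mod $p_i$; hence $|A\cap\Pi(x,p_i)|=p_i\,|A\cap\Pi(x,p_i^2)|$, not equality of the two counts. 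Moreover, even if your equality held, it would say $A\cap\Pi(a_k,p_i)\subset\Pi(a_k,p_i^2)$, a set of only $p_jp_k<|A|$ points — the opposite of $A\subset\Pi(a_k,p_i)$. The correct finish is shorter than what you propose: by Lemmas \ref{Kp_i^2plane} and \ref{M_kfibering}, every $a_k\in\calk$ (nonempty under (F3)) satisfies $|A\cap\Pi(a_k,p_i^2)|=p_jp_k$, so the identity above gives $|A\cap\Pi(a_k,p_i)|=p_ip_jp_k=|A|$, i.e.\ $A\subset\Pi(a_k,p_i)$, a coset of $p_i\ZZ_M$, which is exactly what is used in Theorem \ref{fibered-mainthm} (II\,c). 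Your further attempt to pin $A$ into a $p_j$-plane as well is unnecessary and unsupported: the symmetric use of Lemma \ref{Kp_i^2plane} would require $\Phi_{M_j}|A$, which is not among the hypotheses and cannot simply be ``re-derived''.

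In part (ii), the first assertion is fine, but your mechanism for the ``moreover'' part fails. A $p_j$-fiber and a $p_k$-fiber of $A$ lying in the same plane $\Pi(a,p_i)$ do not overflow any plane bound (Lemma \ref{planebound} for $\Pi(a,p_i)$ gives only the trivial bound $p_ip_jp_k$, and inside a single $\Pi(a,p_i^2)$ one crossed pair contributes at most $p_j+p_k<p_jp_k$ points), and nothing you cite makes a mixed configuration produce an element of $\calj\cap\calk$: fibers in different directions in the same $p_i$-plane simply need not intersect, so the bridge you build in your ``main obstacle'' paragraph does not exist. Lemma \ref{2dir} is also not the right tool here, since it concerns $\Sigma_A(\Lambda)$, not $A\cap\Pi(a,p_i)$. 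The missing idea is an exact count: since $\cali=\emptyset$ and $M/p_j$, $M/p_k$ are divisible by $p_i$, every $x\in A\cap\Pi(a,p_i)$ carries a complete fiber $x*F_j$ or $x*F_k$ inside the same plane. If the plane meets $\calj\cap\calk$, Lemma \ref{fiberedstructure} together with $|A\cap\Pi(a,p_i)|=p_jp_k$ shows the intersection is a full $F_j*F_k$ block, contained in both $\calj$ and $\calk$. Otherwise $A\cap\Pi(a,p_i)$ is a disjoint union of $c_j$ complete $p_j$-fibers and $c_k$ complete $p_k$-fibers, and $p_jp_k=c_jp_j+c_kp_k$ forces $p_j\mid c_k$, hence $c_k\in\{0,p_j\}$, so one of $c_j,c_k$ vanishes. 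That divisibility argument, not a plane-bound overflow, is what rules out a mix.
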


\begin{corollary}\label{slab1}
Assume that (F3) holds, and that
\begin{equation}\label{Phi_{p_i}|Aass}
\Phi_{p_i}\Phi_{M_k}|A.
\end{equation}
Then
\begin{itemize}
\item[(i)] $|A\cap \Pi(a,p_i^2)|=p_jp_k$ for every $a\in \calk$.
\item[(ii)] Assume, in addition, that $\Phi_{M_j}|A$. Then the conditions of Theorem \ref{subtile} (the slab reduction) are satisfied in the $p_i$ direction,
after interchanging $A$ and $B$. 
\end{itemize}
\end{corollary}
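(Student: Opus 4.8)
\textbf{Proof plan for Corollary \ref{slab1}.}

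The plan is to extract the structure already assembled by Lemmas \ref{Kp_i^2plane}, \ref{splitting}, and the assumption (F3), and then translate it into the slab reduction conditions using the splitting-to-slab dictionary from Section \ref{slab-to-splitting}. For part (i), I would start from Lemma \ref{Kp_i^2plane}, which under $\Phi_{M_k}|A$ gives $\Phi_{p_j^2}|A$ and $A\cap\Pi(a_k,p_i^2)\subset\Lambda(a_k,p_i^2p_j)$ for every $a_k\in\calk$; thus $A\cap\Pi(a_k,p_i^2)$ is supported on a single $p_i^2p_j$-grid, which has $p_jp_k$ elements. Combined with the plane bound Lemma \ref{planebound}, which caps $|A\cap\Pi(a_k,p_i^2)|$ at $p_jp_k$, it suffices to show equality, i.e. that this grid is entirely filled. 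Here I would invoke Lemma \ref{splitting}(ii): since $\Phi_{p_i}|A$, we have $|A\cap\Pi(a_k,p_i)|=p_jp_k$ and $A\cap\Pi(a_k,p_i)$ lies entirely in $\calj$ or entirely in $\calk$; because $a_k\in\calk$ and (if $\calj\cap\calk$ nonempty) the fibered structure of Lemma \ref{fiberedstructure} forces $\calk\setminus\calj$ to be $M$-fibered in the $p_k$ direction, the plane $\Pi(a_k,p_i)$ must in fact be $\calk$-type, so its $p_jp_k$ points decompose into $M_k$-fibers in the $p_j$ direction by (\ref{KM/p_k^2jfiber}); counting shows exactly $p_jp_k/p_i^2\cdot p_i^2 = p_jp_k$ points, confirming (i).

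For part (ii), with the additional hypothesis $\Phi_{M_j}|A$, the symmetric statement of Lemma \ref{Kp_i^2plane} (with $p_k$ and $p_j$ interchanged) gives $\Phi_{p_k^2}|A$ and the analogous grid-confinement for every $a_j\in\calj$. So now $A$ is built from $\calj$-planes (filling $p_i^2p_k$-grids in $\calk$-direction language) and $\calk$-planes (filling $p_i^2p_j$-grids), and by Lemma \ref{splitting}(ii) each plane $\Pi(a,p_i)$ is purely one type. I would then observe that $\Phi_{p_j^2}\Phi_{p_k^2}|A$ together with part (i) gives the needed control: every plane $\Pi(a,p_i^2)$ contains exactly $p_jp_k$ points of $A$, and $A$ is a disjoint union of such ``filled'' planes, each of which is $M$-fibered within itself in either the $p_j$ or $p_k$ direction. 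To verify the slab reduction in the $p_i$ direction after swapping $A$ and $B$, I need $\Phi_{p_i^2}|B$ (to make sense of the hypothesis $\Phi_{p_i^{n_i}}|\cdot$ applied to $B$) together with condition (ii) of Theorem \ref{subtile}; equivalently, by Lemma \ref{splittingslab} or Corollary \ref{slabcor}, it suffices to show that for every $a\in A$ we have $\bbB_{M/p_i}[a]>0$, or that every plane $\Pi(a,p_i^2)$ in $B$ is maximal in the sense of (\ref{maxplanebound}). The cleanest route is Corollary \ref{slabcor}(ii): I would show $|B\cap\Pi(b,p_i^2)| = |B|/(|B|,p_i^2)$ for all $b\in B$, which follows because $A$ has the plane-filling structure forcing $B$ to be ``thin'' in the $p_i$ direction — concretely, $\Phi_{p_i}|A$ forces $\Phi_{p_i}\nmid B$ and $\Phi_{p_i^2}|B$, so $(|B|,p_i^2)=p_i$, and the structure of $A$ on $\Pi(a,p_i)$ forces the complementary count $|B\cap\Pi(b,p_i^2)| = |B|/p_i$.

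The main obstacle I anticipate is making the swap $A\leftrightarrow B$ precise: the slab reduction in Theorem \ref{subtile} requires $\Phi_{p_i^{n_i}}|$ (the set playing the role of $A$ there), so I must first establish $\Phi_{p_i^2}|B$, which needs the observation that $\Phi_{p_i}|A$ implies $\Phi_{p_i}\nmid B$, and then separately that $\Phi_{p_i^2}\nmid A$ — this is exactly Lemma \ref{splitting}(i), since $\Phi_{p_i^2}\Phi_{M_k}|A$ would force $A$ into a subgroup, contradicting (F3) and (\ref{symdif}) (an element of $\calj\setminus\calk$ and an element of $\calk\setminus\calj$ cannot both sit in a proper $p_i$-subgroup while also anchoring $M$-fibers in distinct directions that fill planes). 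Once $\Phi_{p_i^2}|B$ is in hand, the plane-count identity for $B$ and hence Corollary \ref{slabcor}(ii) should follow by a short counting argument using part (i) of this corollary and the plane bound. I would expect the bookkeeping around which cyclotomic polynomials divide $A$ versus $B$ (and the interplay with $\calj\cap\calk$ possibly nonempty) to be the fiddliest part, but conceptually straightforward given the earlier lemmas.
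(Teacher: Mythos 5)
Your plan has the right ingredients but misfires at the decisive step of part (ii). After interchanging $A$ and $B$, Corollary \ref{slabcor} asks for $\Phi_{p_i^{2}}|B$ together with the plane condition (\ref{maxplanebound}) verified for the \emph{other} set, i.e.\ for every $a\in A$ one needs $|A\cap\Pi(a,p_i^2)|=|A|/(|A|,p_i^2)=p_jp_k$; it is not the condition $|B\cap\Pi(b,p_i^2)|=|B|/(|B|,p_i^2)$ that you set out to prove (that is the unswapped version, which would have to be paired with $\Phi_{p_i^2}|A$, false here), and your justification of it ("the structure of $A$ forces the complementary count") is an assertion, not a proof. Ironically you already hold the correct condition: (i) gives $|A\cap\Pi(a,p_i^2)|=p_jp_k$ for $a\in\calk$, the symmetric form of Lemma \ref{Kp_i^2plane} (using $\Phi_{M_j}|A$) gives the same for $a\in\calj$, and $A=\calj\cup\calk$ since $\cali=\emptyset$; feeding exactly this into Corollary \ref{slabcor}(ii) with the roles of $A$ and $B$ interchanged is the paper's whole proof of (ii). Also, $\Phi_{p_i^2}|B$ does not need Lemma \ref{splitting}(i): containment of $A$ in a subgroup does not contradict (F3) -- that case is precisely the subgroup-reduction branch of Theorem \ref{fibered-mainthm}(II\,c) -- and the correct reason is simply that $p_i\parallel|A|$, so by (T1) only one power of $p_i$ lies in $S_A$, whence $\Phi_{p_i}|A$ forces $\Phi_{p_i^2}\nmid A$ and $\Phi_{p_i^2}|B$.

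In part (i) your counting is also muddled: the grid $\Lambda(a_k,p_i^2p_j)$ has $p_jp_k^2$ elements, not $p_jp_k$, so "entirely filled" is not the right target, and the closing computation "$p_jp_k/p_i^2\cdot p_i^2=p_jp_k$" does not show that the $p_jp_k$ points of $A\cap\Pi(a_k,p_i)$ supplied by Lemma \ref{splitting}(ii) actually lie in the smaller plane $\Pi(a_k,p_i^2)$. Neither Lemma \ref{splitting}(ii) nor the grid confinement is needed: (\ref{KM/p_k^2jfiber}) says the $M_k$-fiber in the $p_j$ direction through $a$ mod $M_k$ consists of $p_j$ residues, each of multiplicity $p_k$, and every element of $\calk$ lying over these residues differs from $a$ by a multiple of $p_i^2$; hence $|A\cap\Pi(a,p_i^2)|\geq p_jp_k$, and Lemma \ref{planebound} gives the matching upper bound, which is the paper's one-line argument.
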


\begin{proof}
Let $a\in\calk$, then by (\ref{KM/p_k^2jfiber}) we have
$|A\cap \Pi(a,p_i^2)|\geq p_jp_k$, and by Lemma \ref{planebound} this must hold with equality. This implies (i). 

For (ii), we have $\Phi_{p_i^2}|B$ by (\ref{Phi_{p_i}|Aass}). Moreover, in this case (i) also holds for $a\in\calj$, hence for all $a\in A$. This, however, means that (\ref{maxplanebound}) holds with $A$ and $B$ interchanged. By Corollary \ref{slabcor} (ii), the slab reduction conditions are satisfied as claimed.
\end{proof}

\begin{lemma}\label{Phi_N_j|B}
Assume (F3) and (\ref{Phi_{p_i}|Aass}).
Then $\Phi_{N_j}\nmid A$. 
\end{lemma}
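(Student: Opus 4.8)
\textbf{Proof plan for Lemma \ref{Phi_N_j|B}.}

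The plan is to argue by contradiction: assume $\Phi_{N_j}\mid A$ and derive a contradiction with the structure already established under (F3) and (\ref{Phi_{p_i}|Aass}). Since (\ref{Phi_{p_i}|Aass}) includes $\Phi_{p_i}\mid A$, Lemma \ref{splitting} (ii) gives that every plane $\Pi(a,p_i)$ contains exactly $p_jp_k$ points of $A$, all lying in $\calj$ or all in $\calk$; combined with (F3) this means $A$ is ``stratified'' into $p_i$-planes of pure $\calj$-type or pure $\calk$-type, and by Corollary \ref{slab1} (i) each $\calk$-element $a_k$ has $|A\cap\Pi(a_k,p_i^2)|=p_jp_k$. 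Now by Lemma \ref{fibercyc3}, $\Phi_{N_j}\mid A$ is equivalent to $\Phi_{N_j}\mid\calj$. I would then analyze $\calj$ modulo $N_j$: since $M/p_i\in\Div(A)$ (which we get because $\calk\setminus\calj$ is $M$-fibered in the $p_k$ direction, so $\calk$ already forces $M/p_k\in\Div(A)$, and similarly $M/p_j\in\Div(A)$ — but more to the point we need the divisor structure near $\calj$), we can try to apply Corollary \ref{subset} (i) or Lemma \ref{gen_top_div_mis} to conclude that $\calj$ mod $N_j$ is $N_j$-fibered in one of the $p_i,p_k$ directions on every $D(N_j)$-grid. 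Fibering in the $p_i$ direction is excluded because $\cali=\emptyset$ (an $N_j$-fiber of $\calj$ in the $p_i$ direction through $a_j$ would force $a_j\in\cali$); hence $\calj$ is $N_j$-fibered in the $p_k$ direction, so that $\calj\subset\calk$. But this contradicts (\ref{symdif}), which asserts $\calj\setminus\calk\ne\emptyset$.

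The main obstacle I anticipate is justifying the step that rules out, \emph{a priori}, the unfibered $D(N_j)$-grid structures for $\calj$ mod $N_j$ (the corner / almost-corner / diagonal-box configurations of Lemmas \ref{odd_struct}, \ref{even_struct}, or \ref{Kunfibered}, \ref{evenunfiberedN_igrid}). In the odd case this is handled by \cite[Lemma 9.33 ff.]{LaLo2}, but here I would instead invoke the constraint coming from Corollary \ref{slab1} (i): since under (\ref{Phi_{p_i}|Aass}) every $\calk$-element sits in a plane $\Pi(a_k,p_i^2)$ with exactly $p_jp_k$ points, and the $\calj$-type $p_i$-planes each carry $p_jp_k$ points of $\calj$, any unfibered structure in $\calj$ mod $N_j$ would produce a $D(N_j)$-grid with a ``corner'' forcing $>p_ip_k$ points in some plane $\Pi(\cdot,p_k^2)$ or $\Pi(\cdot,p_j^2)$, contradicting Lemma \ref{planebound}; alternatively, Lemma \ref{evennoecorner} already excludes extended corners on $D(N_j)$-grids, and the remaining diagonal-box case for $\calj$ mod $N_j$ can be excluded because it would put too many points of $A$ into a $p_i$-plane, violating $|A\cap\Pi(a,p_i)|=p_jp_k$ from Lemma \ref{splitting} (ii). Once the unfibered structures are ruled out, Corollary \ref{subset} (i) forces the fibering direction to be $p_k$ (not $p_i$, since $\cali=\emptyset$; not $p_k$'s complement by the plane bound), giving $\calj\subset\calk$ and the contradiction with (\ref{symdif}).

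In summary, the steps in order are: (1) invoke Lemma \ref{fibercyc3} to reduce to $\Phi_{N_j}\mid\calj$; (2) record the divisor facts $M/p_j,M/p_k\in\Div(A)$ and the plane-count facts from Lemma \ref{splitting} (ii) and Corollary \ref{slab1} (i); (3) show $\calj$ mod $N_j$ has no unfibered $D(N_j)$-grid, using Lemma \ref{evennoecorner} for extended corners and the plane bound of Lemma \ref{planebound} together with Lemma \ref{splitting} (ii) to kill the remaining corner / almost-corner / diagonal-box possibilities; (4) apply Corollary \ref{subset} (i) (or Lemma \ref{gen_top_div_mis}) to conclude $\calj$ is $N_j$-fibered in the $p_i$ or $p_k$ direction on every $D(N_j)$-grid; (5) exclude the $p_i$ direction using $\cali=\emptyset$; (6) conclude $\calj\subset\calk$, contradicting (\ref{symdif}). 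I expect step (3) to be the technical heart of the argument.
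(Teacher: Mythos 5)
You are assuming $\Phi_{N_j}\mid A$ and aiming for a contradiction, which is the right frame, but the deductive chain in your steps (4)--(6) does not work. Corollary \ref{subset} applied to the index $j$ requires $p_j>\min_\nu p_\nu$; in that regime its part (ii) already yields $\calj\subset\cali\cup\calk=\calk$ (since $\cali=\emptyset$), contradicting (\ref{symdif}) with no further analysis, so the whole content of the lemma is the complementary case $p_j=\min_\nu p_\nu$ --- and there Corollary \ref{subset} is simply not applicable, and $N_j$-fibering of $\calj$ in the $p_j$ direction is a live possibility that your plan never addresses. Worse, your exclusion of the other directions rests on false implications: an $N_j$-fiber (scale $N_j$) of $\calj$ in the $p_i$ direction consists of points whose differences in $\ZZ_M$ are $M/p_ip_j$ or $M/p_i$, so it neither produces an $M$-fiber in the $p_i$ direction nor forces $a_j\in\cali$ (two parallel $M$-fibers of $\calj$ in the $p_j$ direction offset by $M/p_ip_j$ already realize such a structure with $\cali=\emptyset$); likewise $N_j$-fibering in the $p_k$ direction does not give $\calj\subset\calk$. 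The ``in particular'' clause of Corollary \ref{subset} (ii) is a nontrivial conclusion proved in \cite{LaLo2} under the hypothesis $p_j>\min_\nu p_\nu$, not a formal consequence of scale-$N_j$ fibering. In addition, before you can invoke the unfibered-structure classification or Lemma \ref{gen_top_div_mis} for $\calj$ mod $N_j$, you must first produce a missing top difference, i.e. $\{m:\ D(N_j)|m|N_j\}\not\subset\Div_{N_j}(\calj)$, which requires the multiplicity-$p_j$ argument via Lemma \ref{Knotunfiberedunstructured}; the divisor facts recorded in your step (2) do not give this.

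For comparison, the paper first uses Corollary \ref{subset} together with (\ref{symdif}) exactly as above to reduce to $p_j=\min_\nu p_\nu$, and then proves the claim (\ref{fiberatti}) that $\calj$ must be $N_j$-fibered on $D(N_j)$-grids. The genuinely new content is the even case $p_j=2$: if an unfibered $D(N_j)$-grid existed, Lemma \ref{evenunfiberedN_igrid} (applied with the even prime in the role of $p_j$) would force $\Phi_{p_i^2}\mid A$, which is impossible alongside $\Phi_{p_i}\mid A$ from (\ref{Phi_{p_i}|Aass}) for a tile of size $p_ip_jp_k$ --- your plane-count idea for the diagonal-box case is essentially this same mechanism, so that fragment of your step (3) is sound in spirit. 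But the odd case $p_j>2$ of (\ref{fiberatti}) and, crucially, the derivation of the final contradiction from the established fibering (a counting/divisibility argument in the spirit of the proof of Lemma \ref{BPhi_{N_j}}, which must in particular dispose of fibering in the $p_j$ direction, rather than a membership argument through $\cali$ or $\calk$) constitute the real work; the paper imports both from \cite{LaLo2}, Lemma 9.35, and your sketch does not supply a substitute for either.
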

\begin{proof}
If $\Phi_{N_j}$ divides $A$, it also divides $\calj$. By Proposition \ref{no unfibered grids}, $\calj$ must be $N_j$-fibered on all $D(N_j)$-grids.
The rest of the argument appears in \cite{LaLo2}, Lemma 9.35. 
\end{proof}

Assume next that $\Phi_{M_j}\nmid A$. Then, by Lemma \ref{Phi_N_j|B},
\begin{equation}\label{N_jM_jB}
\Phi_{N_j}\Phi_{M_j}|B.
\end{equation}

\begin{lemma}\label{Bjfiver}
Assume (F3), (\ref{Phi_{p_i}|Aass}), and (\ref{N_jM_jB}). Then $B$ is $N_j$-fibered in the $p_i$ direction, so that for every $b\in B$,
\begin{equation}\label{Bp_ifiber}
	\bbB_M[y]+ \bbB_{M/p_j}[y]=1 \text{ for every } y\in \ZZ_M \text{ with } (b-y,M)=M/p_i,
\end{equation}
\end{lemma}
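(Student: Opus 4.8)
The plan is to deduce the $N_j$-fibering of $B$ from the divisibility assumption $\Phi_{N_j}\Phi_{M_j}|B$ via Corollary \ref{doublediv}, and then to rule out the wrong fibering direction using the divisor structure of $A$.

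First I would observe that $M/p_j\in\Div(A)$. This holds because, under (F3), the set $\calk\setminus\calj$ is $M$-fibered in the $p_k$ direction and (since $\cali=\emptyset$ and the relevant elements do not lie in $\calj$) each such $M$-fiber, together with the structure of $A$ on its $D(M)$-grid, produces the difference $M/p_j$; alternatively, $\calj\setminus\calk$ being $M$-fibered in the $p_j$ direction and nonempty already forces $M/p_j\in\Div(A)$ whenever such a fiber has length $p_j\ge 2$. (Here I would just cite the fact, established in the discussion following the statement of Proposition \ref{F3structure}, that $\calj\setminus\calk$ is $M$-fibered in the $p_j$ direction.) Consequently $M/p_j\notin\Div(B)$ by divisor exclusion (Theorem \ref{thm-sands}), so that $\bbB^{N_j}_{N_j}[y]=\bbB_M[y]+\bbB_{M/p_j}[y]\in\{0,1\}$ for all $y$; in other words condition (\ref{binary-cond-2}) holds for $B$ on scale $N_j$ with $c_0=1$.

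Next I would apply Corollary \ref{doublediv} to $B$ (with the roles of $i$ and $j$ swapped, i.e. reading $j$ for $i$): from $\Phi_{N_j}\Phi_{M_j}|B$ and the binary condition just verified, $B$ is a union of pairwise disjoint $N_j$-fibers in the $p_i$ and $p_k$ directions, each of multiplicity $1$. It then remains to exclude $N_j$-fibers in the $p_k$ direction. For this I would use that $M/p_k\in\Div(A)$: again $\calk\setminus\calj$ is $M$-fibered in the $p_k$ direction and nonempty, so $M/p_k$ is a difference in $A$. If $B$ contained an $N_j$-fiber $\{b_1,\dots,b_{p_k}\}$ in the $p_k$ direction, its pairwise differences would be exactly the elements of $\Div_{N_j}$ corresponding to $N_j/p_k$, i.e. differences of the form $M/p_jp_k$ or $M/p_k$ in $\ZZ_M$; in particular $M/p_k\in\Div(B)$ (or $M/p_jp_k\in\Div(B)$ together with a parallel argument), contradicting $M/p_k\in\Div(A)$ via Theorem \ref{thm-sands} — unless $p_k<p_j$, which is excluded because $p_j=\min_\nu p_\nu$ has been forced in Lemma \ref{Phi_N_j|B} under the standing hypotheses. (If the paper's convention here does not yet pin down $p_j$ as the minimal prime, I would instead invoke the length-$p_k$ fiber directly: since $M/p_k\in\Div(A)$, the difference $M/p_jp_k$ — which must appear among the pairwise differences of an $N_j$-fiber in the $p_k$ direction as soon as $p_k\ge 2$ — combined with $M/p_k\in\Div(A)$ still yields the contradiction provided we also know $M/p_jp_k\notin\Div(B)$; but the cleanest route is the minimality of $p_j$.) Hence $B$ is $N_j$-fibered in the $p_i$ direction, and $\bbB^{N_j}_{N_j/p_i}[b]=\phi(p_i)$ for each $b\in B$, which is precisely the assertion (\ref{Bp_ifiber}) that for every $b\in B$ and every $y$ with $(b-y,M)=M/p_i$ one has $\bbB_M[y]+\bbB_{M/p_j}[y]=1$.

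The main obstacle I anticipate is the clean exclusion of the $p_k$-direction fibers: this is where one has to be careful about whether the relevant auxiliary divisor ($M/p_jp_k$, say) lies in $\Div(B)$ and about exactly which inequality among the $p_\nu$ is available at this stage of the argument. Everything else is a direct application of Corollary \ref{doublediv} once the two divisor facts $M/p_j,M/p_k\in\Div(A)$ are recorded, and those follow immediately from the $M$-fibered structure of $\calj\setminus\calk$ and $\calk\setminus\calj$ noted just after the statement of Proposition \ref{F3structure}.
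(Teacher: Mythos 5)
The first half of your argument (recording $M/p_j\in\Div(A)$, hence the binary condition with $c_0=1$, and applying Corollary \ref{doublediv} to get that $B$ is a disjoint union of $N_j$-fibers in the $p_i$ and $p_k$ directions) matches the paper. The gap is in excluding the $p_k$-direction fibers. Your main route rests on the claim that $p_j=\min_\nu p_\nu$ "has been forced in Lemma \ref{Phi_N_j|B}"; this is false. In that lemma the inequality $p_j=\min_\nu p_\nu$ is derived only \emph{inside} a proof by contradiction, under the hypothesis $\Phi_{N_j}|A$ which is then refuted, so it is not available as a standing consequence of (F3), (\ref{Phi_{p_i}|Aass}), (\ref{N_jM_jB}). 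Nor can you impose it without loss of generality: the roles of $j$ and $k$ are not symmetric here, since (\ref{N_jM_jB}) concerns $p_j$ while (\ref{Phi_{p_i}|Aass}) contains $\Phi_{M_k}|A$. Your fallback parenthetical also does not close the gap: once $M/p_k\in\Div(A)$ is used, divisor exclusion forces \emph{all} internal differences of a putative $N_j$-fiber in the $p_k$ direction to satisfy $(b_\mu-b_{\mu'},M)=M/p_jp_k$, so what you obtain is $M/p_jp_k\in\Div(B)$; to get a contradiction you must produce $M/p_jp_k\in\Div(A)$, and you give no argument for that (saying "provided we also know $M/p_jp_k\notin\Div(B)$" is assuming exactly what needs to be proved).

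The paper's actual exclusion runs as follows, and it is the substantive part of the lemma. The $p_k$ elements of such a fiber have pairwise differences with $p_j$-adic valuation exactly $1$, so their $p_j$-coordinates are pairwise distinct elements of a single coset of $p_j\ZZ_{p_j^2}$; hence $p_k\le p_j$, i.e. $p_k<p_j$, and $M/p_jp_k\in\Div(B)$. On the other hand, assumption (\ref{Phi_{p_i}|Aass}) includes $\Phi_{M_k}|A$, so Lemma \ref{Kp_i^2plane} applies and $\calk$ satisfies (\ref{KM/p_k^2jfiber}): $\calk$ is $M_k$-fibered in the $p_j$ direction, meaning each $a_k\in\calk$ sits among $p_j$ parallel $M$-fibers in the $p_k$ direction whose projections mod $M_k$ form an $M_k$-fiber in the $p_j$ direction. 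Since $p_k<p_j$, two of these $p_j$ fibers must have $p_k$-coordinates in the same residue class mod $p_k$, and comparing suitable elements of those two fibers yields a difference with gcd exactly $M/p_jp_k$ in $A$. This contradicts $M/p_jp_k\in\Div(B)$ and completes the proof; your proposal is missing this entire mechanism.
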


\begin{proof}
By (\ref{N_jM_jB}) and Corollary \ref{doublediv} with $c_0=1$,  $B$ is a union of pairwise disjoint $N_i$-fibers in the $p_j$ and $p_k$ directions. 
It follows that for every $b\in B$, either (\ref{Bp_ifiber}) holds, or else
\begin{equation}\label{Bp_kfiber}
	\bbB_{M}[y]+\bbB_{M/p_j}[y]=1 \text{ for every } y\in \ZZ_M \text{ with } (b-y,M)=M/p_k.
\end{equation}
Assume, by contradiction, that (\ref{Bp_kfiber}) holds for some $b\in B$. Since $M/p_k\in \Div(A)$, we must have $p_k<p_j$ and $M/p_jp_k\in \Div(B)$. On the other hand, $\calk$ satisfies (\ref{KM/p_k^2jfiber}), so that $p_k<p_j$ implies $M/p_jp_k\in \Div(A)$, contradicting divisor exclusion. 
\end{proof}

\begin{corollary}\label{cor-subtile1}
	Assume (F3), (\ref{Phi_{p_i}|Aass}), and (\ref{N_jM_jB}). Then the conditions of Theorem \ref{subtile} are satisfied in
 the $p_i$ direction,
	after interchanging $A$ and $B$. 
\end{corollary}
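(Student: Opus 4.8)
The goal is to verify condition (ii) of Theorem \ref{subtile} for the pair $(B,A)$ in the $p_i$ direction, i.e. that for every $m$ with $p_i^2 \mid m \mid M$, if $m \in \Div(B)$ then $m/p_i \notin \Div(A)$. By Corollary \ref{slabcor}, it suffices to verify the hypothesis (i) there with the roles of $A$ and $B$ swapped, namely that $\bbB_{M/p_i}[b] > 0$ for every $b \in B$, and that $\Phi_{p_i^2} \mid B$. The divisibility $\Phi_{p_i^2}\mid B$ is immediate from (\ref{Phi_{p_i}|Aass}), since $\Phi_{p_i}\mid A$ forces $\Phi_{p_i}\nmid B$ and $\Phi_{p_i^2}\mid B$ (recall $|A||B| = M$ has $p_i^2$ as the exact power of $p_i$, and prime-power cyclotomic factors distribute between $A$ and $B$). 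So the heart of the matter is to show $\bbB_{M/p_i}[b]>0$ for all $b\in B$, i.e. every $b\in B$ lies on an $M$-fiber in the $p_i$ direction, equivalently that $M/p_i^2\in\Div(B)$ witnessed at every $b$.

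First I would record the two fibering facts already available: by Lemma \ref{Bjfiver}, $B$ is $N_j$-fibered in the $p_i$ direction, so for each $b\in B$ we have (\ref{Bp_ifiber}), meaning that along the line $\ell_i(b)$ the set $B$ occupies exactly one point of each $M/p_i$-coset inside $\Lambda(b, M/p_i)$; in other words $B\cap\Lambda(b,M/p_i)$ is an $M/p_i^2$-fiber in the $p_i$ direction (the "half-line" $\{b, b', \dots\}$ with consecutive differences of exact order $p_i$). This is precisely the statement $\bbB_{M/p_i}[b] = \phi(p_i)$ once we also know $\bbB_{M/p_i^2}[b]=0$; but (\ref{Bp_ifiber}) only tells us $\bbB_M[y]+\bbB_{M/p_j}[y]=1$ for the $p_i-1$ points $y$ at distance $M/p_i$ from $b$, which is the statement that $B\cap\ell_i(b)$ (mod $N_j$) is a full $N_j$-fiber but allows those points to sit in different $p_i^2$-planes. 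So what remains is to upgrade this $N_j$-scale fibering to the $M$-scale: to show that those $p_i-1$ companion points of $b$ in fact lie at distance exactly $M/p_i^2$ from $b$, not $M/p_i$, i.e. that $B$ is genuinely $M$-fibered in the $p_i$ direction.

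The mechanism for this upgrade is splitting, via Lemma \ref{splittingslab}. Indeed, by Corollary \ref{slab1}(i) we have $|A\cap\Pi(a,p_i^2)| = p_jp_k$ for every $a\in\calk$, and by Lemma \ref{splitting}(ii) (using $\Phi_{p_i}\mid A$) we have $|A\cap\Pi(a,p_i)| = p_jp_k$ for all $a\in A$ with each such plane contained entirely in $\calj$ or entirely in $\calk$; chasing this through, $A\cap\Pi(a,p_i) = A\cap\Pi(a,p_i^2)$ for every $a\in A$, i.e. $A$ maximizes the plane bound of Lemma \ref{planebound} already on the $p_i^2$-scale. This is exactly the hypothesis needed to invoke uniform splitting: for the tiling $A\oplus rB = \ZZ_M$ with any $r\in R$, every $M$-fiber in the $p_i$ direction must split with parity $(rB, A)$ — if some $M$-fiber split with parity $(A, rB)$ we would get $p_i-1$ elements of $A$ in a single $p_i^2$-plane coming from an $M$-fiber together with the original $A\cap\Pi(a,p_i^2)$, violating the (now tight) plane bound. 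Then Lemma \ref{splittingslab} (equivalence of (I) and (II)) delivers exactly condition (ii) of Theorem \ref{subtile} for $(A,B)$ in the $p_i$ direction; after interchanging $A$ and $B$ this is the assertion of the corollary. I would phrase the argument directly in the splitting language rather than re-deriving $M$-fiberedness of $B$, since that is cleaner and is the stated intent of Section \ref{fibered-F3}.

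\textbf{Main obstacle.} The delicate point is making the plane-bound contradiction airtight when an $M$-fiber in the $p_i$ direction is tiled by elements of $A$ lying in different $p_i^2$-planes: one must be careful that the contribution of the fiber and the contribution of $A\cap\Pi(a,p_i^2)$ are genuinely disjoint and that one is not double-counting the root of the fiber. This is where Corollary \ref{slab1}(i) and the stratified structure from Lemma \ref{splitting}(ii) must be combined exactly — specifically, one needs that the $p_i^2$-plane through any $a\in A$ is "already full" ($|A\cap\Pi(a,p_i^2)| = p_jp_k = |A|/p_i$), so that no further $A$-points can join it. Once that observation is in place the rest is a direct application of Lemma \ref{no-upgrades} and Lemma \ref{splittingslab}, with no further case analysis; the parity-$(A,B)$ possibility for a $p_i$-fiber is simply ruled out by counting, and parity-$(B,A)$ uniformly is precisely condition (I) of Lemma \ref{splittingslab}.
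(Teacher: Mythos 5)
Your plan has the right target (the interchanged version of Theorem \ref{subtile}, via Lemma \ref{splittingslab} / Corollary \ref{slabcor}), but the two mechanisms you propose both break down. First, the route through the swapped Corollary \ref{slabcor}(i) requires $\bbB_{M/p_i}[b]>0$ for every $b\in B$, i.e.\ an upgrade of the $N_j$-scale fibering (\ref{Bp_ifiber}) to genuine $M$-fibering of $B$ in the $p_i$ direction. You state this upgrade as "what remains" but never prove it, and it is not available under (F3), (\ref{Phi_{p_i}|Aass}), (\ref{N_jM_jB}): (\ref{Bp_ifiber}) expressly allows the companion of $b$ along $\ell_i(b)$ to sit at distance $M/p_ip_j$ rather than $M/p_i$, and nothing in the hypotheses forces alignment at scale $M$. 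Your substitute mechanism rests on the claim that $A\cap\Pi(a,p_i)=A\cap\Pi(a,p_i^2)$ for \emph{every} $a\in A$; Corollary \ref{slab1}(i) gives this only for $a\in\calk$, and extending it to $a\in\calj$ is exactly Corollary \ref{slab1}(ii), whose hypothesis $\Phi_{M_j}|A$ contradicts (\ref{N_jM_jB}). The case $a\in\calj$ is precisely where a different argument is needed: the paper handles it not by plane counting but by saturating $\langle\bbA[a],\bbB[y]\rangle$ with the $M/p_j$ term, using $\bbB_{M/p_j}[y]=1$ supplied by (\ref{Bp_ifiber}).

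Second, the splitting-parity step is backwards, and the counting "contradiction" is not one. Since $\Phi_{p_i}|A$ forces $\Phi_{p_i^2}|B$ and $\Phi_{p_i^2}\nmid A$, the relevant equivalence is Lemma \ref{splittingslab} with $A$ and $B$ interchanged: the slab conditions for $B$ in the $p_i$ direction amount to every $p_i$-fiber splitting with the $A$-side clustered in a single $p_i^2$-plane and the $B$-side spread, i.e.\ uniform parity $(rA,B)$ in the tilings $B\oplus rA$ (equivalently, condition (III): $B_{y,a}\subset\Pi(y,p_i^2)$ for all $a\in A$, $b\in B$, $y\in b*F_i$, which is what the paper verifies). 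You instead argue for parity $(rB,A)$ and try to \emph{rule out} the $A$-clustered parity; that is the criterion for the slab reduction applied to $A$ (whose hypothesis $\Phi_{p_i^2}|A$ fails here), and deducing the slab condition "for $(A,B)$ and then interchanging" is a non sequitur, because condition (ii) of Theorem \ref{subtile} is not symmetric in $A$ and $B$. Moreover the plane-bound argument fails on its own terms: if a fiber split with the $A$-side clustered, the elements $a_0,\dots,a_{p_i-1}$ are themselves members of $A\cap\Pi(a_0,p_i^2)$, so the tight bound $|A\cap\Pi(a_0,p_i^2)|=p_jp_k$ is not violated -- no extra points of $A$ are produced. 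So the proposal does not close the argument; the short correct proof is the case split $a\in\calk$ versus $a\in\calj$ described above.
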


\begin{proof}
By Theorem \ref{subtile} (iii), it suffices to prove that for all $a\in A$, $b\in B$, and for all  $y\in \ZZ_M$ with $(y-b,M)=M/p_i$, we have
\begin{equation}\label{sat-splitB}
B_{y,a}\subset \Pi(y,p_i^2).
\end{equation} 
For $a\in\calk$, we have $A\cap\Pi(a,p_i)=A\cap \Pi(a,p_i^2)$ by 
Lemma \ref{splitting} (ii) and Lemma \ref{slab1} (i).
 This clearly implies (\ref{sat-splitB}). 

Assume now that $a\in \calj$. If $y\in B$, (\ref{sat-splitB}) holds trivially. Otherwise, we have $\bbB_{M/p_j}[y]=1$ by (\ref{Bp_ifiber}), so that 
$$
\langle\bbA[a],\bbB[y]\rangle=\frac{1}{\phi(p_j)}\bbA_{M/p_j}[a]\bbB_{M/p_j}[y]=1,
$$
which proves (\ref{sat-splitB}).
\end{proof}

It remains to prove Proposition \ref{F3structure} under the assumption that
\begin{equation}\label{cycassumption}
\Phi_{M_\nu}\nmid A \text{ for } \nu\in \{j,k\}. 
\end{equation}
Without loss of generality, we may also assume that 
\begin{equation}\label{porzadekalfabetyczny}
p_k>p_j.
\end{equation}
Since by (\ref{symdif}) $\calk\not\subset \calj$, it follows from Lemma \ref{fiberfibering-new} (ii) that $\Phi_{N_k}\Phi_{M_k}|B$. As in the proof of Lemma \ref{Bjfiver}, we see that $B$ mod $N_k$ is a union of pairwise disjoint $N_k$-fibers in the $p_i$ and $p_j$ directions, so that every $b\in B$ satisfies at least one of the following:
\begin{equation}\label{BTp_ifiber}
\bbB_{M}[y]+\bbB_{M/p_k}[y]=1 \text{ for every } y\in \ZZ_M \text{ with } (b-y,M)=M/p_i.
\end{equation}
\begin{equation}\label{BTp_jfiber}
\bbB_M[y]+ \bbB_{M/p_k}[y]=1 \text{ for every } y\in \ZZ_M \text{ with } (b-y,M)=M/p_j,
\end{equation}
In particular, 
$$
\{M/p_i,M/p_j, M/p_ip_k,M/p_jp_k\} \cap \Div(B)\neq \emptyset.
$$

\begin{lemma}\label{BPhi_{N_j}}
Assume (F3), (\ref{cycassumption}), and (\ref{porzadekalfabetyczny}). 
Then:
\begin{itemize}
\item[(i)] $\Phi_{N_j}\nmid A$,
\item[(ii)] $B$ is $N_j$-fibered in the $p_i$ direction, 
so that (\ref{Bp_ifiber})
holds for all $b\in B$,
\item[(iii)] $\Phi_{p_i}|A$.
\end{itemize}
\end{lemma}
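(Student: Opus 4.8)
\textbf{Proof plan for Lemma \ref{BPhi_{N_j}}.}

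The plan is to reuse the machinery already assembled for the case (F3) with $\Phi_{M_\nu}\nmid A$. First I would establish (i). Suppose for contradiction that $\Phi_{N_j}|A$; then by Lemma \ref{fibercyc3} we have $\Phi_{N_j}|\calj$. The two cyclotomic non-divisibilities (\ref{cycassumption}) force, via Corollary \ref{subset} (ii) applied with $k$ replaced by $j$, that $p_j=\min_\nu p_\nu$ (otherwise $\calj$ would be $N_j$-fibered on every $D(N_j)$-grid in one of the other two directions, which would give $\Phi_{M_j}|A$, contradicting (\ref{cycassumption})). Then I would argue that $\calj$ must actually be $N_j$-fibered on $D(N_j)$-grids: when $p_j>2$ this is the content of \cite[Lemma 9.35]{LaLo2} reproduced in the proof of Lemma \ref{Phi_N_j|B}; when $p_j=2$, Lemma \ref{evenunfiberedN_igrid} applied in the $p_j$ direction would give $\Phi_{p_i^2}|A$. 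Unlike in Lemma \ref{Phi_N_j|B}, we do not have $\Phi_{p_i}|A$ as a hypothesis here, so I cannot immediately discard $\Phi_{p_i^2}|A$; instead, if $\Phi_{p_i^2}|A$ then Lemma \ref{splitting} (i) applied with the roles of $j,k$ adapted (we would need $\Phi_{M_\nu}|A$ for some $\nu$, which fails) — so I would instead argue directly from the diagonal-boxes structure of Lemma \ref{evenunfiberedN_igrid} together with the $B$-fibering consequences (\ref{BTp_ifiber})--(\ref{BTp_jfiber}) and a divisor-exclusion count that $\{D(M)|m|M\}\subset \Div(A)$ forces (\ref{highdivisors}) to fail, giving a contradiction via Corollary \ref{maxout2} much as in Lemma \ref{almostmaxedout}. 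Once $\calj$ is $N_j$-fibered on $D(N_j)$-grids, I would derive a contradiction using Lemma \ref{planebound} and the fact that $\calk\setminus\calj\neq\emptyset$ (any $a_k\in\calk$ would be forced into a plane already saturated by fibers of $\calj$), closely following the final paragraph of \cite[Lemma 9.35]{LaLo2}. This establishes (i).

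With (i) in hand, (\ref{cycassumption}) gives $\Phi_{N_j}\Phi_{M_j}|B$. Since $M/p_j\in\Div(A)$ (because $\calj\setminus\calk$ is nonempty and $M$-fibered in the $p_j$ direction, so $M/p_j\in\Div(\calj\setminus\calk)\subset\Div(A)$), condition (\ref{binary-cond-2}) holds with $c_0=1$ for $B$, and Corollary \ref{doublediv} shows $B$ is a union of pairwise disjoint $N_j$-fibers in the $p_i$ and $p_k$ directions. So for every $b\in B$, either (\ref{Bp_ifiber}) holds, or else $\bbB_M[y]+\bbB_{M/p_j}[y]=1$ for all $y$ with $(b-y,M)=M/p_k$, i.e. $M/p_jp_k\in\Div(B)$. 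I would rule out the latter exactly as in Lemma \ref{Bjfiver}: since $\Phi_{M_k}|B$ by (\ref{cycassumption}), and $M/p_k\in\Div(A)$ (as $\calk\setminus\calj$ is nonempty, $M$-fibered in the $p_k$ direction, giving $M/p_k\in\Div(A)$), the structure result Lemma \ref{M_kfibering} applied to $\calk$ together with (\ref{porzadekalfabetyczny}) forces $M/p_jp_k\in\Div(A)$ (using $p_j<p_k$, each $M_k$-fiber in $\calk$ realizes $M/p_jp_k$ as a difference), contradicting divisor exclusion. Hence (\ref{Bp_ifiber}) holds for all $b\in B$, proving (ii).

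For (iii): from (ii), $B$ is $N_j$-fibered in the $p_i$ direction, so $\bbB_{M/p_ip_j}[b]=\phi(p_i)$ for every $b\in B$, i.e. $|B\cap\Pi(b,p_j^2)|\geq$ the contribution forcing $|B\cap\Pi(b,p_j)|> |B|/p_i$; more precisely, combining (ii) with Lemma \ref{planebound} and a count, one gets $|B\cap\Pi(b,p_i)|>p_jp_k$, and Corollary \ref{planegrid} then yields $\Phi_{p_i^2}|B$, hence $\Phi_{p_i}|A$. I would double-check the numerics: $N_j$-fibered in the $p_i$ direction gives for each $b$ an $N_j$-fiber $\{b_1,\dots,b_{p_i}\}$ in the $p_i$ direction with $(b_\mu-b_{\mu'},M)=M/p_ip_j$, so $|B\cap\Pi(b,p_i)|\geq p_i\cdot(|B|/p_i^2)\cdot$(fiber multiplicity)$>p_jp_k$ once one accounts for $|B|=p_ip_jp_k$; I would phrase this as $|B\cap\Pi(b,p_i^2)|=p_jp_k$ failing to accommodate the $p_i$-direction fiber structure, which via Corollary \ref{planegrid} applied to $B$ gives $\Phi_{p_i^{2-\alpha}}|B$ for some $\alpha\in\{0,1\}$; since $\Phi_{p_i}|B$ would contradict $M/p_i\in\Div(A)$ only if $M/p_i\in\Div(A)$, which need not hold, I would instead note that $B$ being $N_j$-fibered in the $p_i$ direction with multiplicity making $|B\cap\Pi(b,p_i^2)|<|B\cap\Pi(b,p_i)|$ directly gives $\Phi_{p_i^2}|B$ by the argument in the proof of Claim 3 in Corollary \ref{subset} (ii), and $\Phi_{p_i^2}|B$ is equivalent to $\Phi_{p_i^2}\nmid A$; combined with the fact that $\Phi_M|A$ and the prime-power cyclotomic structure, this forces $\Phi_{p_i}|A$. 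The main obstacle I anticipate is part (i) in the even case $p_j=2$: there the diagonal-boxes alternative of Lemma \ref{evenunfiberedN_igrid} is genuinely possible a priori, and without $\Phi_{p_i}|A$ available as a hypothesis I must rule it out by a more delicate splitting/divisor-counting argument in the spirit of Lemma \ref{almostmaxedout} rather than the one-line cyclotomic contradiction used in Lemma \ref{Phi_N_j|B}.
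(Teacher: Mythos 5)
The decisive gap is in part (i), in the even case $p_j=2$ where $\calj\setminus\calk$ has the diagonal-boxes structure of Lemma \ref{evenunfiberedN_igrid} (applied in the $p_j$ direction). Your plan for that subcase --- get $\{D(M)\,|\,m\,|\,M\}\subset\Div(A)$ and conclude ``much as in Lemma \ref{almostmaxedout}'' via Corollary \ref{maxout2} --- does not work, for two reasons. First, the permuted divisor list (\ref{diagboxdiv2}) does not contain $M/p_ip_k$ (nor $M/p_ip_jp_k$), so the inclusion $\{D(M)\,|\,m\,|\,M\}\subset\Div(A)$ is simply not available from the diagonal-boxes structure. Second, and more fundamentally, the conclusion of Corollary \ref{maxout2} is only that one of $\cali,\calj,\calk$ is empty, which is already true under (F3) (where $\cali=\emptyset$), so no contradiction can come out of that machinery here. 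The paper's actual argument in this subcase is a double cyclotomic divisibility: the permuted Lemma \ref{evenunfiberedN_igrid} gives $\Phi_{p_i^2}|A$ together with $M/p_i,M/p_j,M/p_jp_k\in\Div(A)$; then (\ref{Btopdivisors}) rules out (\ref{BTp_jfiber}) for every $b\in B$, so $B$ is $N_k$-fibered in the $p_i$ direction with $\bbB_{M/p_ip_k}[b]=\phi(p_i)$, whence $\Phi_{p_i^2}|B$ by Lemma \ref{cyclo-grid} --- contradicting $\Phi_{p_i^2}|A$. Relatedly, once the $N_j$-fibering of $\calj\setminus\calk$ in the $p_j$ direction is established, you cannot just ``closely follow the final paragraph of \cite[Lemma 9.35]{LaLo2}'': that argument runs under (\ref{Phi_{p_i}|Aass}), i.e.\ with $\Phi_{p_i}|A$ as a hypothesis, whereas here $\Phi_{p_i}|A$ is conclusion (iii). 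The paper must, and does, also treat the alternative $\Phi_{p_i^2}|A$ by a separate count on the slab $A'_{p_i}$ ($p_jp_k=c_jp_j^2+c_kp_k$ with $c_j>0$, impossible); your sketch does not cover this branch.

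There is also an error in your part (ii): to exclude $N_j$-fibers of $B$ in the $p_k$ direction you invoke Lemma \ref{M_kfibering} for $\calk$, but that lemma requires $\Phi_{M_k}|A$, which is exactly what (\ref{cycassumption}) denies, so the claim $M/p_jp_k\in\Div(A)$ is not available by this route (this is where your situation genuinely differs from Lemma \ref{Bjfiver}). The correct and simpler argument, as in the paper, is pure pigeonhole: such a fiber has $p_k$ elements with pairwise differences $d$ satisfying $(d,N_j)=N_j/p_k$, and since $p_k>p_j$ by (\ref{porzadekalfabetyczny}) two of them must satisfy $(d,M)=M/p_k$, giving $M/p_k\in\Div(B)$, while $M/p_k\in\Div(A)$ because $\calk\neq\emptyset$. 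Finally, in (iii) your chain of plane-bound estimates is both unnecessary and partly unjustified (e.g.\ the inequality $|B\cap\Pi(b,p_i)|>p_jp_k$ is not established); once (ii) is known, Lemma \ref{cyclo-grid} applied to $B$ with $m=M/p_ip_j$ and $s=p_i^2$ gives $\Phi_{p_i^2}|B$ directly, and then $\Phi_{p_i}|A$ follows as you indicate from the prime-power dichotomy.
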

\begin{proof}
We start with (i). Assume for contradiction that
$\Phi_{N_j}|A$. 
By Proposition \ref{no unfibered grids}, $\calj$ must be $N_j$-fibered on each $D(N_j)$-grid, so that
$\calj\setminus\calk$ must be $N_j$-fibered in the $p_j$ direction.
%
%
%
%

Let $a_j\in \calj\setminus\calk$.
We now consider two cases.
 
\begin{itemize} 
\item Suppose that $\Phi_{p_i}|A$.
By Lemma \ref{splitting} (ii), we have $A\cap\Pi(a_j,p_i)\subset\calj$ and $|A\cap\Pi(a_j,p_i)|=p_jp_k$. If there was an element $a\in (\calj\cap\calk)\cap \Pi(a_j,p_i)$, it would follow that $A\cap\Pi(a_j,p_i)=a*F_j*F_k$, and in particular $a_j\in a*F_j*F_k$, contradicting the choice of $a_j$. Therefore  $A\cap\Pi(a_j,p_i)\subset\calj\setminus\calk$.
But then the fibering of $\calj\setminus\calk$ implies that $p_jp_k$ is divisible by $p_j^2$, a contradiction.

\item Assume now that $\Phi_{p_i^{2}}|A$. Let $A'$ be a translate of $A$ such that $a_j\in A'_{p_i}$. By the cyclotomic divisibility assumption, we have $|A'_{p_i}|=p_jp_k$. On the other hand, by the fibering properties of $A$, 
$$
p_jp_k=|A'_{p_i}|=c_jp_j^2+c_kp_k,\ \ c_j>0.
$$
Thus $c_j=p_kc'_j$ and $p_j=c'_jp_j^2+c_k$ with $c'_j>0$, a contradiction.
\end{itemize}
Therefore $\Phi_{N_j}\nmid A$, proving (i). 

Next, we prove (ii). By (i) together with (\ref{cycassumption}), we have $\Phi_{N_j}\Phi_{M_j}|B$. As in the proof of Lemma \ref{Bjfiver}, $B$ mod $N_j$ is a union of pairwise disjoint $N_j$-fibers in the $p_i$ and $p_k$ directions, so that
every element of $B$ satisfies at least one of (\ref{Bp_ifiber}) and (\ref{Bp_kfiber}). However, by (\ref{porzadekalfabetyczny}), we must in fact we have (\ref{Bp_ifiber}) for all $b\in B$, otherwise $M/p_k\in \Div(A)\cap \Div(B)$ which is a contradiction. This proves (ii).

Finally, the $N_j$-fibering in (ii) implies that
the assumptions of Lemma \ref{cyclo-grid} hold for $B$, with $m=M/p_ip_j$ and $s=p_i^2$. It follows that $\Phi_{p_i^{2}}|B$, and therefore $\Phi_{p_i}|A$.
\end{proof}

\begin{lemma}\label{noM_nusubtile}
Assume (F3), (\ref{cycassumption}), and (\ref{porzadekalfabetyczny}). 
Then the conditions of Theorem \ref{subtile} (the slab reduction) are satisfied in the $p_i$ direction,
after interchanging $A$ and $B$. 
\end{lemma}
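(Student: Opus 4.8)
The strategy is to apply Lemma \ref{splittingslab} with the roles of $A$ and $B$ exchanged. By Lemma \ref{BPhi_{N_j}} (and its proof) we have $\Phi_{p_i^2}\mid B$, $\Phi_{p_i}\mid A$, $\Phi_{N_j}\nmid A$, and $B$ is $N_j$-fibered in the $p_i$ direction, so that (\ref{Bp_ifiber}) holds for every $b\in B$. Since $\Phi_{p_i^2}\mid B$, applying Lemma \ref{splittingslab} to the tiling $B\oplus A=\ZZ_M$ reduces the lemma to verifying condition (III) in that form: for all $a\in A$, $b\in B$ and $y\in b*F_i$ we must show $B_{y,a}\subset\Pi(y,p_i^2)$. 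If $y\in B$ (in particular if $y=b$) this is immediate, since $B_y=\{y\}$ by divisor exclusion; so I may assume $y\notin B$ and $(y-b,M)=M/p_i$, and then (\ref{Bp_ifiber}) forces $\bbB_{M/p_j}[y]=1$.

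As $\cali=\emptyset$, we have $A=\calj\cup\calk$, and it is enough to treat $a\in\calj$ and $a\in\calk\setminus\calj$. For $a\in\calj$ we have $\bbA_{M/p_j}[a]=\phi(p_j)$, so in the identity $\langle\bbA[a],\bbB[y]\rangle=1$ the term with $m=M/p_j$ already equals $\frac{1}{\phi(p_j)}\phi(p_j)\cdot1=1$; hence all other terms vanish, every $b'\in B_{y,a}$ satisfies $(y-b',M)=M/p_j$, and $p_i^2\mid M/p_j$ gives $b'\in\Pi(y,p_i^2)$ --- this is exactly the $\calj$-argument of Corollary \ref{cor-subtile1}. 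Now let $a\in\calk\setminus\calj$. By Lemma \ref{splitting} (ii) (using $\Phi_{p_i}\mid A$) together with Lemma \ref{fiberedstructure}, $A\cap\Pi(a,p_i)$ is contained in $\calk\setminus\calj$ and has exactly $p_jp_k$ elements; since $\calk\setminus\calj$ is $M$-fibered in the $p_k$ direction, it is a disjoint union of $p_j$ full $M$-fibers of $A$ in the $p_k$ direction. I claim that all of these fibers lie in $\Pi(a,p_i^2)$, equivalently
$$
A\cap\Pi(a,p_i)=A\cap\Pi(a,p_i^2).
$$
Granting this, the swapped form of the span bound (\ref{bispan}) gives $B_{y,a}\subset B_y\subset\Pi(y,p_i^2)\cup\Pi(b,p_i^2)$; and any $b'\in B_{y,a}\cap\Pi(b,p_i^2)$ would force $(y-b',M)=m$ with $p_i\parallel m$ and $\bbA_m[a]>0$, i.e.\ $A$ would contain an element at distance exactly $m$ from $a$ with $p_i\parallel m$, which the claim forbids. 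Thus $B_{y,a}\subset\Pi(y,p_i^2)$, completing condition (III).

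The heart of the argument is therefore the displayed claim for $a\in\calk\setminus\calj$, and this is where I expect the main obstacle to lie. I would argue by contradiction: if some $M$-fiber $a'*F_k\subset A$ with $a'\in\calk\setminus\calj$ lay in $\Pi(a,p_i)\setminus\Pi(a,p_i^2)$, then $p_i\parallel(a-a')$, so $A$ would contain two parallel $M$-fibers in the $p_k$ direction separated by a single step in the $p_i$ direction. I would feed this into the cyclotomic structure of $B$ already available: $\Phi_{N_j}\Phi_{M_j}\mid B$, $\Phi_{N_k}\Phi_{M_k}\mid B$ (the latter from Corollary \ref{subset} and (\ref{cycassumption})), $\Phi_{p_i^2}\mid B$, and the $N_j$-fibering of $B$ in the $p_i$ direction. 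Tracking the fibered structure of $B$ on the scales $N_j$, $N_k$ and $M_i$, as in Corollary \ref{doublediv} and the divisibility arguments of Section \ref{caseF1}, such a configuration forces $B$ either to overfill some plane $\Pi(x,p_\nu^{2})$, contradicting Lemma \ref{planebound} and Corollary \ref{planegrid}, or to produce a forbidden divisor coincidence such as $M/p_j\in\Div(A)\cap\Div(B)$ (or an impossible relation $p_j\mid p_k$). This is the one genuinely new ingredient here; the remainder is bookkeeping parallel to the proof of Corollary \ref{cor-subtile1}. Once the claim holds for every $a\in\calk\setminus\calj$ --- and it holds automatically for $a\in\calj$ via the box product above --- condition (III) of Lemma \ref{splittingslab} is satisfied for $(B,A)$, yielding the slab reduction conditions in the $p_i$ direction after interchanging $A$ and $B$.
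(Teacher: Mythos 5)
Your reduction to condition (III) of Lemma \ref{splittingslab} (with $A$ and $B$ interchanged, using $\Phi_{p_i^2}\mid B$) is legitimate, and your treatment of $a\in\calj$ is complete and correct: it is exactly the box-product computation from Corollary \ref{cor-subtile1}, transplanted using (\ref{Bp_ifiber}). The gap is the case $a\in\calk\setminus\calj$. There you hang everything on the claim $A\cap\Pi(a,p_i)=A\cap\Pi(a,p_i^2)$, and you never prove it -- the paragraph beginning ``I would argue by contradiction'' is a plan, not an argument, and none of the cited tools (Corollary \ref{doublediv}, the plane bound, a hoped-for divisor coincidence) is actually brought to bear on the hypothetical configuration of two $p_k$-fibers of $\calk$ separated by exactly one $p_i$-step. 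Worse, the claim is strictly stronger than what the lemma asserts: the slab condition (\ref{slabcond}) for $B$ in the $p_i$ direction only forbids a difference $m/p_i\in\Div(A)$ when the corresponding $m$ with $p_i^2\mid m$ lies in $\Div(B)$; it does not forbid $A$ from having differences whose $p_i$-part is exactly $p_i$. Under (\ref{cycassumption}) you no longer have $\Phi_{M_k}\mid A$, which is precisely the hypothesis that gave the analogous plane collapse $A\cap\Pi(a,p_i)=A\cap\Pi(a,p_i^2)$ in Corollary \ref{slab1}/Corollary \ref{cor-subtile1}, so there is no reason to expect your claim to be available here, and you are asked to supply a genuinely new structural fact that the statement itself does not require.

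The paper avoids this entirely by verifying condition (II) of Lemma \ref{splittingslab} instead: for $x=a+b$, if $a\in\calj$ then $x*F_i*F_j$ is tiled by $a*F_j$ together with the $N_j$-fiber of $B$ in the $p_i$ direction through $b$ (from (\ref{Bp_ifiber})); if $a\in\calk$, one splits according to the structure of $B$ at $b$ rather than the structure of $A$ at $a$: either $b$ lies on an $N_k$-fiber of $B$ in the $p_i$ direction ((\ref{BTp_ifiber})), and $x*F_i*F_k$ is tiled by $a*F_k$ and that fiber, or $b$ satisfies (\ref{BTp_jfiber}), and then combining its $N_k$-fiber in the $p_j$ direction with the $N_j$-fibers in the $p_i$ direction through each of its elements tiles all of $\Lambda(x,D(M))$ against $a*F_k$. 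In every case the $A$-elements used lie in $\Pi(a,p_i^2)$, so each fiber $x*F_i$ splits with parity $(A,B)$, with no claim about $A\cap\Pi(a,p_i)$ needed. To repair your write-up, replace the unproven claim by this dichotomy on $b$ (or prove the claim, which I do not see how to do from the stated hypotheses).
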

\begin{proof}
We verify that the condition (ii) of Theorem \ref{subtile} holds. That is, given $r\in R$, we show that every $M$-fiber in the $p_i$ direction splits with parity $(rA,B)$. Since the only property of the set $A$ that our proof uses is the fact that every element belongs to an $M$-fiber in either the $p_j$ or the $p_k$ direction, and this property is preserved under the mapping $A\mapsto rA$, it suffices to consider the case $r=1$. 

Let $x\in \ZZ_M$, and let $a\in A, b\in B$ satisfy $x=a+b$. If $a\in \calj$, then $x*F_i*F_j$ is tiled by $a*F_j\subset A$ and the $N_j$-fiber in the $p_i$ direction in $B$ containing $b$, provided by (\ref{Bp_ifiber}).

Suppose now that $a\in \calk$. If $b$ belongs to an $N_k$-fiber in the $p_i$ direction in $B$ (as in (\ref{BTp_ifiber})), then $x*F_i*F_k$ is tiled by $a*F_k\subset A$ and that fiber
(this is the same argument as for $a\in\calj$, with $j$ and $k$ interchanged). If on the other hand $b$ satisfies (\ref{BTp_jfiber}), with each element $b'$ of its $N_k$-fiber in the $p_j$ direction belonging to a $N_j$-fiber in the $p_i$ direction, then $\Lambda(x,D(M))$ is tiled by $a*F_k\subset A$ and all $p_ip_j$ elements $b''\in B$ satisfying $(b-b'',M)\in \{M, M/p_i,M/p_ip_j,M/p_jp_k\}$. In both cases, $x*F_i$ splits with parity $(A,B)$, as required.
\end{proof}
This concludes the proof of Proposition \ref{F3structure}.


\section{Acknowledgements}

The first author is supported by NSERC Discovery Grant 22R80520. The second author is supported by ISF Grant 607/21.



\bibliographystyle{amsplain}


\noindent{{\sc {\L}aba:} Department of Mathematics, University of British Columbia, Vancouver,
B.C. V6T 1Z2, Canada}, 
{\it ilaba@math.ubc.ca}

\noindent{ORCID ID: 0000-0002-3810-6121}

\medskip

\noindent{{\sc Londner:}  Department of Mathematics, Faculty of Mathematics and Computer Science, Weizmann Institute of Science, Rehovot 7610001, Israel},
{\it itay.londner@weizmann.ac.il}

\noindent{ORCID ID: 0000-0003-3337-9427}

\end{document}